\newtheorem{theorem}{Theorem}[subsection]
\newtheorem{corollary}[theorem]{Corollary}
\newtheorem{lemma}[theorem]{Lemma}
\newtheorem{remark}{Remark}[subsection]
\newtheorem{proposition}[theorem]{Proposition}
\newtheorem{definition}[theorem]{Definition}
\newcommand*{\real}{\mathbb{R}\mathrm{e}~}
\title{A Grushin problem for Bismut's hypoelliptic Laplacian}
\author{Francis~Nier\thanks{LAGA, Universit{\'e} de Sorbonne-Paris Nord (Paris XIII), 99 avenue
J.B.~Cl{\'e}ment, F-93430~Villetaneuse. nier@math.univ-paris13.fr}\\
Xingfeng~Sang\thanks{LAGA, Universit{\'e} de Sorbonne-Paris Nord (Paris XIII), 99 avenue
J.B.~Cl{\'e}ment, F-93430~Villetaneuse. sang@math.univ-paris13.fr}\\
Francis~White\thanks{Department of Mathematics, University of California Irvine, 400 Physical Sciences Quad, Irvine CA-92697. fgwhite@uci.edu}}
\begin{document}
\maketitle

\textbf{Abstract}\\

The name ``Grushin problem'' refers here to the variation of Schur complement technique introduced by J.~Sj{\"o}strand in his early works, which is now a commonly used tool in spectral analysis.  Recently Q.~Ren and Z.~Tao proposed such an approach for the analysis of the low lying eigenvalues in the large friction limit for a simple scalar kinetic model. Inspired by this recent work and with the introduction of functional spaces  adapted to the analysis of geometric Kramers-Fokker-Planck operators in a previous article, the authors now study  the combined asymptotic analysis of Bismut's hypoelliptic Laplacian, in the high friction $b\to 0^{+}$ and possibly low temperature $h\to 0^{+}$ regimes.\\

\noindent\textbf{MSC2020:}\, 35H10,  35H20, 35K65,  35R01, 47D06, 58J50, 58J65,  60J65, 82C31, 82C40\\
\textbf{Keywords:} Bismut's hypoelliptic Laplacian, Grushin problem and spectral convergence, multiscale analysis, large friction, low temperature.\\

\tableofcontents
\section{Introduction}
\label{sec:intro}
\subsection{Problem and motivations}
\label{sec:motivations}
In \cite{Bis041}\cite{Bis042}, J.M.~Bismut  introduced the hypoelliptic Laplacian which allows to extend to $p$-forms the generator of the semigroup associated with the Langevin process. The Witten Laplacian, self-adjoint and elliptic, corresponds to the simpler description of the Brownian motion, proposed by Einstein and where the temperature denoted by $h>0$ is essentially the only parameter. The full  Langevin process written here in the euclidean case:
$$
dq=pdt\quad,\quad dp=-\frac{1}{h}\partial_{q}V dt -\frac{1}{b}p dt +\frac{1}{\sqrt{b}}dW
$$
involves actually the two independent parameters $h>0$ and $b>0$, where $h>0$ can be interpreted (after rescaling) as a temperature and $\frac{1}{b}$ as a friction parameter.
The low temperature limit $h\to 0^{+}$ is known as the semiclassical limit for the semiclassical Witten Laplacian $\Delta_{V,h}$ and many works have been devoted to its analysis  after the seminal articles \cite{Wit}\cite{HeSj4}\cite{CFKS} or to  its consequences for the theory of topological invariant of manifolds (see e.g. \cite{Zha}\cite{BiZh}). It also has obvious relationships with all the asymptotic results of Freidlin-Wentzell theory (see e.g. \cite{FrWe}) and its development in the study of simulated annealing in the late 70's (see e.g. \cite{HKS} or \cite{Mic}). We refer the reader to \cite{Ber} for additional references and a historical background and to \cite{LeSt} for  the presentation of more recent applications and issues for the design of effective algorithms in molecular dynamics.\\
It was rapidly shown in \cite{BiLe} that the large friction limit $b\to 0^{+}$ (and $h>0$ fixed) of Bismut's hypoelliptic Laplacian\,, is related to the Witten or Hodge Laplacian on the base manifold. It is summarized by the commonly used terminology of ``overdamped Langevin process'' for
Einstein's description of the brownian motion.\\
Motivated by the applications to molecular dynamics or kinetic theory many works have been devoted in the last twenty years to the accurate computations of small eigenvalues of such operators, with the aim of providing quantitative information about the trend to the equilibrium. Even in the elliptic, self-adjoint and purely semiclassical framework of the semiclassical Witten Laplacian new questions arouse concerned with the accurate computation of spectral element in various geometrical or topological landscape and possibly with boundary value problems.
In particular, in \cite{LNV2} it was proved that when the potential $V\in \mathcal{C}^{\infty}(Q;\mathbb{R}) $ has finitely many critical values and $Q$ is a closed manifold, $\dim~Q=d$\,, the various exponentially small scales of low-lying spectrum of the semiclassical Witten Laplacian's $ \Delta_{V,h} $ in the limit $h\to 0^{+}$ are determined by a topological object: the persistent homology bar code of the function $V$\,.\\
One question addressed in this text is whether a similar result holds for Bismut's hypoelliptic Laplacian in the limit $b\to 0^{+}$ and $h\to 0^{+}$\,.\\
Before giving our main result, let us recall, what is known about similar problems:
\begin{itemize}
\item The accurate description of exponentially small eigenvalues for semi-classical Witten Laplacians, in connection with Eyring-Kramers asymptotics, the generalized Arrhenius law, or the study of quasistationnary distributions, has been studied or used in   \cite{BEGK} \cite{BGK} \cite{HKN} \cite{HeNi2} \cite{Lep1} \cite{LeNi} \cite{DLLN} \cite{LeNe} \cite{LNV1} \cite{LNV2} and references therein.
\item For the Langevin process, the semiclassical regime, which after a rescaling corresponds to $b\propto \sqrt{h}$ and $h\to 0^{+}$\,, for functions ($0$-forms) in the euclidean space with a Morse potential $V$ was considered in \cite{HHS}. An accurate study of the tunnel effect, with microlocal analytic techniques, led to a full asymptotic description of the bottom spectrum under the above assumptions.
  
\item A similar asymptotic framework was considered in the Ph.D thesis of S.~Shen (in \cite{She}) for Bismut's hypoelliptic Laplacian on the cotangent of a closed riemannnian manifold with $b\propto \sqrt{h}$\,, $h\to 0^{+}$\,, the potential $V$ is a Morse function and the metric is euclidean in Morse coordinates around critical points.
\item In \cite{BLM} the analysis of H{\'e}rau-Hitrik-Sj{\"o}strand was extended to a more general class of still scalar ($0$-forms) semiclassical non self-adjoint and subelliptic operators. Such methods have been developped in \cite{Nor1}\cite{Nor2} for other relevant kinetic scalar models where the diffusive part is no longer given by a harmonic oscillator hamiltonian but by a possibly non local operator in the momentum variable.
\item In \cite{BFLS} the authors considered the scalar operator for the Langevin dynamics in the euclidean space but with rather general kinetic energy and potential function. They discuss according to the friction and temperature parameter, the size of the spectral gap (or resolvent estimate). Their variational (so called ``hypocoercive'') method is combined with a Schur complement method which is reminiscent of the formal calculations of \cite{BiLe}-Chap~17.
\item In \cite{ReTa} Ren and Tao developed a Grushin problem  approach for a simple kinetic model in a high friction  limit $\gamma=\frac{1}{b}\to +\infty$\,. Their operator is $\mathcal{Y}-\gamma\Delta_{V}^{\mathbb{S}}$ on the cosphere bundle $S^{*}Q=\left\{(q,p)\in T^{*}Q, g^{ij}(q)p_{i}p_{j}=1\right\}$\,, where $\mathcal{Y}$ is the hamiltonian vector field of the geodesic flow and $\Delta_{V}^{\mathbb{S}}$ is the vertical Laplace-Beltrami operator on the spherical fiber.
\end{itemize}
The results of S.~Shen in \cite{She} are up to now the only accurate asymptotic results on $p$-forms for Bismut's hypoelliptic Laplacian in the combined limit $b\to 0^{+}$ and $h\to 0^{+}$\,, and it is done under some restricted
assumptions.
We note that the works of \cite{BiLe} and \cite{She} are also concerned with the convergence of generalized determinants in connection with Ray-Singer metrics on determinant bundles and other  topological invariants, by developing the strategy of \cite{BiZh}\cite{Zha}.\\
Additionally it is known from the various studies of the elliptic case, i.e. the semiclassical Witten Laplacian, that $1$-forms can be extremely useful even if one is only interested in the scalar case (degree $0$). This is due to  the supersymmetric argument:
 if $\omega$ is an eigenvector in degree $p$ then for a Hodge type operator, $(\mathbf{d}+\mathbf{d}^{*})^{2}=\mathbf{d}\mathbf{d}^{*}+\mathbf{d}^{*}\mathbf{d}$\,, $\mathbf{d}\omega\neq 0$ (resp. $\mathbf{d}^{*}\omega\neq 0$)\,, $\mathbf{d}\omega$ (resp. $\mathbf{d}^{*}\omega$) is an eigenvector of degree $p+1$ (resp. $p-1$)\,.
For these reasons, it is very natural to explore the accurate description of the small eigenvalues in the combined asymptotic regimes $b\to 0^{+}$ and $h\to 0^{+}$\,.\\
Although our previous work, was initially intended to the study of Bismut's hypoelliptic Laplacian with boundary conditions, it rapidly appeared after we heard of Ren and Tao article \cite{ReTa}\,, that our functional framework should allow a rather straightforward transposition of their method. Briefly said, it suffices to replace the total Laplacian $\Delta_{q,p}$ on the total space $S^{*}Q$ by the operator $W^{2}_{\theta}$ introduced in \cite{NSW} for the definition of global Sobolev spaces adapted to the analysis of Bismut's hypoelliptic Laplacian. This combined with various explicit geometric formulas in \cite{Bis05}\cite{BiLe} finally convinced us that an accurate description in the double asymptotics $b\to 0^{+}$ and $h\to 0^{+}$ ( it works for $h=1$) and for a general potential $V\in \mathcal{C}^{\infty}(Q;\mathbb{R})$ should be accessible.

\subsection{Main result and comments}
\label{sec:mainres}
We are concerned with the spectral and semigroup properties of Bismut's hypoelliptic Laplacian, denoted here  by $B_{\pm,b,\frac{1}{h}V}$ on $X=T^*Q$, where $b,h>0$ are parameters\,. Actually, the operator $B_{\pm,b,V}$ is equal to $2(\mathfrak{A}_{\phi_{b},\pm \mathcal{H}}^{\prime})^{2}$ with the presentation of \cite{BiLe}-Section~2 and the additional parameter $h>0$ is introduced by replacing $V$ by $\frac{1}{h}V$\,. We refer to Subsection~\ref{sec:BihypLap} below for a detailed presentation. The semiclassical Witten Laplacian on the closed base manifold $Q$ is given by $\Delta_{V,h}=(d_{V,h}+d_{V,h}^{*})^{2}$ with $d_{V,h}=e^{-\frac{V}{h}}(hd)e^{\frac{V}{h}}=hd+dV\wedge$ and we refer the reader to Subsection~\ref{sec:hermbund} and Subsection~\ref{sec:scalings} for various unitarily equivalent presentations adapted to our problem. We use the $h-$dependent version of the double exponent Sobolev spaces $\tilde{\cal W}_h^{s_1,s_2}$ introduced in Definition~\ref{def:sobsp} for $h=1$ and in  Definition~\ref{def:Ws1s2h} for $h\in ]0,1]$\,.\\

The data of our problem are the spectrum of the semiclassical Witten Laplacian $\mathrm{Spec}(\Delta_{V,h})=\mathrm{Spec}(\Delta_{V^{h},1})$\,, where $V^h(q)=\frac{1}{h}V(hq)$ is defined on a dilated manifold (see Subsection~\ref{sec:scalings}), and the parameters $b,h\in ]0,1]$\,. 

The following definition makes sense if one considers asymptotic regimes where $h\to 0^{+}$ and when $V\in \mathcal{C}^{\infty}(Q;\mathbb{R})$ has a finite number of critical values.
\begin{definition}\label{def:rhoh}
  The parameter $\varrho_{h}\in]0,1]$ parametrized by $h\in ]0,1]$ measures a spectral gap for $\Delta_{V,h}$ according to
  \begin{eqnarray*}
    &&\mathrm{Spec}(\frac{1}{2}\Delta_{V,h})\cap [0,\varrho_{h}]\subset [0,e^{-\frac{c}{h}}] \subset [0,\frac{\varrho_{h}}{2}] \\
       \text{and}&& \mathrm{Spec}(\frac{1}{2}\Delta_{V,h})\cap ]\varrho_{h},+\infty[\subset [4\varrho_{h},+\infty[ 
  \end{eqnarray*}
  for all $h\in ]0,1]$\,. We call $\mathcal{N}_{\pm}(V)$ the rank of $1_{[0,\varrho_{h}]}(\frac{1}{2}\Delta_{V,h})$ and $\mathcal{N}_{\pm}^{(p)}(V)$ the rank of $1_{[0,\varrho_{h}]}(\frac{1}{2}\Delta_{V,h}^{(p)})$ for $p\in \left\{0,\ldots,d\right\}$\,, where the $\pm$ sign refers to the choice of the line bundle $F_{+}=Q\times\mathbb{C}$ or $F_{-}=(Q\times \mathbb{C})\otimes\mathbf{or}_{Q}$\,.\\
  For every $p\in \{0,\dots , d\} $ the eigenvalues of $\frac{1}{2}\Delta_{V,h}^{(p)}$ in $ [0,\varrho_h ]$\,, repeated with multiplicity, are labelled by $ \tilde{\lambda}_{\pm,j,h}^{(p)}(V) $\,, $1\leq j\leq \mathcal{N}_{\pm}(V)$\,, in the increasing order.
\end{definition}

It was proved in \cite{HeSj4} (resp. in \cite{LNV2}) that one can take $\varrho_{h}=ch$ with $c>0$ (resp. $\varrho_{h}=e^{-\frac{\varepsilon}{h}}$ with $\varepsilon>0$ arbitrarily small) when $V\in \mathcal{C}^{\infty}(Q;\mathbb{R})$ is a Morse function (resp. has a finite number of critical values). Additionally the number of eigenvalues of $\frac{1}{2}\Delta_{V,h}^{(p)}$ counted with multiplicities, in  $[0,\varrho_{h}]$\,, is fixed for $h\in ]0,h_{0}]$\,, $h_{0}>0$ small enough, and determined by the topological properties of the sublevel sets of $V$\,, via Morse theory, or more generally via the barcode of persistent homology. We note also that the Poincar{\'e} duality implies $\mathcal{N}_{+}^{(p)}(V)=\mathcal{N}_{-}^{(d-p)}(-V)$ for every $p\in \{0,\dots,d\}$\,.\\

\begin{definition}
          For every $p\in \{0,\dots,2d\}$, the eigenvalues of $B_{\pm,b,\frac{V}{h}}^{(p)}$ lying in $D(0,\frac{\varrho_h}{h^2})$ , repeated according to their algebraic multiplicity, will be denoted by $(\lambda_{\pm,j,h}^{(p)})_{1\leq j\leq \mathcal{N}_{\pm}^{(p)}}$\,.
          The characteristic space 
          $$E_{\pm,b,h}^{(p)} = \mathrm{Ran}\bigg(\frac{1}{2i\pi}\int_{|z|= \frac{\varrho_h}{h^2} } (z-B_{\pm,b,\frac{V}{h}}^{(p)})^{-1}\,dz\bigg)$$ 
          has the dimension $ \mathcal{N}_{\pm}^{(p)} = \dim(E_{\pm,b,h}^{(p)})$\,. When $B_{\pm,b,\frac{V}{h}}^{(p)}\big|_{E_{\pm,b,h}^{(p)}}$ is diagonalizable (see Theorem~\ref{th:main}-\textbf{a)}),  a basis of eigenvectors is written 
          $ (u_{\pm,j,h}^{(p)})_{1\leq j \leq \mathcal{N}_{\pm}^{(p)}}$ and its $L^2$ dual basis is denoted by $ (v_{\pm,j,h}^{(p)})_{1\leq j \leq \mathcal{N}_{\pm}^{(p)}} $\,.
        \end{definition}

      \begin{theorem}
        \label{th:main}
        Let $g$ be a metric on $Q$ and let $V\in\mathcal{C}^{\infty}(Q;\mathbb{R})$ be a potential function with finitly many critical values.
In the following statements $C_s\geq 1$ denotes a large enough constant determined by $s\in\mathbb{R}$\,.
  \begin{description}
  \item[a)] When $bC_0\leq h\varrho_h \leq h$ all the eigenvalues of $B_{\pm,b,\frac{V}{h}} $ with real part below $\frac{\varrho_h}{h^2}$ are real and non negative:
    $$ \mathrm{Spec}(B_{\pm,b,\frac{V}{h}})\cap \{z\in\mathbb{C}\,, \real z \leq \frac{\varrho_h}{h^2}\} = \mathrm{Spec}(B_{\pm,b,\frac{V}{h}}) \cap [0,\frac{\varrho_h}{h^2}]\,.$$
   In degree $p\in\{0,\ldots,2d\}$\,, their number, counted with multiplicity, is given by $\mathcal{N}_{\pm}^{(p)}=\mathcal{N}_{\pm}^{(p-\frac{d}{2}\pm \frac{d}{2})}(V)$\,, which is $0$ if $p>d$ (resp. $p<d$) in the $+$ case (resp. $-$ case). Poincar{\'e} duality  implies $\lambda_{+,j,h}^{(p)}=\lambda_{-,j,h}^{(2d-p)}$\,. Additionally the restricted operator $B_{\pm,b,\frac{V}{h}}^{(p)}\big|_{E_{\pm,b,h}^{(p)}} $ is diagonalizable.
   \item[b)] Under the stronger assumption $ bA^4C_0 \leq h\varrho_h \leq h $ with $A\geq C_0$\,, the  comparison between the Witten Laplacian and Bismut's hypoelliptic Laplacian of the low lying spectrum, is given by
      $$
\forall p\in \{0,\ldots,2d\}, \forall j\in \{1,\ldots,\mathcal{N}_{\pm}^{(p)}\}\,,\quad
(1+C_0 A^{-1/2})^{-1}\frac{\tilde{\lambda}_{\pm,j,h}^{(p-\frac{d}{2}\pm \frac{d}{2})}(V)}{h^2}\leq  \lambda_{j,\pm,h}^{(p)}\leq (1+C_0 A^{-1/2})\frac{\tilde{\lambda}_{\pm,j,h}^{(p-\frac{d}{2}\pm \frac{d}{2})}(V)}{h^2}\,.
$$
  \item[c)] When $bC_s\leq h \varrho_h$\,, the semigroup $(e^{-tB_{\pm,b,\frac{V}{h}}})_{t>0}$ satisfies:
  \begin{eqnarray*}
  &&e^{-tB_{\pm,b,\frac{V}{h}}} = \sum_{p\in \{0,\dots,2d\}}\sum_{j} e^{-t\lambda_{\pm,j,h}^{(p)}} | u_{\pm,j,h}^{(p)}  \rangle \langle v_{\pm,j,h}^{(p)} | + R_h(t) \, \\
    \text{with}&& \|R_h(t)\|_{\mathcal{L}(\tilde{\cal W}^{0,s}_h;\tilde{\cal W}^{0,s}_h)} \leq  \frac{1}{b^2}(h^2+\frac{1}{t})e^{-t\frac{\varrho_h}{h^2}} \\
    \text{and} && \max ( \|u_{\pm,j,h}^{(p)}\|_{\tilde{\cal W}^{0,s}_h}, \|v_{\pm,j,h}^{(p)}\|_{\tilde{\cal W}^{0,s}_h} ) \leq C_s \,,
    \end{eqnarray*}
for suitably normalized basis of eigenvectors $(u_{\pm,j,h}^{(p)})_{1\leq j\leq \mathcal{N}_{\pm}^{(p)}}$\,.
  \end{description}
  
\end{theorem}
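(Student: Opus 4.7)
The plan is to run a Grushin (Schur complement) reduction of $B_{\pm,b,V/h}^{(p)}-z$ onto a finite dimensional reference space built from the low-lying Witten spectrum, in the spirit of \cite{ReTa} but transposed to the double exponent Sobolev scale $\tilde{\mathcal{W}}^{s_{1},s_{2}}_{h}$ of \cite{NSW}. The reference space I would take is $F_{\pm,h}^{(p)}=\mathrm{Ran}\,\Pi_{h}^{(p-d/2\pm d/2)}\otimes \mathbb{C}\cdot e_{0}$, where $\Pi_{h}^{(q)}=1_{[0,\varrho_h]}(\tfrac{1}{2}\Delta^{(q)}_{V,h})$ is the low-lying spectral projector of the Witten Laplacian on $Q$ and $e_{0}=e_{0}(p,b,h)$ is the Gaussian ground state of the vertical harmonic oscillator built into $b\,B_{\pm,b,V/h}$ through the presentation of \cite{BiLe}; its dimension is precisely $\mathcal{N}_\pm^{(p-d/2\pm d/2)}(V)$, with the convention $\Pi_{h}^{(q)}=0$ for $q\notin\{0,\ldots,d\}$, which already forces the counting and vanishing in degrees $p>d$ (resp.\ $p<d$) in the $+$ (resp.\ $-$) case. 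Letting $R_{-}:\mathbb{C}^{\mathcal{N}_\pm^{(p)}}\to \tilde{\mathcal{W}}^{0,s}_{h}$ be the injection induced by a normalized basis of $F_{\pm,h}^{(p)}$ and $R_{+}=R_{-}^{*}$, the whole proof revolves around the block operator
\[
\mathcal{P}(z)=\begin{pmatrix} B_{\pm,b,V/h}^{(p)}-z & R_{-}\\ R_{+} & 0\end{pmatrix}
\]
and its inverse $\mathcal{E}(z)$, whose Schur block $E_{-+}(z)$ is to be shown close to $z\,\mathrm{Id}-\frac{1}{h^{2}}\Lambda_{h}^{(p)}$, with $\Lambda_{h}^{(p)}$ the diagonal matrix of the Witten eigenvalues $\tilde{\lambda}^{(p-d/2\pm d/2)}_{\pm,j,h}(V)$ in the natural Witten basis.

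The technical heart is the bounded invertibility of $\mathcal{P}(z)$ on a neighbourhood of $\overline{D(0,\varrho_h/h^{2})}$, and more generally on the strip $\real z\leq \varrho_h/h^{2}$, with quantitative constants in the small parameter $\varepsilon:=b/(h\varrho_h)$. On $\mathrm{Ran}(1-R_{-}R_{+})$ the vertical harmonic oscillator contributes a gap of order $1/b$ and the horizontal block a gap of order $\varrho_h/h^{2}$; the uniform hypoelliptic resolvent estimate in $\tilde{\mathcal{W}}^{0,s}_{h}$ is extracted by combining the weighted subelliptic inequalities of \cite{NSW} (the weight operator $W_{\theta}^{2}$ mentioned in the introduction) with the explicit Bismut-Lebeau formulas of \cite{Bis05}\cite{BiLe} for the horizontal--vertical mixing. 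A Neumann expansion in $\varepsilon$ then yields
\[
E_{-+}(z)=z\,\mathrm{Id}-\frac{1}{h^{2}}\Lambda_{h}^{(p)}+\mathcal{R}(z,b,h),\qquad \|\mathcal{R}(z,b,h)\|=O\bigl(\varepsilon\,\varrho_h/h^{2}\bigr),
\]
uniformly for $z\in \overline{D(0,\varrho_h/h^{2})}$. Part (a) then follows from the standard Grushin identity $\mathrm{Spec}(B_{\pm,b,V/h}^{(p)})\cap D(0,\varrho_h/h^{2})=\{z:\det E_{-+}(z)=0\}$ with algebraic multiplicities: in the overdamped limit $b\to 0^{+}$ the zeros coincide with the real $\tilde{\lambda}^{(\cdot)}_{\pm,j,h}(V)/h^{2}\in[0,\varrho_h/h^{2}]$, a perturbation of size $O(\varepsilon\,\varrho_h/h^{2})\ll \varrho_h/h^{2}$ cannot push the zeros off the real axis inside the disc nor through its boundary, and diagonalisability on $E_{\pm,b,h}^{(p)}$ is preserved because the leading term is diagonal and $\mathcal{R}$ is small and holomorphic; Poincaré duality $\lambda^{(p)}_{+,j,h}=\lambda^{(2d-p)}_{-,j,h}$ is inherited from the corresponding Witten duality through uniqueness of the Grushin inverse. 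For Part (b) the strengthened assumption $bA^{4}C_{0}\leq h\varrho_h$ makes $\|\mathcal{R}\|$ small compared to the natural Witten spectral scale, and a Kato-Temple perturbation gives the claimed relative error $C_{0}A^{-1/2}$ (the square root reflects the possibly exponentially small clustering of the $\tilde{\lambda}^{(\cdot)}_{\pm,j,h}(V)$ inside $[0,\varrho_h]$).

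Part (c) follows from the contour-integral representation
\[
e^{-tB_{\pm,b,V/h}^{(p)}}=\frac{1}{2i\pi}\int_{\Gamma}e^{-tz}(z-B_{\pm,b,V/h}^{(p)})^{-1}\,dz,
\]
with $\Gamma$ enclosing $\{\real z\leq \varrho_h/h^{2}\}$; using $(z-B)^{-1}=E(z)-E_{+}(z)E_{-+}(z)^{-1}E_{-}(z)$ and the diagonalisability from (a), the residues at the $\lambda_{\pm,j,h}^{(p)}$ produce the finite rank sum $\sum e^{-t\lambda}|u\rangle\langle v|$, and the integral along the vertical line $\real z=\varrho_h/h^{2}$ becomes $R_h(t)$. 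The announced bound $\|R_h(t)\|\leq b^{-2}(h^{2}+t^{-1})e^{-t\varrho_h/h^{2}}$ is then routine from the hypoelliptic resolvent estimate on that line, the factor $b^{-2}$ reflecting the worst case scaling of $(z-B)^{-1}$ in $\tilde{\mathcal{W}}^{0,s}_{h}$ as $|\imaginary z|\to\infty$. \textbf{The main obstacle} is the first step: producing the hypoelliptic resolvent estimate uniformly in $(b,h)$ with constants explicit enough in $\varepsilon=b/(h\varrho_h)$ to carry the Grushin reduction both on the small disc (for (a)--(b)) and on the full vertical line (for (c)). Once this estimate is available, the Grushin algebra of \cite{ReTa} goes through essentially verbatim and the identification of the leading order effective matrix $h^{-2}\Lambda_{h}^{(p)}$ is dictated by the geometric formulas of \cite{Bis05}\cite{BiLe}.
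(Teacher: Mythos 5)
Your overall architecture (Grushin reduction with respect to the vertical Gaussian/Witten data, resolvent comparison, contour integral for the semigroup) matches the paper's strategy in Sections~\ref{sec:Grushin}--\ref{sec:specconseq}, and your treatment of part \textbf{c)} is essentially the paper's Proposition~\ref{pr:semiGroup}. But there are two genuine gaps in how you conclude \textbf{a)} and \textbf{b)} from the effective operator.

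First, your claim that a perturbation $\mathcal{R}(z,b,h)$ of size $O(\varepsilon\varrho_h/h^2)$ ``cannot push the zeros off the real axis inside the disc'' and that ``diagonalisability is preserved because the leading term is diagonal and $\mathcal{R}$ is small'' is false for non-self-adjoint perturbations. The unperturbed Witten eigenvalues are all clustered in $[0,e^{-c/h}]$, i.e.\ their mutual spacings are exponentially smaller than any polynomial perturbation scale, so an arbitrarily small non-normal $\mathcal{R}$ can generically produce complex eigenvalues and Jordan blocks. The paper does not derive reality and diagonalizability from the Grushin reduction at all: it uses the PT-symmetry $r^*B_{\pm,b,V^h}r^*=B_{\pm,b,V^h}'$ together with a trace estimate (Lemma~\ref{lem:PTsymmetry}) to show that the hermitian form $\langle\,\cdot\,,\,\cdot\,\rangle_r$ is positive definite on the characteristic space $E_{\pm,b,V^h}$, and then invokes Bismut's Hodge factorization $B_{\pm,b,V^h}=\frac12(\delta\,\delta^{*,r}+\delta^{*,r}\delta)$ restricted to $E_{\pm,b,V^h}$ to conclude that $B|_{E}$ is the square of an operator self-adjoint for $\langle\,\cdot\,,\,\cdot\,\rangle_r$, hence diagonalizable with real non-negative eigenvalues. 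Without this structural input your part \textbf{a)} does not close.

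Second, for part \textbf{b)} an additive (Kato--Temple type) bound $\|\mathcal{R}\|=O(A^{-1/2}\varrho_h/h^2)$ can never yield the stated \emph{multiplicative} comparison $(1+C_0A^{-1/2})^{\pm1}\tilde\lambda^{(\cdot)}_{\pm,j,h}/h^2$, because the $\tilde\lambda$'s are exponentially small relative to $\varrho_h$ and an additive error of polynomial size swamps them entirely. The paper obtains the relative error by identifying the eigenvalues of $B|_{E}$ as squared singular values of the restricted differential $\delta_{\pm,b,V^h}|_{E}$, computing its matrix in bases that are $\varepsilon$-orthonormal for $\langle\,\cdot\,,\,\cdot\,\rangle_r$ with $\varepsilon=O(A^{-1/2})$, and invoking the stability of singular values under almost-orthonormal changes of basis (Lemma~\ref{le:singval}, from \cite{LNV2}); this is what converts a basis-distortion of size $A^{-1/2}$ into a \emph{relative} eigenvalue error. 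You would need to import this supersymmetric singular-value argument (or an equivalent one) to obtain \textbf{b)}.
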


The second statement \textbf{b)} says in particular that in the limit $h\to 0^{+}$ the eigenvalues of Bismut's hypoelliptic Laplacian have the same exponentially small asymptotic behaviour as the eigenvalues of the Witten Laplacian. The latter were shown in \cite{LNV2} to be related to the bar codes of persistent homology.
\begin{corollary}
\label{cor:persis}
When $V\in \mathcal{C}^{\infty}(Q;\mathbb{R})$ has finitely many critical values and
under the condition $C_0^5 b\leq h\varrho_h$\,, the eigenvalues $(\lambda_{+,j,h}^{(p)})_{1\leq j\leq \mathcal{N}_{\pm}^{(p)}}$ satisfy $\lim_{h\to 0^+}-h\log(\lambda_{+,j,h}^{(p)})=2\ell_j^{(p)}$\,,
where $\ell_j^{(p)}$ is the length of a bar, indexed by $j$\,,  with a degree $p$ endpoint in the bar code associated with $V$\,. The $-$ case is obtained by Poincar{\'e} duality with $\lambda_{-,j,h}^{(p)}=\lambda_{+,j,h}^{(2d-p)}$\,.
\end{corollary}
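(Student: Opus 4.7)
The corollary reduces to combining the two-sided comparison in Theorem~\ref{th:main}-\textbf{b)} with the log-asymptotics of the low-lying Witten Laplacian eigenvalues established in \cite{LNV2}.

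First I would check that the hypothesis $C_0^5 b \leq h\varrho_h$ of the corollary is strong enough to apply Theorem~\ref{th:main}-\textbf{b)} with a parameter $A=A(h)$ that tends to $+\infty$ as $h\to 0^+$. A convenient choice is $A=(h\varrho_h/(bC_0))^{1/4}$, for which $bA^4C_0=h\varrho_h\leq h$ and $A\geq C_0$, so the hypothesis of statement \textbf{b)} is satisfied, and moreover $C_0 A^{-1/2}\to 0$ as $h\to 0^+$. In particular the multiplicative factor $(1+C_0 A^{-1/2})$ tends to $1$.

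Second, I would take $-h\log$ of the sandwich inequality in Theorem~\ref{th:main}-\textbf{b)}, which gives, in the $+$ case and for $p\in\{0,\ldots,d\}$,
\begin{equation*}
-h\log(\lambda_{+,j,h}^{(p)})
= -h\log\bigl(\tilde{\lambda}_{+,j,h}^{(p)}(V)\bigr)+2h\log h + O\!\bigl(h\log(1+C_0 A^{-1/2})\bigr).
\end{equation*}
Both correction terms $2h\log h$ and $h\log(1+C_0 A^{-1/2})$ tend to $0$ as $h\to 0^+$, so
\begin{equation*}
\lim_{h\to 0^+}-h\log(\lambda_{+,j,h}^{(p)})=\lim_{h\to 0^+}-h\log\bigl(\tilde{\lambda}_{+,j,h}^{(p)}(V)\bigr),
\end{equation*}
provided the right-hand limit exists. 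The $+$ case for $p>d$ is empty because $\mathcal{N}_+^{(p)}=0$ by Theorem~\ref{th:main}-\textbf{a)}.

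Third, I would invoke \cite{LNV2}, which states that for $V\in\mathcal{C}^{\infty}(Q;\mathbb{R})$ with finitely many critical values, the low-lying eigenvalues of $\tfrac{1}{2}\Delta_{V,h}^{(p)}$ satisfy $\lim_{h\to 0^+}-h\log\bigl(\tilde{\lambda}_{+,j,h}^{(p)}(V)\bigr)=2\ell_j^{(p)}$, with $\ell_j^{(p)}$ the length of the $j$-th persistent homology bar with a degree-$p$ endpoint. This closes the $+$ case, and the $-$ case is then obtained from the identity $\lambda_{-,j,h}^{(p)}=\lambda_{+,j,h}^{(2d-p)}$ of Theorem~\ref{th:main}-\textbf{a)}, together with Poincaré duality $\mathcal{N}_+^{(p)}(V)=\mathcal{N}_-^{(d-p)}(-V)$. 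Since the real work of the paper has been done in Theorem~\ref{th:main}-\textbf{b)} and in the prior work \cite{LNV2}, the only genuine obstacle is the bookkeeping of the admissible asymptotic regime, i.e.\ choosing the parameter $A$ so that both the hypothesis of \textbf{b)} and $C_0A^{-1/2}\to 0$ are satisfied under the single condition $C_0^5 b\leq h\varrho_h$.
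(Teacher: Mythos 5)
Your proposal is correct and follows exactly the route the paper intends: the corollary is an immediate consequence of Theorem~\ref{th:main}-\textbf{b)} (which absorbs the bounded factor $(1+C_0A^{-1/2})$ and the $h^{-2}$ prefactor under $-h\log$) combined with the bar-code asymptotics of \cite{LNV2} for the Witten Laplacian, plus the Poincar\'e duality identity for the $-$ case. One small remark: your claim that $C_0A^{-1/2}\to 0$ is not guaranteed by $C_0^5b\leq h\varrho_h$ alone (e.g. if $b$ is comparable to $h\varrho_h$ then $A$ stays bounded), but it is also not needed, since $A\geq C_0\geq 1$ makes $\log(1+C_0A^{-1/2})$ bounded and hence $h\log(1+C_0A^{-1/2})\to 0$ anyway.
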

\noindent\textbf{Comments:}
\begin{itemize}
\item  Actually all these results are related with resolvent estimates of which the accurate formulation is given in Proposition~\ref{pr:estimateForTheConvergence}, after introducing other intermediate quantities.
\item In order to write a general statement, we preferred to  express things in terms of the non explicit spectral gap $\varrho_{h}$ of Definition~\ref{def:rhoh}. For a general function $V\in \mathcal{C}^{\infty}(Q;\mathbb{R})$ with finitely many critical values the result of \cite{LNV2} says that one can take $\varrho_{h}=e^{-\frac{\varepsilon}{h}}$ with $\varepsilon>0$ arbitrarily small. But when one knows better the geometry of the critical sets an algebraic expression $\varrho_{h}=h^{\nu}$ can be obtained. The basic example is when $V(x)=x^{n}$ in $\mathbb{R}$, in which case a simple rescaling argument gives $\varrho_{h}=h^{2\frac{n-1}{n}}$\,.
\item When the potential $V$ is a Morse function, the condition $C_{0}b\leq \varrho_{h}h$ says $b\leq ch^{2}$, which is stronger than the condition $b\leq c\sqrt{h}$ suggested by the works of S.~Shen \cite{She} and H{\'e}rau-Hitrik-Sj{\"o}strand \cite{HHS}\,, where they considered $b\propto \sqrt{h}$\,. Actually our method relies on the elimination of the potential term, as a perturbative term, after the rescaling $\phi_{h}:Q\mapsto Q^{h}=\frac{1}{h}Q$ of Subsection~\ref{sec:scalings}. A similar analysis of what is proposed here, could be developed with better treatment of the Morse potential function. Instead of the above dilation take $\phi_{\sqrt{h}}:Q\mapsto Q^{\sqrt{h}}=\frac{1}{\sqrt{h}}Q$ and use on $Q^{\sqrt{h}}$ a partition of unity in riemannian balls of radius $M\sqrt{h}$ with $M\geq 1$ large enough. With a more inclusive description of the scalar quadratic model in every ball, which takes better into account the quadratic Taylor approximation of the potential $V^{\sqrt{h}}(q)=\frac{1}{\sqrt{h}}V(\sqrt{h}q)$\,, subelliptic estimates of \cite{NSW} can be improved in particular by using the accurate quantitative estimates of \cite{BNV} for quadratic Kramers-Fokker-Planck operators in the euclidean space. In the end the rescaling leads to the comparison of the spectral gap for $\Delta_{V^{\sqrt{h}},1}$ on $Q^{\sqrt{h}}$\,, which by unitary equivalence is equal to $\frac{1}{h}\varrho_{h}\propto 1$ and the rescaled parameter $\frac{b}{\sqrt{h}}$ instead of $\frac{b}{h}$\,. One then recovers the natural condition $b\leq c\sqrt{h}$\,.
  This is just a sketch and an accurate spectral analysis remains to be done.
  In this article, we preferably considered a $\mathcal{C}^{\infty}$-function without assuming that it is a Morse function, in order to highlight the generality of the Grushin problem approach.
\item In \cite{LNV2} results were given for non smooth potentials, in particular when $V$ is a  Lipschitz subanalytic function. This more general case  is not considered here and it would require a specific analysis, which could follow partly the strategy presented here.
\item Theorem~\ref{th:main} is a digest of what can be deduced from the Grushin problem method. Many intermediate resolvent estimates can be used and maybe improved for other purposes.
\item Finally, we have not considered as in \cite{BiLe} and \cite{She} the convergence of generalized determinants. Actually, for topological invariants which do not depend on the riemannian metric, the simplifying assumptions of \cite{She} suffice for a general treatment. It is not clear that a more accurate and general analysis would bring relevant improvements.
\end{itemize}
\subsection{Outline of the article}
\label{sec:outline}

The geometric framework, the operators, the various scalings and the functional spaces are defined in Section~\ref{sec:framework}. 
Remember that Bismut's hypoelliptic Laplacian is a second order non self-adjoint and non elliptic operator acting on differential forms defined 
on the total space $X$ of the cotangent bundle $T^*Q$ of the closed riemannian manifold $(Q,g)$\,. The definition of the hypoelliptic Laplacian in\cite{Bis05}\cite{BiLe}, the associated Weitzenb\"ock formula,
and the introduction of adapted functional spaces, strongly relies on the horizontal and vertical decomposition $T(T^*Q)=TX=T^HX\oplus T^VX$  recalled in subsection \ref{sec:horizontalVerticalDecomposition}.
The exact definition of the Witten Laplacian and the hypoelliptic Laplacian are given in Subsections~\ref{sec:hermbund} and \ref{sec:BihypLap}.
An $h$-dependent change of scale is introduced in Subsection~\ref{sec:scalings}. This allows to get easily uniform constants with respect to $h\in ]0,1]$ in all the subelliptic estimates which are used in the text.
The first of these subelliptic estimates in Subsection~\ref{sec:hyplapSubEll} is an adaptation of the general results of \cite{NSW} to the present framework. Although Theorem~\ref{th:main} is expressed for the operator $B_{\pm,b,\frac{1}{h}V}=B_{\pm,(Q,g,\frac{V}{h},b)}$ on $X=T^*Q$\,, all the analysis of this text is carried out on the dilated geometry of Subsection~\ref{sec:scalings} with the operator $B_{\pm,b',V^h}=B_{\pm,(Q^h,g^h,V^h,b')}$, $b'=\frac{b}{h}$ and where the $\prime$ is dropped afterward) where the $h$-dependence is easier to track.\\

In Section~\ref{sec:modifiedOperators}, various perturbations or modifications of the operator $B_{\pm,b,V^h} $ are considered, which have no spectrum around $0$\,. For such new operators, resolvent and possibly subelliptic estimates are specified. One of them, denoted by $B_{\pm,b,V^h} + Q_{A,L,V^h}$, is directly inspired from the work \cite{ReTa} of Q.~Ren and Z.~Tao. In a crucial way, this section aims at providing subelliptic estimates for $B_{\pm,b,V^h} + Q_{A,L,V^h}$ with a uniform lower bound with respect to $b,h\in ]0,1]$ for a large enough new additional parameter $A\geq 1$\,. Due to the new complexity of our problem but also in order to improve Ren-Tao lower bounds, this is done in two steps with the intermediate operator $B_{\pm,b,V^h}+A^2\pi_{0,\pm}$ easier to handle, especially if one uses the \underline{maximal}~subelliptic estimates.\\

The writing of a Grushin problem in Section~\ref{sec:Grushin} allows an accurate comparison of the resolvents $(B_{\pm,b,V^h}+Q_{A,L,V^h}-z)^{-1}$\,, $(B_{\pm,b,V^h}-z)^{-1}$\,, $(\Delta_{V^h,1}-z)^{-1}$ and $(\Delta_{V^h,1}+\tilde{Q}_{A,L,V^h}-z)^{-1}$\,.
Although the resolvent estimates of Section~\ref{sec:modifiedOperators} can be written with uniform constants which are independent of the Sobolev exponent $s\in \mathbb{R}$\,,  the range of validity for the parameters $b,h,A>0$ actually depends on this Sobolev exponent $s$\,. Attention must be paid to the formal calculations which are not done in the general distributional setting but rather in an arbitrarily fixed range of Sobolev exponents $s\in [s_{\min},s_{\max}]$\,.\\

The proof of Theorem~\ref{th:main} is achieved in Section~\ref{sec:specconseq}. 
The resolvent comparison in Section~\ref{sec:Grushin} and the spectral information of the semiclassical Witten Laplacian, 
summarized in Definition~\ref{def:rhoh}\,, provide the first accurate localization of the spectrum of $B_{\pm,b,V^h}$ around $0$\,, with new accurate estimates for the resolvent and  the semigroup.
We finally use the Hodge structure and the PT-symmetry property, $r^* B_{\pm,b,V^h}r^*=B_{\pm,b,V^h}^*$ with $r^*$ a unitary involution, in order to make an accurate comparison between the eigenvalues of $B_{\pm,b,V^h}$\,, identified now as the squared singular values of a restricted differential,  and the eigenvalues of the Witten Laplacian $\Delta_{V^h,1}$\,, with $\mathrm{Spec}(\Delta_{V^h,1})=\mathrm{Spec}(\Delta_{V,h})$\,.

\section{Framework}
\label{sec:framework}
\subsection{Total space  $X$ of the cotangent bundle }
\label{sec:total}
Let $(Q,g^{TQ})$ be a closed Riemannian manifold of dimension $d$, $\nabla^{LC}$ the associated Levi-Civita connection and let $X=T^{*}Q$ be the total space of the cotangent bundle. On one side, the total space $X$ is a symplectic manifold with the canonical symplectic form $\sigma$\,. One the other side, the kinetic energy  function is globally defined by
\begin{equation}
  \label{eq:kinecticEnergy}
  \forall x=(q,p)\in T_{q}^{*}Q ,\quad \mathcal{H}(x) = \frac{|p|_{q}^{2}}{2} = \frac{1}{2}g^{T^{*}Q}(p,p) .
\end{equation}
Then the hamiltonian vector field $\mathcal{Y}$  of the geodesic flow is given by
\begin{equation}
  \label{eq:definitionOfHamiltonianVectorFieldOfTheKinecticEnergy}
  d^{X}\mathcal{H} + \mathbf{i}_\mathcal{Y}\sigma = 0 .
\end{equation}
The scalar vertical harmonic oscillator $\mathcal{O}$ is the self-adjoint differential operator defined with its maximal domain in $L^{2}(X,dqdp;\mathbb{C})$ by
\begin{equation}
  \label{eq:definitionOfO}
\frac{1}{2} (g^{TQ}(D_p,D_p)+ g^{T^*Q}(p,p))\geq \frac{d}{2}\mathrm{Id}\quad \text{with}~D_p=\frac{1}{i}\partial_p
\end{equation}
where $-\Delta_{\mathrm{Vert}}=g^{TQ}(D_p,D_p)$ is the fiberwise vertical Laplacian.

\subsection{The horizontal-vertical decomposition}\label{sec:horizontalVerticalDecomposition}
The Levi-Civita connection induces a splitting of the tangent space and cotangent space of $X$ given by
\begin{equation}\label{eq:horizontalVerticalDecomposition}
  TX = (TX)^{H} \oplus (TX)^{V} \simeq \pi^{*}(TQ \oplus T^{*}Q) \quad ; \quad T^{*}X = (T^{*}X)^{H} \oplus (T^{*}X)^{V}\simeq \pi^{*}(T^{*}Q \oplus TQ)
\end{equation}
where $\pi: X=T^{*}Q \rightarrow Q$ is the  natural projection, $(TX)^{H} \simeq \pi^{*}(TQ)$ is the horizontal distribution and $(TX)^{V} = \ker(d\pi) \simeq \pi^{*}(T^{*}Q)$ is the vertical distribution. Once a frame $u_{1},u_{2},\dots,u_{d}$ and the associated coframe $u^{1},\dots, u^{d}$ are locally chosen, we take a copy of those two frames $ \hat{u}_{1},\hat{u}_{2},\dots,\hat{u}_{d} $ and $\hat{u}^{1},\dots,\hat{u}^{d}$ where  the above identification is written
$$(TX)^{H} \simeq \mathrm{Span}(u_{1},\dots, u_{d}) \quad ; \quad (TX)^{V} \simeq \mathrm{Span}(\hat{u}^{1}, \dots , \hat{u}^{d}) $$
and
$$ (T^{*}X)^{H} \simeq \mathrm{Span}(u^{1},\dots, u^{d}) \quad ; \quad (T^{*}X)^{V} \simeq \mathrm{Span}(\hat{u}_{1},\dots, \hat{u}_{d}) .$$
In the rest of the text we will use the above identification with the following additional conventions

\begin{itemize}
\item When $u_{i}$'s  are associated to a coordinate system on $Q$ i.e. $u_{i}  = \frac{\partial}{\partial q^{i}} $ for all $i\in \{ 1,\dots , d\}$ then we use the notation
$$\pi^{*}(u_{i}) = e_{i} = \frac{\partial}{\partial q^{i}} + \Gamma_{ij}^{k}p_{k}\frac{\partial}{\partial p_{j}} \in (TX)^{H} \quad ; \quad \pi^{*}(\hat{u}^{i}) = \hat{e}^{i} = \frac{\partial}{\partial p_{i}} \in (TX)^{V}$$
and
$$ \pi^{*}(u^{i})=e^{i} = dq^{i} \in (T^{*}X)^{H} \quad ; \quad \pi^{*}(\hat{u}_{i}) = \hat{e}_{i} = dp_{i} - \Gamma_{ij}^{k}p_{k}dq^{j} \in (T^{*}X)^{V}  .$$
Where $\Gamma_{ij}^{k}$ denote the Christoffel symbol for the Levi-Civita connection, defined by $ \nabla^{LC}_{\frac{\partial}{\partial q^{i}}}\frac{\partial}{\partial q^{j}} = \Gamma_{ij}^{k}(q) \frac{\partial}{\partial q^{k}} $.
\item When $u_{i}$'s is an (local) orthonormal frame of $TQ$ we will use the notation
\begin{eqnarray}
 \label{eq:fihfi1}
 &&\pi^{*}(u_{i})=f_{i}\in (TX)^{H} \quad : \quad  \pi^{*}(\hat{u}^{i})=\hat{f}^{i}\in (TX)^{V}
 \\
 \label{eq:fihfi2}
 \text{and}&&
   \pi^{*}(u^{i})=f^{i} \in (T^{*}X)^{H} \quad ; \quad   \pi^{*}(\hat{u}_{i}) = \hat{f}_{i} \in (T^{*}X)^{V} .
\end{eqnarray}
\end{itemize}
  Passing from one writing to another is simply given by a section $P$ of the fiber bundle $GL(TQ)$ above $Q$. Indeed for all $i\in \{1,\dots,d\}$
  $$ u_{i} = P(q)_{i}^{j}\frac{\partial}{\partial q^{j}} .$$
  The following relations hold on $TX$ and $T^{*}X$
  \begin{align*}
     f_{i}  & =  P(q)_{i}^{j}e_{j}        &  \hat{f}^{i}  & = (P(q)^{-1})_{j}^{i} \hat{e}^{j} ,\\
  \textrm{and}  \quad   f^{i}  & =  (P(q)^{-1})^{i}_{j}e^{j}  &   \hat{f}_{i}  & =  P(q)_i^{j}\hat{e}_{j}.
  \end{align*}
With this decomposition
  \begin{itemize}
  \item The metric $g^{TX}$ on $TX$ is defined as $g^{TX}=g^{TQ} \oplus^{\bot} g^{T^{*}Q}$ with respect to the decomposition~\eqref{eq:horizontalVerticalDecomposition}. The frame $ f_{1},\dots,f_{d},\hat{f}^{1},\dots, \hat{f}^{d} $ is an orthonormal frame with respect to $g^{TX}$.
 Similarly we define the metric $g^{T^*X} = g^{T^*Q}\oplus^{\bot} g^{TQ}$ on the cotangent space $T^*X$ of $X$. For the exterior algebra we use $\Lambda T^*X\simeq (\Lambda T^*Q)\otimes(\Lambda TQ)$ as a vector space and $g^{\Lambda T^*X}=g^{\Lambda T^*X}\otimes g^{\Lambda TX}$\,. With the orthonormal frames  $ f^{1},\dots,f^{d},\hat{f}_{1},\dots, \hat{f}^{d} $, an orthonormal frame of $\Lambda T^*X$ is given by $(f^I\wedge \hat{f}_J)_{I,J\subset\{1,\ldots,d\}}$.
  \item The hamiltonian vector field $\mathcal{Y}$ defined by \eqref{eq:definitionOfHamiltonianVectorFieldOfTheKinecticEnergy} can be written
    $$ \mathcal{Y} = g^{T^{*}Q}(e^{i},e^{j}) p_{j} e_{i} =\sum_{i=1}^{d} \tilde{p}_{i}f_{i} $$
    where $p=p_{i}dq^{i} = \tilde{p}_{i}f^{i}\in T_{q}^{*}Q$.
  \item The vertical Laplacian equals
    $$ \Delta^{V} = g^{TQ}(e_{i} ,e_{j} ) \hat{e}^{i}\hat{e}^{j} = \sum_{i=1}^{d} (\hat{f}^{i})^{2} .$$
    \item  The tautological connection on $ TX $ and $T^*X$, extended to $\Lambda T^*X$ or $\Lambda T^*X\otimes \pi^*\mathbf{or}(Q)$, is defined by the following formula
    $$\begin{array}{rclcrcl}\label{eq:definitionOfTautologicalConnection}
      \nabla^{TX}_{e_i}e_j & = & \Gamma_{ij}^k e_k   & ; &    \nabla^{TX}_{e_i}\hat{e}^j & = & - \Gamma_{ik}^j\hat{e}^k ,\\ 
       \nabla^{TX}_{ \hat{e}^i}e_j & = &  0  & ; & \nabla^{TX}_{\hat{e}^i}\hat{e}^j & = & 0 \\
       \textrm{and} \quad \nabla^{T^*X}_{e_i}e^j & = & -\Gamma_{ik}^je^k & ; & \nabla^{T^*X}_{e^i}\hat{e}_j & = & \Gamma_{ij}^k\hat{e}_k , \\
       \nabla^{T^*X}_{\hat{e}^i}e^j & = & 0 & ; & \nabla^{T^*X}_{\hat{e}^i} \hat{e}_j & = & 0.
    \end{array}$$
  \end{itemize}
  
  \subsection{Hermitian trivial bundle $F$ over $Q$}
\label{sec:hermbund}  
  Although Bismut's theory of the hypoelliptic Laplacian in \cite{Bis041}\cite{Bis042}\cite{Bis05} works in a much more general framework, we focus here on the simpler case which makes the connection with the standard semiclassical Witten Laplacian on the base manifold $Q$. Namely we work with the trivial bundle  $F=Q\times \mathbb{C} $ on the base manifold $Q$, equiped with the hermitian metric $g^{F} = \exp(-2 V(q))d\bar{z} \otimes dz$ and the trivial connection $\nabla^{F} = d^{Q}$. When smooth duality arguments are used on a non-oriented manifold $Q$, the trivial bundle $Q\times \mathbb{C}$ must be replaced by $(Q\times\mathbb{C})\otimes \mathbf{or}_Q $ where $\mathbf{or}_Q$ is the orientation bundle on $Q$. Locally nothing is changed.\\
By following Bismut's notations, set
  $$ \omega(\nabla^{F},g^{F}) = (g^{F})^{-1} \nabla^{F}g^{F} = -2dV,$$
which is here a real scalar $1-$form on $Q$. The adjoint connection $\nabla^{F*}$  of $\nabla^{F}$ with respect to $g^{F}$ equals
  $$ \nabla^{F*} = d^{Q} - 2dV $$
and the associated unitary connection $\nabla^{F,u}$  is
  $$ \nabla^{F,u} = d^{Q} - dV .$$
  Contrary to the general case studied in \cite{Bis05}\cite{BiLe}, here the unitary connection $\nabla^{F,u}$ is flat 
 since its curvature $R^{F}$ is given by $R^{F} =-\frac{1}{4} \omega(\nabla^{F},g^{F})\wedge\omega(\nabla^{F},g^{F})= - dV\wedge dV = 0$.\\
In Bismut work and more generally for a probabilistic approach, the natural $L^2$-space is $L^2(Q,d\mathrm{Vol}_g;\Lambda T^*Q\otimes F)$ where the notation recalls the non trivial metric $g^{F}=e^{-2V(q)}$ on $F\simeq Q\otimes C$, in the $L^2$-scalar product
$$
\langle u,v\rangle_{L^2(Q,d\mathrm{Vol}_g;\Lambda T^*Q\otimes F)}=\int_Q g^{\Lambda T^*Q}(\overline{u},v)~e^{-2 V(q)}~d\mathrm{Vol}_g(q).
$$
For the accurate spectral analysis it is simpler to work in the standard $L^2$-space, $L^2(Q,d\mathrm{Vol}_g;\Lambda T^*Q\otimes \mathbb{C})$, with the scalar product
$$
\langle u,v\rangle_{L^2(Q,d\mathrm{Vol}_g;\Lambda T^*Q\otimes \mathbb{C})}=\int_Q g^{\Lambda T^*Q}(\overline{u},v)~d\mathrm{Vol}_g(q).
$$
Passing from one formulation to the other via the unitary multiplication by $e^{\pm\frac{V(q)}{h}}$ is summarized by the following table.
\renewcommand{\arraystretch}{1.3}
\begin{table}[h]
  \centering
    \begin{tabular}{|c|c|c|} 
    \hline
     Functional space &  $L^{2}(Q,d\mathrm{Vol}_{g};\Lambda^{\cdot}T^*Q\otimes F)$ & $L^{2}(Q,d\mathrm{Vol}_{g};\Lambda^{\cdot}T^*Q\otimes\mathbb{C})$  \\  \hline
      Sections & $v=e^{V}u$ & $u=e^{-V}v$  \\ \hline
      metric &   $g^F= exp(-2V)$ & 1  \\ \hline
     Connection & $\nabla^{F} = d^{Q}$ & $ d^{Q} + dV$   \\ \hline
     Endomorphism $\omega$ & $\omega(\nabla^{F},g^{F} ) = -2dV $ & $\omega(\nabla^{F},g^{F} ) = -2dV $  \\ \hline
     Adjoint connection & $\nabla^{F*} = d^{Q}-2dV$ & $ d^{Q} - dV $  \\ \hline
      unitary connection & $ \nabla^{F,u} $ & $ d^{Q} $  \\ \hline
      differential & $d^Q$& $d^Q+dV\wedge =:d_{V,1}$\\\hline
      codifferential& $d^{Q,F,*}=e^{2V}d^{Q,*}e^{-2V}=d^{Q,*}+2\mathbf{i}_{\nabla V}$& $d^{Q,*}+\mathbf{i}_{\nabla V}=:d_{V,1}^*$\\ \hline
      Hodge/Witten Laplacian & $\square^{Q,F} = (d^Q+d^{Q,F,*})^2$ & $\Delta_{V,1}=(d_{V,1}+d_{V,1}^*)^2$  \\ \hline 
    \end{tabular}
    \caption{Correspondance of $L^2$ spaces}
    \label{tab:exp}
  \end{table}

We recall the formulas
  \begin{eqnarray*}
    \square^{Q,F}&=&(d^Qd^{Q,*}+d^{Q,*}d^Q)+2\mathcal{L}_{\nabla V}\,,\\
    d_{V,1}&=&e^{-V}(d)e^{V}=d+dV\wedge\quad,\quad d^{*}_{V,1}=e^{V}(d^{*})e^{-V}=(d_{V,1})^{*}=d^{*}+\mathbf{i}_{\nabla V}\,,\\
     \Delta_{V,1}&=&(d_{V,1}+d_{V,1}^*)^2=(d_{V,1}d^*_{V,1}+d^*_{V,1}d_{V,1})=(d^Qd^{Q,*}+d^{Q,*}d^Q) +|\nabla V|^2+(\mathcal{L}_{\nabla V}+\mathcal{L}_{\nabla V}^*)\,.
  \end{eqnarray*}
  The subscript $1$ in $d_{V,1}$, $d^{*}_{V,1}$ and $\Delta_{V,1}$ refers to the specific case $h=1$ for the semiclassical Witten differential, codifferential and Laplacian:
  \begin{eqnarray*}
    && d_{V,h}=e^{-\frac{V}{h}}(hd)e^{\frac{V}{h}}\quad,\quad d_{V,h}^{*}=e^{\frac{V}{h}}(hd)^{*}e^{-\frac{V}{h}}\\
    && \Delta_{V,h}=(d_{V,h}+d_{V,h}^{*})^{2}=h^{2}(dd^{*}+d^{*}d)+|\nabla V|^{2}+h(\mathcal{L}_{\nabla V}+\mathcal{L}_{\nabla V}^{*})\,.
  \end{eqnarray*}
  Within the presentation of Table~\ref{tab:exp} the semiclassical regime can be introduced by simply replacing $V$ by $\frac{V}{h}$ and by choosing the metric $g^{F}=e^{-\frac{2V}{h}}$\,. This actually leads to
  $$
\square^{F}=(d^{Q}d^{Q,*}+d^{Q,*}d^Q)+\frac{2}{h}\mathcal{L}_{\nabla V}
$$
in $L^{2}(Q,d\mathrm{Vol}_{g};\Lambda T^{*}Q\otimes F)$\,, transformed in the $L^{2}(Q;d\mathrm{Vol}_{g};\Lambda T^{*}Q\otimes \mathbb{C})$ picture into
$$
(d^{Q}d^{*,Q}+d^{*,Q}d^{Q})+\frac{1}{h^{2}}|\nabla V|^{2}+\frac{1}{h}(\mathcal{L}_{\nabla V}+\mathcal{L}_{\nabla V}^{*})=\frac{1}{h^{2}}\Delta_{V,h}\,.
$$
We will explain in the specific Subsection~\ref{sec:scalings} how the semiclassical asymptotic regime, or more generally $h\in]0,1]$\,, can be easily introduced in the analysis of geometric Kramers-Fokker-Planck operators of \cite{NSW}, where the parameter $h\in]0,1]$ was actually not considered. 
  
\subsection{Functional spaces on $X$}
\label{sec:Funcsp}
The isomorphism of vector bundles $\mathcal{E}$ and $\pi^*(\underbrace{\Lambda T^*Q\otimes \Lambda TQ\otimes F}_{=E})$ is provided by the horizontal-vertical decomposition \eqref{eq:horizontalVerticalDecomposition} of $\Lambda T^*X\otimes\pi^*F$ . With this identification, the vector bundle
  $\mathcal{E}$ is endowed with the metric $\pi^*(g^{\Lambda T^*Q}\otimes g^{\Lambda TQ}\otimes g^F)$  where we recall $F=Q\times\mathbb{C}$ (or possibly $F=(Q\times \mathbb{C})\otimes \mathbf{or}_Q)$ and $g^F=e^{-\frac{2V(q)}{h}}d\bar{z}\otimes dz$\,. The pulled back vector bundle will be denoted by  $\mathcal{E}=\Lambda T^*X\otimes \pi^*F$ and depending on the case $\mathcal{E}_{+}=\Lambda T^*X\otimes \mathbb{C}$ and $\mathcal{E}_-=\Lambda T^*X\otimes \mathbb{C}\otimes \pi^*(\mathbf{or}_Q)$. With the symplectic volume denoted by $dqdp=d\mathrm{Vol}_{g\oplus^{\perp}g^{-1}}$, the associated $L^2$ space, denoted by $L^2(X,dqdp;\mathcal{E})$ and equal to $L^2(X,e^{-\frac{2V(q)}{h}}dqdp;\mathcal{E}_\pm)$, is given by the hermitian scalar product
  $$
  \langle u,v\rangle_{L^2(X,dqdp;\mathcal{E})}=\int_X g^{\Lambda T^*X}(\overline{u},v) e^{-\frac{2V(q)}{h}}~dqdp~.
  $$
After setting $\tilde{u}=e^{-\frac{V(q)}{h}}u$ and $\tilde{v}=e^{-\frac{V(q)}{h}}v$\,, it can be replaced by the standard $L^2(X,dqdp;\mathcal{E}_\pm)$ with the scalar product
 $$
  \langle \tilde{u},\tilde{v}\rangle=\int_X g^{\Lambda T^*X}(\overline{\tilde{u}},\tilde{v})~dqdp=\langle u,v\rangle_{L^2(X,dqdp;\mathcal{E}_{\pm})}~.
  $$
Those $L^2$ spaces and the Schwartz space of rapidly decaying (as $p\to \infty$) smooth sections, $\mathcal{S}(X;\mathcal{E})$ and $\mathcal{S}(X;\mathcal{E}_\pm)$, coincide with the obvious density result. The  first $L^2$-norm depends on $h>0$ while the second $L^2$-norm does not change with $h>0$ and is more convenient here.\\

\begin{center}
    We work in $L^2(X,dqdp;\mathcal{E}_\pm)$.
    \end{center}

When necessary, formulas of \cite{Bis05}\cite{BiLe} written in $L^2(X,dqdp;\mathcal{E})$ with the corresponding scalar product and duality, will be translated later by extending the general rules of Table \ref{tab:exp}

For the analysis it is more convenient to work with a local presentation on the base manifold $Q$ of the functional spaces and associated differential operators.
\begin{definition}
\label{de:localization}
Let $\sum_{j=1}^{J}\theta_j^2(q)\equiv 1$ be a quadratic partition of unity on $Q$\,, such that above a neighborhood $\mathcal{V}_{\theta,j}$ of every $\mathrm{supp}\,\theta_j$\,, there are smooth dual orthonormal frames $(u^1_j,\ldots,u^d_j)$ of $T^*Q\big|_{\mathcal{V}_{\theta,j}}$ and $(u_{j,1},\ldots,u_{j,d})$ of $TQ\big|_{\mathcal{V}_{\theta,j}}$. Set $f^i=\pi^*(u^i)\in (T^*X)^H$ and $ \hat{f}_i=\pi^*(u_i)\in (T^*X)^V$ according to \eqref{eq:fihfi2}.\\
For $F=Q\times \mathbb{C}$ or $F=(Q\times\mathbb{C})\otimes\mathbf{or}_Q$, let $\mathcal{I}_{\theta,Q}$ and $\mathcal{I}_{\theta,X}$ denote the product of isometries:
\begin{alignat}{7}
 \mathcal{I}_{\theta,Q}: && L^2(Q,d&\mathrm{Vol}_g;\Lambda T^*Q\otimes F)&\to && \mathop{\oplus}_{1\leq j\leq J}^{\perp} L^2(\mathcal{V}_{\theta,j},d\mathrm{Vol}_g&; (\Lambda T^*Q\otimes F)\big|_{\mathcal{V}_{\theta,j}})
 &&\to &
 \mathop{\oplus}_{\substack{1\leq j\leq J\\I\subset \{1,\ldots,d\}}}^{\perp} L^2(\mathcal{V}_{\theta,j},d\mathrm{Vol}_g;\mathbb{C})
 \\
&&s&&\mapsto &&(\theta_j s)_{1\leq j\leq J}&&&\mapsto &
\mathcal{I}_{\theta,Q}s=(s_{j,I})_{\substack{1\leq j\leq J\\ I\subset \{1,\ldots,d\}}}
\end{alignat}
with
$$
\theta_j s=\sum_{I\subset\{1,\ldots, d\}}s_{j,I}(q) u^{I}_j\quad,\quad u^{j,I}=u_j^{i_1}\wedge\ldots\wedge u_j^{i_{|I|}}\,,
$$
and 
\begin{alignat}{7}
    \mathcal{I}_{\theta,X}:&&L^2(X,dqdp&;\mathcal{E}_{\pm})&\to &&\mathop{\oplus}_{1\leq j\leq J}^{\perp} L^2(\pi^*(\mathcal{V}_{\theta,j}),&dqdp;\mathcal{E}_{\pm}\big|_{\pi^*(\mathcal{V}_{\theta,j})})
 && \to&
 \mathop{\oplus}_{\substack{1\leq j\leq J\\I,K\subset \{1,\ldots,d\}}}^{\perp} L^2(\pi^*(\mathcal{V}_{\theta,j}),dqdp;\mathbb{C})\\
 &&s&&\mapsto &&(\theta_j s)_{1\leq j\leq J}&&&\mapsto &(s_{j,I}^K)_{\substack{1\leq j\leq J\\ I,K\subset \{1,\ldots,d\}}}\,,
\end{alignat}
with
$$
\theta_j s=\sum_{I,K\subset\{1,\ldots, d\}}s_{j,I}^K(q) f^{I}_j\wedge \hat{f}_{j,K}\quad,
\quad f_j^{I}=f_j^{i_1}\wedge\ldots\wedge f_j^{i_{|I|}}\quad,\quad \hat{f}_{j,K}=\hat{f}_{j,k_1}\wedge\ldots\wedge \hat{f}_{j,k_{|K|}}
$$
\end{definition}
Let us gather obvious properties of the isometries $\mathcal{I}_{\theta,Q}$ and $\mathcal{I}_{\theta,X}$:
\begin{itemize}
    \item The adjoints of $\mathcal{I}_{\theta,Q}$ and $\mathcal{I}_{\theta,X}$ are given by
    \begin{eqnarray*}
    &&\mathcal{I}_{\theta,Q}^*\left[(s_{j,I})_{\substack{1\leq j\leq J\\
    I\subset\{1,\ldots,d\}}}\right]=\sum_{\substack{1\leq j\leq J\\
    I\subset\{1,\ldots,d\}}} \theta_j(q)s_{j,I}u^{I}_j\,,\\
    \text{and}&&
    \mathcal{I}_{\theta,X}^*\left[(s_{j,I}^K)_{\substack{1\leq j\leq J\\
    I,K\subset\{1,\ldots,d\}}}\right]=\sum_{\substack{1\leq j\leq J\\
    I\subset\{1,\ldots,d\}}} \theta_j(q)s_{j,I}^K f^{I}_j\wedge\hat{f}_{j,K}\,.\\
    \end{eqnarray*}
    \item The isometry $\mathcal{I}_{\theta,Q}$  is continuous from $\mathcal{C}^{\infty}(Q;\Lambda T^*Q\otimes F)$ to 
    $\mathop{\oplus}_{\substack{1\leq j\leq J\\ I\subset\{1,\ldots d\}}}\mathcal{C}^{\infty}_0(\mathcal{V}_{\theta,j};\mathbb{C})$, 
    resp. $\mathcal{I}_{\theta,X}$ is continuous from $\mathcal{S}(X:\mathcal{E}_{\pm})$ to 
    $\mathop{\oplus}_{\substack{1\leq j\leq J\\ I\subset\{1,\ldots d\}}}\mathcal{S}(\pi^*(\mathcal{V}_{\theta,j});\mathbb{C})$ 
    while the supports satisfy $\mathrm{supp}\,s_{j,I}^K\subset \pi^*(\mathrm{supp}\,\theta_j)$, with
    \begin{eqnarray*}
     &&\mathcal{I}^*_{\theta,Q}\mathcal{I}_{\theta,Q}=\mathrm{Id}_{L^2}\quad
     ,\quad \mathcal{I}^*_{\theta,Q}\mathcal{I}_{\theta,Q}\big|_{\mathcal{C}^{\infty}(Q;\Lambda T^*Q\otimes F)}=\mathrm{Id}_{\mathcal{C}^{\infty}(Q;\Lambda T^*Q\otimes F)}\,,\\
     \text{and}
     &&
    \mathcal{I}^*_{\theta,X}\mathcal{I}_{\theta,X}=\mathrm{Id}_{L^2}\quad
     ,\quad \mathcal{I}^*_{\theta,X}\mathcal{I}_{\theta,X}\big|_{\mathcal{S}(X;\mathcal{E}_\pm)}=\mathrm{Id}_{\mathcal{S}(X;\mathcal{E}_{\pm})}\,.
 \end{eqnarray*}
 \item The vertical harmonic oscillator hamiltonian given by \eqref{eq:definitionOfO} satisfies as a self-adjoint operator
 \begin{equation}
 \label{eq:locO}
 \mathcal{O}=\mathcal{I}_{\theta,X}^*\left[\frac{-\Delta_{\mathrm{Vert}}+|p|_q^2}{2}\otimes \mathrm{Id}_{\mathbb{C}^{J\times 2^{2d}}}\right]\mathcal{I}_{\theta,X}
 \end{equation}
 with the functional calculus given by
 \begin{equation}
 \label{eq:locfO}
 f(\mathcal{O})=\mathcal{I}_{\theta,X}^*\left[f(\frac{-\Delta_{\mathrm{Vert}}+|p|_q^2}{2})\otimes \mathrm{Id}_{\mathbb{C}^{J\times 2^{2d}}}\right]\mathcal{I}_{\theta,X}
 \end{equation}
 for any Borel function $f:\mathbb{R}\to \mathbb{C}$\,.\\
 The vertical degree $N^V$ written locally as $\sum_{i=1}^d\hat{f}_{j,i}\wedge \mathbf{i}_{\hat{f}_{j,i}}$ is diagonal according to 
 \begin{equation}
\label{eq:locNV}
 N^V=\mathcal{I}_{\theta,X}^*\left[\mathop{\oplus}_{K\subset\{1,\ldots,K\}}^{\perp}|K|\mathrm{Id}_{\mathbb{C}^{J\times 2^d}}\right]\mathcal{I}_{\theta,X}\,.
\end{equation}
\end{itemize}
We recall now the general definition of the global Sobolev spaces $\tilde{\mathcal{W}}^{s_1,s_2}(X;\mathcal{E}_\pm)$, $(s_1,s_2)\in \mathbb{R}^2$, introduced in \cite{NSW}.\\
With the horizontal-vertical decomposition \eqref{eq:horizontalVerticalDecomposition} and the metric $g^{TQ}$, the horizontal scalar Laplacian (see \cite{BeBo}) is given by
$$\Delta_H =g^{ij}(q)(e_ie_j - \Gamma_{ij}^k e_k ) = (e_i)^*\circ g^{ij}(q)\circ e_j\,,
$$
while the vertical scalar harmonic oscillator operator $\mathcal{O}$ has already been introduced in \eqref{eq:definitionOfO}.\\
The scalar operator $W^2$  is defined as the closure in $L^{2}(X,dqdp;\mathbb{C})$ of the differential operator $C_{g}-\Delta_{H}+C_{g}\mathcal{O}^2: \mathcal{S}(X;\mathbb{C}) \rightarrow \mathcal{S}(X;\mathbb{C})$ for $C_{g}\geq 1$ large enough. The operator $W^2$ is self-adjoint and  $(W^2,\mathcal{O})$ is a pair of commuting self-adjoint operators.\\
The non scalar version $W^2_{\theta}$ is modelled on the scalar version after using the quadratic partition of unity on $Q$\,, $\sum_{j=1}^{J}\theta_j^2(q)\equiv 1$ and the isometry $\mathcal{I}_{\theta,X}$\,. It is given 
by 
\begin{equation}
\label{eq:definitionW2theta}
    W^2_{\theta}=\mathcal{I}_{\theta,X}^*\left[W^2\otimes\mathrm{Id}_{\mathbb{C}^{J\times 2^{2d}}}\right]\mathcal{I}_{\theta,X}
    =\sum_{j=1}^J\theta_{j}(q)\circ W^{2}_{sc,j}\circ \theta_{j}(q)\,.
\end{equation}
where $W^2_{sc,j}$ is defined by using the connection $\nabla^j$ which is trivial in the orthonormal frame $(f_j^1,..,f_j^d,\hat{f}_{j,1},..,\hat{f}_{j,d})$\,.\\
Again for $C_g\geq 1$ large enough, $(W^2_{\theta},\mathcal{O})$ is a pair of strongly commuting self-adjoint operators in $L^2(X,dqdp;\mathcal{E}_\pm)$. We refer the reader to \cite{NSW} for details.
 
\begin{definition}[Sobolev Spaces]
  \label{def:sobsp}
    For all $s_1,s_2 \in \mathbb{R}$, the double exponent Sobolev space $\tilde{\mathcal{W}}^{s_1,s_2}(X,dqdp;\mathcal{E}_{\pm})$ is defined by
    $$ \tilde{\mathcal{W}}^{s_{1},s_{2}}(X; \mathcal{E}_\pm) = \{ u \in \mathcal{S}'(X; \mathcal{E}_\pm), \mathcal{O}^{\frac{s_{1}}{2}} (W^{2}_{\theta})^{s_{2}/2}u \in L^{2}(X,dqdp;\mathcal{E}_\pm) \} .$$
    The norm is defined as $ \|u\|_{ \tilde{\mathcal{W}}^{s_{1},s_{2}}(X;\mathcal{E}_\pm)} = \| \mathcal{O}^{\frac{s_{1}}{2}}(W^{2}_{\theta})^{\frac{s_{2}}{2}}u \|_{L^{2}}$. For simplicity,  those spaces will often be denoted by $ \tilde{\mathcal{W}}^{s_{1},s_{2}} $\,.
  \end{definition}
  The pseudodifferential calculus associated with $W^2_{\theta}$ was introduced in \cite{NSW}  where the order of operators is recalled here:
  $$
  p_i, D_{p_{i}}~(1/2)\quad,\quad \mathcal{O}, e_i  ~(1)\quad,\quad \nabla_{\mathcal{Y}}^{\mathcal{E}_\pm}~(3/2)\quad,\quad (W^2_{\theta})^{s/2} ~ (s)\,,
  $$
  and it says in particular
  \begin{align*}
&&\tilde{\mathcal{W}}^{0,s_{2}+\frac{s_1}{2}}&\subset \tilde{\mathcal{W}}^{s_{1},s_{2}}\subset \tilde{\mathcal{W}}^{0,s_{2}}\,,&\\
&& \tilde{\mathcal{W}}^{0,s_{2}}&\subset \tilde{\mathcal{W}}^{-s_{1},s_{2}}\subset \tilde{\mathcal{W}}^{0,s_{2}-\frac{s_1}{2}}&\quad\text{for}~s_1\geq 0, s_2\in \mathbb{R}\,,
  \end{align*}
  and
  $$
  \mathop{\cap}_{s_2\in\mathbb{R}} \tilde{\mathcal{W}}^{s_1,s_2}=\mathcal{S}(X;\mathcal{E}_{\pm})\quad \mathop{\cup}_{s_2\in \mathbb{R}}\tilde{\mathcal{W}}^{s_1,s_2}=\mathcal{S}'(X;\mathcal{E}_{\pm})\quad
  \text{for~all}~s_1\in \mathbb{R}.
  $$

We end this section by adding some notations and by recalling some functional analysis properties.

\begin{definition}
  \label{de:adjoints}
  For a continuous operator $A:\mathcal{S}(X;\mathcal{E}_{\pm})\to \tilde{\mathcal{W}}^{0,s}(X;\mathcal{E}_{\pm})$\,, which is closable in the Hilbert space $\tilde{\mathcal{W}}^{{0,s}}(X;\mathcal{E}_{\pm})$\,, its closure will be denoted by $\overline{A}^{s}$ while $\overline{A}=\overline{A}^{0}$\,.\\
  Its formal adjoint for the $\tilde{\mathcal{W}}^{0,s}(X;\mathcal{E}_{\pm})$-scalar product will be written $A^{\prime,s}:\tilde{\mathcal{W}}^{0,s}(X;\mathcal{E})
  \to \mathcal{S}'(X;\mathcal{E}_{\pm})$\,, with $A'=A^{\prime,0}$\,. The same notation will be used for its restriction to $\mathcal{S}(X;\mathcal{E}_{\pm})$ instead of $A^{\prime,s}\big|_{\mathcal{S}(X;\mathcal{E}_{\pm})}$\,.\\
  Its adjoint for the $\tilde{\mathcal{W}}^{0,s}(X;\mathcal{E}_{\pm})$ will be denoted by $A^{*,s}:D(A^{*,s})\to \tilde{\mathcal{W}}^{0,s}(X;\mathcal{E}_{\pm})$ with $u\in D(A^{*,s})$ characterized by
$$
\exists C_{u}\geq 0\,,\quad \forall v\in  \mathcal{S}(X;\mathcal{E})\,, |\langle u\,,\, A v\rangle_{\tilde{\mathcal{W}}^{0,s}}|\leq C_{u,s}\|v\|_{\tilde{\mathcal{W}}^{0,s}}\,.
$$
Again the simpler notation $A^{*}=A^{*,0}$ is reserved for the case $s=0$\,. 
\end{definition}
Because $(W^{2}_{\theta})^{s/2}:\tilde{\mathcal{W}}^{0,s}(X;\mathcal{E}_{\pm})\to L^{2}(X,dqdp;\mathcal{E}_{\pm})$ is unitary, while it is a continuous automorphism of  $\mathcal{S}(X;\mathcal{E}_{\pm})$ (resp. $\mathcal{S}'(X;\mathcal{E}_{\pm})$),  the study of $A:\mathcal{S}(X;\mathcal{E}_{\pm})\to \tilde{\mathcal{W}}^{0,s}(X;\mathcal{E}_{\pm})$ is equivalent to the one of
$$
A_{s}=(W^{2}_{\theta})^{s/2}A(W^{2}_{\theta})^{-s/2}: \mathcal{S}(X;\mathcal{E}_{\pm})\to L^{2}(X,dpdp;\mathcal{E}_{\pm})\,.
$$
This is in particular convenient when $A:\mathcal{S}(X;\mathcal{E})\to \mathcal{S}(X;\mathcal{E})$ is continuous. Actually $\overline{A}^{s}=(W^{2}_{\theta})^{-s/2}\overline{A_{s}}^{0}(W^{2}_{\theta})^{s/2}$ and we can simply work in $L^{2}(X,dqdp;\mathcal{E}_{\pm})$ with the family of densely defined operators $(A_{s})_{s\in\mathbb{R}}$ as we already did in the proof of Proposition~\ref{pr:main1sttext}.\\
We deduce in particular the formulas:
\begin{eqnarray}
  \label{eq:formadjconj1}
   A^{\prime,s} &= &
     \left[(W^{2}_{\theta})^{-s/2}A_{s}(W^{2}_{\theta})^{s/2}\right]^{\prime,s}=(W^{2}_{\theta})^{-s}\left[(W^{2}_{\theta})^{-s/2}A_{s}(W^{2}_{\theta})^{s/2}\right]^{\prime}(W^{2}_{\theta})^{s}   \label{eq:formadjconj2} \\
   &  = &(W^{2}_{\theta})^{-s/2}A_{s}'(W^{2}_{\theta})^{s/2} \nonumber\\
(A^{\prime,s})_{s}& = & A_{s}'\\
  \label{eq:adjconj}
  \text{and} \quad 
   (A^{*,s})_s & = & A_{s}^{*}\,.
\end{eqnarray}
In all of our cases the operator $A$ and its formal adjoint $A'$ are continuous from $\mathcal{S}(X;\mathcal{E}_{\pm})$ to itself. Alternatively $A$ is continous from $\mathcal{S}(X;\mathcal{E}_{\pm})$ to itself and from $\mathcal{S}'(X;\mathcal{E}_{\pm})$ to itself.
We always have
$$
\overline{A^{\prime,s}}^{s}\subset A^{*,s} \quad\text{and}\quad
\overline{A_{s}'}\subset A_{s}^{*}
$$
in the sense that $\overline{A^{\prime,s}}^{s}$ is the minimal extension of $A^{\prime,s}\big|_{\mathcal{S}(X;\mathcal{E}_{\pm})}$ while $A^{*,s}$ is its maximal extension. Under the above assumption the case of equality is treated via the equivalence
$$
\left(\overline{A^{\prime,s}}^{s}=A^{*,s}\right)\Leftrightarrow \left(
A_{s}^{'}=A_{s}^{*}\right)\,.
$$
Remember that accretive operators are closable and with an additional positive constant they are one to one and have a closed range. Essential maximal accretivity, under the above assumptions, means exactly  $A^{\prime,s}=A^{*,s}$ or,  equivalently, $A'_{s}=A^{*}_{s}$\,.

  \subsection{Bismut's hypoelliptic Laplacian}
\label{sec:BihypLap} 
  We do present here neither the construction of the hypoelliptic Laplacian as a deformed Hodge type operator, nor the various various versions of it which are presented in \cite{Bis05}\cite{BiLe}.
  We directly start with the Weitzenb\"ock formula for the version denoted by $2\mathfrak{A}'_{\phi_b,\pm \mathcal{H}}$ in \cite{BiLe}-p~32 formulas (2.3.12)(2.3.13). According to \cite{BiLe}-p32 (see formula (2.3.14)) it makes sense as an operator acting on $\mathcal{S}(X;\mathcal{E})$ and the formal adjoints are computed with the scalar product of  $L^2(X,dqdp; \mathcal{E})=L^2(X,e^{-2V(q)}dqdp;\mathcal{E}_\pm)$\,. For this presentation the parameter $h\in ]0,1]$ is not yet considered but it suffices to replace like in Subsection~\ref{sec:hermbund} the potential $V$ by $\frac{V}{h}$ and various equivalent representations are explained in Subsection~\ref{sec:scalings}.\\
  Formulas (2.3.12)(2.3.13) of \cite{BiLe} say for a local orthonormal frame $f_{1},\dots,f_{d}$ of $TQ$:
  \begin{equation}
  \label{eq:Weitz}
             2\mathfrak{A}_{\phi_b,\pm \mathcal{H}}'^{2} = \frac{1}{b^{2}} \alpha_{\pm}^{\prime} + \frac{1}{b} \beta_{\pm}^{\prime} + \gamma_{\pm}^{\prime} 
    \end{equation}

    where
    \begin{align}
    \label{eq:alphaprime}
      \alpha_{\pm}^{\prime}  = & \frac{1}{2}(- \Delta^{V} + |p|_{g}^{2} \pm (2\hat{f}_{i}\mathbf{i}_{\hat{f}^{i}} - d))  ,  \\
     \label{eq:betaprime}
      \beta_{\pm}^{\prime}  =  & - ( \pm \nabla^{\Lambda^{\cdot}T^{*}X\otimes \pi^*F,u}_{\mathcal{Y}} - (f_{i}V) \nabla^{\Lambda^{\cdot} T^{*}X}_{\hat{f}^{i}}) ,   \\
      \label{eq:gammaprime}
      \gamma_{\pm}^{\prime}  =  & - \frac{1}{4} \left\langle R^{TQ}(f_{i},f_{j})f_{k},f_{\ell} \right\rangle (f^{i} - \hat{f}_{i})(f^{j}-\hat{f}_{j})\mathbf{i}_{f_{k} + \hat{f}^{k}}\mathbf{i}_{f_{\ell} + \hat{f}^{\ell}}  \\
                      & \quad  - \left( \pm\left\langle R^{TQ}(p,f_{i})p,f_{j} \right\rangle - (f_{i}(f_{j}V) +\tilde{\Gamma}_{ij}^{k} f_{k}V) \right)(f^{i} - \hat{f}_{i})\mathbf{i}_{f_{j} + \hat{f}^{j}}\,,    \end{align}
and $\tilde{\Gamma}_{ij}^{k}(q) = f^{k}(\nabla_{f_{i}}^{TQ}f_{j})$  the Christoffel symbol expressed in this frame.\\
In order to have good duality arguments when the base manifold $Q$ is not oriented, the vector bundle $\mathcal{E}$ must be $\Lambda T^*X\otimes \mathbb{C}$ in the $+$ case and $\Lambda T^*X\otimes \mathbb{C}\otimes \pi^*(\mathbf{or}_Q)$ in the $-$ case.\\
In \cite{Bis05}, Propositions~3.14 also provides the formula
\begin{equation}
\label{eq:algebraicIdentityForHodgeLaplacianprime}
    \pi_{0,\pm}(\gamma_{\pm}^{\prime} - \beta_{\pm}^{\prime} \alpha_{\pm}^{\prime,-1} \beta_{\pm}^{\prime} )\pi_{0,\pm} = \frac{\square^{Q,F}}{2}\,,
\end{equation}
where $\pi_{0,\pm}$ is the orthogonal projection on the kernel of $\ker(\alpha^{\prime}_{\pm})$\,. In the formula \eqref{eq:algebraicIdentityForHodgeLaplacianprime}, there is an identification between operators acting on $\mathrm{Ran}\,\pi_{0,\pm}$ and operators defined on the base manifold $Q$ which is detailed below. Let us keep for the moment the notations of \cite{Bis05}\cite{BiLe}.

In our framework, i.e. when we work in $L^2(X,dqdp;\mathcal{E}_\pm)$, it suffices to conjugate all the operators according to $A\mapsto e^{-V}Ae^{V}$\,. We obtain
\begin{equation}
  \label{eq:WeitzbL2}
             B_{\pm,b,V} =2 e^{-V}\mathfrak{A}_{\phi_b,\pm \mathcal{H}}'^{2}e^{V}= \frac{1}{b^{2}} \alpha_{\pm} + \frac{1}{b} \beta_{\pm} + \gamma_{\pm}
    \end{equation}

    where
    \begin{align}
    \label{eq:alpha}
      \alpha_{\pm}  = &\alpha_{\pm}^{\prime}= \frac{1}{2}(- \Delta^{V} + |p|_{g}^{2} \pm (2\hat{f}_{i}\mathbf{i}_{\hat{f}^{i}} - d))  ,  \\
     \label{eq:betaprelim}
      \beta_{\pm}  =  & e^{-V(q)}\beta_{\pm}^{\prime}e^{V(q)}=- e^{-V(q)}( \pm \nabla^{\Lambda^{\cdot}T^{*}X\otimes \pi^*F,u}_{\mathcal{Y}} - (f_{i}V) \nabla^{\Lambda^{\cdot} T^{*}X}_{\hat{f}^{i}})e^{V(q)} ,   \\
      \label{eq:gamma}
      \gamma_{\pm}  =  &\gamma_{\pm}^{\prime} = - \frac{1}{4} \left\langle R^{TQ}(f_{i},f_{j})f_{k},f_{\ell} \right\rangle (f^{i} - \hat{f}_{i})(f^{j}-\hat{f}_{j})\mathbf{i}_{f_{k} + \hat{f}^{k}}\mathbf{i}_{f_{\ell} + \hat{f}^{\ell}}  \\
                      & \quad  - \left( \pm\left\langle R^{TQ}(p,f_{i})p,f_{j} \right\rangle - (f_{i}(f_{j}V) +\tilde{\Gamma}_{ij}^{k} f_{k}V) \right)(f^{i} - \hat{f}_{i})\mathbf{i}_{f_{j} + \hat{f}^{j}}\,.   \end{align}
The only non trivial calculation is for $\beta_{\pm}$. According to Table~\ref{tab:exp}, $e^{-V(q)}\nabla^{F,u}e^{V(q)}=d^Q$ and we obtain
$$
e^{-V(q)} \nabla^{\Lambda^{\cdot}T^{*}X\otimes \pi^*F,u} e^{-V(q)}=\nabla^{\mathcal{E}_\pm}
$$
where $\nabla^{T^*X}$ is the tautological connection on $T^*X$ associated with the Levi-Civita connection on $TQ$ and $\nabla^{\mathcal{E}_\pm}$ is the exterior algebra extension.\\
We obtain
\begin{equation}
\label{eq:beta}
\beta_{\pm}=- ( \pm \nabla^{\mathcal{E}_\pm}_{\mathcal{Y}} - (f_{i}V) \nabla^{\mathcal{E}_\pm}_{\hat{f}^{i}})
\end{equation}
Because $\alpha_{\pm}^{\prime}$ commutes with $e^{\pm V(q)}$ the kernel of $\alpha_{\pm}$ and the orthogonal projections $\pi_{0,\pm}$ are not changed. 
By Table~\ref{tab:exp} we also know
$$
e^{-V(q)}\square^{Q,F}e^{V(q)}=\Delta_{V,1}\,.
$$
The formula \eqref{eq:algebraicIdentityForHodgeLaplacianprime} becomes
$$
  \pi_{0,\pm}(\gamma_{\pm} - \beta_{\pm} \alpha_{\pm}^{-1} \beta_{\pm} )\pi_{0,\pm} =  \frac{1}{2}\Delta_{V,1}\,,
  $$
  and when the potential $V$ is replaced by $\frac{V}{h}$\,, $h\in ]0,1]$\,,
\begin{equation}
 \label{eq:algebraicIdentityForWittenLaplacianprime}
   \pi_{0,\pm}(\gamma_{\pm} - \beta_{\pm} \alpha_{\pm}^{-1} \beta_{\pm} )\pi_{0,\pm} = \frac{1}{2h^2} \Delta_{V,h}\,,
\end{equation}
where $\Delta_{V,h}=(d_{V,h}+d_{V,h}^*)^2$ is the semiclassical Witten Laplacian.\\

Another property proved in \cite{Bis05}\cite{BiLe} which will be useful for proving $\mathrm{Spec}(B_{\pm,b,V})\subset [0,+\infty[$ for $b>0$ small enough, is related with the Hodge structure of $B_{\pm,b,V}=2\mathfrak{A}_{\phi_{b},\pm\mathcal{H}}'^{2}$ that we briefly recall here.
\begin{definition}
\label{de:Bismutnot}~
\begin{itemize}
\item The tensorial operations $\lambda_0$ and $\mu_0$, expressed in the orthonormal frames $(f_i,\hat{f}^i,f^i,\hat{f}_i)_{1\leq i\leq d}$\,, are  $\lambda_0=f^i\wedge \mathbf{i}_{\hat{f}^i}$ (resp. $\mu_0=\hat{f}_i\wedge \mathbf{i}_{f_i}$) which increases the horizontal (resp. vertical) degree by $1$ and decreases the  vertical (resp. horizontal) degree by $1$\,.
As nilpotent elements of $\mathrm{End}(\Lambda T^*X) $, their exponential $e^{\pm \lambda_0} $ (resp. $e^{\pm \mu_0} $) are polynomials.
    \item For $a\in \mathbb{R}$\,, $r_a:X\to X$ is given by $r_a(q,p)=(q,ap)$ and $r_a^*:\mathcal{S}(X;\mathcal{E}_\pm)\to \mathcal{S}(X;\mathcal{E}_\pm)$ is the natural pull-back. The simpler notations $r$ and $r^*$ will be used for the isometric involutions obtained for $a=-1$\,. The linear map $K_a:\mathcal{S}(X;\mathcal{E}_{\pm})\to \mathcal{S}(X;\mathcal{E}_{\pm})$ is given by $K_a (s_I^J(q,p) f^I\hat{f}_J)=a^{d/2} s_I^J(q,ap) (f^I \hat{f}_J)\big{|}_{x}$ with a trivial action in the bases $(f^I,\hat{f}_J)_{I,J\subset\{1,\ldots,d\}}, \text{at}~x=(q,p)$ and $(f^I,\hat{f}_J)_{I,J\subset\{1,\ldots,d\}}$\,, at $~x=(q,ap)$\,.
    \item The hermitian form $\langle~,~\rangle_{r}$ on $\mathcal{S}(X;\mathcal{E}_{\pm})$ is given by
    $$
    \langle u\,,\, v\rangle_r=\langle u\,,\, r^* v\rangle\,.
    $$
\end{itemize}
\end{definition}
The operator $\mathfrak{A}^{'}_{\pm,b}$ equals
\begin{eqnarray}
\label{eq:BHodge}
&& B_{\pm,b,V}=2\mathfrak{A}_{\phi_b,\pm\mathcal{H}}'^{2}=
2(\frac{\delta_{\pm,b,V}+ \delta_{\pm,b,V}^{*,r}}{2})^2=\frac{1}{2}[\delta_{\pm,b,V}\delta_{\pm,b,V}^{*,r}+\delta_{\pm,b,V}^{*,r}\delta_{\pm,b,V}]\,,\\
\label{eq:deltabV}\text{with}&&
\delta_{\pm,b,V}=K_{{b}}e^{-\mu_0} e^{-V}d^{X}_{\pm\frac{1}{b^{2}}\mathcal{H}}e^{V}e^{\mu_0}K_{b}^{-1}
                                =e^{-\mu_0}e^{\mp\mathcal{H}-V}(K_{b}d^{X} K_{b}^{-1})e^{\pm\mathcal{H}+V}e^{\mu_0}\,,
  \\
  \text{and}&&
               \label{eq:deltabV*}
\delta_{\pm,b,V}^{*,r}=e^{-\lambda_0}e^{\pm\mathcal{H}+V}K_{b}d^{X,*}K_{b}^{-1}e^{\mp\mathcal{H}-V}
e^{+\lambda_0}\,,
\end{eqnarray}
where $d^{X,*}$ stand for the standard Hodge codifferential for the metric $\pi^{*}(g\oplus g^{-1})$ on $TX=T(T^{*}Q)$\,.\\
The operator $\delta_{\pm,b,V}^{*,r}$ is actually the $\langle~,~\rangle_{r}$-formal adjoint of $\delta_{\pm,b,V}$:
$$
\forall u,v\in \mathcal{S}(X;\mathcal{E}_{\pm}),\quad
\langle u\,,\, r^{*}(\delta_{\pm,b,V}^{*,r})v\rangle=\langle \delta_{\pm,b,V}u\,,\, r^{*}v\rangle\,.
$$
The important properties for us are $\delta_{\pm,b,V}^{2}=0$\,, $(\delta_{\pm,b,V}^{*,r})^{2}=0$ and the fact that $\frac{1}{2}B_{\pm,b,h}$ is the square of the $\langle~,~\rangle_{r}$ symetric operator $\mathfrak{A}_{\phi_{b},\pm \mathcal{H}}'$\,.\\
Let us explain how \eqref{eq:BHodge}\eqref{eq:deltabV} and \eqref{eq:deltabV*} written in our setting are deduced from the formulas (2.1.23) (2.1.24) and (2.1.28) of \cite{BiLe} (see also Section~2 and Section~3 of \cite{Bis05}):
\begin{itemize}
\item The factors $e^{\pm V}$ come from our choice of scalar product $\langle~,~\rangle$ instead of $\langle~,~\rangle_{L^{2}(X,dqdp;\mathcal{E})}$ and the correspondance of Table~\ref{tab:exp}. Once this is settled, this factor can be forgotten for the comparison with the formulas of \cite{BiLe}.
\item For a general $b>0$\,, the formula (2.1.28)-\cite{BiLe}
  $$
  \mathfrak{A}'_{\phi_{b},\pm \mathcal{H}}=
  K_{b}\mathfrak{A}'_{\phi_{1},\pm r_{\frac{1}{b}}^{*}\mathcal{H}}K_{b}^{-1}
  =K_{b}\mathfrak{A}'_{\phi_{1},\pm \frac{1}{b^{2}}\mathcal{H}}K_{b}^{-1}
  $$
  allows to extend the formulas (2.1.22)(2.1.23)-\cite{BiLe} written for the case $b=1$ to the general case. Because $K_{b}$ commutes with $\mu_{0}$ (and $\lambda_{0}$) this provides the formula \eqref{eq:deltabV}. Because $K_{b}$ commutes with $r^{*}$ it implies that $\delta_{\pm,b,V}^{*,r}$ is the $\langle~,~\rangle_{r}$-formal adjoint of $\delta_{\pm,b,V}$\,.
\item Finally the explicit expression of $\delta_{\pm,b,V}$ is obtained after using the property that $\lambda_{0}$ is the $\langle~,~\rangle$-formal adjoint of $\mu_{0}$\,, and the involutive identity $r^{*}\lambda_{0}r^{*}=-\lambda_{0}$\,.
\end{itemize}
Like in \cite{BiLe}-page 32 but with now the $L^2(X,dqdp;\Lambda T^*X)$ scalar product $\langle~,~\rangle$, we recall the elementary functional properties of $\alpha_{\pm}$ and $\beta_{\pm}$\,. Meanwhile, we make  the identifications hidden in \eqref{eq:algebraicIdentityForHodgeLaplacianprime} and \eqref{eq:algebraicIdentityForWittenLaplacianprime} more explicit by using the isometries $\mathcal{I}_{\theta,X}$ and $\mathcal{I}_{\theta,Q}$ of Definition~\ref{de:localization}.
  \begin{itemize}
  \item The operator $\alpha_{\pm}=\mathcal{O}\pm (N^V-d/2)$ is self-adjoint on its domain $\tilde{\mathcal{W}}^{2,0}(X;\mathcal{E}_\pm)$,. By using the fiberwise change of variable $\tilde{p}_i=(\sqrt{g(q)})^{ij}p_j$ the Hilbert space $L^2(X,dqdp;\mathcal{E}_{\pm})$ can be written as the direct integral
  $$
  L^2(X,dqdp;\mathcal{E}_{\pm})=\int_Q^{\oplus}L^2(\mathbb{R}^d,d\tilde{p};\mathbb{C}^{2^{2d}})~d\mathrm{Vol}_g(q)
  $$
  if we notice $dqdp=|\det(g(q))|^{1/2}~dq d\tilde{p}$\,. In this direct integral representation, $\alpha_{\pm}$ is nothing but
  $$
  \alpha_{\pm}=\int_Q^\oplus \frac{-\Delta_{\tilde{p}}+|\tilde{p}|^2}{2}\otimes \mathrm{Id}_{\mathbb{C}^{2^{2d}}}\pm (N_V-d/2)~d\mathrm{Vol}_g(q)
  $$
  where $\frac{-\Delta_{\tilde{p}}+|\tilde{p}|^2}{2}=\sum_{i=1}^d \frac{-\partial^2_{\tilde{p}_j}+\tilde{p}_j^2}{2}$ is the euclidean scalar harmonic oscillator.
  Therefore the spectrum of $ \alpha_{\pm} $  equals $\mathbb{N}$. The kernel of $\alpha_{+}$ is given by horizontal forms times $\exp\big(-\frac{|p|_{q}^{2}}{2}\big)$ and the kernel of $\alpha_{-}$ is given by the exterior product of horizontal forms with a top vertical form times $\exp\big(-\frac{|p|_{q}^{2}}{2}\big)$.
    \item With the orthogonal projection $ \pi_{0,\pm} $ on the kernel of $\alpha_{\pm}$ and $1-\pi_{0,\pm} = \pi_{\bot,\pm}$ its orthogonal complement, we have
      \begin{eqnarray*}
    &&    L^2(X,dqdp;\mathcal{E}_{\pm})=\ker{\alpha_{\pm}} \mathop{\oplus}^\perp \mathrm{Ran}{\,\alpha_{\pm}}
    =\mathrm{Ran}\,\pi_{0,\pm}\mathop{\oplus}^\perp \mathrm{Ran}{\,\pi_{\perp,\pm}}
    \\
    \text{and}
     &&\mathcal{S}(X;\mathcal{E}_{\pm})=
      \big(\mathrm{Ran}\,\pi_{0,\pm}\cap\mathcal{S}(X;\mathcal{E}_{\pm})\big) \oplus \big(\mathrm{Ran}{\,\pi_{\bot,\pm}}\cap \mathcal{S}(X;\mathcal{E}_{\pm})\big)
    \end{eqnarray*}
    while the functional calculus says that  $\alpha_{\pm}:\mathrm{Ran}\pi_{\bot,\pm}\cap\tilde{\mathcal{W}}^{2,0}(X;\mathcal{E}_{\pm})\to \mathrm{Ran}\,\pi_{\bot,\pm}$ is invertible with the norm of $\alpha_{\pm}^{-1}\pi_{\bot,\pm}$ equal to $1$.
    \item The differential operator $\beta_{\pm}$ maps $\ker{\alpha_{\pm}}\cap \mathcal{S}(X;\mathcal{E}_{\pm})=\mathrm{Ran}\,{\pi_{0,\pm}}\cap \mathcal{S}(X;\mathcal{E}_{\pm})$ into $\mathrm{Ran}\,\alpha_{\pm}\cap \mathcal{S}(X;\mathcal{E}_{\pm})=\mathrm{Ran}\,\pi_{\bot,\pm}\cap \mathcal{S}(X;\mathcal{E}_{\pm})$.
    \item With \eqref{eq:locO}\eqref{eq:locfO}\eqref{eq:locNV} we can write
    $$
    f(\alpha_{\pm})=\mathcal{I}_{\theta,X}^*\left[\mathop{\oplus}_{K\subset\{1,\ldots,d\}}f\left(\frac{-\Delta_{\mathrm{Vert}}+|p|_q^2}{2}\pm (|K|-d/2)\right)\otimes \mathrm{Id}_{\mathbb{C}^{J\times {2d}}}\right]\mathcal{I}_{\theta,X}
    $$
    for any Borel function $f:\mathbb{R}\to\mathbb{C}$\,. In particular for $f=1_{\{0\}}$ we obtain
    $$
    \pi_{0,\pm}=\mathcal{I}_{\theta,X}^*\left[1_{\{0\}}\big(\frac{-\Delta_{\mathrm{Vert}}+|p|_q^2-d}{2}\big)1_{\{0\}}\big(|K|-d/2\pm d/2\big)\right]\mathcal{I}_{\theta,X}\,.
    $$
    The kernel of the harmonic oscillator $\frac{-\Delta_{\mathrm{Vert}}+|p|_q^2-d}{2}$ equals $\mathbb{C}\frac{e^{-\frac{|p|_q^2}{2}}}{\pi^{d/4}}$ with
    $$
    \int_{\mathbb{R}^d}|\frac{e^{-\frac{|p|_q^2}{2}}}{\pi^{d/4}}|^2~dp=|\det(g(q)|^{1/2}\,.
    $$
    We deduce that
    \begin{equation}
        \label{eq:U+theta}
    U_{+,\theta}=\mathcal{I}_{\theta,X}^*\left[\frac{e^{-\frac{|p|_q^2}{2}}}{\pi^{d/4}}\times\right]\mathcal{I}_{\theta,Q}
    \end{equation}
    is a unitary transform $U_{+,\theta}:L^2(Q,d\mathrm{Vol}_g;\Lambda T^*Q\otimes \mathbb{C})\to \mathrm{Ran}\,\pi_{+,0}=\ker(\alpha_+)$ in the $+$ case.
    In the $-$ case, we choose $\eta\in \mathcal{C}^{\infty}(X;\Lambda^d (T^*X)^V\otimes\pi^*(\mathbf{or}_Q))$  to be a normalized non vanishing section, which can be written locally as $\hat{f}_{j,1}\wedge\ldots\wedge\hat{f}_{j,d}$ with the suitable orientation. Then the unitary transform $U_{-,\theta}:L^2(Q,d\mathrm{Vol}_g;\Lambda T^*Q\otimes \mathbb{C}\otimes \mathbf{or}_{Q})\to \mathrm{Ran}\,\pi_{-,0}=\ker(\alpha_-)$ is given by 
    \begin{equation}
    \label{eq:U-theta}
    U_{-,\theta}=\mathcal{I}_{\theta,X}^*\left[\frac{e^{-\frac{|p|_q^2}{2}}}{\pi^{d/4}}\times\right](\mathcal{I}_{\theta,Q}\wedge\eta).
    \end{equation}
    When $\mathcal{I}_{\theta,Q}(s)=(s_{j,I})_{\substack{1\leq j\leq J\\ I\subset\{1,\ldots,d\}}}$ for $s\in L^2(Q,d\mathrm{Vol}_g;\Lambda T^*Q\otimes F)$ ($F=Q\times \mathbb{C}$ in the $+$ case and $F=(Q\times\mathbb{C}\otimes \mathbf{or}_Q)$ in the $-$ case) we get
    \begin{eqnarray}
     \label{eq:U+thetaexpl}
     &&
     U_{+,\theta}s=\sum_{\substack{1\leq j\leq J\\ I\subset\{1,\ldots,d\}}} \theta_j(q)s_{j,I}(q)\frac{e^{-\frac{|p|_q^2}{2}}}{\pi^{d/4}} f_{j}^I\\
     \text{and}&&
     U_{-,\theta}s=\sum_{\substack{1\leq j\leq J\\ I\subset\{1,\ldots,d\}}} \theta_j(q)s_{j,I}(q)\frac{e^{-\frac{|p|_q^2}{2}}}{\pi^{d/4}} f_{j}^I\wedge \hat{f}_{j,1}\wedge \ldots \wedge \hat{f}_{j,d}\,.
     \label{eq:U-thetaexp}
    \end{eqnarray}
    When $\mathcal{I}_{\theta,X}(s')=(s_{j,I}^K)_{\substack{1\leq j\leq J\\I,K\subset\{1,\ldots,d\}}}$ for $s'\in L^2(X,dqdp;\mathcal{E}_{\pm})$ we get
    \begin{alignat}{1}
     U_{+,\theta}^{-1}s'=U_{+,\theta}^*s'  & =\sum_{\substack{1\leq j\leq J\\ I\subset\{1,\ldots,d\}}} \theta_j(q)\left(\int_{T^*_q Q}\frac{e^{-\frac{|p|_q^2}{2}}}{\pi^{d/4}} s_{j,I}^{\emptyset}(q,p)~dp\right) u_{j}^I \label{eq:U+theta*expl}\\
      \label{eq:U-theta*exp}
      \text{and} \quad
     U_{-,\theta}^{-1}s' =U_{-,\theta}s'  & = \sum_{\substack{1\leq j\leq J\\ I\subset\{1,\ldots,d\}}} \theta_j(q)\left(\int_{T^*_q Q}\frac{e^{-\frac{|p|_q^2}{2}}}{\pi^{d/4}} s_{j,I}^{\{1,\ldots,d\}}(q,p)~dp\right) u_{j}^I\wedge\hat{f}_{j,1}\wedge\ldots\wedge \hat{f}_{j,d}\,.
    \end{alignat}
The unitary map $U_{\pm,\theta}:L^2(Q,d\mathrm{Vol}_g;\Lambda T^*Q\otimes F)\to \mathrm{Ran}\,\pi_{0,\pm}=\ker(\alpha_{\pm})$ clearly induces an isomorphism depending on the case:
\begin{eqnarray*}
    U_{+,\theta}&:&\mathcal{C}^\infty(Q;\Lambda^{\cdot}T^{*}Q\otimes \mathbb{C})  \rightarrow  \mathcal{S}(X; \mathcal{E}_+) \cap \ker \alpha_+\\ 
    U_{-,\theta}&:&\mathcal{C}^\infty(Q;\Lambda^{\cdot}T^{*}Q\otimes\mathbb{C}\otimes\mathbf{or}_Q) 
     \rightarrow  \mathcal{S}(X; \mathcal{E}_-) \cap \ker \alpha_- ~.
\end{eqnarray*}
Other functional spaces can be considered. With those notations, formula \eqref{eq:algebraicIdentityForWittenLaplacianprime} means precisely
\begin{equation}
     \label{eq:algebraicIdentityForWittenLaplacianUtheta}
     U_{\pm,\theta}^{-1}[\pi_{0,\pm}(\gamma_{\pm} - \beta_{\pm} \alpha_{\pm}^{-1} \beta_{\pm} )\pi_{0,\pm}] U_{\pm,\theta}= \frac{1}{2h^2} \Delta_{V,h} \,.
\end{equation}
Notice also that $e_i (e^{-\frac{|p|_q^2}{2}}\ a(q))=e^{-\frac{|p|_q^2}{2}}\frac{\partial a}{\partial q^i}(q)$ implies
\begin{equation}
\label{eq:LaplBelscUtheta}
U_{\pm,\theta}^{-1}[\pi_{0,\pm} W^2_{\theta}\pi_{0,\pm}] U_{\pm,\theta}=\mathcal{I}_{\theta, Q}^*\left[(C+C\frac{d^2}{4}-\frac{1}{2}\Delta_Q)\otimes\mathrm{Id}_{\mathbb{C}^{J\times 2^d}}\right]\mathcal{I}_{\theta,Q}
\end{equation}
where $\Delta_{Q}$ is the scalar Laplace-Beltrami operator on $Q$\,.
       \end{itemize}
\begin{lemma}
\label{le:nabYpi0}
There exists a constant $C_{g,\theta}\geq 1$ determined by the metric $g$ and the quadratic partition of unity $\sum_{j=1}^{J}\theta_j^2(q)\equiv 1$ such that
$$
C_{g,\theta}^{-1}\|u\|_{\tilde{\mathcal{W}}^{0,1}}^2\leq \|u\|^2_{L^2}+\|\nabla_{\mathcal{Y}}^{\mathcal{E}_{\pm}}u\|_{L^2}^2\leq C_{g,\theta}\|u\|_{\tilde{\mathcal{W}}^{0,1}}^2
$$
holds for all $u\in \mathcal{S}(X;\mathcal{E}_{\pm})\cap \ker \alpha_{\pm}$\,.
\end{lemma}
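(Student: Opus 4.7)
The plan is to use the unitary isomorphism $U_{\pm,\theta} : L^2(Q,d\mathrm{Vol}_g; \Lambda T^*Q\otimes F) \to \ker\alpha_\pm$ from \eqref{eq:U+theta}--\eqref{eq:U-theta} to push both norms down to the base $Q$, where each becomes equivalent (up to constants depending only on $g$ and $\theta$) to the standard Sobolev $H^1$-norm on $Q$. Thus given $u\in \mathcal{S}(X;\mathcal{E}_\pm)\cap \ker\alpha_\pm$, I set $s = U_{\pm,\theta}^{-1}u \in \mathcal{C}^\infty(Q;\Lambda T^*Q\otimes F)$, so that unitarity gives $\|u\|_{L^2}=\|s\|_{L^2(Q)}$.

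For the right-hand side, the formulas \eqref{eq:locO}, \eqref{eq:locNV}, and \eqref{eq:definitionW2theta} show that $W^2_\theta$, $\mathcal{O}$, and $N^V$ are all scalar/diagonal in the local frame decomposition via $\mathcal{I}_{\theta,X}$, so $W^2_\theta$ commutes with $\pi_{0,\pm}$ and preserves $\ker\alpha_\pm$. Formula \eqref{eq:LaplBelscUtheta} then gives
$$
\|u\|_{\tilde{\mathcal{W}}^{0,1}}^2 = \langle u, W^2_\theta u\rangle = \bigl\langle s,\, \mathcal{I}_{\theta,Q}^*\bigl[(C_g + C_g d^2/4 - \tfrac{1}{2}\Delta_Q)\otimes \mathrm{Id}\bigr]\mathcal{I}_{\theta,Q}\, s\bigr\rangle_{L^2(Q)},
$$
which by elementary elliptic estimates on the closed manifold $Q$ is equivalent to $\|s\|_{L^2(Q)}^2 + \|\nabla_Q s\|_{L^2(Q)}^2$.

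For the left-hand side, I compute $\nabla^{\mathcal{E}_\pm}_\mathcal{Y} u$ locally. Above $\mathcal{V}_{\theta,j}$, the identification \eqref{eq:U+thetaexpl}--\eqref{eq:U-thetaexp} writes $u$ as a sum of terms $\theta_j(q)\,s_{j,I}(q)\,\tfrac{e^{-|p|_q^2/2}}{\pi^{d/4}}\,f_j^I\wedge\hat f_{j,K}$ with $K=\emptyset$ in the $+$ case and $K=\{1,\dots,d\}$ in the $-$ case, while $\mathcal{Y} = \sum_i \tilde p_i f_{j,i}$. The horizontal lift $f_{j,i}$ is the generator of geodesic parallel transport, hence annihilates the scalar $|p|_q^2$ and the Gaussian factor. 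Applying $\nabla^{\mathcal{E}_\pm}_{f_{j,i}}$ to the frame elements $f_j^I\wedge\hat f_{j,K}$ produces the action of $u_{j,i}$ on the coefficients $s_{j,I}$ plus bounded-coefficient terms coming from Christoffel symbols $\tilde\Gamma$ of the chosen orthonormal frame. Gaussian fiber integration using $\int \tilde p_i \tilde p_\ell\, e^{-|p|_q^2}\pi^{-d/2}\,dp = \tfrac{1}{2}\delta_{i\ell}|\det g(q)|^{1/2}$ together with the vanishing of odd moments then yields
$$
\|\nabla^{\mathcal{E}_\pm}_\mathcal{Y} u\|_{L^2}^2 = \tfrac{1}{2}\sum_{j,i}\bigl\|\theta_j\,\nabla^{\Lambda T^*Q\otimes F}_{u_{j,i}} s\bigr\|_{L^2(Q)}^2 + R(s),
$$
with a remainder $|R(s)|\leq C_{g,\theta}\bigl(\|s\|_{L^2(Q)}^2 + \varepsilon\|\nabla_Q s\|_{L^2(Q)}^2\bigr)$ that is absorbed, after a Cauchy--Schwarz step on the cross terms, into an enlarged constant. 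Adding $\|u\|_{L^2}^2=\|s\|_{L^2(Q)}^2$ makes the full expression equivalent to $\|s\|_{L^2(Q)}^2+\|\nabla_Q s\|_{L^2(Q)}^2$, matching the equivalent form obtained in the first part, and yielding the lemma.

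The main obstacle is the bookkeeping in the middle step: identifying precisely the Christoffel contributions from the tautological connection $\nabla^{\mathcal{E}_\pm}$ acting on the chosen orthonormal frame elements $f_j^I\wedge\hat f_{j,K}$, and showing that after the Gaussian integration in $p$ they contribute only lower-order terms that can be absorbed into the $H^1(Q)$-norm of $s$. Everything else reduces to unitarity of $U_{\pm,\theta}$ and the scalar identity \eqref{eq:LaplBelscUtheta}.
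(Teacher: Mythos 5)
Your proposal is correct and follows essentially the same route as the paper's proof: both push the problem to the base manifold through the Gaussian structure of $\ker\alpha_{\pm}$, identify the leading term $\tfrac12\int_Q|\nabla^g_q s|^2\,d\mathrm{Vol}_g$ on each side by Gaussian fiber integration (using $\int\tilde p_i\tilde p_\ell e^{-|p|_q^2}\pi^{-d/2}dp=\tfrac12\delta_{i\ell}$ and the fact that the horizontal lift annihilates $|p|_q^2$), and use \eqref{eq:LaplBelscUtheta} for the $W^2_{\theta}$ side. The only organizational difference is that the paper treats the connection corrections via an exact quadratic-partition localization of the first-order operator $\nabla^{\mathcal{E}_{\pm}}_{\mathcal{Y}}$ followed by a per-chart multiplicative comparison with the trivial frame connection $\nabla^{j}$ (bounded on $\ker\alpha_{\pm}$ because the difference carries the factor $e^{-\frac{|p|_q^2}{4}}g^{ik}p_k(\cdot)e^{\frac{|p|_q^2}{4}}$), whereas you absorb them as an additive lower-order remainder; both are standard and lead to the same constants.
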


\begin{proof}
 We first notice that for any connection $\nabla$ and all $u\in \mathcal{S}(X;\mathcal{E}_{\pm})\cap \ker \alpha_{\pm}$\,,
 $$
\nabla_{\mathcal{Y}}u=
e^{-\frac{|p|_q^2}{4}}
\nabla_{\mathcal{Y}}e^{\frac{|p|_q^2}{4}}\pi_{0,\pm}u\,.
$$
Because $\nabla^{\mathcal{E}_{\pm}}_{\mathcal{Y}}$ is a first order differential operator  we have
$$
\|\nabla_{\mathcal{Y}}^{\mathcal{E}_{\pm}}u\|_{L^2}^2=\sum_{j=1}^{J}\|\nabla_{\mathcal{Y}}^{\mathcal{E}_{\pm}}(\theta_j(q)u)\|_{L^2}^2
-\sum_{j=1}^J \|(\mathcal{Y}\theta_j)u\|_{L^2}^2\,.
$$
With the local formula 
$$
\mathcal{Y}\theta_j=g^{ik}(q)p_k (\partial{q^i}\theta_j)(q)=e^{-\frac{|p|_q^2}{4}}g^{ik}p_k (\partial{q^i}\theta_j)(q)e^{\frac{|p|_q^2}{4}}
$$
and $\|e^{\frac{|p|_q^2}{4}}\pi_0\|_{\mathcal{L}(L^2)}\leq C_g$ we obtain
$$
C_{g,\theta,1}^{-1}\sum_{j=1}^{J}\|\theta_j(q) u\|^2_{L^2}+\|\nabla_{\mathcal{Y}}^{\mathcal{E}_{\pm}}(\theta_j(q)u)\|_{L^2}^2
\leq 
\|u\|^2_{L^2}+\|\nabla_{\mathcal{Y}}^{\mathcal{E}_{\pm}}u\|_{L^2}^2
\leq \sum_{j=1}^{J}\|\theta_j(q) u\|^2_{L^2}+\|\nabla_{\mathcal{Y}}^{\mathcal{E}_{\pm}}(\theta_j(q)u)\|_{L^2}^2\,.
$$

Above the neighborhood $\mathcal{V}_{\theta,j}\supset \mathrm{supp}\,\theta_j$, we use the connection $\nabla^j$ which is trivial in the local orthonormal frame 
$(f_j^1\ldots f_j^d,\hat{f}_{j,1},\ldots,\hat{f}_{j,d})$\,.\\The relation
$$
\nabla_{\mathcal{Y}}^{\mathcal{E}_{\pm}}-\nabla_{\mathcal{Y}}^{j}=e^{-\frac{|p|_q^2}{4}}g^{ik}(q)p_k (\nabla^{\mathcal{E}_{\pm}}_{e_i}-\nabla^{j}_{e_i})e^{\frac{|p|_q^2}{4}}\,
$$
allows the same comparison which leads to 
$$
C_j^{-1}\left(\|\theta_j u\|^2_{L^2}+\|\nabla_{\mathcal{Y}}^{\mathcal{E}_{\pm}}(\theta_j u)\|_{L^2}^2\right)
\leq 
\|\theta_j u\|^2_{L^2}+\|\nabla_{\mathcal{Y}}^{j}(\theta_j u)\|_{L^2}^2\leq
C_j\left(\|\theta_j u\|^2_{L^2}+\|\nabla_{\mathcal{Y}}^{\mathcal{E}_{\pm}}(\theta_j u)\|_{L^2}^2\right)\,.
$$
With the trivial connection $\nabla^j$ in the orthonormal frame $(f_j^1,\ldots,f_j^d, \hat{f}_{j,1},\ldots,\hat{f}_{j,d})$ the estimate of the middle term is reduced to the computation of $\|\theta_j u\|^2_{L^2}+\|\mathcal{Y}(\theta_j u)\|_{L^2}^2$ for $u=\pi^{-d/4}e^{-\frac{|p|_q^2}{2}} a(q)$ for $a\in \mathcal{C}^{\infty}_0(\mathcal{V}_{\theta,j};\mathbb{C})$. We compute
$$
\|\mathcal{Y}(\pi^{-d/4}e^{-\frac{|p|_q^2}{2}} \theta_j a)\|_{L^2}^2=\int_{Q}\left[\int_{\mathbb{R}^d}|g^{ik}(q)p_k (\partial_{q^i} (\theta_j a)|^2 \pi^{-d/2}e^{-g^{ik}(q)p_ip_k}~dp\right]~dq
=\frac{1}{2}\int_Q |\nabla^g_q (\theta_j a)|^2 d\mathrm{Vol}_g(q).
$$
Similarly the definition $W^2_\theta=\mathcal{I}_{\theta,X}^*\left[W^2\otimes\mathrm{Id}_{\mathbb{C}^{J\times 2^{2d}}}\right]\mathcal{I}_{\theta,X}$ reduces the problem to the computation of 
$$
\langle \theta_j u\,,\, W^2 \theta_j u\rangle=\langle \theta_j u\,,\, [C-\Delta_H+C\mathcal{O}^2] \theta_j u\rangle
$$
with $u=\pi^{-d/4}e^{-\frac{|p|_q^2}{2}} a(q)$\,, $a\in \mathcal{C}^{\infty}_0(\mathcal{V}_{\theta,j};\mathbb{C})$\,. We obtain
like in \eqref{eq:LaplBelscUtheta}
$$
\langle \theta_j u\,,\, W^2 \theta_j u\rangle =(C+C\frac{d^2}{4})\|\theta_j a\|_{L^2}^2 + \frac{1}{2}\int_Q |\nabla^g_q (\theta_j a)|^2 d\mathrm{Vol}_g(q)
$$
and this ends the proof.
\end{proof}

\subsection{Scalings}
\label{sec:scalings}
When we consider semiclassical Witten Laplacians,  it is natural to introduce semiclassical Sobolev spaces. Accordingly the space $\tilde{\mathcal{W}}^{0,s}(X;\mathcal{E}_{\pm})$ has to be defined with an $h$-dependent norm. There are various transformations on the operators, Witten's and Bismut's Laplacian, and on the functional spaces which allow to reduce the $h$-dependent problem, $h\in]0,1]$\,, to the case $h=1$\,. This simplifies the asymptotic analysis with respect to the pair of parameters $b>0,h>0$\,. In particular,   the initial subelliptic estimates  of \cite{NSW}, where only the parameter $b>0$ was considered, can be easily translated into a $b,h$-dependent version.\\
\textbf{Semiclassical Witten Laplacian:} The semiclassical Witten Laplacians $\Delta_{V,h}$ on the riemannian manifold $(Q,g)$ can be given several equivalent presentations. It is better to think in terms of the four data $(Q,g,V,h)$ where $(Q,g)$ is the riemannian manifold $V\in \mathcal{C}^{\infty}(Q;\mathbb{R})$ is the potential function and $h\in ]0,1]$ is the semiclassical parameter.\\
The semiclassical Witten Laplacian equals
\begin{eqnarray*}
  &&\Delta_{V,h}=\Delta_{(Q,g,V,h)}= (d_{V,h}+d_{V,h}^{*,g})^2\\
  \text{where}&& d_{V,h}=e^{-\frac{V}{h}}(hd)e^{\frac{V}{h}}=hd+dV\wedge\quad
                 d_{V,h}^{*,g}=e^{-\frac{V}{h}}(hd^{*,g})e^{\frac{V}{h}}=hd^{*,g}+\mathbf{i}_{\nabla_{g}V}\,,
\end{eqnarray*}
where the subscripts recall that the Hodge star operator, the codifferential and the gradient all depend on the chosen metric $g$\,.\\
Relations between the following differential operators acting on $\mathcal{C}^{\infty}(Q;\Lambda T^{*}Q\otimes F_{\pm})$ can be written:
\begin{eqnarray*}
  &&d^{*,\frac{1}{h^{2}}g}=h^{2}d^{*,g}\quad,\quad
     \nabla_{\frac{g}{h^{2}}}V=h^{2}\nabla_{g}V\\
  && d_{V,h}=hd_{\frac{V}{h},1}
     \quad,\quad d_{V,h}^{*,g}=\frac{1}{h}d_{\frac{V}{h},1}^{*,\frac{g}{h^{2}}}\quad,\quad \Delta_{(Q,g,V,h)}=\Delta_{(Q,\frac{g}{h^{2}},\frac{V}{h},1)}\,.
\end{eqnarray*}
For the $L^{2}$-spaces we note that
$$
d\mathrm{Vol}_{\frac{1}{h^{2}}g}=h^{-d}d\mathrm{Vol}_{g}\quad
    \int_{Q} \langle s,s' \rangle_{\frac{1}{h^{2}}g}~d\mathrm{Vol}_{\frac{1}{h^{2}}g}=\int_{Q}(h^{2})^{\mathrm{deg}s-d/2}\langle s\,,s'\rangle_{g}~d\mathrm{Vol}_{g}
    $$
    and the map $s\mapsto h^{d/2-\mathrm{deg}\,s}s$ is a unitary map from $L^{2}_{_{g}}(Q;\Lambda T^{*}Q\otimes F_{\pm})$ onto $L^{2}_{\frac{1}{h^{2}}g}(Q;\Lambda T^{*}Q\otimes F_{\pm})$\,. Semiclassical Sobolev spaces are defined by replacing derivatives of vector fields with $g$-norms bounded by $1$ \,, by vector fields with $\frac{1}{h^{2}}g$-norms bounded by  $1$ or $g$-norms of size $\mathcal{O}(h)$\,. By using a Laplace type operator $\Delta_{(Q,g,0,1)}$ or $H_{0,g}=\sum_{j=1}^{J}\theta_{j}(q)\Delta_{sc,g}\theta_{j}(q)$ the semiclassical Sobolev norms are given by
$$
\|u\|_{H^{s,h}_{g}(Q)}=\|(1+h^{2}H_{0,g})^{s/2}u\|_{L^{2}_{g}(Q)}=\|h^{\frac{d}{2}-\mathrm{deg}}(1+H_{0,\frac{1}{h^{2}}g})^{s/2}u\|_{L^{2}_{\frac{g}{h^{2}}}(Q)}\,.
$$
Another introduction of the scaling relies on the fact that $(Q,g)$ can be isometrically embedded in the euclidean space $(\mathbb{R}^{d_{Q}};g_{\mathbb{R}^{d_{Q}}})$\,, according to Nash embedding theorem (see e.g. \cite{Gro}).  This isometric embedding can be done such that $d_{g^{\mathbb{R^{d_{Q}}}}}(0,Q)=1$
and  one may consider the homothetic transformation of $Q$ with center $0$ and ratio $\frac{1}{h}$\,, $Q^{h}=\frac{1}{h}Q$ or $Q^{h}=\phi_{h}Q$ with $\phi_{h}(q)=\frac{1}{h}q$ for $q\in \mathbb{R}^{d_{Q}}$\,. The tangent and conormal vector bundle $TQ$\,, $N^{*}Q=\left\{v \in T^{*}\mathbb{R}^{d_{Q}}\big|_{Q}\,,\quad \forall t\in TQ\,, v.t=0\right\}$ are well defined and the euclidean metric $g^{\mathbb{R}^{d_{Q}}}$ allows to identify $$T^{*}Q=\left\{v\in T^{*}\mathbb{R}^{d_{Q}}\big|_{Q}\,,\quad \forall w\in N^{*}Q\,, \quad (g^{\mathbb{R}^{d_{Q}}})^{-1}(v,w)=0\right\}$$\,.
The same can be done with $Q^{h}$ which is endowed with the metric $g^{h}=g^{\mathbb{R}^{d_{Q}}}\big|_{TQ^{h}\times TQ^{h}}$\,. Then the riemannian manifold $(Q^{h},g^{h})$ is isometric to $(Q,\frac{1}{h^{2}}g)$ and when $H_{0,g^{h}}=\sum_{j=1}^{J}\theta_{j}(h.)\Delta_{sc, g^{h}}\theta_{j}(h.)$ and $V^{h}(q)=\frac{1}{h}V(hq)$ we obtain
\begin{eqnarray*}
  && \phi_{h}^{*}\Delta_{(Q^{h},g^{h},V^{h},1)}\phi_{h,*}=\Delta_{(Q,\frac{1}{h^{2}}g,\frac{1}{h}V,1)}=\Delta_{(Q,g,V,h)}\\
  && \|u\|_{H^{s,h}_{g}(Q)}=\|h^{\frac{d}{2}-\mathrm{deg}}(1+H_{0,\frac{1}{h^{2}}g})^{s/2}u\|_{L^{2}_{\frac{1}{h^{2}}g}(Q)}=\|h^{\frac{d}{2}-\mathrm{deg}}(1+H_{0,g^{h}})^{s/2}\phi_{h,*}u\|_{L^{2}_{g^{h}}(Q^{h})}\,.
\end{eqnarray*}
If instead of the quadratic partition of unity $\sum_{j=1}^{J}\theta_{j}^{2}(q)\equiv 1$ on $Q$ one takes an $h$-dependent partition of unity $\sum_{j=1}^{J_{h}}\theta_{j,h}^{2}(q)\equiv 1$ subordinate to an atlas $\cup_{j=1}^{J_{h}}\Omega_{j,h}=Q$ with $\mathrm{diam}_{g}(\Omega_{j,h})\leq Ch$ or ($\mathrm{diam}_{\frac{1}{h^{2}}g}(\Omega_{j,h})\leq C$) and an intersection number uniformly bounded with respect to $h$\,, one sees that $g^{h}$ in a coordinates system in $\phi_{h}\Omega_{j,h}$ satisfies
$$
\|\partial_{q}^{\alpha}g^{h}\|_{L^{\infty}(\phi_{h}\Omega_{j,h})}+
\|\partial_{q}^{\alpha}(g^{h})^{-1}\|_{L^{\infty}(\phi_{h}\Omega_{j,h})}\leq C_{\alpha}\\,.
$$
Although the volume of $Q^{h}$\,, $\mathrm{Vol}(Q^{h})=h^{-d}\mathrm{Vol}(Q)$ increases as $h\to 0$\,, the above quantity $\|(1+H_{0,g^{h}})^{s}v\|_{L^{2}_{g^{h}}(Q^{h})}$ correspond to the standard Sobolev space norm on $Q^{h}$ with a uniform control of the local variations of the metric while $\nabla_{g^{h}}V^{h}$  is uniformly bounded as well as its covariant derivatives with respect to vector fields with a bounded $g^{h}$-norms. We will use the short notation $H^s(Q^h;\Lambda T^{*}Q^{h}\otimes F_{\pm})$ for $H^{s,1}_{g^h}(Q^h;\Lambda T^{*}Q^{h}\otimes F_{\pm})$\,.
\\
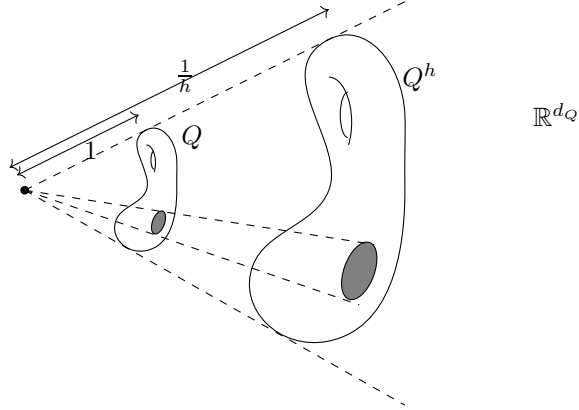
\begin{figure}[h]
  \centering
\begin{tikzpicture}
  \draw (5+0,0) to[closed, curve through =
  { (5-1,-2) (5-2,-1) (5-1,0) (5-1,2) (5+0,1) }] (5+0,0.5);
  \draw (4,1.5) arc[start angle = 90, end angle =-30, x radius=0.3,y radius=0.6];
  \draw (4.25,1.3) arc[start angle = 90, end angle =270), x radius=0.1,y radius=0.3];
  \draw[rotate=-20,fill=gray] (4.5,0.5)  ellipse[x radius=0.2, y radius=0.4];
    \draw (5.2,1.5) node{$Q^{h}$};
  \draw (2.2,0.7) node{$Q$};
   \draw (1.6,0.6) arc[start angle = 90, end angle =-30, x radius=0.12,y radius=0.24];
   \draw (1.7,0.52) arc[start angle = 90, end angle =270), x radius=0.04,y radius=0.12];
  \draw(2+0,0) to[closed, curve through =
  { (2-0.4,-0.8) (2-0.8,-0.4) (2-0.4,0) (2-0.4,0.8) (2+0,0.4) }] (2+0.,0.2);
  \draw[rotate=-20,fill=gray] (1.8,0.2) ellipse[x radius=0.08, y radius=0.16];
  \draw[dashed] (0,0)--(5,2.5);
  \draw[dashed] (0,0)--(5,-2.85);
  \draw[dashed] (0,0)--(4.6,-0.72);
  \draw[dashed] (0,0)--(4.4,-1.52);
  \draw[fill=black] (0,0) circle[radius=0.05];
  \draw[<->] (-0.1,0.2)--(1.5,1);
  \draw[<->] (-0.2,0.3)--(4.,2.4);
  \draw[rotate=20] (1.,0.2) node{$1$};
  \draw[rotate=20] (2.5,0.7) node{$\frac{1}{h}$};
  \draw (7,1) node{$\mathbb{R}^{d_{Q}}$};
\end{tikzpicture}\\
\caption{ The grey areas represent on $Q$ a ball of radius $1$ (resp. $h$) for the metric $\frac{1}{h^{2}}g$ (resp. $g$)
and on $Q^{h}$ the isometric ball of radius $1$ for the metric $g^{h}$\,.}
\end{figure}\\

\noindent\textbf{Bismut hypoelliptic Laplacian:} We do the same kind of scalings as above for the Bismut hypoelliptic Laplacian. Actually we will start from the expression  \eqref{eq:WeitzbL2}\eqref{eq:alpha}\eqref{eq:betaprelim}\eqref{eq:gamma} of the operator $B_{\pm,b,\frac{V}{h}}$ which is actually determined by the data $(Q,g,\frac{V}{h},b)$ where $(Q,g)$ is the base riemannian manifold, $V\in \mathcal{C}^{\infty}(Q;\mathbb{R})$ is the potential function and $b,h>0$ are the two parameters:
$$
B_{\pm,b,\frac{V}{h}}=B_{\pm,(Q,g,\frac{V}{h},b)}=\frac{1}{b^{2}}\alpha_{\pm,(Q,g)}+\frac{1}{b}\beta_{\pm,(Q,g,\frac{V}{h})}+\gamma_{\pm,(Q,g,\frac{V}{h})}\,.
$$
By mimicking what we observed for the Witten Laplacian, we firstly want to establish a simple relation between $B_{\pm,(Q,g,\frac{V}{h},*)}$ and $B_{\pm,(Q,\frac{1}{h^{2}}g,\frac{V}{h},*)}$\,. We notice
$$
(\frac{1}{h^{2}}g)\oplus (\frac{1}{h^{2}}g)^{-1}=(\frac{1}{h^{2}}g)\oplus (h^{2}g^{-1})
$$
while the Christoffel symbols $\Gamma_{ij}^{k}(q)$ are the same for the metric $g$ and the rescaled metric $\frac{1}{h^{2}}g$ and the Levi-Civita connection is not changed.
If local orthonormal frames given by \eqref{eq:fihfi1} and \eqref{eq:fihfi2}
are denoted by $(f_{i,g})_{1\leq i\leq d}$\,,$(\hat{f}^{i}_{g})_{1\leq i\leq d}$ $(f^{i}_{g})_{1\leq i\leq d}$ and $(\hat{f}_{i,g})_{1\leq i\leq d}$ for the metric $g$\,, orthonormal frames for the metric $\frac{1}{h^{2}}g$ are given by
\begin{eqnarray*}
  && f_{i,\frac{1}{h^{2}}g}=hf_{i,g}\quad,\quad \hat{f}^{i}_{\frac{1}{h^{2}}g}=\frac{1}{h}\hat{f}^{i}_{g}\quad\,,\\
  \text{and}&&
    f^{i}_{\frac{1}{h^{2}}g}=\frac{1}{h}f^{i}_{g}\quad,\quad \hat{f}_{i,\frac{1}{h^{2}}g}=h\hat{f}_{i,g}\,.
\end{eqnarray*}
Other simple relations for the kinetic energy and the hamiltonian vector field $\mathcal{Y}$ are:
$$
\frac{|p|_{q,g}^{2}}{2}=\frac{g^{ij}(q)p_{i}p_{j}}{2}=\frac{1}{2h^{2}}(\frac{1}{h^2}g)^{ij}p_{i}p_{j}=\frac{|p|^{2}_{q,\frac{1}{h^{2}}g}}{2h^{2}}\quad
\text{and}\quad \mathcal{Y}_{g}=\frac{1}{h^{2}}\mathcal{Y}_{\frac{1}{h^{2}}g}
$$
while the symplectic form $\sigma=dp\wedge dq$ on $T^{*}Q$ is not changed.\\
Although $B_{\pm,(Q,g,\frac{V}{h},b)}$ preserves the total degree $|I|+|J|$\,, it mixes the horizontal degree $|I|$ and vertical degree $|J|$ for sections $s_{I}^{J}(x)f^{I}\hat{f}_{J}$\,. The different homogeneities in the conformal change of metric from $g$ to $\frac{1}{h^{2}}g$ must be considered carefully as well.\\
Because $X=T^{*}Q$ is a vector bundle on $Q$\,, while $\mathcal{E}_{\pm}=\pi^{*}(\Lambda T^{*}Q\otimes \Lambda TQ\otimes F_{\pm})$\,, we define the mapping $\Psi_{h}:\mathcal{S}(X;\mathcal{E}_{\pm})\to \mathcal{S}(X;\mathcal{E}_{\pm})$ by
$$
\Psi_{h}(s_{I}(q,p)^{J}f^{I}_{\frac{1}{h^{2}}g}\hat{f}_{J,\frac{1}{h^2}g})= h^{-\frac{d}{2}+|I|-|J|}s_{I}^{J}(q,h^{-1} p)f^{I}_{\frac{1}{h^{2}}g}\hat{f}_{J,\frac{1}{h^{2}}g}=h^{-\frac{d}{2}}s_{I}^{J}(q,h^{-1} p)f^{I}_{g}\hat{f}_{J,g}\,.
$$
We obtain
\begin{align*}
   \Psi_{h}^{-1}(\alpha_{\pm,(Q,g)})\Psi_{h}&=\frac{1}{2}\left(-h^{-2}\Delta_{g}^{V}+h^{2}|p|^{2}_{q,g}\pm (2\hat{f}_{i,g}\mathbf{i}_{\hat{f}^{i}_{g}}-d)\right)
    =
                                              \alpha_{\pm, (Q,\frac{1}{h^{2}}g)}\\
\text{and}\quad  \Psi_{h}^{-1}  (\beta_{\pm,(Q,g,\frac{V}{h})})\Psi_{h}&=-(\pm h\nabla^{\mathcal{E}_{\pm}}_{\mathcal{Y}_{g}}-\frac{1}{h^{2}}(f_{i,g}V)\nabla^{\mathcal{E}_{\pm}}_{\hat{f}^{i}_{g}})=
  -\frac{1}{h}(\pm \nabla^{\mathcal{E}_{\pm}}_{\mathcal{Y}_{\frac{1}{h^{2}}g}}-(f_{i,\frac{1}{h^{2}}g}\frac{V}{h})\nabla^{\mathcal{E}_{\pm}}_{\hat{f}^{i}_{\frac{1}{h^{2}}g}})=\frac{1}{h}\beta_{\pm,(Q,\frac{1}{h^{2}}g,\frac{V}{h})} \,. 
\end{align*}
For $\Psi_{h}^{-1}\gamma_{\pm,Q,g}\Psi_{h}$ firstly notice that the Riemann curvature tensors are compared according to
$$
R^{TQ}_{g}=h^{2}R^{TQ}_{\frac{1}{h^{2}}g}
$$
while the coefficient $\tilde{\Gamma}_{ij}^{k}(q)=f^{k}(\nabla^{TQ}_{f_{i}}f_{j})$ satisfies
$$
\tilde{\Gamma}_{ij,g}^{k}(q)=\frac{1}{h}\tilde{\Gamma}^{k}_{ij,\frac{1}{h^{2}}g}\,.
$$
The definition of the mapping $\Psi_{h}$ ensures the identity of the tensorial operations
$$
\Psi_{h}^{-1}
\begin{pmatrix}
  f^{i}_{g}\wedge\\
  \hat{f}_{i,g}\wedge
\end{pmatrix}
\Psi_{h}=\begin{pmatrix}
  f^{i}_{\frac{1}{h^{2}}g}\wedge\\
  \hat{f}_{i,\frac{1}{h^{2}}g}\wedge
         \end{pmatrix}\quad\text{and}\quad
\Psi_{h}^{-1}
\begin{pmatrix}
  \mathbf{i}_{f_{i,g}}\\
  \mathbf{i}_{\hat{f}^{i}_{g}}
\end{pmatrix}
\Psi_{h}=
\begin{pmatrix}
  \mathbf{i}_{f_{i,\frac{1}{h^{2}}g}}\\
  \mathbf{i}_{\hat{f}^{i}_{\frac{1}{h^{2}}g}}
\end{pmatrix}\,.
$$
We deduce
$$
\Psi_{h}^{-1}\gamma_{\pm,(Q,g,\frac{V}{h})}\Psi_{h}=\frac{1}{h^{2}}\gamma_{\pm,(Q,\frac{1}{h^{2}}g,\frac{V}{h})}\,.
$$
We have proved
$$
\Psi_{h}^{-1}B_{\pm,(Q,g,\frac{V}{h},b)}\Psi_{h}=\frac{1}{h^{2}}\left[\frac{h^{2}}{b^{2}}\alpha_{\pm,(Q,\frac{1}{h^{2}}g)}+\frac{h}{b}\beta_{\pm,(Q,\frac{1}{h^{2}g},\frac{V}{h})}
+\gamma_{\pm,(Q,\frac{1}{h^{2}}g,\frac{V}{h})}\right]=\frac{1}{h^{2}}B_{\pm,(Q,\frac{1}{h^{2}}g,\frac{V}{h},\frac{b}{h})}
$$
Let us consider now what happens on the functional spaces.\\
The linear map $\Psi_{h}$ is actually a unitary transform from $L^{2}_{\frac{1}{h^{2}}g}(X,dqdp;\mathcal{E}_{\pm})$ to  $L^{2}_{g}(X,dqdp;\mathcal{E}_{\pm})$\,.\\
The $h$-dependent norms for the $\tilde{W}^{s_{1},s_{2}}(X;\mathcal{E}_{\pm})_{g}$ were not studied in \cite{NSW} but we follow the dilatation trick presented for the Witten Laplacian in order to reduce the problem to uniform estimates in  the case $h=1$\,.
\begin{definition}
  \label{def:Ws1s2h}
  On the cotangent space $X=T^{*}Q$ where $Q$ is endowed with the riemannian metric $g$\,, the $h$-dependent norm, $h\in ]0,1]$\,, of $\tilde{\mathcal{W}}^{s_{1},s_{2}}(X;\mathcal{E}_{\pm})$ is given by
  $$
\|u\|_{\tilde{\mathcal{W}}^{s_{1},s_{2}}_{h}}=\|\mathcal{O}^{s_{1}/2}(W_{\theta,h}^{2})^{s_{2}/2}u\|_{L^{2}(X;\mathcal{E}_{\pm})}
$$
where
$$
W_{\theta,h}^{2}=\sum_{j=1}^{J}\theta_{j}(q)(C_{g}-h^{2}\Delta_{H}+C_{g}\mathcal{O}^{2})\theta_{j}(q)\,.
$$
\end{definition}
Remember that the operator $W_{\theta,h}^{2}$ is an elliptic self-adjoint operator for any fixed $(Q,g,h)$ with $h\in]0,1]$ when $C_{g}\geq 1$ is chosen large enough. However the uniformity of the subelliptic estimates for $B_{\pm,(Q,g,\frac{V}{h},b)}$ with these $h$-dependent norms requires some explanation.\\
We keep track of the change of riemannian metrics with subscripts like before and write $\Delta_{H}=\Delta_{H,g}$ and $W^{2}_{\theta,h}=W^{2}_{\theta,h,g}$\,.
Actually the definition of $\Delta_{H,\frac{1}{h^{2}}g}$ gives $\Delta_{H,\frac{1}{h^{2}}g}=h^{2}\Delta_{H,g}$ and
$$
\Psi_{h}^{-1}W^{2}_{\theta,h,g}\Psi_{h}=(C_{g}-\Delta_{H,\frac{1}{h^{2}}g}+C_{g}(\mathcal{O}_{\frac{1}{h^{2}}g})^{2})=W^{2}_{\theta,1,\frac{1}{h^{2}}g}\,.
$$
After the riemannian embedding $Q\to \mathbb{R}^{d_{Q}}$ and the identfication of $X=T^{*}Q$ as a subbundle of $(T^{*}\mathbb{R}^{d_{q}})\big|_{Q}$ the dilatation $\Phi_{h}:q\mapsto \frac{q}{h}$ in $\mathbb{R}^{d_{Q}}$ with $\Phi_{h,*}:X=T^{*}Q\to X^h=T^{*}Q^h$ is a symplectic map, and an isometry from $(X,(\frac{1}{h^{2}}g)\oplus^{\perp}(h^{2}g^{-1}))$ to $(X^{h}, g^{h}\oplus^{\perp}(g^{h})^{-1})$\,.
We obtain
$$
\Phi_{h,*}\Psi_{h}^{-1}(W^{2}_{\theta,h,g})\Psi_{h}\Phi_{h}^{*}=W^{2}_{\theta,1,g^{h}}\,,
$$
and
\begin{align*}
 \|u\|_{\tilde{\mathcal{W}}^{s_{1},s_{2}}_{h,g}(X;\mathcal{E}_{\pm})}&=\|\mathcal{O}_{g}^{s_{1}/2}(W^{2}_{\theta,h,g})^{s_{2}/2}u\|_{L^{2}_{g}(X;\mathcal{E}_{\pm})}=
                                                                       \|\mathcal{O}_{g^{h}}^{s_{1}/2}(W^{2}_{\theta,1,g^{h}})^{s_{2}/2}\Phi_{h,*}\Psi_{h}^{-1}u\|_{L^{2}_{g^{h}}(X_{h};\Phi_{h,*}\mathcal{E}_{\pm})}\\
  &=
\|\Phi_{h,*}\Psi_{h}^{-1}u\|_{\tilde{\mathcal{W}}^{s_{1},s_{2}}_{1,g^{h}}(X_{h};\Phi_{h,*}\mathcal{E}_{\pm})}\,.
\end{align*}
The above discussion can be summarized by the following statement.
\begin{proposition}
  \label{pr:scaling}
  With the above notation $\Phi_{h,*}\Psi_{h}^{-1}$  is a unitary map from $\tilde{\cal{W}}^{s_{1},s_{2}}_{h,g}(X,\mathcal{E}_{\pm})_{g}$ to \\$\tilde{\mathcal{W}}^{s_{1},s_{2}}_{1,g^{h}}(X^{h};\Phi_{h,*}\mathcal{E}_{\pm})$ for all $s_{1},s_{2}\in \mathbb{R}$ with
  \begin{eqnarray*}
    && \Phi_{h,*}\Psi_{h}^{-1} B_{\pm,(Q,g,\frac{V}{h},b)}\Psi_{h}\Phi_{h}^{*}=\frac{1}{h^{2}}B_{\pm,(Q^{h},g^{h},V^{h},\frac{b}{h})}\\
    &&
       \Phi_{h,*}\Psi_{h}^{-1} \alpha_{\pm,(Q,g)}\Psi_{h}\Phi_{h}^{*}=\alpha_{\pm,(Q^{h},g^{h})}\\
    && \Phi_{h,*}\Psi_{h}^{-1} \beta_{\pm,(Q,g,\frac{V}{h})}\Psi_{h}\Phi_{h}^{*}=\frac{1}{h}\beta_{\pm,(Q^{h},g^{h},V^{h})}\quad,\quad
       \Phi_{h,*}\Psi_{h}^{-1} \gamma_{\pm,(Q,g,\frac{V}{h})}\Psi_{h}\Phi_{h}^{*}=\frac{1}{h^{2}}\gamma_{\pm,(Q^{h},g^{h},V^{h})}\\
    \text{and}&&
                  \Phi_{h,*}\Psi_{h}^{-1} W^{2}_{\theta,g,h}\Psi_{h}\Phi_{h}^{*}=W^{2}_{\theta(h.),g^{h},1}\,,
  \end{eqnarray*}
  with $V^{h}(q)=\frac{1}{h}V(hq)$\,, $\theta_{j}(h.)(q)=\theta_{j}(hq)$ for $q\in Q^{h}$\,. Additionally $\nabla_{g^{h}}V^{h}$ and $g^{h}$ and $(g^{h})^{-1}$\,, expressed in the coordinates associated with the atlas $Q^{h}=\mathop{\cup}_{j=1}^{J}\frac{1}{h}\Omega_{j}$ (or $\frac{1}{h}$ times the coordinates associated with  $Q=\mathop{\cup}_{j=1}^{J}\Omega_{j}$) have uniformly bounded derivatives.
\end{proposition}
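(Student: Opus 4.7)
Almost all the ingredients are already in place in the preceding discussion; the task is to assemble them into a clean statement and take care of the functional-analytic unitarity at the level of the Sobolev spaces.

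First I would check the unitarity of $\Phi_{h,*}\Psi_h^{-1}$ at the $L^2$ level, in two stages. The map $\Psi_h$ is a unitary from $L^2_{\frac{1}{h^2}g}(X,dqdp;\mathcal{E}_\pm)$ to $L^2_g(X,dqdp;\mathcal{E}_\pm)$: indeed, in the frames $(f^I_g,\hat{f}_{J,g})$ and $(f^I_{\frac{1}{h^2}g},\hat{f}_{J,\frac{1}{h^2}g})$ the weight $h^{-d/2+|I|-|J|}$ in the definition of $\Psi_h$ compensates exactly for (i) the change of fiber variable $p\mapsto h^{-1}p$ in the vertical $L^2$-integral and (ii) the fact that the pointwise norm on $\Lambda^{|I|}(T^*X)^H\otimes\Lambda^{|J|}(T^*X)^V$ picks up a factor $h^{-2|I|+2|J|}$ under $g\mapsto \frac{1}{h^2}g$. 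Next, the Nash embedding picture identifies $(Q,\frac{1}{h^2}g)$ isometrically with $(Q^h,g^h)$ through $\Phi_h(q)=q/h$, so $\Phi_{h,*}$ is a symplectic isometry from $(X,(\frac{1}{h^2}g)\oplus^{\perp}(h^2g^{-1}))$ to $(X^h,g^h\oplus^{\perp}(g^h)^{-1})$ and therefore induces a unitary between the corresponding $L^2$ spaces of sections.

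Next I would collect the operator conjugation identities. The three formulas
\[
\Psi_h^{-1}\alpha_{\pm,(Q,g)}\Psi_h=\alpha_{\pm,(Q,\frac{1}{h^2}g)},\quad
\Psi_h^{-1}\beta_{\pm,(Q,g,\frac{V}{h})}\Psi_h=\tfrac{1}{h}\beta_{\pm,(Q,\frac{1}{h^2}g,\frac{V}{h})},\quad
\Psi_h^{-1}\gamma_{\pm,(Q,g,\frac{V}{h})}\Psi_h=\tfrac{1}{h^2}\gamma_{\pm,(Q,\frac{1}{h^2}g,\frac{V}{h})}
\]
are exactly the computations carried out just before the statement, relying on the scaling of the Christoffel symbols, the curvature tensor, the kinetic energy $|p|^2_{q,g}=h^{-2}|p|^2_{q,\frac{1}{h^2}g}$, the geodesic vector field $\mathcal{Y}_g=h^{-2}\mathcal{Y}_{\frac{1}{h^2}g}$, and the invariance of the tensorial operations $f^i\wedge$, $\hat f_i\wedge$, $\mathbf{i}_{f_i}$, $\mathbf{i}_{\hat f^i}$ under $\Psi_h^{-1}\cdot\Psi_h$. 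Then $\Phi_{h,*}$ simply transports each of the operators $\alpha_{\pm,(Q,\frac{1}{h^2}g)}$, $\beta_{\pm,(Q,\frac{1}{h^2}g,\frac{V}{h})}$, $\gamma_{\pm,(Q,\frac{1}{h^2}g,\frac{V}{h})}$ into their counterparts on $(Q^h,g^h,V^h)$ because $\Phi_h^*V^h=h^{-1}V=\frac{V}{h}$ and $\Phi_{h,*}$ is an isometry. Assembling these gives the formula for $B_{\pm,(Q,g,\frac{V}{h},b)}$, with the parameter $b$ turning into $b/h$ as required, because $\frac{1}{b^2}\alpha+\frac{1}{b}\cdot\frac{1}{h}\beta+\frac{1}{h^2}\gamma=\frac{1}{h^2}\bigl(\frac{h^2}{b^2}\alpha+\frac{h}{b}\beta+\gamma\bigr)$.

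Then I would handle the Sobolev norms. By the identity $\Delta_{H,\frac{1}{h^2}g}=h^2\Delta_{H,g}$ and $\mathcal{O}_g=\Psi_h(\mathcal{O}_{\frac{1}{h^2}g})\Psi_h^{-1}$ (which follows from the same tensorial compatibilities and the rescaling of $|p|_q^2$), one gets
\[
\Psi_h^{-1}W^2_{\theta,h,g}\Psi_h=W^2_{\theta,1,\frac{1}{h^2}g},
\]
and then $\Phi_{h,*}$ converts the right-hand side into $W^2_{\theta(h\cdot),1,g^h}$ on $X^h$. Since $\mathcal{O}$ is also transported onto $\mathcal{O}_{g^h}$, functional calculus on the strongly commuting self-adjoint pair $(W^2_\theta,\mathcal{O})$ gives that $\Phi_{h,*}\Psi_h^{-1}$ intertwines $\mathcal{O}_g^{s_1/2}(W^2_{\theta,h,g})^{s_2/2}$ with $\mathcal{O}_{g^h}^{s_1/2}(W^2_{\theta(h\cdot),1,g^h})^{s_2/2}$, which is exactly the claimed unitarity of $\tilde{\mathcal{W}}^{s_1,s_2}_{h,g}\to\tilde{\mathcal{W}}^{s_1,s_2}_{1,g^h}$ for every $s_1,s_2\in\mathbb{R}$.

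Finally I would record the uniform boundedness of derivatives. In the charts of $Q^h=\cup_{j=1}^{J}\frac{1}{h}\Omega_j$, the coefficients of $g^h$ are $g_{ij}(hq)$, so $\partial_q^\alpha g^h_{ij}=h^{|\alpha|}(\partial^\alpha g_{ij})(hq)$, which is bounded by $\|\partial^\alpha g_{ij}\|_{L^\infty(Q)}$ uniformly in $h\in]0,1]$; the same argument applies to $(g^h)^{-1}$ and to $\nabla_{g^h}V^h$ since $(\nabla_{g^h}V^h)(q)=(\nabla_gV)(hq)$ with a uniform control of its covariant derivatives. None of the steps here carries real difficulty: the main point to be careful about is the bookkeeping of the two competing scalings (the fiberwise dilation inside $\Psi_h$ and the base dilation $\Phi_h$), and in particular making sure that the bidegree-dependent weight $h^{-d/2+|I|-|J|}$ in $\Psi_h$ is exactly what is needed for simultaneous unitarity and for the tensorial invariance used in the conjugations of $\alpha_\pm,\beta_\pm,\gamma_\pm$.
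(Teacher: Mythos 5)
Your proposal is correct and follows essentially the same route as the paper, whose ``proof'' of Proposition~\ref{pr:scaling} is precisely the discussion preceding it: unitarity of $\Psi_{h}$ between $L^{2}_{\frac{1}{h^{2}}g}$ and $L^{2}_{g}$, the conjugation identities for $\alpha_{\pm},\beta_{\pm},\gamma_{\pm}$ obtained from the scaling of frames, Christoffel symbols, curvature and $\mathcal{Y}$, the transport by the isometry $\Phi_{h,*}$, the identity $\Psi_{h}^{-1}W^{2}_{\theta,h,g}\Psi_{h}=W^{2}_{\theta,1,\frac{1}{h^{2}}g}$ followed by functional calculus for the Sobolev norms, and the chain-rule bound $\partial_{q}^{\alpha}g^{h}=h^{|\alpha|}(\partial^{\alpha}g)(hq)$ for the uniform estimates. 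Your assembly of these ingredients, including the role of the bidegree weight $h^{-d/2+|I|-|J|}$ in $\Psi_{h}$, matches the paper's argument.
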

This result and what we recalled just above for  the semiclassical Witten Laplacian, allow to eliminate the parameter $h\in ]0,1]$ in the analysis. Actually it suffices to make the analysis for $(Q^{h},g^{h},V^{h})$ and $(Q^{h},g^{h},V^{h},\frac{b}{h})$ where the parameter $b$ is replaced by $\frac{b}{h}$ and to use the uniform control of all the norm estimates with respect to $h\in ]0,1]$ on the dilated manifolds $Q^{h}$ and $X^{h}=T^{*}Q^{h}$\,. 
\subsection{The Hypoelliptic Laplacian as a perturbed Geometric Kramers-Fokker-Planck operator}
\label{sec:hyplapSubEll}

Although we are ultimately interested in Bismut's hypoelliptic Laplacian $B_{\pm,b,\frac{V}{h}}=B_{\pm,(Q,g,\frac{V}{h},b)}$\,, Proposition~\ref{pr:scaling} with
$$
\Phi_{h,*}\Psi_{h}^{-1} B_{\pm,(Q,g,\frac{V}{h},b)}\Psi_{h}\Phi_{h}^{*}=\frac{1}{h^{2}}B_{\pm,(Q^{h},g^{h},V^{h},\frac{b}{h})}
$$
allows to reduce the analysis to $B_{\pm,\frac{b}{h},V^{h}}=B_{\pm,(Q^{h},g^{h},V^{h},\frac{b}{h})}$ with uniform controls of the local derivatives (in coordinate charts) of $\nabla_{g^{h}}V^{h}$\,, $g^{h}$ and $(g^{h})^{-1}$\,.
For the sake of simplicity we replace $\frac{b}{h}$ by $b>0$ and  we write
$B_{\pm,b,V^{h}}=B_{\pm,(Q^{h},g^{h},V^{h},b)}$\,.\\
We will use the short notations $\mathcal{E}_{\pm}^{h}=\Phi_{h,*}\mathcal{E}_{\pm}$ for the vector bundle above $X^{h}=T^{*}Q^{h}$ and the connection $\nabla^{\mathcal{E}_{\pm},h}$ will be the connection on $\mathcal{E}_{\pm}^{h}$ associated with the metric $g^{h}$ on $Q^{h}$\,.\\

With these modifications,  Bismut's hypoelliptic Laplacian can be written as
\begin{equation} \label{eq:formOfTheOperator}
B_{\pm,b,V^{h}} = P_{\pm,b} + R_{0,h} + R_{2,h} + \frac{1}{b}R_{1,\bot,h},
\end{equation} 
where the principal part is
\begin{equation}
\label{eq:defPpmb}
P_{\pm,b} = \frac{1}{b^2}\alpha_{\pm,g^{h}} \mp \frac{1}{b}\nabla_{\mathcal{Y}_{g^{h}}}^{{\cal E}_{\pm},h}
\end{equation}
and the three lower order corrections are
\begin{eqnarray*}
R_{0,h}&=&-\frac{1}{4}\langle R_{g^{h}}^{TQ}(f_{i,g^{h}},f_{j,g^{h}})f_{k,g^{h}},f_{\ell,g^{h}} \rangle (f^i_{g^{h}}-\hat{f}_{i,g^{h}})(f^j_{g^{h}}-\hat{f}_{j,g^{h}})
           \mathbf{i}_{f_{k,g^{h}} + \hat{f}^k_{g^{h}}}\mathbf{i}_{f_{\ell,g^{h}} + \hat{f}^{\ell}_{g^{h}} } + ( f_{i,g^{h}}(f_{j,g^{h}}V^{h}) \\
       &&\hspace{4cm}+ \tilde{\Gamma}_{ij,g^{h}}^k f_{k,g^{h}}V^{h} )(f^i_{g^{h}} - \hat{f}_{i,g^{h}}) \mathbf{i}_{f_{j,g^{h}} + \hat{f}^{j}_{g^{h}}}\\
R_{1,\bot,h}&=&R_{1}= (f_{i,g^{h}} V^{h}) \nabla_{\hat{f}^i_{g^{h}}}^{\mathcal{E}_{\pm},h}\\
  R_{2,h} &=&  \mp \langle R^{TQ}_{g^{h}}( p, f_{i,g^{h}})p,f_{j,g^{h}} \rangle (f^i_{g^{h}} - \hat{f}_{i,g^{h}})\mathbf{i}_{f_{j,g^{h}} + \hat{f}^j_{g^{h}}}
\end{eqnarray*}
where we have neglected the $\pm$ sign in the notations $R_{0,h},R_{1,\bot,h}, R_{2,h}$\,.\\
These notations are motivated by the following conditions, for a given differential operator $A$, indexed by $i=0,1,2$~:
\begin{eqnarray}
  \label{eq:hypothesisForPerturbationRi}
&\hspace{-1cm}\forall s\in \mathbb{R}, \exists C_{s,i}>0\,, \|A\|_{\mathcal{L}(\tilde{\mathcal{W}}^{i,0};L^2)} +\|A\|_{\mathcal{L}(L^2;\tilde{\mathcal{W}}^{-i,0})} + \|(W^2_{\theta})^{s/2} A (W^2_{\theta})^{-s/2} - A \|_{\mathcal{L}(L^2 ; L^2)} \leq C_{s,i}\,,&\\
  \label{eq:extraHypothesisForPerturbationRiPerp}
 & \pi_{0,\pm} A \pi_{0,\pm} = 0 \,.&
\end{eqnarray}
The collection of operators $R_{0,h},R_{1,h}=R_{1,\bot,h},R_{2,h}$ are differential operators in the class $\mathrm{OpS}_{\Psi}^{1}(Q^{h};\mathrm{End}(\mathcal{E}_{\pm}^{h}))$ introduced in \cite{NSW} while $(W^{2}_{\theta})^{s/2}\in \mathrm{OpS}^{s}_{\Psi}(Q^{h};\mathrm{End}({\cal E}_{\pm}^{h}))$ has a scalar principal symbol. We recall that actually $W^{2}_{\theta}=W^{2}_{\theta,g^{h}}$ and that, because of the uniform bounds of Proposition~\ref{pr:scaling}, all of the local seminorms of symbols are uniformly controlled with respect to $h\in]0,1]$\,.  Therefore $R_{0,h}$, $R_{1,\bot,h}$, $R_{2,h}$ all satisfy, uniformly with respect to $h\in ]0,1]$\,,
$\|(W^2_{\theta})^{s/2} R_{i,h}(W^{2}_{\theta})^{-s/2} - R_{i,h} \|_{\mathcal{L}(L^2 ; L^2)} \leq C_{{s,i}}$\,, while the inequality  $\|R_{i,h}\|_{{\cal L}(\tilde{{\cal W}}^{i,0};L^{2})}+\|R_{i,h}\|_{{\cal L}(L^2;\tilde{{\cal W}}^{-i,0})}\leq C_{i}$ according to the index $i=0,1,2$ is straigthforward.\\
Finally the index $\bot$ in $R_{1,\bot,h}$ recalls that $R_{1,h}=R_{1,\bot,h}$ satisfies the condition
\eqref{eq:extraHypothesisForPerturbationRiPerp}.\\
The following result allows to reduce the analysis of Bismut's hypoelliptic Laplacians in any $\tilde{\mathcal{W}}^{0,s}(X^{h};\mathcal{E}_{\pm}^{h})$ space to the case $s=0$\,.
\begin{proposition}
  \label{pr:perturbconjug}
  The conditions \eqref{eq:hypothesisForPerturbationRi} and \eqref{eq:extraHypothesisForPerturbationRiPerp} are left invariant by a conjugation by $(W^{2}_{\theta})^{s'/2}$ for any $s'\in \mathbb{R}$\,, or by taking the formal adjoint for the $L^{2}$-scalar product. Namely if $A$ satisfy condition \eqref{eq:hypothesisForPerturbationRi} (resp.\eqref{eq:extraHypothesisForPerturbationRiPerp} ) then $(W^{2}_{\theta})^{s'/2} A (W^{2}_{\theta})^{-s'/2}$ and the formal $L^2$-adjoint $A'$ also satisfy \eqref{eq:hypothesisForPerturbationRi} (resp. \eqref{eq:extraHypothesisForPerturbationRiPerp}).

  The conjugation of Bismut's hypoelliptic Laplacian by $(W^{2}_{\theta})^{s'/2}$\,, $s'\in \mathbb{R}$\,, equals
  \begin{eqnarray*}
&&(W^{2}_{\theta})^{s'/2}B_{\pm,b,V^{h}}(W^{2}_{\theta})^{-s'/2}=P_{\pm,b}+ R_{0,h}^{s'} + R_{2,h}^{s'} + \frac{1}{b}R_{1,\bot,h}^{s'}
    \\
    \text{with}&& R_{0,h}^{s'}=(W^{2}_{\theta})^{s'/2}R_{0,h}(W^{2}_{\theta})^{-s'/2}\quad,\quad  R_{2,h}^{s'}=(W^{2}_{\theta})^{s'/2}R_{2,h}(W^{2}_{\theta})^{-s'/2}\\
    \text{and}&& R^{s'}_{1,\bot,h}=R_{1,h}^{s'}=(W^{2}_{\theta})^{s'/2}R_{1,h}(W^{2}_{\theta})^{-s'/2}
                 \mp \left[(W^{2}_{\theta})^{s'/2}\nabla^{\mathcal{E}_{\pm},h}_{\mathcal{Y}_{g^{h}}}(W^{2}_{\theta})^{-s'/2}-\nabla^{\mathcal{E}_{\pm},h}_{\mathcal{Y}_{g^{h}}}\right]
  \end{eqnarray*}
  where $R_{0,h}^{s'},R_{1,h}^{s'},R_{2,h}^{s'}$ satisfy the condition \eqref{eq:hypothesisForPerturbationRi}, uniformly with respect to $h\in ]0,1]$\,, for the respective values of  $i=0,1,2$ and $R_{1,\bot,h}^{s}=R_{1,h}^{s'}$ satisfies the condition \eqref{eq:extraHypothesisForPerturbationRiPerp}. Additionally $(R_{0,h}=0~\text{and}~R_{2,h}=0)\Rightarrow (R_{0,h}^{s'}=0~\text{and}~R_{2,h}^{s'}=0)$\,.\\
  Finally the formal adjoint $B_{\pm,b,V^{h}}^{\prime,s'}$ for the $\tilde{\mathcal{W}}^{0,s'}(X^{h};\mathcal{E}_{\pm}^{h})$ scalar product, according to Definition~\ref{de:adjoints} satisfies
  \begin{eqnarray}
\label{eq:formadj1}
&&
  (W^{2}_{\theta})^{s'/2}B_{\pm,b,V^{h}}(W^{2}_{\theta})^{-s'/2}=(P_{\pm,b})^{\prime,0}+ (R_{0,h}^{s'})^{\prime} + (R_{2,h}^{s'})^{\prime} + \frac{1}{b}(R_{1,\bot,h}^{s'})^{\prime}
    \\
\label{eq:formadj2}
    \text{with}&&
                  (P_{\pm,b})^{\prime}=\frac{1}{b^{2}}\alpha_{\pm,g^{h}}\pm \frac{1}{b}\nabla^{\mathcal{E}_{\pm},h}_{\mathcal{Y}_{g^{h}}}\quad:\quad L^{2}(X^{h},dqdp;\mathcal{E}_{\pm}^h)\longrightarrow \mathcal{S}'(X^{h};\mathcal{E}_{\pm}^h)\,.
  \end{eqnarray}
\end{proposition}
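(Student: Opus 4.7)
The proposition has four parts: stability of conditions \eqref{eq:hypothesisForPerturbationRi} and \eqref{eq:extraHypothesisForPerturbationRiPerp} under conjugation by $(W^{2}_{\theta})^{s'/2}$ and under the $L^{2}$-formal adjoint, the algebraic expansion of $(W^{2}_{\theta})^{s'/2}B_{\pm,b,V^{h}}(W^{2}_{\theta})^{-s'/2}$, the verification that the new pieces still satisfy those conditions, and the adjoint formula. My plan is to first isolate one crucial commutation property, then deduce the stabilities, then carry out the algebraic expansion, and finally obtain the adjoint identity essentially for free. The central commutation observation, on which everything else rests, is that $[W^{2}_{\theta},\alpha_{\pm,g^{h}}]=0$, hence $[W^{2}_{\theta},\pi_{0,\pm}]=0$. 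This follows from \eqref{eq:locO}, \eqref{eq:locNV}, \eqref{eq:definitionW2theta} and $\alpha_{\pm}=\mathcal{O}\pm(N^{V}-d/2)$: through the isometry $\mathcal{I}_{\theta,X}$ both $\alpha_{\pm}$ and $W^{2}_{\theta}$ act as (commuting) block-diagonal operators of scalar type tensored with the identity.

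For stability, the condition \eqref{eq:extraHypothesisForPerturbationRiPerp} transfers trivially since $(W^{2}_{\theta})^{\pm s'/2}$ commutes with $\pi_{0,\pm}$, and likewise taking the $L^{2}$-adjoint preserves it because $\pi_{0,\pm}^{*}=\pi_{0,\pm}$. For the three components of \eqref{eq:hypothesisForPerturbationRi} and the conjugated operator $A^{s'}=(W^{2}_{\theta})^{s'/2}A(W^{2}_{\theta})^{-s'/2}$, the commutator bound comes from the telescoping identity
$$
(W^{2}_{\theta})^{s/2}A^{s'}(W^{2}_{\theta})^{-s/2}-A^{s'}=\bigl[(W^{2}_{\theta})^{(s+s')/2}A(W^{2}_{\theta})^{-(s+s')/2}-A\bigr]-\bigl[(W^{2}_{\theta})^{s'/2}A(W^{2}_{\theta})^{-s'/2}-A\bigr]\,,
$$
both brackets being bounded on $L^{2}$ by assumption. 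Combining this with $A^{s'}=A+\text{(}L^{2}\text{-bounded)}$ and $\mathcal{O}\geq d/2$ gives the two remaining $\tilde{\mathcal{W}}^{i,0}\!\to\!L^{2}$ and $L^{2}\!\to\!\tilde{\mathcal{W}}^{-i,0}$ bounds. Stability under the $L^{2}$-formal adjoint is obtained by duality, using $\mathcal{O}^{*}=\mathcal{O}$ and $(W^{2}_{\theta})^{*}=W^{2}_{\theta}$ to swap the two first-norm conditions and to transpose the commutator bound.

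The algebraic expansion is now direct. In $B_{\pm,b,V^{h}}=\frac{1}{b^{2}}\alpha_{\pm}\mp\frac{1}{b}\nabla_{\mathcal{Y}_{g^{h}}}^{\mathcal{E}_{\pm},h}+R_{0,h}+R_{2,h}+\frac{1}{b}R_{1,\bot,h}$, the $\alpha_{\pm}$ term is untouched by conjugation, while $\mp\frac{1}{b}\nabla_{\mathcal{Y}}$ produces the extra correction $\mp\frac{1}{b}\bigl[(W^{2}_{\theta})^{s'/2}\nabla_{\mathcal{Y}}(W^{2}_{\theta})^{-s'/2}-\nabla_{\mathcal{Y}}\bigr]$, which is grouped with $(W^{2}_{\theta})^{s'/2}R_{1,\bot,h}(W^{2}_{\theta})^{-s'/2}$ to define $R_{1,\bot,h}^{s'}$. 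The operators $R_{0,h}^{s'}$ and $R_{2,h}^{s'}$ satisfy \eqref{eq:hypothesisForPerturbationRi} with the same $i$ by the previous paragraph, and the vanishing implication for them is immediate. For $R_{1,\bot,h}^{s'}$ the new ingredient is to show that the commutator $[(W^{2}_{\theta})^{s'/2},\nabla_{\mathcal{Y}}](W^{2}_{\theta})^{-s'/2}$ still satisfies \eqref{eq:hypothesisForPerturbationRi} with $i=1$: by the $\Psi$-calculus of \cite{NSW} (uniform in $h\in ]0,1]$ thanks to Proposition~\ref{pr:scaling}) the commutator loses one unit of order from the $3/2$ of $\nabla_{\mathcal{Y}}$, so $[(W^{2}_{\theta})^{s'/2},\nabla_{\mathcal{Y}}](W^{2}_{\theta})^{-s'/2}$ is of order $1/2$, which is more than enough. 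Condition \eqref{eq:extraHypothesisForPerturbationRiPerp} for this commutator reduces to $\pi_{0,\pm}\nabla_{\mathcal{Y}}\pi_{0,\pm}=0$; on $\ker\alpha_{\pm}$, sections are of the form $e^{-|p|_{q}^{2}/2}$ times a horizontal tensor, and $\mathcal{Y}=\sum_i\tilde{p}_{i}f_{i}$ introduces a factor linear in $p$, which lies in the first excited Hermite level and is therefore orthogonal to $\ker\alpha_{\pm}$ in every fiber. Finally the adjoint formula follows from $(A^{\prime,s'})_{s'}=A_{s'}'$ (see \eqref{eq:formadjconj2}) applied to $A=B_{\pm,b,V^{h}}$: one just takes the $L^{2}$-formal adjoint of the expansion of $B_{s'}$, noting that $\alpha_{\pm}^{*}=\alpha_{\pm}$ and $(\nabla_{\mathcal{Y}_{g^{h}}}^{\mathcal{E}_{\pm},h})^{*}=-\nabla_{\mathcal{Y}_{g^{h}}}^{\mathcal{E}_{\pm},h}$, the latter because $\mathcal{Y}$ is Hamiltonian (divergence-free for $dqdp$) and $\nabla^{\mathcal{E}_{\pm},h}$ is a metric connection. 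The main genuine obstacle is the $\Psi$-calculus bookkeeping in Paragraph 3, keeping the constants uniform in $h$; once that is set up properly (as in \cite{NSW} together with Proposition~\ref{pr:scaling}) the remaining arguments are algebraic.
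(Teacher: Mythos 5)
Your proposal is correct and follows essentially the same route as the paper's proof: the commutation of $W^{2}_{\theta}$ with $\alpha_{\pm}$ (hence with $\pi_{0,\pm}$), the continuous embeddings $\tilde{\mathcal{W}}^{i,0}\subset L^{2}\subset\tilde{\mathcal{W}}^{-i,0}$ for the invariance of \eqref{eq:hypothesisForPerturbationRi}, the $\mathrm{OpS}^{*}_{\Psi}$-calculus of \cite{NSW} (with the uniformity in $h$ coming from Proposition~\ref{pr:scaling}) to handle $A_{\mathcal{Y}}$, the identity $\pi_{0,\pm}\nabla_{\mathcal{Y}}^{\mathcal{E}_{\pm},h}\pi_{0,\pm}=0$, and the self-adjointness of $\alpha_{\pm}$ together with the anti-adjointness of $\nabla_{\mathcal{Y}}^{\mathcal{E}_{\pm},h}$ for the adjoint formula. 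Your telescoping identity for the conjugated commutator condition and the Hermite-level argument for $\pi_{0,\pm}\nabla_{\mathcal{Y}}\pi_{0,\pm}=0$ merely make explicit steps the paper leaves implicit.
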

\begin{proof}
  The invariance of \eqref{eq:hypothesisForPerturbationRi} actually comes from the continuous imbeddings $\tilde{\mathcal{W}}^{i,0}\subset L^{2}\subset \tilde{\mathcal{W}}^{-i,0}$ for $i=0,1,2$\,.\\
The invariance of \eqref{eq:extraHypothesisForPerturbationRiPerp} is due to the commutation of $(W^{2}_{\theta})^{s'/2}$ with $\pi_{0,\pm}=1_{\left\{0\right\}}(\alpha_{\pm,g^{h}})$: Actually $W^{2}_{\theta}$ strongly commutes with $\mathcal{O}_{g^{h}}$ and preserves the vertical degree. It therefore commutes with $\alpha_{\pm,g^{h}}$ and with any functions of $\alpha_{\pm,g^{h}}$\,.\\
For the last property it suffices to check that $A_{\mathcal{Y}}=\left[(W^{2}_{\theta})^{s'/2}\nabla^{\mathcal{E}_{\pm},h}_{\mathcal{Y}_{g^{h}}}(W^{2}_{\theta})^{-s'/2}-\nabla^{\mathcal{E}_{\pm},h}_{\mathcal{Y}_{g^{h}}}\right]$ satisfies the two conditions \eqref{eq:hypothesisForPerturbationRi} for $i=1$ and \eqref{eq:extraHypothesisForPerturbationRiPerp}.\\
The estimate
\begin{equation}
\label{eq:WYWmY}
\|\left[(W^{2}_{\theta})^{s'/2}\nabla^{\mathcal{E}_{\pm},h}_{\mathcal{Y}_{g^{h}}}(W^{2}_{\theta})^{-s'/2}-\nabla^{\mathcal{E}_{\pm},h}_{\mathcal{Y}_{g^{h}}}\right]\|_{\mathcal{L}(\tilde{\mathcal{W}}^{1,0};L^{2})}\leq C_{s'}
\end{equation}
was proved in \cite{NSW}-Proposition~3.8, where the uniform constant $C_s'$  with respect to $h\in]0,1]$ is made possible by the uniform control of the derivatives of $g^h$ and $(g^h)^{-1}$ recalled in Proposition~\ref{pr:scaling}. By duality and because $\mathcal{A}_{\mathcal{Y}}^*+\mathcal{A}_{\mathcal{Y}}\in\mathcal{L}(L^2)$ we deduce as well
$$
\|\left[(W^{2}_{\theta})^{s'/2}\nabla^{\mathcal{E}_{\pm},h}_{\mathcal{Y}_{g^{h}}}(W^{2}_{\theta})^{-s'/2}-\nabla^{\mathcal{E}_{\pm},h}_{\mathcal{Y}_{g^{h}}}\right]\|_{\mathcal{L}(L^{2};\tilde{\mathcal{W}}^{-1,0})}\leq C_{s'}\,.
$$
Because $\nabla_{\mathcal{Y}_{g^{h}}}^{\mathcal{E}_{\pm},h}\in \mathrm{OpS}^{3/2}_{\Psi}(Q^h;\mathcal{E}_{\pm}^h)$\,, while $(W^{2}_{\theta})^{s/2}\in \mathrm{OpS}^{s}_{\Psi}(Q^h;\mathcal{E}_{\pm}^h)$ with a scalar principal symbol, we deduce that $A_{\mathcal{Y}}\in \mathrm{OpS}^{1/2}_{\Psi}(Q^h;\mathcal{E}_{\pm}^h)$\,, with local seminorm of symbols uniformly bounded with respect to $h\in]0,1]$\,.  Therefore $(W^{2}_{\theta})^{s}A_{\mathcal{Y}_{g^{h}}}(W^{2}_{\theta})^{-s}-A_{\mathcal{Y}_{g^{h}}}\in \mathrm{OpS}^{-1/2}_{\Psi}(Q^{h};\mathrm{End}(\mathcal{E}_{\pm}^{h}))$ is a bounded operator in $L^{2}(X^{h},dqdp;\mathcal{E}_{\pm}^{h})$ with norm uniformly bounded with respect to $h\in ]0,1]$\,.\\
The condition \eqref{eq:extraHypothesisForPerturbationRiPerp} is due to the identity $\pi_{0,\pm}\nabla^{\mathcal{E}_{\pm},h}_{\mathcal{Y}_{g^{h}}}\pi_{0,\pm}=0$ as continuous operator on $\mathcal{S}'(X^{h};\mathcal{E}_{\pm}^{h})$\,.\\
For the formal adjoint $B_{\pm,b,V^{h}}^{\prime,s'}$\,, it suffices to apply \eqref{eq:formadj1}\eqref{eq:formadj2} after noticing that $B_{\pm,b,V^{h}}$ is continuous as an operator  $\mathcal{S}(X^{h};\mathcal{E}_{\pm}^{h})\to \mathcal{S}(X^{h};\mathcal{E}_{\pm}^{h})$ and $\mathcal{S}'(X^{h};\mathcal{E}_{\pm}^{h})\to \mathcal{S}'(X^{h};\mathcal{E}_{\pm}^{h})$\,. The expression of $P_{\pm,b}'$ comes from the fact that $\alpha_{\pm,g^{h}}$ is self-adjoint and $\nabla_{\mathcal{Y}_{g^{h}}}^{\mathcal{E}_{\pm},h}$ is anti-adjoint because the connection $\nabla^{\mathcal{E}_{\pm},h}$ is unitary.
\end{proof}

\begin{proposition}
  \label{pr:main1sttext} 
There exists a constant  $C_{g}\geq 1$ determined by the metric $g$ and, for any $s\in \mathbb{R}$\,, a constant $C_{g,V,s}\geq 1$
  determined by $s\in\mathbb{R}$, the metric  $g$ and the potential function $V\in \mathcal{C}^{\infty}(Q;\mathbb{R})$ such that the following properties hold when 
  $0<b<\frac{1}{C_{g}}$ and $\kappa_{s}\geq C_{g,V,s}$\,:\\
  The operator $\frac{\kappa_{s}}{b^{2}}+B_{\pm,b,V^{h}}$\,, as an unbounded operator in  $\tilde{\mathcal{W}}^{0,s}(X^{h};\mathcal{E}_\pm^{h})$\,,  is essentially maximal accretive on $\mathcal{C}^{\infty}_{0}(X^{h};\mathcal{E}_{\pm}^{h})$ (or on $\mathcal{S}(X^{h};\mathcal{E}_\pm^{h})$).\\
  If $\overline{B_{\pm,b,V^{h}}}^{s}$ denotes its closure according to Definition~\ref{de:adjoints}, the inequalities
  \begin{equation}
    \label{eq:IPPineqTh}
\mathbb{R}\mathrm{e}\langle u\,,\, (\frac{\kappa_{s}}{b^{2}}+\overline{B_{\pm,b,V^{h}}}^{s})u\rangle_{\tilde{\mathcal{W}}^{0,s}}\geq \frac{1}{16 b^{2}}\left[\|u\|_{\tilde{\mathcal{W}}^{1,s}}^{2}+\kappa_{s}\|u\|_{\tilde{\mathcal{W}}^{0,s}}^{2}\right]\,,
\end{equation}
and
  \begin{align} \nonumber
  \left\| \left(\overline{B_{\pm,b,V^{h}}}^{s} - \frac{i\lambda}{b}\right)u\right\|_{\tilde{\mathcal{W}}^{0,s}}+\frac{2    \kappa_s}{b^2}\|u\|_{\tilde{\mathcal{W}}^{0,s}} 
   \geq &  \frac{1}{C_{g}}
    \Bigg(\left\| \frac{\mathcal{O}_{g^h}}{b^{2}}u \right\|_{\tilde{\mathcal{W}}^{0,s}} + 
    \left\|
      \frac{1}{b}\left( \nabla^{\mathcal{E}_{\pm},h}_{\mathcal{Y}_{g^{h}}} - i\lambda \right)u
    \right\|_{\tilde{\mathcal{W}}^{0,s}}
 \\
    \label{eq:principalInequality}
&+ \frac{1}{b^{4/3}}\left[||u||_{\tilde{\cal W}^{0,s+\frac{2}{3}}} + \|\left(\frac{|\lambda|}{\langle p \rangle_q} \right)^{2/3} u\|_{\tilde{\mathcal{W}}^{0,s}}+
                                                                                            \right]  +                                                                                           \left(\frac{|\lambda|^{1/2}}{b^{3/2}}\right)\|u\|_{\tilde{\mathcal{W}}^{0,s}}
    \Bigg)
\end{align}
hold for  every 
$u\in D(\overline{B_{\pm,b,V^{h}}}^{s})$ 
and every $\lambda\in \mathbb{R}$\,.\\
The formal adjoint $B_{\pm,b,V^{h}}^{\prime,s}$ and adjoint $B_{\pm,b,V^{h}}^{*,s}$ of Definition~\ref{de:adjoints} satisfy $\overline{(B_{\pm,b,V^{h}}^{\prime,s}\big|_{\mathcal{S}(X^h;\mathcal{E}_{\pm}^h)})}^{s}=B_{\pm,b,h}^{*,s}$ while the formal adjoint $B_{\pm,b,V^{h}}^{\prime,s}=(W^{2}_{\theta})^{-s}B_{\pm,b,V^{h}}^{\prime}(W^{2}_{\theta})^{s}$ satisfies \eqref{eq:formadj1}\eqref{eq:formadj2}.
\end{proposition}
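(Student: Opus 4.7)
My plan is to reduce to the case $s=0$ via Proposition~\ref{pr:perturbconjug} and then deduce everything from the geometric Kramers-Fokker-Planck (GKFP) estimates of \cite{NSW} applied to the principal part $P_{\pm,b}$, treating $R_{0,h}+R_{2,h}+\frac{1}{b}R_{1,\bot,h}$ as controlled perturbations. Concretely, setting $B^{(s)}=(W^2_\theta)^{s/2}B_{\pm,b,V^h}(W^2_\theta)^{-s/2}$, Proposition~\ref{pr:perturbconjug} rewrites $B^{(s)}=P_{\pm,b}+R^{s}_{0,h}+R^{s}_{2,h}+\frac{1}{b}R^{s}_{1,\bot,h}$ with remainders still obeying \eqref{eq:hypothesisForPerturbationRi} (and \eqref{eq:extraHypothesisForPerturbationRiPerp} for the index-$1$ term) uniformly in $h\in]0,1]$. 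Since $(W^2_\theta)^{s/2}$ is a unitary isomorphism $\tilde{\mathcal{W}}^{0,s}\to L^2$ preserving $\mathcal{S}(X^h;\mathcal{E}_\pm^h)$, the three statements at the value $s$ are equivalent to the corresponding $L^2$-statements for $B^{(s)}$, so it suffices to prove \eqref{eq:IPPineqTh}, \eqref{eq:principalInequality} and essential maximal accretivity on $\mathcal{S}(X^h;\mathcal{E}_\pm^h)$ in $L^2$ for the generic form $P_{\pm,b}+\tilde R_{0}+\tilde R_{2}+\frac{1}{b}\tilde R_{1,\bot}$.

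For \eqref{eq:IPPineqTh}, I would use that $\alpha_{\pm,g^h}\geq 0$ and that $\nabla^{\mathcal{E}_\pm,h}_{\mathcal{Y}_{g^h}}$ is anti-symmetric (since the connection $\nabla^{\mathcal{E}_\pm,h}$ is unitary), so $\mathrm{Re}\,\langle u,P_{\pm,b}u\rangle=\frac{1}{b^2}\langle u,\alpha_{\pm,g^h}u\rangle\geq \frac{1}{b^2}\|\pi_{\bot,\pm}u\|^2$ and one controls $\|u\|_{\tilde{\mathcal{W}}^{1,0}}^2$ up to a bounded error. The perturbations are absorbed as follows: $\tilde R_0$ is $L^2$-bounded hence controlled by $\frac{\kappa_s}{b^2}\|u\|^2$; by interpolation between the bounds $\tilde{\mathcal{W}}^{2,0}\to L^2$ and $L^2\to\tilde{\mathcal{W}}^{-2,0}$, $\tilde R_2$ is bounded $\tilde{\mathcal{W}}^{1,0}\to\tilde{\mathcal{W}}^{-1,0}$, so $|\langle u,\tilde R_2u\rangle|\leq C\|u\|_{\tilde{\mathcal{W}}^{1,0}}^2$, and this is absorbed by $\frac{1}{b^2}\|u\|_{\tilde{\mathcal{W}}^{1,0}}^2$ provided $b\leq 1/C_g$. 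The genuinely delicate term is $\frac{1}{b}\tilde R_{1,\bot}$: using \eqref{eq:extraHypothesisForPerturbationRiPerp} we split $\tilde R_{1,\bot}=\pi_{\bot,\pm}\tilde R_{1,\bot}+\tilde R_{1,\bot}\pi_{\bot,\pm}$, bound each by $C\|\pi_{\bot,\pm}(\cdot)\|\cdot\|\cdot\|_{\tilde{\mathcal{W}}^{1,0}}$ and absorb the $\|\pi_{\bot,\pm}u\|^2/b^2$ factor into $\mathrm{Re}\langle u,P_{\pm,b}u\rangle$ together with a small fraction of $\|u\|_{\tilde{\mathcal{W}}^{1,0}}^2/b^2$; the remaining $L^2$ error is swallowed by a large enough $\kappa_s/b^2$.

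The maximal subelliptic estimate \eqref{eq:principalInequality} would be obtained by applying the main theorem of \cite{NSW} (uniform in $h$ thanks to Proposition~\ref{pr:scaling}) directly to the principal part $P_{\pm,b}\mp\frac{i\lambda}{b}$. That theorem provides, for GKFP operators with spectral parameter $\frac{i\lambda}{b}$ in the transport direction, precisely the five terms listed in the right-hand side: the elliptic vertical gain $\|\mathcal{O}/b^2\|$, the transport term with shift $\lambda$, the sub-elliptic $2/3$-gain $b^{-4/3}\|\cdot\|_{\tilde{\mathcal{W}}^{0,2/3}}$ reflecting the bracket of $\mathcal{O}$ and $\nabla_\mathcal{Y}$, together with its $\lambda$-dependent sharp variants $b^{-4/3}\|(|\lambda|/\langle p\rangle_q)^{2/3}u\|$ and $b^{-3/2}|\lambda|^{1/2}\|u\|$. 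The perturbation arguments of the preceding paragraph show that $R_{0,h}+R_{2,h}+\frac{1}{b}R_{1,\bot,h}$ only shifts the right-hand side by quantities absorbable against $\frac{1}{b^2}\|\mathcal{O}^{1/2}u\|$ and $\frac{\kappa_s}{b^2}\|u\|$, which is exactly the role of the $\frac{2\kappa_s}{b^2}\|u\|$ term added on the left.

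Finally, essential maximal accretivity on $\mathcal{S}(X^h;\mathcal{E}_\pm^h)$: \eqref{eq:IPPineqTh} says $\frac{\kappa_s}{b^2}+\overline{B_{\pm,b,V^h}}^s$ is accretive, and accretivity of the formal adjoint $(\frac{\kappa_s}{b^2}+B_{\pm,b,V^h})^{\prime,s}$ follows from the same IPP argument applied to $(P_{\pm,b})^{\prime}=\frac{1}{b^2}\alpha_{\pm,g^h}\pm\frac{1}{b}\nabla^{\mathcal{E}_\pm,h}_{\mathcal{Y}_{g^h}}$ — the sign change in the antisymmetric part does not affect the real part. Combined, these yield density of the range and hence essential maximal accretivity; the identity $\overline{B_{\pm,b,V^h}^{\prime,s}|_{\mathcal{S}}}^s=B_{\pm,b,V^h}^{*,s}$ is its standard reformulation via Definition~\ref{de:adjoints} and \eqref{eq:adjconj}, while the explicit expression \eqref{eq:formadj2} of $P_{\pm,b}^{\prime}$ follows as above. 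I expect the hardest step to be the clean handling of the $\frac{1}{b}R_{1,\bot,h}$ term: without the off-diagonal vanishing \eqref{eq:extraHypothesisForPerturbationRiPerp}, a first-order perturbation scaled by $1/b$ would not be absorbable by the principal part, and one must use the $\pi_{\bot,\pm}$-splitting together with the elliptic gain of $\alpha_{\pm,g^h}$ on $\mathrm{Ran}\,\pi_{\bot,\pm}$ to recover the missing power of $b$ uniformly in $h$ and $s$.
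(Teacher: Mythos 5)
Your overall route is the same as the paper's: conjugate by $(W^2_\theta)^{s/2}$ to reduce to $s=0$ via Proposition~\ref{pr:perturbconjug}, invoke the geometric Kramers--Fokker--Planck estimates of \cite{NSW} (Proposition~7.2 there) for the model operator, and absorb $R_{0,h}$, $R_{2,h}$, $\frac{1}{b}R_{1,\bot,h}$ perturbatively. The IPP inequality \eqref{eq:IPPineqTh} and the subelliptic estimate \eqref{eq:principalInequality} are handled correctly in spirit, with two small inaccuracies: the term $\frac{|\lambda|^{1/2}}{b^{3/2}}\|u\|$ is \emph{not} supplied by \cite{NSW}; the paper derives it from the terms $\frac{1}{b^{4/3}}\|\frac{|\lambda|}{\langle p\rangle_q}u\|$ and $\frac{1}{b^2}\|\mathcal{O}u\|$ by a one-line interpolation, which you should add. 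Also, absorbing $R_{2,h}$ (which is quadratic in $p$) requires the full gain $\frac{1}{b^2}\|\mathcal{O}u\|$ from the right-hand side of \eqref{eq:principalInequality}, not $\frac{1}{b^2}\|\mathcal{O}^{1/2}u\|$; the paper phrases this as relative boundedness of $R_{2,h}$ with respect to the unperturbed closed operator with relative bound $O(b^2)<1$.

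The genuine gap is your argument for essential maximal accretivity. You claim that accretivity of $\frac{\kappa_s}{b^2}+B$ on $\mathcal{S}(X^h;\mathcal{E}_\pm^h)$ together with accretivity of its formal adjoint on $\mathcal{S}(X^h;\mathcal{E}_\pm^h)$ "yields density of the range and hence essential maximal accretivity". That implication is false in general: a symmetric operator that is not essentially self-adjoint (e.g.\ $i\frac{d}{dx}$ on $C_c^\infty(0,1)$) satisfies both accretivity statements after adding a constant. The correct criterion requires showing that the \emph{maximal} adjoint $A^{*,s}$ has trivial kernel after a positive shift, and the obstruction is that an element $v$ of $\ker(A^{*,s}+\lambda)$ is a priori only in $L^2$, so you cannot apply the formal-adjoint accretivity computation to it without first regularizing. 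The paper avoids this by importing the essential maximal accretivity of the model operator $P_{\pm,b,M}$ from \cite{NSW}-Proposition~7.2 and then preserving it under the $R_{2,h}$ perturbation via Kato--Rellich (\cite{ReSi}-Theorem~X.50), using the relative bound $C_g\nu_g b^2\leq 1/4$. (The intrinsic difficulty you would face if you proved it from scratch is visible in Proposition~\ref{pr:injectivityOfTheAdjointOfPiBPi}, where injectivity of an adjoint is established by a $\chi(\varepsilon W^2_\theta)$ mollification combined with Helffer--Sj\"ostrand commutator estimates.) You must either cite the NSW maximal accretivity statement and add the Kato--Rellich step for $R_{2,h}$, or supply such a regularization argument.
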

\begin{remark}
  \label{re:accretBb}
  It will be checked after Proposition~\ref{pr:subellipticEstimateA} that $C_{g,V,s}+\overline{B_{\pm,b,V^{h}}}^{s}$ is maximal accretive with
  $$
\forall u\in D(\overline{B_{\pm,b,V^{h}}}^{s})\,,\quad \mathbb{R}\mathrm{e}~\langle u\,,\, B_{\pm,b,V^{h}}u\rangle_{\tilde{\mathcal{W}}^{0,s}}\geq 0\,.
$$
\end{remark}
\begin{proof}
  By Proposition~\ref{pr:perturbconjug} the problem is reduced to the case $s=0$ for the operator
$$
P_{\pm,b}+ R_{0,h}^{s} + R_{2,h}^{s} + \frac{1}{b}R_{1,\bot,h}^{s}=\frac{1}{b^{2}}\mathcal{O}_{g^{h}}\mp \nabla^{\mathcal{E}_{\pm},h}_{\mathcal{Y}_{g^{h}}} +M_{0,s}(b,h)+M_{1,s}(b,h)+R_{2,h}
$$
with
\begin{eqnarray*}
  && M_{0,s}(b,h)=\frac{\pm(2\hat{f}_{i,g^{h}} \mathbf{i}_{\hat{f}^{i}_{g^{h}}} - d) + d}{2b^2}+R_{0,h}^{s} + R_{2,h}^s-R_{2,h}\quad,\quad \|M_{0,s}(b,h)\|_{\mathcal{L}(L^{2};L^{2})}\leq \frac{\nu_{0,s}}{b^{2}}\\
  && M_{1,s}(b,h)=  \frac{1}{b}R_{1,\bot,h}^{s}\quad,\quad \|M_{1}(b,h)\|_{\mathcal{L}(\tilde{\mathcal{W}}^{1,0};L^{2})}\leq \frac{\nu_{1,s}}{bh}\leq \frac{C_{g}+8\nu_{0,s}}{16 b^{2}}(1+b^{2})\,,\\
  && R_{2,h}= \mp \langle R^{TQ}_{g^{h}}( p, f_{i,g^{h}})p,f_{j,g^{h}} \rangle (f^i_{g^{h}} - \hat{f}_{i,g^{h}})\mathbf{i}_{f_{j,g^{h}} + \hat{f}^{j,g^{h}}}\quad,\quad \|R_{2}\|_{\mathcal{L}(\tilde{\mathcal{W}}^{2,0};L^{2})}\leq \nu_{g}\,.
\end{eqnarray*}
Actually 
$$
R_{2,h}^s-R_{2,h}=(W^2_\theta)^{s/2}R_2(W^2_\theta)^{-s/2}-R_{2,h} \in \mathrm{OpS}^{1-1}_{\Psi}(Q^{h};\mathrm{End}(\mathcal{E}_{\pm}^{h}))\subset \mathcal{L}(L^2;L^2)
$$
and the above inequalities hold true for suitably well chosen $s$-dependent values of  $\nu_{0,s}>0$ and $\nu_{1,s}>0$ when $0<b\leq 1$\,, uniformly with respect to $h\in ]0,1]$\,.
The last result concerned with the equality of the minimal and maximal extension of the formal adjoint results from the essential maximal accretivity, as it is recalled after Definition~\ref{de:adjoints}.\\
When the final term $R_{2,h}$ is replaced by $0$\,, the result is actually given by Proposition~7.2 in \cite{NSW} with the following changes:
\begin{itemize}
\item the lower bound $\frac{1}{8b^{2}}\left[\|u\|_{\tilde{\mathcal{W}}^{1,0}}^{2}+\kappa_{s}\|u\|^{2}_{L^{2}}\right]$ in \eqref{eq:IPPineqTh}\;
\item the coefficient $\frac{1}{4C_g(1+b)^7}$ in the right-hand side of \eqref{eq:principalInequality}, under the sufficient condition $\kappa_{s}\geq (C_g+16\nu_{0,s})(1+b^2)$\,;
\item the term $\left(\frac{|\lambda|^{1/2}}{b^{3/2}}\right)\|u\|_{L^{2}}$ in the right-hand side of  \eqref{eq:principalInequality}
which is not written in \cite{NSW}.
\end{itemize}
For the last term of $\left(\frac{|\lambda|^{1/2}}{b^{3/2}}\right)\|u\|_{L^{2}}$ in  \eqref{eq:principalInequality}\,, it suffices to notice the interpolation inequality
\begin{align*}
\left(\frac{|\lambda|^{1/2}}{b^{3/2}}\right)\|u\|_{L^{2}}&\leq 3\Big[\frac{1}{b^{4/3}}\|\frac{|\lambda|}{\langle p\rangle_{q}}u\|_{L^{2}}+
\frac{1}{b^{2}}\| \langle p\rangle_{q}^{2}u\|_{L^{2}}\Big]
  \\
  &\leq 12\Big[\frac{1}{b^{4/3}}\|\frac{|\lambda|}{\langle p\rangle_{q}}u\|_{L^{2}}+
\frac{1}{b^{2}}\|\mathcal{O}_{g^{h}}u\|_{L^{2}}\Big]\,.
\end{align*}
 Because $0<b\leq 1$\,, it suffices to replace the constant $C_{g,old}$ of Proposition~7.2 in \cite{NSW} by $C_g=C_{g,new}2^9\times 13 C_{g,old}$ and then to choose $C_{g,s}=2(C_{g,new}+16\nu_{0,s})$\,.\\
Let us consider now the case with the final term $R_{2,h}= \mp \langle R^{TQ}_{g^{h}}( p, f_{i,g^{h}})p,f_{j,g^{h}} \rangle (f^i_{g^{h}} - \hat{f}_{i,g^{h}})\mathbf{i}_{f_{j,g^{h}} + \hat{f}^j_{g^{h}}}$\,.
We set $A_s(b,h)=P_{\pm,b}+M_{0,s}(b,h)+M_{1,s}(b,h)$ and we now consider $A_{s}(b,h)+R_{2,h}$ by perturbative arguments.
The accretivity of $A_s(b,h)+R_{2,h}$ is due to 
$$
\left|\mathbb{R}\mathrm{e}\langle u\,,\, R_{2,h}u\rangle_{L^{2}}\right|\leq C_g'\|u\|_{\tilde{\mathcal{W}}^{1,0}}^{2}
$$
while we know
$$
\mathbb{R}\mathrm{e}\langle u\,,\, A_s(b,h)u\rangle_{L^{2}}\geq \frac{1}{8b^2}\left[\|u\|_{\tilde{\mathcal{W}}^{1,0}}^{2}+\kappa_{s}\|u\|^{2}_{L^{2}}\right]\,.
$$
It thus suffices to assume $0<b\leq \frac{1}{4\sqrt{C_g'}}$\,. 
The second inequality \eqref{eq:principalInequality} for $A_s(b,h)$ implies
$$
\forall u\in D(\overline{A_s(b,h)})\,,\quad \|\overline{A_s(b,h)}u\|_{L^2}+\frac{2\kappa_s}{b^2}\|u\|_{L^2}\geq \frac{1}{C_g b^2}\|\mathcal{O}_{g^h}u\|_{L^2}
\geq \frac{1}{C_g \nu_g b^2}\|R_{2,h} u\|_{L^2}\,.
$$
Therefore $R_{2,h}$ is a relatively bounded perturbation of $\overline{A_s(b,h)}$ with relative bound $C_g\nu_g b^2\leq 1/4<1$ provided that $0<b\leq \frac{1}{2\sqrt{C_g\nu_g}}$\,. By \cite{ReSi}-Theorem~X.50, $\overline{A_s(b,h)+R_{2,h}}$ is maximal accretive with the same domain as $\overline{A_s(b,h)}$\,. This relative boundedness also implies
$$
\|(\overline{B_{\pm,b,V^{h}}}-i\lambda)u\|_{L^2}+\frac{2\kappa_s}{b^2}\|u\|_{L^2}\geq \frac{3}{4}
\left[\|(\overline{A_s(b,h)}-i\lambda)u\|_{L^2}+\frac{2\kappa_s}{b^2}\|u\|_{L^2}\right]
$$
and the subelliptic estimate \eqref{eq:principalInequality}, with the coefficient $\frac{3}{4 C_g}$ in the right-hand side follows.\\
We end the proof by adjusting a new value of $C_g$ according to $C_{g,new}=\max(4/3 C_g,\sqrt{4C_g'},2\sqrt{C_g \nu_g})$\,.\\

\end{proof}
Let us recall a few consequences of Proposition~\ref{pr:main1sttext}:
\begin{enumerate}
\item For any $s\in \mathbb{R}$ and $z\in \mathbb{C}$\,, the compact imbedding: $\tilde{\mathcal{W}}^{0,s+2/3}(X^{h};\mathcal{E}_{\pm}^{h})\subset \tilde{\mathcal{W}}^{0,s}(X^{h};\mathcal{E}_{\pm}^{h})$ implies  $\overline{B_{\pm,b,V^{h}}}^{s}-z:D(\overline{B_{\pm,b,V^{h}}}^{s})\to \tilde{\mathcal{W}}^{s}(X^{h};\mathcal{E}_{\pm}^{h})$ is a Fredholm operator with index $0$\,. Therefore the spectrum of $\overline{B_{\pm,b,V^{h}}}^{s}$ is discrete.
\item By a bootstrap argument when $z\not\in \mathrm{Spec}(\overline{B_{\pm,b,V^{h}}}^{s})$ the resolvent $(\overline{B_{\pm,b,V^{h}}}^{s}-z)^{-1}$ sends continuously $\mathcal{S}(X^{h};\mathcal{E}_{\pm}^{h})$ to $\mathcal{S}(X^{h};\mathcal{E}_{\pm}^{h})$ and the same holds for $(B_{\pm,b,V^{h}}^{*,s}-z)^{-1}$\,. Hence for two different $s,s'\in \mathbb{R}$ the resolvent $(\overline{B_{\pm,b,V^{h}}}^{s}-z)^{-1}$ and $(\overline{B_{\pm,b,V^{h}}}^{s'}-z)^{-1}$ coincide as $\mathcal{L}(\mathcal{S}(X^{h};\mathcal{E}_{\pm}^{h});\mathcal{S}'(X^{h};\mathcal{E}_{\pm}^{h}))$-valued meromophic functions and
  $\mathrm{Spec}(\overline{B_{\pm,b,V^{h}}}^{s})$ does not depend on $s\in\mathbb{R}$ as well.
\item The subelliptic estimate \eqref{eq:principalInequality} ensures that $\overline{B_{\pm,b,V^{h}}}^{s}$ is cuspidal according to the terminology of \cite{Nie} (see also \cite{HerNi}\cite{HeNi}\cite{EcHa}\cite{BiLe}) and the integral representation
  $$
e^{-t\overline{B_{\pm,b,V^{h}}}^{s}}=\frac{1}{2i\pi}\int_{\Gamma_{b}}e^{-tz}(z-\overline{B_{\pm,b,h}}^{s})^{-1}~dz
$$
is a convergent integral for $t>0$ when
$$
\Gamma_{b}=\left\{z\in\mathbb{C}\,, \mathbb{R}\mathrm{e}\,z  = \frac{1}{C_{b}}\langle \mathrm{Im}\,z\rangle^{1/2}-C_{b}\right\}
$$
and $e^{-t\overline{B_{\pm,b,V^{h}}}^{s}}:\mathcal{S}'(X^{h};\mathcal{E}_{\pm}^{h})\mapsto \mathcal{S}(X^{h};\mathcal{E}_{\pm}^{h})$\,.\\
This implies that the poles of the resolvent $(z-B_{\pm,b,V^{h}})^{-1}$ are continuous finite rank operators from $\mathcal{S}'(X^{h};\mathcal{E}_{\pm}^{h})$ to $\mathcal{S}(X^{h};\mathcal{E}_{\pm}^{h})$\,.\\
\item Changing the contour $\Gamma_{b}$ above  allows to isolate the main contribution to $e^{-t\overline{B_{\pm,b,V^{h}}}^{s}}$ associated with eigenvalues with small real part from the others with exponentially smaller remainder as $t\to +\infty$\,.
\item With the scaling and Proposition~\ref{pr:scaling} all these functional properties can be transferred to the operator $B_{\pm,b,\frac{V}{h}}$ associated with $(Q,g,\frac{V}{h},b)$ after replacing the condition $0<b\leq \frac{1}{C_{g}}$ by $0<\frac{b}{h}\leq\frac{1}{C_{g}}$\,, the spaces $\tilde{\mathcal{W}}^{s_{1},s_{2}}(X^{h};\mathcal{E}_{\pm}^{h})$ by the spaces $\tilde{\mathcal{W}}^{s_{1},s_{2}}_{h}(X;\mathcal{E}_{\pm})$ according to Definition~\ref{def:Ws1s2h} and by multiplying the spectral parameter by $\frac{1}{h^{2}}$ or the time by $h^{2}$\,.
\end{enumerate}

\section{Improved lower bounds for modified operators} \label{sec:modifiedOperators}
In this whole section we work with the rescaled Bismut Laplacian $B_{\pm,b,V^h}$ associated with the scaled data $(Q^h,g^h,V^h,b)$ and the Sobolev spaces $\tilde{\cal{W}}^{s_1,s_2}(X^h;\mathcal{E}_{\pm}^h)=\tilde{\cal{W}}^{s_1,s_2}_{1}(X^h;\mathcal{E}_{\pm}^h)$\,. Although the connection, the vector field $ \cal{Y}$\,, the terms $\alpha_{\pm}$\,, $\beta_{\pm}$\,, $\gamma_{\pm}$\,, and some other related quantities depend on $h$ or the metric $g^h$\,, we will drop the corresponding subscript notations for the sake of simplicity.  This is especially relevant owing to the uniform estimates Proposition~\ref{pr:scaling} and  of Proposition~\ref{pr:main1sttext}. For further comparisons,  we keep the memory of the $h$-parameter only via the notations $V^h$, $Q^h$, $X^h$\,, $\mathcal{E}_{\pm}^h$ and $\nabla^{\mathcal{E}_{\pm},h}$\,.\\
For the accurate spectral asymptotic analysis we need subelliptic estimates for the operator $\overline{B_{\pm,b,V^h}}^s$ itself without adding the constant $\frac{\kappa_{s}}{b^{2}}$ in order to study the spectrum around $0$\,. Because $\alpha_{\pm}$ and possibly $\overline{B_{\pm,b,V^h}}^{s}$ have a non trivial kernel, resolvent estimates must be given for operators modified in such a way that the singularity of the resolvent at $z=0$ is removed with a good control as the parameter $b$ tends to $0$ (uniform with respect to $h\in]0,1]$)\,. The first modification consists in adding $A^{2}\pi_{0,\pm}$ with $A=A(b)$ suitably chosen according to $b$\,, the second modification consists in looking at $\pi_{\perp,\pm}\overline{B_{\pm,b,V^h}}^s\pi_{\perp,\pm}$ with $\pi_{\perp,\pm}=1-\pi_{0,\pm}$\,. Finally the third one consists in adding $A^{2}\pi_{0,\pm}\chi\big(\frac{2 W^{2}_{\theta}}{(LA)^{2}})\pi_{0,\pm}$ with $\chi\in\mathcal{C}^{\infty}_{0}(\mathbb{R};[0,1])$ instead of $A^{2}\pi_{0,\pm}$\,.

\subsection{The first modified operator $B_{\pm,b,V^h}+A^{2}\pi_{0,\pm}$}

The main result of this paragraph is about a subelliptic estimate for  $B_{\pm,b,V^h}+A^{2}\pi_{0,\pm}$ without adding a remainder term $\frac{\kappa_{b,h}}{b^{2}}$ and where the lower bound has coefficients which can be fixed large, independently of $b\to 0^+$. With this aim, the maximal subelliptic exponent $2/3$ is replaced by the lower value $2/9$ as a result of interpolation.
\begin{proposition}\label{pr:subellipticEstimateA}
     There exist two constants $C, C_s\geq 1$, which are respectively uniform and $s$-dependent, $s\in \mathbb{R}$, such that the condition $C_{s}\max(Ab,b,A^{-1})\leq 1 $  implies that $\overline{B_{\pm,b,V^{h}}+A^{2}\pi_{0,\pm}}^{s}$ with $D(\overline{B_{\pm,b,V^{h}}+A^{2}\pi_{0,\pm}-\frac{A^{2}}{2}}^{s})=D(\overline{B_{\pm,b,V^{h}}}^{s})\subset \tilde{\mathcal{W}}^{0,s}(X^{h};\mathcal{E}_{\pm}^{h})$ is maximal accretive with 
     \begin{eqnarray} \label{eq:subellipticEstimateWith2/5ForBismutLaplacian}
   C \left\| (B_{\pm,b,V^h} +A^2 \pi_{0,\pm}-z)u \right\|_{\tilde{\mathcal{W}}^{0,s}}  & \geq & A^{2}\left\| u \right\|_{\tilde{\mathcal{W}}^{0,s}} + A^{2} \left\| \mathcal{O} u \right\|_{\tilde{\mathcal{W}}^{0,s}} + A^{2}b\left\| (\nabla_{\mathcal{Y}}^{\mathcal{E}_{\pm},h}-i\mathrm{Im}~z)u \right\|_{\tilde{\mathcal{W}}^{0,s}} \nonumber\\ 
                                                                                       & &+ bA^{2}|\mathrm{Im}~z|^{1/2}\|u\|_{\tilde{\mathcal{W}}^{0,s}}+ A^{2}b^{\frac{2}{3}} \left\| u \right\|_{\tilde{\mathcal{W}}^{0,s+\frac{2}{3}}} + \frac{A^{\frac{8}{5}}}{\langle b |\mathrm{Im}~z|^{1/2}\rangle^{\frac{4}{5}}} \|u\|_{\tilde{\mathcal{W}}^{0,s+\frac{2}{5}}} \nonumber \\
       & & \quad + A^{\frac{16}{9}}\|u\|_{\tilde{\cal W}^{0,s+\frac{2}{9}}}
  \end{eqnarray} 
  for all $u\in \mathcal{S}(X^h;\mathcal{E}_{\pm}^h)$ and all $z\in\mathbb{C}$ such that $\real z\leq \frac{A^{2}}{2}$\,, and  where we recall that the operators $\mathcal{O},\mathcal{Y} $ and the Sobolev spaces $\tilde{W}^{s_1,s_2}$ depend on the metric $g^h$.
\end{proposition}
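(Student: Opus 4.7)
The plan is to leverage Proposition~\ref{pr:main1sttext} applied to $\overline{B_{\pm,b,V^{h}}}^{s}$ together with the extra coercivity provided by $A^{2}\pi_{0,\pm}$, and then deduce maximal accretivity. By Proposition~\ref{pr:perturbconjug} and the identity $\pi_{0,\pm}=1_{\{0\}}(\alpha_{\pm})$ (so that $\pi_{0,\pm}$ commutes with $W^{2}_{\theta}$ and $A^{2}\pi_{0,\pm}$ is invariant under $(W^{2}_{\theta})^{s/2}$-conjugation), one reduces to the case $s=0$ with constants uniform in $h\in]0,1]$, the $s$-dependence being absorbed into $C_{s}$.

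For $\real z\leq A^{2}/2$, set $\tilde{B}:=B_{\pm,b,V^{h}}+A^{2}\pi_{0,\pm}$ and apply Proposition~\ref{pr:main1sttext} to $\overline{B_{\pm,b,V^{h}}}^{0}$ with $\lambda=b\,\mathrm{Im}\,z$, combined with the triangle inequality
\[
\|(\overline{B_{\pm,b,V^{h}}}^{0}-i\,\mathrm{Im}\,z)u\|_{L^{2}}\leq \|(\tilde{B}-z)u\|_{L^{2}}+\bigl(A^{2}+|\mathrm{Re}\,z|\bigr)\|u\|_{L^{2}}.
\]
Multiplication by $b^{2}A^{2}$ produces on the right-hand side the ``principal'' terms $A^{2}\|\mathcal{O}u\|$, $A^{2}b\|\nabla_{\mathcal{Y}}^{\mathcal{E}_{\pm},h}u\|$, $A^{2}b^{2/3}\|u\|_{\tilde{\mathcal{W}}^{0,2/3}}$, $bA^{2}|\mathrm{Im}\,z|^{1/2}\|u\|$, up to a correction $c\bigl(A^{2}+|\mathrm{Re}\,z|\bigr)\|u\|$ that collects $(Ab)^{4/3}\leq 1$ from the triangle step and the $\tfrac{2\kappa_{s}}{b^{2}}\|u\|$ contribution from Proposition~\ref{pr:main1sttext}. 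This correction is absorbed by an independent coercivity estimate: using $\alpha_{\pm}\geq \pi_{\perp,\pm}$ one has $\real\langle u, b^{-2}\alpha_{\pm}u\rangle\geq b^{-2}\|u_{\perp}\|^{2}\geq A^{2}C_{s}^{2}\|u_{\perp}\|^{2}$ (under $Ab\leq 1/C_{s}$), together with $\real\langle u, A^{2}\pi_{0,\pm}u\rangle=A^{2}\|u_{0}\|^{2}$; coupling this with the antisymmetry of $\nabla_{\mathcal{Y}}^{\mathcal{E}_{\pm},h}$ for the unitary connection and with the $L^{2}$-bounds on $R_{0,h},R_{2,h},b^{-1}R_{1,\perp,h}$ tracked as in the proof of Proposition~\ref{pr:main1sttext} yields $\real\langle u,(\tilde{B}-z)u\rangle\gtrsim A^{2}\|u_{0}\|^{2}+b^{-2}\|u_{\perp}\|^{2}+|\mathrm{Re}\,z|\|u\|^{2}\gtrsim (A^{2}+|\mathrm{Re}\,z|)\|u\|^{2}$ for $\real z\leq A^{2}/2$. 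Cauchy--Schwarz then delivers $(A^{2}+|\mathrm{Re}\,z|)\|u\|\lesssim \|(\tilde{B}-z)u\|$, which absorbs the correction and establishes \eqref{eq:subellipticEstimateWith2/5ForBismutLaplacian} minus the last term.

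The remaining $A^{8/5}\|u\|_{\tilde{\mathcal{W}}^{0,s+2/5}}$ follows from a H\"older interpolation on the spectral scale of $W^{2}_{\theta}$: $\|u\|_{\tilde{\mathcal{W}}^{0,2/5}}\leq \|u\|^{3/5}\|u\|_{\tilde{\mathcal{W}}^{0,1}}^{2/5}$ is rewritten as $A^{8/5}\|u\|_{\tilde{\mathcal{W}}^{0,2/5}}\leq (A^{2}\|u\|)^{3/5}(A\|u\|_{\tilde{\mathcal{W}}^{0,1}})^{2/5}$, and the second factor is controlled via the explicit form $W^{2}_{\theta}=\sum_{j}\theta_{j}(C-h^{2}\Delta_{H}+C\mathcal{O}^{2})\theta_{j}$, the Hörmander-type control of horizontal derivatives $\langle u,-\Delta_{H}u\rangle$ via $\|u\|_{\tilde{\mathcal{W}}^{0,2/3}}$ and $\|\nabla_{\mathcal{Y}}^{\mathcal{E}_{\pm},h}u\|$, and the already established estimates for $A^{2}\|\mathcal{O}u\|$ and $A^{2}b\|\nabla_{\mathcal{Y}}u\|$; since $A\geq C_{s}\geq 1$, these can be demoted from $A^{2}$- to $A$-coefficients, giving $A\|u\|_{\tilde{\mathcal{W}}^{0,1}}\lesssim \|(\tilde{B}-z)u\|$, and Young's inequality concludes. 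Maximal accretivity of $\overline{\tilde{B}-A^{2}/2}^{s}$ follows by treating $A^{2}\pi_{0,\pm}$ as a bounded perturbation of the maximal accretive operator $\tfrac{\kappa_{s}}{b^{2}}+\overline{B_{\pm,b,V^{h}}}^{s}$ provided by Proposition~\ref{pr:main1sttext}, combined with the lower bound just proved (Lumer--Phillips). The main obstacle is the coercivity-absorption step: because $2\kappa_{s}/b^{2}$ dominates $A^{2}$ under the hypothesis $(Ab)^{2}\leq C_{s}^{-2}$, the absorption relies essentially on the strict $b^{-2}$ spectral gap of $\alpha_{\pm}$ on $\mathrm{Ran}\,\pi_{\perp,\pm}$ combined with the $A^{2}\pi_{0,\pm}$ shift, which is tight precisely under $Ab\leq 1/C_{s}$.
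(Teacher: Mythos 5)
Your reduction to $s=0$, the coercivity/absorption argument producing the first five terms on the right-hand side of \eqref{eq:subellipticEstimateWith2/5ForBismutLaplacian}, and the maximal accretivity via bounded perturbation all run parallel to the paper's Propositions~\ref{pr:firstEstimate} and~\ref{pr:subellipticestimateWithbDependence} and are essentially sound, modulo the bookkeeping of the remainders $R_{0,h},R_{2,h}$ (which the paper handles by a separate relative-boundedness step at the very end) and the fact that your absorption of the $2\kappa_s b^{-2}$ shift makes the leading constant $C$ $s$-dependent, whereas the statement keeps it uniform.

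The derivation of the last term $A^{8/5}\|u\|_{\tilde{\mathcal{W}}^{0,s+2/5}}$, however, has a genuine gap. Your interpolation identity $A^{8/5}\|u\|_{\tilde{\mathcal{W}}^{0,2/5}}\leq(A^{2}\|u\|)^{3/5}(A\|u\|_{\tilde{\mathcal{W}}^{0,1}})^{2/5}$ is correct, but the claimed input $A\|u\|_{\tilde{\mathcal{W}}^{0,1}}\lesssim\|(B_{\pm,b,V^h}+A^{2}\pi_{0,\pm}-z)u\|_{L^{2}}$ is false for general $u$. The operator only provides a $2/3$-gain with coefficient $A^{-2}b^{-2/3}$ and a control of $\|\nabla_{\mathcal{Y}}^{\mathcal{E}_{\pm},h}u\|$ with coefficient $A^{-2}b^{-1}$; both blow up relative to $A^{-1}$ as $b\to 0^{+}$, and no bracket argument on $-\Delta_{H}$ can upgrade them to a full derivative with a $b$-independent constant. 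The correct statement, \eqref{eq:IntermediateEstimate000}, reads $\tilde C^{-1}\|u\|_{\tilde{\mathcal{W}}^{0,s+1}}\leq(b+A^{-1})\|f\|_{\tilde{\mathcal{W}}^{0,s}}+b^{2}\|f\|_{\tilde{\mathcal{W}}^{0,s+1}}$ and carries the \emph{higher} norm of $f$ on the right; it is obtained by projecting the equation onto the first excited level $\pi_{1,\pm}$ of $\alpha_{\pm}$ to isolate $\nabla_{\mathcal{Y}}^{\mathcal{E}_{\pm},h}u_{0}$ and invoking Lemma~\ref{le:nabYpi0} (equivalence of $\|u_{0}\|_{L^2}+\|\nabla_{\mathcal{Y}}^{\mathcal{E}_{\pm},h}u_{0}\|_{L^2}$ with $\|u_{0}\|_{\tilde{\mathcal{W}}^{0,1}}$ on $\ker\alpha_{\pm}$) — a mechanism entirely absent from your proposal. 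One must then split $f=f_{L}+f_{H}$ by the spectral cut $\mathds{1}_{\{W^{2}_{\theta}\leq b^{-2}\}}$: for $u_{L}$ the term $b^{2}\|f_{L}\|_{\tilde{\mathcal{W}}^{0,s+1}}$ is traded for $b\|f_{L}\|_{\tilde{\mathcal{W}}^{0,s}}$ and your interpolation applies, while for $u_{H}$ one interpolates instead between the $2/3$-estimate and $\|u_{H}\|\leq\tilde C bA^{-1}\|f_{H}\|$, where the powers of $b$ cancel exactly: $(A^{-2}b^{-2/3})^{3/5}(bA^{-1})^{2/5}=A^{-8/5}$. Without this low/high-frequency decomposition the $b$-independent coefficient $A^{8/5}$ — the whole point of the proposition — cannot be reached.
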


\begin{remark}
The constants $C,C_{s}\geq 1$ in Proposition~\ref{pr:subellipticEstimateA} are  obtained after several steps, and at every step the values of the constants $C,C_{s}$ are suitably tuned. We will often conclude such an intermediate  analysis at step $n$ with the sentence ``Choose $(C_{new},C_{R,s,new})=$ Expression of  $(C_{old},C_{R,s,old})$'', where $_{old}$ refers to the values obtained at step $n-1$ and $_{new}$ to the conclusion for the step $n$. 
\end{remark}
Before starting a proof let us verify the maximal accretivity announced in Remark~\ref{re:accretBb}.
\begin{corollary}
  \label{cor:maxaccr}
  For all $s\in \mathbb{R}$ there exists $C_{s}\geq 1$ such that $C_{s}+\overline{B_{\pm,b,V^{h}}}^{s}$ is maximal accretive when $C_{s}b\leq 1$ and $h\in ]0,1]$:
$$
\forall u\in D(\overline{B_{\pm,b,V^{h}}}^{s})\,,\quad \mathbb{R}\mathrm{e}~\langle u\,,\, (C_{s}+B_{\pm,b,V^{h}})u\rangle_{\tilde{\mathcal{W}}^{0,s}}\geq 0\,.
$$
\end{corollary}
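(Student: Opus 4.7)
The plan is to deduce the corollary directly from Proposition~\ref{pr:subellipticEstimateA} by a bounded positive perturbation, after making a suitable choice of the parameter $A$.

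First I would verify that $\pi_{0,\pm}$ remains an orthogonal projection in the $\tilde{\mathcal{W}}^{0,s}$-scalar product. This is the key compatibility point: since $W^{2}_{\theta}$ strongly commutes with $\mathcal{O}_{g^{h}}$ and preserves the vertical degree, it commutes with $\alpha_{\pm,g^{h}}$, and hence with any Borel function of $\alpha_{\pm}$, in particular with $\pi_{0,\pm}=\mathbf{1}_{\{0\}}(\alpha_{\pm})$. This is exactly the observation already used in the proof of Proposition~\ref{pr:perturbconjug}. Conjugation by $(W^{2}_{\theta})^{s/2}$ therefore leaves $\pi_{0,\pm}$ unchanged, so $\pi_{0,\pm}$ is an orthogonal projection in $\tilde{\mathcal{W}}^{0,s}$ and the complementary projection $\pi_{\bot,\pm}=\mathrm{Id}-\pi_{0,\pm}$ is a non-negative bounded self-adjoint operator there.

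Next I would apply Proposition~\ref{pr:subellipticEstimateA} with the specific choice $A=\tilde C_{s}$ equal to (or larger than) the $s$-dependent constant appearing in that proposition. The triple condition $\tilde C_s\max(Ab,b,A^{-1})\leq 1$ then reduces to the single requirement $\tilde C_s^{2}b\leq 1$. Under this condition, the proposition yields that $\overline{B_{\pm,b,V^{h}}+A^{2}\pi_{0,\pm}}^{s}$ is maximal accretive in $\tilde{\mathcal{W}}^{0,s}(X^{h};\mathcal{E}_{\pm}^{h})$, with domain equal to $D(\overline{B_{\pm,b,V^{h}}}^{s})$.

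To conclude I would add the bounded non-negative perturbation $A^{2}\pi_{\bot,\pm}$. Because $A^{2}\pi_{\bot,\pm}\in \mathcal{L}(\tilde{\mathcal{W}}^{0,s})$ is self-adjoint and $\geq 0$ in this Hilbert space, adding it preserves maximal accretivity (and the domain), and we have the identity
$$
\overline{B_{\pm,b,V^{h}}+A^{2}\pi_{0,\pm}}^{s}+A^{2}\pi_{\bot,\pm}=\overline{B_{\pm,b,V^{h}}}^{s}+A^{2}\,\mathrm{Id}.
$$
Setting $C_{s}:=\tilde C_{s}^{2}$ then gives maximal accretivity of $C_{s}+\overline{B_{\pm,b,V^{h}}}^{s}$ under the assumption $C_{s}b\leq 1$, and the announced inequality $\mathrm{Re}\,\langle u,(C_{s}+B_{\pm,b,V^{h}})u\rangle_{\tilde{\mathcal{W}}^{0,s}}\geq 0$ follows by combining the accretivity of $\overline{B_{\pm,b,V^{h}}+A^{2}\pi_{0,\pm}}^{s}$ with the non-negativity of $A^{2}\pi_{\bot,\pm}-A^{2}\pi_{0,\pm}+A^{2}\,\mathrm{Id}=2A^{2}\pi_{\bot,\pm}\geq 0$. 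There is essentially no obstacle beyond the bookkeeping of constants; the only care needed is the commutation of $\pi_{0,\pm}$ with $W^{2}_{\theta}$ so that the positivity of $\pi_{\bot,\pm}$ survives the passage from $L^{2}$ to $\tilde{\mathcal{W}}^{0,s}$.
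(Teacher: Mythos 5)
Your proposal is correct and follows essentially the same route as the paper: apply Proposition~\ref{pr:subellipticEstimateA} with $A$ fixed equal to the $s$-dependent constant (so the condition collapses to $C_{s}^{2}b\leq 1$), and observe that $A^{2}-A^{2}\pi_{0,\pm}=A^{2}\pi_{\bot,\pm}\geq 0$ in $\tilde{\mathcal{W}}^{0,s}$, taking $C_{s,new}=C_{s,old}^{2}$. The extra details you supply (commutation of $\pi_{0,\pm}$ with $W^{2}_{\theta}$, preservation of maximal accretivity under a bounded accretive perturbation) are correct and merely make explicit what the paper leaves implicit.
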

\begin{proof}
  It suffices to notice
  $$
  \mathbb{R}\mathrm{e}~\langle u\,,\, (A^{2}+B_{\pm,b,V^{h}})u\rangle_{\tilde{\mathcal{W}}^{0,s}}\geq
    \mathbb{R}\mathrm{e}~\langle u\,,\, (B_{\pm,b,V^{h}}+A^{2}\pi_{0,\pm})u\rangle_{\tilde{\mathcal{W}}^{0,s}}\geq 0
    $$
    when $C_{s,old}\max(Ab,b,\frac{1}{A})\leq 1$\,, to choose $A=C_{s,old}$\,, $b\leq \frac{1}{C_{s,old}^{2}}$ and to take $C_{s,new}=C_{s,old}^{2}$\,.
\end{proof}
We start the proof of Proposition~\ref{pr:subellipticEstimateA} with the simpler operator 
\begin{equation}\label{eq:simplifiedOperator}
    P_{\pm,b}+\frac{1}{b}R_{1,\bot,h}+A^2\pi_{0,\pm}
\end{equation}
where $A$ is a positive number and $P_{\pm,b}$ and $R_{1,\bot,h}$ are defined as in \eqref{eq:formOfTheOperator}. 
Remember that the conjugated operator  $(W^2_{\theta})^{\frac{s}{2}}[P_{\pm,b}+\frac{1}{b}R_{1,\bot,h}+A^2\pi_{0,\pm}](W^2_{\theta})^{-\frac{s}{2}}$ with  $(W^2_{\theta})^{\frac{s}{2}} = (W^2_{\theta,g^h})^{\frac{s}{2}}$\,, takes the same form
  $P_{\pm,b}+\frac{1}{b}\tilde{R}_{1,s,\bot,h}+A^2\pi_{0,\pm}$ with a new $s$-dependent remainder term $\frac{1}{b}R_{1,\bot,h}$ with the same uniform estimates.
After this we will consider 
$$ 
B_{\pm,b,V^h}=[ P_{\pm,b} + \frac{1}{b}R_{1,\bot,h} ]+ R_{0,h} + R_{2,h} 
$$
by a simple perturbative argument. 

We use the notations    $u_0= \pi_{0,\pm}(u)$ and $u_{\bot} = \pi_{\bot,\pm}u=u-u_{0}$ for $u\in \mathcal{S}'(X^h;\mathcal{E}_{\pm}^h)$. The following properties are obvious
\begin{itemize}
    \item The equality $\mathcal{O}u_0 =\mathcal{O}_{g^h}u_0 = \frac{d}{2}u_0$ holds and therefore
    $$ \|u_0\|_{\tilde{\mathcal{W}}^{1,0}}^2 = \frac{d}{2} \|u_0\|^2_{L^2} \geq \frac{1}{2} \|u_0\|^2_{L^2}.$$
    \item With $\alpha_{\pm}= \mathcal{O}\pm (N_v-d/2)$ we  have  $\mathcal{O}+d/2\geq \alpha_\pm \geq \mathcal{O}-d/2$ and
\begin{eqnarray}
\label{eq:ubotW1}
    &&\|u_\bot\|^2_{\tilde{\mathcal{W}}^{1,0}}+\frac{d}{2}\|u_\bot\|_{L^2}^2\geq  \langle u_{\bot}\,,\, \alpha_\pm u_{\bot}\rangle\geq \|u_\bot\|^2_{\tilde{\mathcal{W}}^{1,0}}-\frac{d}{2}\|u_\bot\|_{L^2}^2
    \\
\label{eq:ubotL2}
    \text{while we know}&&
    \langle u_{\bot}\,,\, \alpha_\pm u_{\bot}\rangle\geq \|u_\bot\|_{L^2}^2\,.
\end{eqnarray}
\end{itemize}
We begin with the following integration by parts.

\begin{proposition}\label{pr:IppWithRealPart}
  For all $A,b \in \mathbb{R}_{+}^{*}$, the inequality
  \begin{equation}\label{eq:IppInequality}
     \real \langle (P_{\pm,b} + A^2 \pi_{0,\pm})u,u \rangle_{L^2} \geq \frac{2}{ (d+2)b^2} \| u_{\bot}\|_{\tilde{\mathcal{W}}^{1,0}}^2 +A^2 \|u_0\|_{L^2}
  \end{equation}
holds for all $u\in \mathcal{S}(X^h;\mathcal{E}_{\pm}^h)$.
\end{proposition}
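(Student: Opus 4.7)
The plan is to expand the quadratic form against the orthogonal decomposition $u=u_0+u_\bot$ and use two facts: $\alpha_\pm$ is self-adjoint with $\alpha_\pm u_0=0$, and $\nabla^{\mathcal{E}_\pm,h}_{\mathcal{Y}}$ is formally anti-adjoint on $\mathcal{S}(X^h;\mathcal{E}_\pm^h)$. The latter follows from the unitarity of the connection $\nabla^{\mathcal{E}_\pm,h}$ together with the divergence-freeness of the Hamiltonian vector field $\mathcal{Y}$ with respect to $dqdp$, as already invoked at the end of the proof of Proposition~\ref{pr:perturbconjug}. Consequently $\real\langle \mp\tfrac{1}{b}\nabla^{\mathcal{E}_\pm,h}_{\mathcal{Y}} u\,,\, u\rangle_{L^2} = 0$, so that
$$\real\langle P_{\pm,b} u\,,\, u\rangle_{L^2} = \tfrac{1}{b^2}\langle \alpha_\pm u\,,\, u\rangle_{L^2} = \tfrac{1}{b^2}\langle \alpha_\pm u_\bot\,,\, u_\bot\rangle_{L^2},$$
where the second equality uses self-adjointness of $\alpha_\pm$ and $\alpha_\pm u_0=0$.

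Next, I would extract a $\tilde{\mathcal{W}}^{1,0}$-norm from $\langle \alpha_\pm u_\bot\,,\, u_\bot\rangle_{L^2}$ with the optimal constant $\tfrac{2}{d+2}$. The idea is to form a convex combination of the two available lower bounds \eqref{eq:ubotW1} and \eqref{eq:ubotL2}: assigning weight $\tfrac{2}{d+2}$ to \eqref{eq:ubotW1} and weight $\tfrac{d}{d+2}$ to \eqref{eq:ubotL2}, the contributions in $\|u_\bot\|_{L^2}^2$ cancel exactly because
$$\tfrac{d}{d+2}-\tfrac{2}{d+2}\cdot\tfrac{d}{2}=0,$$
leaving $\langle \alpha_\pm u_\bot\,,\, u_\bot\rangle_{L^2} \geq \tfrac{2}{d+2}\|u_\bot\|^2_{\tilde{\mathcal{W}}^{1,0}}$. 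This produces the first term of the claimed lower bound.

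Finally, because $\pi_{0,\pm}$ is a self-adjoint orthogonal projection, $\real\langle A^2\pi_{0,\pm} u\,,\, u\rangle_{L^2} = A^2\|u_0\|^2_{L^2}$, which adds cleanly to the preceding estimate to yield \eqref{eq:IppInequality} (reading the last term as $A^2\|u_0\|^2_{L^2}$, which is presumably what is intended). There is essentially no obstacle: the only mild care required is justifying the integration by parts implicit in the anti-adjointness of $\nabla^{\mathcal{E}_\pm,h}_{\mathcal{Y}}$ on Schwartz sections, which is routine given the functional framework of Subsection~\ref{sec:Funcsp}, and in tracking the sharp numerical constant $\tfrac{2}{d+2}$ via the above convex combination.
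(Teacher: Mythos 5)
Your proof is correct and follows exactly the route of the paper, whose one-line proof reads ``use $\real\langle P_{\pm,b}u,u\rangle=\frac{1}{b^2}\langle u_\bot,\alpha_\pm u_\bot\rangle$ and \eqref{eq:ubotW1}\eqref{eq:ubotL2}''; your convex combination with weights $\frac{2}{d+2}$ and $\frac{d}{d+2}$ is precisely how the constant $\frac{2}{d+2}$ is extracted, and your reading of the last term as $A^2\|u_0\|_{L^2}^2$ (a typo in the statement) is confirmed by its use in \eqref{eq:IppBis}.
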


\begin{proof}
 Just use $\real\langle P_{\pm,b} u\,,\, u\rangle = \frac{1}{b^2}\langle u_\bot\,,\, \alpha_\pm u_\bot\rangle$ and \eqref{eq:ubotW1}\eqref{eq:ubotL2}.
\end{proof}

\begin{proposition}\label{pr:controlOfRByIpp}
  There is a positive constant $c_{R}>0$\,, such that for all $\varepsilon>0$, the inequality 
  $$ c_{R}|\real\left\langle R_{1,\bot,h}u , u \right\rangle | \leq \varepsilon \left\| u_{0} \right\|_{L^{2}}^{2} + (1+\frac{1}{\varepsilon})\left\| u_{\bot} \right\|_{\tilde{\mathcal{W}}^{1,0}}^{2} $$
  holds for all $u\in \mathcal{S}(X^h;\mathcal{E}_{\pm}^h)$.
\end{proposition}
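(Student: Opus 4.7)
The plan is to exploit the two defining features of $R_{1,\bot,h}$: that it satisfies the perturbation estimate \eqref{eq:hypothesisForPerturbationRi} for $i=1$, and that it satisfies the vanishing condition \eqref{eq:extraHypothesisForPerturbationRiPerp}, namely $\pi_{0,\pm}R_{1,\bot,h}\pi_{0,\pm}=0$. I would decompose $u=u_0+u_\bot$ with $u_0=\pi_{0,\pm}u$, $u_\bot=\pi_{\bot,\pm}u$, and expand
$$\langle R_{1,\bot,h}u,u\rangle = \langle R_{1,\bot,h}u_0,u_0\rangle + \langle R_{1,\bot,h}u_0,u_\bot\rangle + \langle R_{1,\bot,h}u_\bot,u_0\rangle + \langle R_{1,\bot,h}u_\bot,u_\bot\rangle.$$
By self-adjointness of $\pi_{0,\pm}$, the first term equals $\langle u,\pi_{0,\pm}R_{1,\bot,h}\pi_{0,\pm}u\rangle=0$, so only the three remaining terms have to be controlled.

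For the two cross terms, I would use the continuous mapping properties $R_{1,\bot,h}:\tilde{\mathcal{W}}^{1,0}\to L^2$ and (by duality applied to the analogous bound $R_{1,\bot,h}:L^2\to\tilde{\mathcal{W}}^{-1,0}$) obtain
$$|\langle R_{1,\bot,h}u_0,u_\bot\rangle| + |\langle R_{1,\bot,h}u_\bot,u_0\rangle| \leq 2 C_{1}\,\|u_0\|_{L^2}\,\|u_\bot\|_{\tilde{\mathcal{W}}^{1,0}},$$
where $C_1$ comes from \eqref{eq:hypothesisForPerturbationRi}. For the remaining diagonal term $\langle R_{1,\bot,h}u_\bot,u_\bot\rangle$ I would apply Cauchy-Schwarz together with the same mapping property and use $\mathcal{O}\geq d/2$ (so $\|u_\bot\|_{L^2}\leq \sqrt{2/d}\,\|u_\bot\|_{\tilde{\mathcal{W}}^{1,0}}$) to absorb everything into $\|u_\bot\|_{\tilde{\mathcal{W}}^{1,0}}^2$.

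Finally, a Young inequality $\|u_0\|_{L^2}\|u_\bot\|_{\tilde{\mathcal{W}}^{1,0}}\leq \delta\|u_0\|_{L^2}^2+\tfrac{1}{4\delta}\|u_\bot\|_{\tilde{\mathcal{W}}^{1,0}}^2$ with $\delta$ chosen proportional to $\varepsilon$ converts the product into the desired sum. Choosing $c_R$ as the inverse of the maximum of the constants that appear in front of $\|u_0\|_{L^2}\|u_\bot\|_{\tilde{\mathcal{W}}^{1,0}}$ and $\|u_\bot\|_{\tilde{\mathcal{W}}^{1,0}}^2$ yields exactly the announced form $\varepsilon\|u_0\|_{L^2}^2+(1+\tfrac{1}{\varepsilon})\|u_\bot\|_{\tilde{\mathcal{W}}^{1,0}}^2$. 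There is no real obstacle here: the argument is a two-line bookkeeping once the decomposition and the vanishing of the $00$-block are used; the only thing to be careful about is to apply the $\tilde{\mathcal{W}}^{1,0}\to L^2$ bound on the $u_\bot$-factor and the dual $L^2\to\tilde{\mathcal{W}}^{-1,0}$ bound on the $u_0$-factor in the appropriate cross term so that both are controlled with the same (uniform in $h$) constant.
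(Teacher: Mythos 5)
Your proposal is correct and follows essentially the same route as the paper: the paper writes $R_{1,\bot,h}=\pi_{0,\pm}R_{1,h}'\pi_{\bot,\pm}+\pi_{\bot,\pm}R_{1,h}''\pi_{0,\pm}+\pi_{\bot,\pm}R_{1,h}'''\pi_{\bot,\pm}$ (the operator-block form of your vanishing $00$-block plus three remaining terms) and then applies Cauchy--Schwarz and Young exactly as you do. The only cosmetic difference is in the term $\langle R_{1,\bot,h}u_0,u_\bot\rangle$, where the paper uses $\|u_0\|_{\tilde{\mathcal{W}}^{1,0}}=\sqrt{d/2}\,\|u_0\|_{L^2}$ (since $\mathcal{O}u_0=\tfrac{d}{2}u_0$) instead of your $L^2\to\tilde{\mathcal{W}}^{-1,0}$ bound, which is in any case directly part of hypothesis \eqref{eq:hypothesisForPerturbationRi} and need not be derived by duality.
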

\begin{proof}
From conditions \eqref{eq:hypothesisForPerturbationRi}, \eqref{eq:extraHypothesisForPerturbationRiPerp}  we deduce 
$$ R_{1,\bot,h}= \pi_{0,\pm}R_{1,h}'\pi_{\bot,\pm} + \pi_{\bot,\pm}R_{1,h}''\pi_{0,\pm} + \pi_{\bot,\pm}R_{1,h}'''\pi_{\bot,\pm} ,$$
with $R_{1,h}',R_{1,h}'',R_{1,h}'''\in\mathcal{L}(\tilde{\mathcal{W}}^{1,0};L^2)$. 
The triangular and Cauchy-Schwarz inequalities yield
  \begin{eqnarray*}
    |\real \left\langle R_{1,\bot,h}u , u \right\rangle|  &\leq &|\left\langle R_{1,h}' u_{\bot} , u_{0} \right\rangle| + |\left\langle R_{1,h}''u_{0} , u_{\bot} \right\rangle| + |\left\langle R_{1,h}'''u_{\bot} , u_{\bot} \right\rangle| \\
   & \leq & C_{R}\left( \left\| u_{\bot} \right\|_{\tilde{\mathcal{W}}^{1,0}}\left\| u_{0} \right\|_{L^{2}} + \left\| u_{0} \right\|_{L^2}\left\| u_{\bot} \right\|_{L^{2}} + \left\| u_{\bot} \right\|_{\tilde{\mathcal{W}}^{1,0}}\left\| u_{\bot} \right\|_{L^{2}} \right) \,,
  \end{eqnarray*}
  where $0<\frac{1}{2c_{R}}=C_{R}= \sup_{h\in ]0,1]} \max (\| R_{1,h}' \|_{\mathcal{L}(\tilde{\mathcal{W}}^{1,0};L^2)} ,\sqrt{\frac{d}{2}}\|R_{1,h}''\|_{\mathcal{L}(\tilde{\mathcal{W}}^{1,0};L^2)},\|R_{1,h}'''\|_{\mathcal{L}(\tilde{\mathcal{W}}^{1,0};L^2)})<\infty$ by our hypothesis on $R_{1,\bot,h}$. 
  \begin{eqnarray*}
    c_{R}|\real\left\langle R_{1,\bot,h}u , u \right\rangle| \leq  \left\| u_{\bot} \right\|_{\tilde{\mathcal{W}}^{1,0}}\left\| u_{0} \right\|_{L^{2}} + \left\| u_{\bot} \right\|_{\tilde{\mathcal{W}}^{1,0}}^{2}.
  \end{eqnarray*}
  The result follows when we apply the inequality 
  $$ \forall a,b,\varepsilon\in \mathbb{R}_{+}^{*}\,, \quad 2ab\leq \varepsilon a^{2} + \frac{1}{\varepsilon}b^{2},$$
  with $a=\|u_0\|_{L^2}$ and $b=\|u_{\bot}\|_{\tilde{\mathcal{W}}^{1,0}}$.
\end{proof}

The following proposition is a consequence of Proposition~\ref{pr:IppWithRealPart} and Proposition~\ref{pr:controlOfRByIpp}.

\begin{proposition}\label{pr:firstEstimate}
  There is a constant $C_{R,s}\geq 1$, which depends $s\in\mathbb{R}$, such that the condition $\max(Ab,b,\frac{1}{A})\leq \frac{1}{C_{R,s}}$ implies the inequalities
  \begin{eqnarray}
    \label{eq:IppBis}
    & \real\left\langle (P_{\pm,b}+\frac{1}{b}R_{1,\bot,h}+A^2\pi_{0,\pm}) u , u \right\rangle_{\tilde{\mathcal{W}}^{0,s}} & \geq \frac{1}{(d+2)b^{2}} \left\| u_{\bot} \right\|_{\tilde{\mathcal{W}}^{1,s}}^{2} + \frac{3A^{2}}{4} \left\| u_{0} \right\|_{\tilde{\mathcal{W}}^{0,s}}^{2}\\
    &\left\| (P_{\pm,b} + \frac{1}{b}R_{1,\bot,h} +A^2\pi_{0,\pm}-i\lambda) u \right\|_{\tilde{\mathcal{W}}^{0,s}} &  \geq \frac{3A^{2}}{4} \left\| u \right\|_{\tilde{\mathcal{W}}^{0,s}}  \label{eq:L2L2ResolvantEstimate} \,, \\
    \textrm{and} &\left\| (P_{\pm,b} +\frac{1}{b}R_{1,\bot,h} +A^2\pi_{0,\pm} -i\lambda) u \right\|_{\tilde{\mathcal{W}}^{0,s}}^{2} & \geq  \frac{3A^{2}}{4(d+2)b^{2}} \left\| u_{\bot} \right\|_{\tilde{\mathcal{W}}^{1,s}}^{2} + \frac{9A^{4}}{16} \left\| u_{0} \right\|_{\tilde{\mathcal{W}}^{0,s}}^{2} \label{eq:Ipp}
  \end{eqnarray}
for all $u\in \mathcal{S}(X^h;\mathcal{E}_{\pm}^h)$ and all $\lambda\in\mathbb{R}$\,. Moreover under the above condtion, $(P_{\pm,b}+\frac{1}{b}R_{1,\bot,h}+A^2\pi_{0,\pm})$ is essentially maximal accretive on $\mathcal{S}(X^h;\mathcal{E}_{\pm}^h)$ in $\tilde{\mathcal{W}}^{0,s}(X^h;\mathcal{E}_{\pm}^h)$.\\
\end{proposition}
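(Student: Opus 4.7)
The plan is first to reduce to the case $s=0$ by conjugation with $(W^2_\theta)^{s/2}$. Since $\alpha_\pm$ and $\pi_{0,\pm}$ both strongly commute with $W^2_\theta$ (the former being a function of $\mathcal{O}$ and of the vertical degree $N^V$), this conjugation leaves $P_{\pm,b}$ essentially unchanged and only modifies $\frac{1}{b}R_{1,\bot,h}$ into $\frac{1}{b}\tilde{R}_{1,s,\bot,h}$ satisfying conditions \eqref{eq:hypothesisForPerturbationRi}--\eqref{eq:extraHypothesisForPerturbationRiPerp} with $s$-dependent constants $c_{R,s}, C_{R,s}$ (uniform in $h\in]0,1]$ by Proposition~\ref{pr:scaling}). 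It therefore suffices to prove the three inequalities in $L^2(X^h;\mathcal{E}_\pm^h)$ for the operator $P_{\pm,b} + \frac{1}{b}\tilde{R}_{1,s,\bot,h} + A^2\pi_{0,\pm}$.

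For \eqref{eq:IppBis}, I would add the real part estimate of Proposition~\ref{pr:IppWithRealPart} to $\frac{1}{b}$ times the control of Proposition~\ref{pr:controlOfRByIpp} (applied to $\tilde{R}_{1,s,\bot,h}$), with the Young parameter chosen as $\varepsilon = A^2 b c_{R,s}/4$. With this choice the $\|u_0\|^2$-contribution of the perturbation equals exactly $\tfrac{A^2}{4}\|u_0\|^2$, which is absorbed by one quarter of the $A^2\|u_0\|^2_{L^2}$ lower bound, leaving $\tfrac{3A^2}{4}\|u_0\|^2_{L^2}$. The $\|u_\bot\|^2_{\tilde{\mathcal{W}}^{1,0}}$-contribution becomes $(\tfrac{1}{c_{R,s}b}+\tfrac{4}{A^2 b^2 c_{R,s}^2})\|u_\bot\|^2_{\tilde{\mathcal{W}}^{1,0}}$, which is bounded by $\tfrac{1}{(d+2)b^2}\|u_\bot\|^2_{\tilde{\mathcal{W}}^{1,0}}$ once $\tfrac{b}{c_{R,s}} + \tfrac{4}{A^2 c_{R,s}^2} \leq \tfrac{1}{d+2}$; this last inequality is guaranteed by the hypothesis $\max(Ab,b,A^{-1})\leq 1/C_{R,s}$ after $C_{R,s}$ has been fixed sufficiently large depending on $d$ and $c_{R,s}$. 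This step is the technical heart of the proof: the $1/b$-singularity of $\frac{1}{b}R_{1,\bot,h}$ must be absorbed jointly by the $L^2$-coercivity of $A^2\pi_{0,\pm}$ (forcing $A$ large) and by the $\tilde{\mathcal{W}}^{1,0}$-coercivity of $\alpha_\pm/b^2$ on $\pi_{\bot,\pm}$ (forcing $b$ small).

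The two other estimates and essential maximal accretivity follow from standard manipulations. For \eqref{eq:L2L2ResolvantEstimate}, I would use Cauchy-Schwarz in the $\tilde{\mathcal{W}}^{0,s}$-inner product (the $-i\lambda$ term drops from the real part), together with the comparison $\|u_\bot\|_{\tilde{\mathcal{W}}^{1,s}} \geq \|u_\bot\|_{\tilde{\mathcal{W}}^{0,s}}$ (from $W^2_\theta \geq 1$ when $C_g\geq 1$) and $A^2 b^2 \leq \tfrac{4}{3(d+2)}$ (a consequence of $Ab\leq 1/C_{R,s}$) to deduce $\tfrac{1}{(d+2)b^2}\|u_\bot\|^2_{\tilde{\mathcal{W}}^{1,s}} \geq \tfrac{3A^2}{4}\|u_\bot\|^2_{\tilde{\mathcal{W}}^{0,s}}$; the orthogonality $\|u\|^2 = \|u_0\|^2 + \|u_\bot\|^2$ for $\pi_{0,\pm}$ then yields the desired $\tfrac{3A^2}{4}\|u\|^2_{\tilde{\mathcal{W}}^{0,s}}$ lower bound. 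The squared estimate \eqref{eq:Ipp} follows by substituting $\|u\|_{\tilde{\mathcal{W}}^{0,s}} \leq \tfrac{4}{3A^2}\|(P_{\pm,b}+\tfrac{1}{b}R_{1,\bot,h}+A^2\pi_{0,\pm}-i\lambda)u\|_{\tilde{\mathcal{W}}^{0,s}}$ back into the Cauchy-Schwarz lower bound. Finally, essential maximal accretivity on $\mathcal{S}(X^h;\mathcal{E}_\pm^h)$: accretivity is \eqref{eq:IppBis}, while the essential maximal part follows from the same argument used in the proof of Proposition~\ref{pr:main1sttext} applied to the simpler operator $\kappa_s/b^2 + P_{\pm,b} + \frac{1}{b}R_{1,\bot,h}$, to which $A^2\pi_{0,\pm}-\kappa_s/b^2$ is added as a bounded symmetric perturbation (noting that $\pi_{0,\pm}$ is self-adjoint on $\tilde{\mathcal{W}}^{0,s}$ since it commutes with $W^2_\theta$).
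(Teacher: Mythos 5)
Your proposal follows essentially the same route as the paper's proof: the real-part estimate of Proposition~\ref{pr:IppWithRealPart} combined with Proposition~\ref{pr:controlOfRByIpp} and a Young parameter proportional to $Ab$ (the paper takes $\varepsilon=Ab\sqrt{d+2}$, you take $\varepsilon=A^2bc_{R,s}/4$ — both absorb the $\frac{1}{b}R_{1,\bot,h}$ term under $C_{R,s}\max(Ab,b,A^{-1})\leq 1$), then Cauchy--Schwarz for \eqref{eq:L2L2ResolvantEstimate}, back-substitution for \eqref{eq:Ipp}, a bounded-perturbation argument resting on Proposition~7.2 of \cite{NSW} for essential maximal accretivity, and conjugation by $(W^2_\theta)^{s/2}$ for general $s$. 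One small correction: the comparison $\|u_\bot\|^2_{\tilde{\mathcal{W}}^{1,s}}\gtrsim\|u_\bot\|^2_{\tilde{\mathcal{W}}^{0,s}}$ comes from $\mathcal{O}\geq\frac{d}{2}$ (the first Sobolev exponent is governed by $\mathcal{O}$, not by $W^2_\theta$), which costs a harmless factor $\frac{d}{2}$ already accounted for in the paper's constants.
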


\begin{proof}
  We begin with the case $s=0$, Proposition~\ref{pr:controlOfRByIpp} gives
   $$ \real \langle (P_{\pm,b} +A^2\pi_{0,\pm} + \frac{1}{b}R_{1,\bot,h}) u , u \rangle_{L^2} \geq \big(\frac{2}{(d+2)b^2} - \frac{1}{bc_{R}}(1+\frac{1}{\varepsilon})\big) \|u_{\bot}\|_{\tilde{\mathcal{W}}^{1,0}}^2 + \big(A^2-\frac{\varepsilon}{bc_R} \big) \| u_0 \|_{L^2}^2 \,, $$
  for all $\varepsilon>0$. Choosing $\varepsilon =Ab\sqrt{d+2}$ 
and the sufficient conditions
$$
 b \leq \frac{c_R}{2(d+2)} \quad \text{and} \quad       \frac{4\sqrt{d+2}}{c_R}\leq A 
$$
imply
$$
\big(\frac{2}{(d+2)b^2} - \underbrace{\frac{1}{bc_{R}}}_{\leq \frac{1}{2(d+2)b^2}} - \underbrace{\frac{1}{bc_R \varepsilon}}_{\leq \frac{1}{2(d+2)b^2} }\big)\geq \frac{1}{(d+2)b^2} \quad\text{and}\quad \big(A^2-\frac{\varepsilon}{bc_R} \big)\geq \frac{3A^2}{4}\,.
$$
This proves \eqref{eq:IppBis} under the condition  $ \max (Ab , b ,\frac{1}{A}) \leq \frac{1}{C_{R,0}}$, with $C_{R,0}=\max(\frac{2(d+2)}{c_R},\sqrt{d+2},\frac{4\sqrt{d+2}}{c_R})$\,.\\

With $\|u_\bot\|_{\tilde{\mathcal{W}}^{1,0}}^2\geq \frac{d}{2}\|u_\bot\|_{L^2}^2$ and
$$
\real \langle (P_{\pm,b} +A^2\pi_{0,\pm} + \frac{1}{b}R_{1,\bot,h}) u , u \rangle_{L^2}=
 \real \langle (P_{\pm,b} +A^2\pi_{0,\pm} + \frac{1}{b}R_{1,\bot,h}-i\lambda) u , u \rangle_{L^2} 
$$
the Cauchy-Schwarz inequality combined with \eqref{eq:IppBis} gives
\begin{equation}
\label{eq:CS1}
\|(P_{\pm,b}+\frac{1}{b}R_{1,\bot,h}+A^2\pi_{0,\pm}-i\lambda) u\|_{L^2}\|u\|_{L^2}\geq 
\frac{d}{2(d+2)b^{2}} \left\| u_{\bot} \right\|_{L^2}^{2} + \frac{3A^{2}}{4} \left\| u_{0} \right\|_{L^2}^{2}
\geq \frac{3A^2}{4}\|u\|_{L^2}^2
\end{equation}
as soon as $\frac{d}{(d+2)b^2}\geq A^2$\,, which is implied by $\sqrt{d+2}Ab \leq C_{R,0} Ab\leq 1 $. This yields \eqref{eq:L2L2ResolvantEstimate}.\\
The inequality~\eqref{eq:Ipp} is a consequence of \eqref{eq:L2L2ResolvantEstimate} and \eqref{eq:CS1}. \\

For the maximal accretivity property, the decomposition
  $$
  P_{\pm,b}+\frac{1}{b}R_{1,\bot,h}+A^2\pi_{0,\pm}=\left[\frac{C'}{b^2}+\frac{1}{b^2}\mathcal{O} \mp \frac{1}{b}\nabla^{\mathcal{E}_{\pm},h}_{\mathcal Y}+\frac{1}{b}R_{1,\bot,h}\right]+\left[A^2\pi_0 -\frac{C'}{b^2} \pm \frac{1}{b^2}(N_V-d/2)\right]
  $$
  shows that $(P_{\pm,b}+\frac{1}{b}R_{1,\bot,h}+A^2\pi_{0,\pm})$ is a bounded perturbation of 
  $$
  \frac{C'}{b^2}+P_{\pm,b,M}=\frac{C'}{b^2}+\frac{1}{b^2}\mathcal{O} \mp \frac{1}{b}\nabla^{\mathcal{E}_{\pm},h}_{\mathcal Y}+M_1
  $$
  where $M_1=\frac{1}{b}R_{1,\bot,h}$ fulfills the assumptions of Proposition~7.2 in \cite{NSW} when $C_{R,0}b\leq 1$\,. Then  Proposition~7.2 in \cite{NSW} says that $\frac{C'}{b^2}+P_{\pm,b,M}$ is essentially maximal accretive on $\mathcal{S}(X^h;\mathcal{E}_{\pm}^h)$ for $C'>0$ chosen large enough.\\

  Finally, the case with  a general  $s\in\mathbb{R}$ amounts to the case $s=0$ owing to
  $$ (W_{\theta}^{2})^{\frac{s}{2}}  \big(P_{\pm,b}+\frac{1}{b}R_{1,\bot,h} +A^2\pi_{0,\pm} \big) (W_{\theta}^{2})^{-\frac{s}{2}} = P_{\pm,b}+\frac{1}{b}R_{1,\bot,h}^s +A^2\pi_{0,\pm}. $$
\end{proof}

Below we give a first global subelliptic estimate without remainder for $P_{\pm,b}+\frac{1}{b}R_{1,\bot,h}+A^2\pi_0$.
\begin{proposition} \label{pr:subellipticestimateWithbDependence}
  There exist two constants $C, C_{R,s}\geq 1$, which are respectively uniform and $s$-dependent, $s\in \mathbb{R}$, such that the inequality
  \begin{multline}\label{eq:MaximalSubEllipticEstimateWithDependanceOnb}
    C\left\| (P_{\pm,b} + \frac{1}{b}R_{1,\bot,h} + A^2 \pi_{0,\pm}-z) u \right\|_{\tilde{\mathcal{W}}^{0,s}}\geq A^{2}\left\| u \right\|_{\tilde{\mathcal{W}}^{0,s}} + A^{2}\left\| \mathcal{O} u \right\|_{\tilde{\mathcal{W}}^{0,s}} + 
    A^{2}b\left\| (\nabla_{\mathcal{Y}}^{\mathcal{E}_{\pm},h}-i\mathrm{Im}~z)u \right\|_{\tilde{\mathcal{W}}^{0,s}} \\
    + A^{2}b^{\frac{2}{3}} \left\| u \right\|_{\tilde{\mathcal{W}}^{0,s+\frac{2}{3}}}+A^{2}b|\mathrm{Im}\,z|^{1/2}\|u\|_{\tilde{\mathcal{W}}^{0,s}}
  \end{multline}
holds  for all $u\in \mathcal{S}(X^h;\mathcal{E}_\pm^h)$ and all $z\in\mathbb{C}$\,, such that $\real z\leq \frac{A^{2}}{2}$ as soon as $C_{R,s} \max(Ab,b,A^{-1})\leq 1$\,. 
\end{proposition}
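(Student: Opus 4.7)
Set $\mathcal{P}:=P_{\pm,b}+\frac{1}{b}R_{1,\bot,h}+A^{2}\pi_{0,\pm}-z$. The argument combines the coercivity of Proposition~\ref{pr:firstEstimate} with the subelliptic estimate \eqref{eq:principalInequality} of Proposition~\ref{pr:main1sttext}, applied to $P_{\pm,b}+\frac{1}{b}R_{1,\bot,h}=B_{\pm,b,V^{h}}-R_{0,h}-R_{2,h}$ (viewing $R_{0,h}\in\mathcal{L}(L^{2})$ and $R_{2,h}\in\mathcal{L}(\tilde{\mathcal{W}}^{2,0};L^{2})$ as bounded perturbations of the kind already absorbed in the perturbative step at the end of the proof of Proposition~\ref{pr:main1sttext}, so that the same subelliptic inequality holds with $B_{\pm,b,V^{h}}$ replaced by $P_{\pm,b}+\frac{1}{b}R_{1,\bot,h}$). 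The role of the $A^{2}\pi_{0,\pm}$ term is to replace the accretive shift $\kappa_{s}/b^{2}$ of Proposition~\ref{pr:main1sttext} by directly controlling the zero-mode $u_{0}:=\pi_{0,\pm}u$.

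First, I would extend the accretivity \eqref{eq:IppBis}--\eqref{eq:Ipp} to complex $z$ with $\real z\leq A^{2}/2$. Splitting $z=\real z+i\,\mathrm{Im}\,z$, taking the real part of $\langle u,\mathcal{P}u\rangle_{\tilde{\mathcal{W}}^{0,s}}$, and absorbing $-(\real z)\|u\|^{2}\geq -\frac{A^{2}}{2}\|u\|^{2}$ via $\|u_{\bot}\|_{\tilde{\mathcal{W}}^{1,s}}^{2}\geq \frac{d}{2}\|u_{\bot}\|^{2}$ together with the smallness $A^{2}b^{2}\leq 1/C_{R,s}^{2}$ built into the hypothesis yields
$\real\langle u,\mathcal{P}u\rangle_{\tilde{\mathcal{W}}^{0,s}}\geq \frac{\|u_{\bot}\|_{\tilde{\mathcal{W}}^{1,s}}^{2}}{2(d+2)b^{2}}+\frac{A^{2}}{4}\|u\|_{\tilde{\mathcal{W}}^{0,s}}^{2}$.
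By Cauchy--Schwarz this gives $\|\mathcal{P}u\|_{\tilde{\mathcal{W}}^{0,s}}\geq \frac{A^{2}}{4}\|u\|_{\tilde{\mathcal{W}}^{0,s}}$ and $\|u_{\bot}\|_{\tilde{\mathcal{W}}^{1,s}}\leq C\frac{b}{A}\|\mathcal{P}u\|_{\tilde{\mathcal{W}}^{0,s}}$, supplying the $A^{2}\|u\|_{\tilde{\mathcal{W}}^{0,s}}$ term on the right-hand side of \eqref{eq:MaximalSubEllipticEstimateWithDependanceOnb}.

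Second, set $\lambda:=b\,\mathrm{Im}(z)\in\mathbb{R}$, so that $\bigl(P_{\pm,b}+\tfrac{1}{b}R_{1,\bot,h}-\tfrac{i\lambda}{b}\bigr)u=\mathcal{P}u-A^{2}\pi_{0,\pm}u+(\real z)u$; the triangle inequality and Step~1 bound this by $3\|\mathcal{P}u\|$. Invoking \eqref{eq:principalInequality} for $P_{\pm,b}+\frac{1}{b}R_{1,\bot,h}$ and absorbing the remainder $\frac{2\kappa_{s}}{b^{2}}\|u\|\leq \frac{8\kappa_{s}}{A^{2}b^{2}}\|\mathcal{P}u\|$ via Step~1, one arrives at
$$
\|\mathcal{P}u\|\Bigl(3+\frac{8\kappa_{s}}{A^{2}b^{2}}\Bigr)\geq \frac{1}{C_{g}}\Bigl(\frac{\|\mathcal{O}u\|}{b^{2}}+\frac{\|\nabla_{\mathcal{Y}}^{\mathcal{E}_{\pm},h}u\|}{b}+\frac{\|u\|_{\tilde{\mathcal{W}}^{0,s+2/3}}}{b^{4/3}}+\frac{|\lambda|^{1/2}}{b^{3/2}}\|u\|\Bigr).
$$
Multiplying through by $A^{2}b^{2}$, and using $A^{2}b^{2}\leq 1$ together with $b\leq 1$ (so that $A^{2}b^{1/2}|\lambda|^{1/2}\geq A^{2}b|\lambda|^{1/2}$), the four subelliptic terms on the right-hand side of \eqref{eq:MaximalSubEllipticEstimateWithDependanceOnb} are recovered.

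The main obstacle is tracking constants: the hypothesis $C_{R,s}\max(Ab,b,A^{-1})\leq 1$ forces $A^{2}b^{2}\leq 1/C_{R,s}^{2}$, so the absorbing factor $\kappa_{s}/(A^{2}b^{2})$ cannot be made uniformly small by playing with $A$ and $b$ alone. After multiplication by $A^{2}b^{2}$ this produces an additive contribution of order $\kappa_{s}$, which is absorbed by enlarging the $s$-dependent threshold $C_{R,s}$ so that $C_{R,s}^{2}\geq \kappa_{s}$; with this choice the remaining multiplicative constant $C$ in \eqref{eq:MaximalSubEllipticEstimateWithDependanceOnb} depends only on $C_{g}$ and a universal numerical factor, uniform in $b,h,A,z$ with the $s$-dependence relegated entirely to $C_{R,s}$.
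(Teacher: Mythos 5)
Your strategy is sound and is structurally the same as the paper's: both arguments rest on the $L^{2}$ lower bound $\|\mathcal{P}u\|_{\tilde{\mathcal{W}}^{0,s}}\geq\frac{3A^{2}}{4}\|u\|_{\tilde{\mathcal{W}}^{0,s}}$ of \eqref{eq:L2L2ResolvantEstimate} (writing $\mathcal{P}$ for $P_{\pm,b}+\frac{1}{b}R_{1,\bot,h}+A^{2}\pi_{0,\pm}-z$ as you do) to absorb the zeroth-order remainder of the maximal subelliptic estimate of \cite{NSW}, followed by multiplication by $A^{2}b^{2}$. But there is a genuine problem with the constants. Your absorbed remainder $\frac{2\kappa_{s}}{b^{2}}\|u\|\leq\frac{8\kappa_{s}}{A^{2}b^{2}}\|\mathcal{P}u\|$ becomes, after the multiplication by $A^{2}b^{2}$, an additive contribution $8\kappa_{s}\|\mathcal{P}u\|$ on the left-hand side, so your final multiplicative constant is of order $\kappa_{s}$ and hence $s$-dependent, contrary to the statement. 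Enlarging $C_{R,s}$ does not help: it only shrinks $A^{2}b^{2}$, and the product $A^{2}b^{2}\cdot\kappa_{s}/(A^{2}b^{2})=\kappa_{s}$ is unchanged. This is not cosmetic, since the uniformity of $C$ is what allows $L\geq(2C)^{5/2}$ to be a \emph{uniform} constant in Proposition~\ref{pr:subEllipticQALVh}. The paper sidesteps this by applying the $s=0$ estimate of \cite{NSW}-Proposition~7.2 to the operator $P_{\pm,b}+A^{2}\pi_{0,\pm}=P_{\pm,b,M_{0}}$ with $M_{0}=A^{2}\pi_{0,\pm}\pm\frac{1}{b^{2}}(N_{V}-d/2)$, whose bounds are uniform because $A^{2}\leq b^{-2}$; the whole $s$-dependence is then carried by the conjugated remainder $\frac{1}{b}R_{1,\bot,h}^{s}$, which enters multiplied by $b$ and is therefore killed by the $s$-dependent threshold while leaving a uniform constant.

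Two further points on the execution. First, your reduction in Step~2 bounds $\|(\real z)u\|$ by a multiple of $\|\mathcal{P}u\|$, which works only when $|\real z|\lesssim A^{2}$, whereas the hypothesis is one-sided and $\real z$ may be arbitrarily negative; the correct reduction is the paper's: $P_{\pm,b}+\frac{1}{b}R_{1,\bot,h}+A^{2}\pi_{0,\pm}-\frac{A^{2}}{2}$ is accretive by \eqref{eq:IppBis}, hence $\|(S+t)u\|\geq\|Su\|$ for $t=\frac{A^{2}}{2}-\real z\geq0$, which reduces to $\real z=\frac{A^{2}}{2}$ and then to $z=i\lambda$ at the price of a factor $3$. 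Second, \eqref{eq:principalInequality} controls $\frac{1}{b}\|(\nabla^{\mathcal{E}_{\pm},h}_{\mathcal{Y}}-i\lambda)u\|$ rather than $\frac{1}{b}\|\nabla^{\mathcal{E}_{\pm},h}_{\mathcal{Y}}u\|$; passing from one to the other costs $|\mathrm{Im}(z)|\,\|u\|$, which the terms you list do not control when $|\mathrm{Im}(z)|>b^{-2}$, so you should invoke the form of \cite{NSW}-Proposition~7.2 quoted in \eqref{eq:MaximalSubEllipticEstimatePb0}, which carries $\frac{1}{b}\|\nabla^{\mathcal{E}_{\pm},h}_{\mathcal{Y}}u\|$ directly. (Your auxiliary remark that $A^{2}b^{1/2}|\lambda|^{1/2}\geq A^{2}b|\lambda|^{1/2}$ is also confused by the two meanings of $\lambda$, but the term $A^{2}b\,|\mathrm{Im}(z)|^{1/2}\|u\|$ does come out correctly.)
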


\begin{proof}
  Owing to the  accretivity of Proposition~\ref{pr:firstEstimate}, the case of a general $z\in \mathbb{C}$\,, $\real z\leq \frac{A^{2}}{2}$\,, is reduced to the case $z=i\lambda$\,, $\lambda\in \mathbb{R}$\,. Actually the accretivity of $P_{\pm,b}+\frac{1}{b}R_{1,\bot,h}+A^{2}\pi_{0,\pm}-\frac{A^{2}}{2}$ implies
  $$
\|(P_{\pm,b}+\frac{1}{b}R_{1,\bot,h}+A^{2}\pi_{0,\pm}-z)u\|_{\tilde{\mathcal{W}}^{0,s}}  \geq \|(P_{\pm,b}+\frac{1}{b}R_{1,\bot,h}+A^{2}\pi_{0,\pm}-\frac{A^{2}}{2}-i\mathrm{Im}\,z)u\|_{\tilde{\mathcal{W}}^{0,s}}
$$
when $\real z\leq\frac{A^{2}}{2}$\,. But the inequality \eqref{eq:L2L2ResolvantEstimate} also says
\begin{align*}
\|(P_{\pm,b}+\frac{1}{b}R_{1,\bot,h}+A^{2}\pi_{0,\pm}-\frac{A^{2}}{2}-i\lambda)u\|_{\tilde{\mathcal{W}}^{0,s}}&\geq
\|(P_{\pm,b}+\frac{1}{b}R_{1,\bot,h}+A^{2}\pi_{0,\pm}-i\lambda)u\|_{\tilde{\mathcal{W}}^{0,s}}-\frac{A^{2}}{2}\|u\|_{\tilde{\mathcal{W}}^{0,s}}\\
&\geq
\frac{1}{3}\|(P_{\pm,b}+\frac{1}{b}R_{1,\bot,h}+A^{2}\pi_{0,\pm}-i\lambda)u\|_{\tilde{\mathcal{W}}^{0,s}}\,.
\end{align*}
So we focus on the case $z=i\lambda$\,, $\lambda\in \mathbb{R}$\,.\\
In the case $s=0$  we refer again to Proposition~7.2 in \cite{NSW}. Actually with $R_{1,\bot,h}=0$\,, we set $M_0=A^2\pi_{0,\pm} \pm \frac{1}{b^2}(N_V-d/2)$ and
$$
P_{\pm,b}+A^2\pi_{0,\pm}= \frac{1}{b^2}\mathcal{O}+\frac{1}{b}\nabla_{\mathcal{Y}}^{\mathcal{E}_{\pm},h}+\left[A^2\pi_{0,\pm} \pm \frac{1}{b^2}(N_V-d/2)\right]=P_{\pm,b,M_0}
$$
where the right-hand side refers to the notation introduced in \cite{NSW}. 
The operator  $M_0=A^2\pi_{0,\pm} \pm \frac{1}{b^2}(N_V-d/2)$ fulfills the assumptions of Proposition~7.2 in \cite{NSW} with $\nu_1=0$ and a uniform $\nu_0>0$\,.
It provides us the subelliptic estimate
\begin{equation*}
  C'_{1}\big( \left\|(P_{\pm,b,M_0}-i\frac{\lambda}{b}) u \right\|_{L^{2}} + \frac{C'_0}{b^{2}}\|u\|_{L^2} \big) \geq \frac{C'_0}{b^{2}} \left\| u \right\|_{L^{2}} + \frac{1}{b^{2}} \left\| \mathcal{O} u \right\|_{L^{2}} + \frac{1}{b}\left\| (\nabla_{\mathcal{Y}}^{\mathcal{E}_{\pm},h}-i\lambda)u \right\|_{L^{2}} + \frac{1}{b^{\frac{4}{3}}}\left\| u \right\|_{\tilde{\mathcal{W}}^{0,\frac{2}{3}}}+\frac{|\lambda|^{1/2}}{b^{3/2}}\|u\|_{L^{2}}\,,
\end{equation*}
or after replacing $\frac{\lambda}{b}$ by $\lambda$\,,
\begin{equation}
  \label{eq:MaximalSubEllipticEstimatePb0}
  C'_{1}\big( \left\|(P_{\pm,b,M_0}-i\lambda u) \right\|_{L^{2}} + \frac{C'_0}{b^{2}}\|u\|_{L^2} \big) \geq \frac{C'_0}{b^{2}} \left\| u \right\|_{L^{2}} + \frac{1}{b^{2}} \left\| \mathcal{O} u \right\|_{L^{2}} + \left\| (\frac{1}{b}\nabla_{\mathcal{Y}}^{\mathcal{E}_{\pm},h}-i\lambda)u \right\|_{L^{2}} + \frac{1}{b^{\frac{4}{3}}}\left\| u \right\|_{\tilde{\mathcal{W}}^{0,\frac{2}{3}}}+\frac{|\lambda|^{1/2}}{b}\|u\|_{L^{2}}\,,
\end{equation}
for fixed uniform constants $C'_1\geq 1$ and $C_0'\geq 1$ when $b\leq 1$. Interpolation  or the functional calculus tells us 
\begin{equation}
\label{eq:interpWks}
\|u\|_{\tilde{\mathcal{W}}^{1,s}}\leq \|u\|_{\tilde{\mathcal{W}}^{0,s}}^{1/2}\|u\|_{\tilde{\mathcal{W}}^{2,s}}^{1/2}= \|u\|_{\tilde{\mathcal{W}}^{0,s}}^{1/2}\|\mathcal{O}u\|_{\tilde{\mathcal{W}}^{0,s}}^{1/2}
\leq \delta \|u\|_{\tilde{\mathcal{W}}^{0,s}}+ \delta^{-1}\|\mathcal{O}u\|_{\tilde{\mathcal{W}}^{0,s}}\,.
\end{equation}
Applied here with $s=0$ and $\delta=\sqrt{C'_{0}}$, this implies
$$
\| \frac{1}{b}R_{1,\bot,h}u\|_{L^2}\leq \frac{C_R}{b}\|u\|_{\tilde{\mathcal{W}}^{1,0}}
\leq \frac{C_R}{b}\times \frac{C'_1 b^2}{\sqrt{C'_0}}\big( \left\| P_{\pm,b,M_0} u \right\|_{L^{2}} + \frac{C'_0}{b^{2}}\|u\|_{L^2} \big)
$$
and
$$
 C'_{1}\big( \left\|  (P_{\pm,b} + \frac{1}{b}R_{1,\bot,h} + A^2 \pi_{0,\pm}-i\lambda) u \right\|_{L^{2}} + \frac{C'_0}{b^{2}}\|u\|_{L^2} \big) \geq   C'_{1}\left(1-
 \frac{C_R C'_1 b}{\sqrt{C_0'}}\right)\big( \left\| (P_{\pm,b,M_0}-i\lambda) u \right\|_{L^{2}} + \frac{C'_0}{b^{2}}\|u\|_{L^2}\big).
$$
By assuming $C_{R,0}\max(Ab,b,\frac{1}{A})\leq 1$ the inequality \eqref{eq:L2L2ResolvantEstimate} implies
$$
 C'_{1}\left( 1+ \frac{2C_0'}{b^2A^2}\right)\left\|  (P_{\pm,b} + \frac{1}{b}R_{1,\bot,h} + A^2 \pi_{0,\pm}-i\lambda) u \right\|_{L^{2}} \geq   C'_{1}\left(1-
 \frac{C_R C'_1 b}{\sqrt{C_0'}}\right)\big( \left\| (P_{\pm,b,M_0}-i\lambda) u \right\|_{L^{2}} + \frac{C'_0}{b^{2}}\|u\|_{L^2}\big).
$$
With $b\leq \frac{\sqrt{C_0'}}{2C_R C'_1} $ after a multiplication by $2A^2b^2$ we obtain
\begin{align*}
  C'_1\left(2A^2b^2+4C_0'\right)&\left\|  (P_{\pm,b} + \frac{1}{b}R_{1,\bot,h} + A^2 \pi_{0,\pm}-i\lambda) u \right\|_{L^{2}}
  \geq   A^2b^2C'_{1}\big( \left\| (P_{\pm,b,M_0}-i\lambda) u \right\|_{L^{2}} + \frac{C'_0}{b^{2}}\|u\|_{L^2}\big)
 \\
  &\geq C_0'A^2\left\| u \right\|_{L^{2}} + A^2 \left\| \mathcal{O} u \right\|_{L^{2}}
    + A^2 b\left\| (\nabla_{\mathcal{Y}}^{\mathcal{E}_{\pm},h}-ib\lambda)u \right\|_{L^{2}} + A^2b^{2/3}\left\| u \right\|_{\tilde{\mathcal{W}}^{0,\frac{2}{3}}}
    +bA^{2}|\lambda|^{1/2}\|u\|_{L^{2}}\,.                                                     
\end{align*}
Because $A^2b^2\leq \frac{1}{(C_{R,0})^2}\leq C_0'$ we deduce
\begin{align*}
6C'_1C_0'\left\|  (P_{\pm,b} + \frac{1}{b}R_{1,\bot,h} + A^2 \pi_{0,\pm}-i\lambda) u \right\|_{L^{2}}
  &\geq A^2\left\| u \right\|_{L^{2}} + A^2 \left\| \mathcal{O} u \right\|_{L^{2}} + A^2 b\left\| (\nabla_{\mathcal{Y}}^{\mathcal{E}_{\pm},h}-ib\lambda)u \right\|_{L^{2}}\\
  &\hspace{2cm}
    + A^2b^{2/3}\left\| u \right\|_{\tilde{\mathcal{W}}^{0,\frac{2}{3}}} +bA^{2}|\lambda|^{1/2}\|u\|_{L^{2}}.
\end{align*}
We have proved the result for $s=0$ if we take $C=6 C'_1C_0'$\,, after replacing the initial value of $C_{R,0}=C_{R,0,old}$ by $C_{R,0,new}=\max(C_{R,0,old},  \frac{2C'_1 C_R}{\sqrt{C'_0}})$.\\
Let us now consider the case of a general $s\in\mathbb{R}$.
We apply the inequality \eqref{eq:MaximalSubEllipticEstimatePb0} in the case $s=0$ to the operator $ P_{\pm,b}+A^2 \pi_{0,\pm} + \frac{1}{b}R_{1,\bot,h}^s = (W_{\theta}^2)^{\frac{s}{2}} (P_{\pm,b}+A^2 \pi_{0,\pm} + \frac{1}{b}R_{1,\bot,h}) (W_{\theta}^2)^{-\frac{s}{2}}  $ and the function $v=(W_{\theta}^{2})^{\frac{s}{2}}u$, with the constant $C=6C'_1C'_0$. 
Because $R_{1,\bot,h}^s$ satisfies the same estimates uniform with respect to $h\in ]0,1]$ as $R_{1,\bot,h}$, there exists a constant $C_{R,s}\geq 1$ such that when $\max(Ab,b,\frac{1}{A})\leq \frac{1}{C_{R,s}}$ we have 
\begin{align}
  \nonumber
  6C_1'C_0' \left\| ( P_{\pm,b}+A^2 \pi_{0,\pm} + \frac{1}{b}R_{1,\bot,h}-i\lambda) u  \right\|_{\tilde{\mathcal{W}}^{0,s}} \geq&
   A^2 \|u\|_{\tilde{\mathcal{W}}^{0,s}} + A^2 \| \mathcal{O} u \|_{\tilde{\mathcal{W}}^{0,s}} + A^2 b \| (\nabla_{\mathcal{Y}}^{\mathcal{E}_{\pm},h}-ib\lambda )v  \|_{L^2}\\
  \label{eq:intermediateSubellipticEstimate000}
               & + A^2b^{2/3} \| u \|_{\tilde{\mathcal{W}}^{0,s+\frac{2}{3}}}
  +A^{2}b|\lambda|^{1/2}\|u\|_{\tilde{\mathcal{W}}^{0,s}}\,.
 \end{align}
 We use again \eqref{eq:WYWmY} which gives the uniform bound  $\| [\nabla_{\mathcal{Y}}^{\mathcal{E}_{\pm},h}, (W_{\theta}^2)^{\frac{s}{2}}](W_{\theta}^2)^{-\frac{s}{2}} \|_{\mathcal{L}(\tilde{\mathcal{W}}^{1,0}; L^2)}< C_{s} $. Thus the decomposition $ \nabla_{\mathcal{Y}}^{\mathcal{E}_{\pm},h}v = (W_{\theta}^{2})^{\frac{s}{2}}\nabla_{\mathcal{Y}}^{\mathcal{E}_{\pm},h} u + [\nabla_{\mathcal{Y}}^{\mathcal{E}_{\pm},h} , (W_{\theta}^2)^{\frac{s}{2}}] (W_{\theta}^2)^{-\frac{s}{2}}v  $ entails 
$$ 
\| (\nabla_{\mathcal{Y}}^{\mathcal{E}_{\pm},h}-ib\lambda) u \|_{\tilde{\mathcal{W}}^{0,s}} \leq \| (\nabla_{\mathcal{Y}}^{\mathcal{E}_{\pm},h}-ib\lambda) v \|_{L^2} + C_{s}\|u\|_{\tilde{\mathcal{W}}^{1,s}}\,.
$$
The interpolation inequality \eqref{eq:interpWks} used with $\delta=1$ tells us $ \|u\|_{\tilde{\mathcal{W}}^{1,s}} \leq   \| u \|_{\tilde{\mathcal{W}}^{0,s}} + \|\mathcal{O} u\|_{\tilde{\mathcal{W}}^{0,s}}$ while  \eqref{eq:intermediateSubellipticEstimate000} gives
$ \|u\|_{\tilde{\mathcal{W}}^{1,s}} \leq \frac{12C_0'C_1'}{A^2} \| ( P_{\pm,b} + A^2\pi_{0,\pm} + \frac{1}{b}R_{1,\bot,h} ) u \|_{\tilde{\mathcal{W}}^{0,s}} $.
\\ We finally obtain
\begin{align*}
  6C_0'C_1'(1+2C_{s}b)\left\| ( P_{\pm,b}+A^2 \pi_{0,\pm} + \frac{1}{b}R_{1,\bot,h}-i\lambda) u  \right\|_{\tilde{\mathcal{W}}^{0,s}} \geq &A^2 \|u\|_{\tilde{\mathcal{W}}^{0,s}} + A^2 \| \mathcal{O} u \|_{\tilde{\mathcal{W}}^{0,s}}
  \\
  &+ A^2 b \| (\nabla_{\mathcal{Y}}^{\mathcal{E}_{\pm},h}-ib\lambda) u  \| _{\tilde{\mathcal{W}}^{0,s}}
  \\&  + A^2b^{2/3} \| u \|_{\tilde{\mathcal{W}}^{0,s+\frac{2}{3}}} +A^{2}b|\lambda|^{1/2}\|u\|_{\tilde{\mathcal{W}}^{0,s}}\,.
\end{align*}
It now suffices to take $C=18 C_0'C_1'$\,, while $C_{R,s,new} = \max(C_{R,s,old},2C_{s})$ for the result concerned with $z=i\lambda$\,, and to take $C=3*18 C_{0}'C_{1}'$ for a general $z\in \mathbb{C}$ such that $\real z\leq \frac{A^{2}}{2}$\,.
\end{proof}

The subelliptic estimate \eqref{eq:MaximalSubEllipticEstimateWithDependanceOnb} is not yet satisfactory because the norm $\|u\|_{\tilde{\mathcal{W}}^{0,s+2/3}}$  appears in the right-hand side with the factor $A^2 b^{2/3}$ which is too small as $b\to 0$.
By possibly reducing the $2/3$-gain of regularity, we seek a factor of the form $A^{\alpha}$, $\alpha >0$\,. 
In order to do this we write for $u\in \mathcal{S}(X^h;\mathcal{E}_{\pm}^h)$ 
$$
\big(P_{\pm,b} + \frac{1}{b}R_{1,\bot,h} +A^2\pi_{0,\pm}-i\lambda\big)u=f
$$
where we focus again on the case $z=i\lambda$ and decompose $u$ and the right-hand side $f$ according to
$$
u= u_0+u_{\bot}=\pi_{0,\pm} u+\pi_{\bot,\pm}u\quad,\quad f=f_0+f_{\bot}=\pi_{0,\pm} f+\pi_{\bot,\pm}f\,.
$$
 \begin{lemma}
 \label{le:intermediateProposition002}
  There is a constant $C_{R,s}\geq 1$, which depends on $s\in\mathbb{R}$\,, such that the inequality 
    \begin{equation}
      \label{eq:estimateOnUOrthogonal}
    \left\| u_{\bot} \right\|_{\tilde{\mathcal{W}}^{1,s}} \leq \frac{2(d+2)b^{2}}{\varepsilon} \left\| f \right\|_{\tilde{\mathcal{W}}^{0,s}} + \varepsilon \left\| u_{0} \right\|_{\tilde{\mathcal{W}}^{0,s}}\,,
    \end{equation}
    holds true for all $u\in \mathcal{S}(X^h;\mathcal{E}_\pm^h)$ and all $\varepsilon \in ]0,1]$ as soon as $C_{R,s}\max( Ab,b,A^{-1})\leq 1$.
  \end{lemma}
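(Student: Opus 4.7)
The starting point is the coercivity inequality \eqref{eq:IppBis} supplied by Proposition~\ref{pr:firstEstimate}: under the condition $C_{R,s}\max(Ab,b,A^{-1})\le1$,
$$
\real\langle (P_{\pm,b}+\tfrac{1}{b}R_{1,\bot,h}+A^{2}\pi_{0,\pm})u,u\rangle_{\tilde{\mathcal{W}}^{0,s}}\ge \frac{1}{(d+2)b^{2}}\|u_{\bot}\|_{\tilde{\mathcal{W}}^{1,s}}^{2}+\frac{3A^{2}}{4}\|u_{0}\|_{\tilde{\mathcal{W}}^{0,s}}^{2}.
$$
Since $-i\lambda\,\mathrm{Id}$ is anti-selfadjoint on $\tilde{\mathcal{W}}^{0,s}$, the real part of $\langle -i\lambda u,u\rangle$ vanishes, so the same lower bound applies with the left-hand side replaced by $\real\langle f,u\rangle_{\tilde{\mathcal{W}}^{0,s}}$ where $f=(P_{\pm,b}+\tfrac{1}{b}R_{1,\bot,h}+A^{2}\pi_{0,\pm}-i\lambda)u$. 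The goal is then purely algebraic: extract from this quadratic coercivity a \emph{linear} bound on $\|u_{\bot}\|_{\tilde{\mathcal{W}}^{1,s}}$.

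\textbf{Step 1.} Decompose $\langle f,u\rangle_{\tilde{\mathcal{W}}^{0,s}}=\langle f_{0},u_{0}\rangle_{\tilde{\mathcal{W}}^{0,s}}+\langle f_{\bot},u_{\bot}\rangle_{\tilde{\mathcal{W}}^{0,s}}$, using that $\pi_{0,\pm}$ and $\pi_{\bot,\pm}$ commute with $(W^{2}_{\theta})^{s/2}$ because $W^{2}_{\theta}$ strongly commutes with $\alpha_{\pm}=\mathcal{O}\pm(N^{V}-d/2)$. Apply Cauchy--Schwarz to each piece, and use that $\mathcal{O}\ge d/2$ (uniformly) to control $\|u_{\bot}\|_{\tilde{\mathcal{W}}^{0,s}}\le c_{d}\|u_{\bot}\|_{\tilde{\mathcal{W}}^{1,s}}$. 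This yields an inequality of the shape
$$
\frac{1}{(d+2)b^{2}}\|u_{\bot}\|_{\tilde{\mathcal{W}}^{1,s}}^{2}\le \|f\|_{\tilde{\mathcal{W}}^{0,s}}\|u_{0}\|_{\tilde{\mathcal{W}}^{0,s}}+c_{d}\|f\|_{\tilde{\mathcal{W}}^{0,s}}\|u_{\bot}\|_{\tilde{\mathcal{W}}^{1,s}}.
$$

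\textbf{Step 2.} Apply Young's inequality $ab\le \tfrac{1}{2\mu}a^{2}+\tfrac{\mu}{2}b^{2}$ with $\mu=(d+2)b^{2}/c_{d}^{2}$ to the cross-term $c_{d}\|f\|\|u_{\bot}\|_{\tilde{\mathcal{W}}^{1,s}}$, so that half of it is absorbed into the left-hand side. The result is
$$
\frac{1}{2(d+2)b^{2}}\|u_{\bot}\|_{\tilde{\mathcal{W}}^{1,s}}^{2}\le \|f\|\|u_{0}\|+\frac{c_{d}^{2}(d+2)b^{2}}{2}\|f\|^{2}.
$$
Now balance the remaining cross-term $\|f\|\|u_{0}\|$ with the free parameter $\varepsilon\in]0,1]$ using Young's inequality once more, writing $\|f\|\|u_{0}\|=\bigl(\tfrac{(d+2)b^{2}}{\varepsilon^{2}}\|f\|^{2}\bigr)^{1/2}\bigl(\tfrac{\varepsilon^{2}}{(d+2)b^{2}}\|u_{0}\|^{2}\bigr)^{1/2}$ (up to a factor of~2), producing the quadratic bound
$$
\|u_{\bot}\|_{\tilde{\mathcal{W}}^{1,s}}^{2}\le \frac{C(d+2)^{2}b^{4}}{\varepsilon^{2}}\|f\|_{\tilde{\mathcal{W}}^{0,s}}^{2}+\varepsilon^{2}\|u_{0}\|_{\tilde{\mathcal{W}}^{0,s}}^{2}.
$$
Taking square roots and using $\sqrt{A+B}\le\sqrt{A}+\sqrt{B}$ yields the announced linear estimate after adjusting the universal constant in $C_{R,s}$ to absorb $c_{d}$ and $C$.

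\textbf{Main obstacle.} There is no conceptual difficulty once \eqref{eq:IppBis} is in hand; the work is purely the bookkeeping of constants needed to reach exactly the form $\tfrac{2(d+2)b^{2}}{\varepsilon}\|f\|+\varepsilon\|u_{0}\|$ of the statement, and the verification that the $s$-dependent factors (coming from the conjugation by $(W^{2}_{\theta})^{s/2}$, hence from the remainder $R_{1,\bot,h}^{s}$) can be uniformly absorbed in $C_{R,s}$. The restriction $\varepsilon\le 1$ is convenient for simplifying $1+\tfrac{1}{\varepsilon^{2}}\le\tfrac{2}{\varepsilon^{2}}$ in the final step.
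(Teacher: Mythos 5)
Your proposal is correct and follows essentially the same route as the paper: start from the coercivity inequality \eqref{eq:IppBis} (noting that the $-i\lambda$ term drops out of the real part), split $\langle f,u\rangle$ into the $\pi_{0,\pm}$ and $\pi_{\bot,\pm}$ components, apply Cauchy--Schwarz, absorb the $\|u_{\bot}\|$ cross term into the left-hand side via Young's inequality and $\mathcal{O}\ge d/2$, introduce $\varepsilon$ through a second application of Young's inequality, and take square roots. The only differences are in the bookkeeping of constants, which both arguments handle by tuning $C_{R,s}$.
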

  \begin{proof}
    In this case inequality~\eqref{eq:IppBis} and orthogonality give 
    $$ \frac{1}{(d+2)b^{2}} \left\| u_{\bot} \right\|_{\tilde{\mathcal{W}}^{1,s}}^{2} \leq |\left\langle f_{0} \, , \, u_{0} \right\rangle_{\tilde{\mathcal{W}}^{0,s}} + \left\langle f_{\bot} \, , \, u_{\bot} \right\rangle_{\tilde{\mathcal{W}}^{0,s}}|
 \leq \left\| f_{0} \right\|_{\tilde{\mathcal{W}}^{0,s}} \left\| u_{0} \right\|_{\tilde{\mathcal{W}}^{0,s}} + \left\| f_{\bot} \right\|_{\tilde{\mathcal{W}}^{0,s}}\left\| u_{\bot} \right\|_{\tilde{\mathcal{W}}^{0,s}}.$$ 
 By using $\alpha \beta\leq \frac{\alpha^{2} + \beta^{2}}{2}$  with $(\alpha ,\beta) = (\frac{b}{\varepsilon_0}\|f_0\|_{\tilde{\cal W}^{0,s}}, \frac{\varepsilon_{0}}{b}\|u_{0}\|_{\tilde{\cal W}^{0,s}} )$ and
 $(\alpha,\beta)= ( \sqrt{\frac{2d+4}{d}}b \| f_{\bot} \|_{\tilde{\cal W}^{0,s}} , \sqrt{\frac{d}{2d+4}}\frac{1}{b} \|u_{\bot}\|_{\tilde{\cal W}^{0,s}} ) $ we obtain
$$ \frac{1}{(d+2)b^{2}}\left\| u_{\bot} \right\|_{\tilde{\mathcal{W}}^{1,s}}^{2} \leq \frac{b^{2}}{2\varepsilon_{0}^{2}}\left\| f_{0} \right\|_{\tilde{\mathcal{W}}^{0,s}}^{2} + \frac{\varepsilon_{0}^{2}}{2b^{2}}\left\| u_{0} \right\|_{\tilde{\mathcal{W}}^{0,s}}^{2} + \frac{(d+2)b^{2}}{d}\left\| f_{\bot} \right\|_{\tilde{\mathcal{W}}^{0,s}}^{2} + \frac{d}{4(d+2)b^{2}} \underbrace{\left\| u_{\bot} \right\|_{\tilde{\mathcal{W}}^{0,s}}^{2}}_{\leq  \frac{2}{d} \|u_{\bot}\|_{\tilde{\cal W}^{1,s}}} .$$
Multiplied by $2(d+2)b^2$, it becomes
$$ \left\| u_{\bot} \right\|_{\tilde{\mathcal{W}}^{1,s}}^2 \leq \frac{(d+2)b^4}{\varepsilon_0^2}  \left\| f_0 \right\|_{\tilde{\mathcal{W}}^{0,s}}^{2} + \varepsilon_0^{2}(d+2)\left\| u_{0} \right\|_{\tilde{\mathcal{W}}^{0,s}}^{2} + \frac{2(d+2)^2b^4}{d}\|f_{\bot}\|_{\tilde{\cal W}^{0,s}}^2. $$
    By choosing $\varepsilon_0=\sqrt{\frac{1}{2(d+2)}}\varepsilon$, $\varepsilon\in ]0,1]\subset ]0,\sqrt{d}]$, we obtain
    $$
     \left\| u_{\bot} \right\|_{\tilde{\mathcal{W}}^{1,s}}^2\leq \frac{2(d+2)^2b^4}{\varepsilon^2}\left\| f \right\|_{\tilde{\mathcal{W}}^{0,s}}^{2} +\varepsilon^2\left\| u_{0} \right\|_{\tilde{\mathcal{W}}^{0,s}}^{2} \leq
     \left(\frac{2(d+2)b^2}{\varepsilon}\left\| f \right\|_{\tilde{\mathcal{W}}^{0,s}} +\varepsilon\left\| u_{0} \right\|_{\tilde{\mathcal{W}}^{0,s}}\right)^2.
    $$
  \end{proof}

  \begin{lemma}\label{le:IntermediateProposition001}
  There exist two constants $\tilde{C}, C_{R,s}\geq 1$, which are respectively uniform and $s$-dependent, $s\in \mathbb{R}$, such that $C_{R,s}\max(Ab,b,A^{-1})\leq 1 $ implies
  \begin{equation}
    \label{eq:estimateOnU0}
    \frac{ 1}{\tilde{C}}\left\| u_{0} \right\|_{\tilde{\mathcal{W}}^{0,s+1}} \leq \langle b |\lambda|^{1/2}\rangle^2( b + \frac{1}{A} ) \left\| f \right\|_{\tilde{\mathcal{W}}^{0,s}} + b^{2}\left\| f \right\|_{\tilde{\mathcal{W}}^{0,s+1}}\,,
  \end{equation}
  for all $u\in \mathcal{S}(X^h,\mathcal{E}_{\pm}^h)$.
\end{lemma}

\begin{proof}
  We start with the proof in the $s=0$ case. \\
  Let $\pi_{i,\pm}$ denote the spectral projection on the eigenspace of the operator $\alpha_{\pm}$ associated with the eigenvalue $i\in \{0,1,2\}$. By projecting the equation $ f = \big(P_{\pm,b} + \frac{1}{b}R_{1,\bot,h} +A^2\pi_{0,\pm}-i\lambda\big)u $ on $\mathrm{Ran}(\pi_{1,\pm})$, we obtain
  $$ (\frac{1}{b^{2}}-i\lambda) u_{1} \mp \frac{1}{b} \pi_{1}\nabla^{\mathcal{E}_{\pm},h}_{\mathcal{Y}}(u_{0} + u_{2}) + \frac{1}{b}\pi_{1}R_{1,\bot,h}u = f_{1} ,$$
  where $u_i = \pi_{i,\pm}u$ and $f_i=\pi_{i,\pm}f$ for $i\in \{0,1,2\}$.
  By isolating $\pi_{1}\nabla^{\mathcal{E}_{\pm},h}_{\mathcal{Y}}u_{0} = \nabla^{\mathcal{E}_{\pm},h}_{\mathcal{Y}}u_{0}$, it gives:
  $$  \nabla^{\mathcal{E}_{\pm},h}_{\mathcal{Y}}u_{0}= bf_{1} - (\frac{1}{b}-ib\lambda)u_{1} - \pi_{1}\nabla^{\mathcal{E}_{\pm},h}_{\mathcal{Y}}u_{2} - \pi_{1}R_{1,\bot}u . $$
  An upper bound of $\left\| \nabla^{\mathcal{E}_{\pm},h}_{\mathcal{Y}}u_{0} \right\|_{L^{2}}$ is thus given by
  \begin{equation} \label{eq:intermediateInquality003}
      \left\| \nabla^{\mathcal{E}_{\pm},h}_{\mathcal{Y}}u_{0} \right\|_{L^{2}} \leq b\left\| f_{1} \right\|_{L^{2}}  + \frac{1}{b}\langle b|\lambda|^{1/2}\rangle^2 \left\| u_{1} \right\|_{L^{2}} + \left\| u_{2} \right\|_{\mathcal{W}^{0,1}} + \left\| R_{1,\bot}u \right\|_{L^{2}}.
  \end{equation}
  We now use the inequality~\eqref{eq:estimateOnUOrthogonal} with the two regularity exponents $s=0$ and $s=1$  and with different values of $\varepsilon\in ]0,1]$\,. It makes sense under the following constraints 
  $$\max(C_{R,0},C_{R,1})\max(Ab,b,A^{-1})\leq 1 \,. $$
 We obtain
  \begin{itemize}
  \item $\left\| u_{1} \right\|_{L^{2}} \leq \frac{2}{d} \|\mathcal{O}^{1/2}u_1\|_{L^2}\leq 2 \left\| u_{\bot} \right\|_{\tilde{\mathcal{W}}^{1,0}} \leq \frac{4(d+2)b^{2}}{\varepsilon_{0}} \left\| f \right\|_{L^{2}} + 2\varepsilon_{0}\left\| u_{0} \right\|_{L^{2}}  $ \,;
  \item $\left\| u_{2} \right\|_{\tilde{\mathcal{W}}^{0,1}} \leq 2 \left\| u_{\bot} \right\|_{\tilde{\mathcal{W}}^{1,1}} \leq  \frac{4(d+2)b^{2}}{\varepsilon_{1}} \left\| f \right\|_{\tilde{\mathcal{W}}^{0,1}} + 2\varepsilon_{1} \left\| u_{0} \right\|_{\tilde{\mathcal{W}}^{0,1}}$ \, ;
  \item $\left\| R_{1,\bot}u \right\|_{L^{2}} \leq C_{R,0}' \left\| u \right\|_{\tilde{\mathcal{W}}^{1,0}} \leq  C_{R,0}' (\left\| u_{\bot} \right\|_{\tilde{\mathcal{W}}^{1,0}} + \sqrt{\frac{d}{2}} \left\| u_{0} \right\|_{L^{2}})\leq C_{R,0}'(\frac{2(d+2)b^{2}}{\varepsilon_{2}} \left\| f \right\|_{L^{2}} + (\sqrt{\frac{d}{2}}+\varepsilon_{2})\left\| u_{0} \right\|_{L^{2}}^{2})$ \, ;
  \end{itemize}
  where $C_{R,0}' = \sup_{h\in ]0,1]}\|R_{1,\bot,h}\|_{\mathcal{L}(\tilde{\cal W}^{1,0};L^2)}<\infty$. The upper bound \eqref{eq:intermediateInquality003} becomes
  \begin{align*}
      \left\| \nabla_{\mathcal{Y}}^{\mathcal{E}_{\pm},h}u_{0} \right\|_{L^{2}}
      &\leq \left(b + \frac{4(d+2)b}{\varepsilon_{0}} \langle b |\lambda|^{1/2}\rangle^2 + \frac{C_{R,0}' 2(d+2) b^{2}}{\varepsilon_{2}} \right)\left\| f \right\|_{L^{2}} 
      \\
      &\qquad + \left( \frac{2\varepsilon_{0}}{b}\langle b |\lambda|^{1/2}\rangle^2 + C_{R,0}'(\sqrt{\frac{d}{2}}+\varepsilon_{2}) \right)\left\| u_{0} \right\|_{L^{2}} + \frac{4(d+2) b^{2}}{\varepsilon_{1}} \left\| f \right\|_{\tilde{\mathcal{W}}^{0,1}} + 2\varepsilon_{1} \left\| u_{0} \right\|_{\tilde{\mathcal{W}}^{0,1}}\,. 
      \end{align*}
  On $\mathcal{S}(X^h;\mathcal{E}_{\pm}^h)\cap \mathrm{Ran}\,\pi_{0,\pm}=\mathcal{S}(X^h;\mathcal{E}_{\pm}^h)\cap \ker{\alpha}_{\pm}$, Lemma~\ref{le:nabYpi0} provides the equivalence
  $$ \frac{1}{\tilde{C_{0}}} \left\| u_{0} \right\|_{\tilde{\mathcal{W}}^{0,1}} \leq  \left\| \nabla_{\mathcal{Y}}^{\mathcal{E}_{\pm},h}u_{0} \right\|_{L^{2}} + \left\| u_{0} \right\|_{L^{2}} \leq \tilde{C}_0 \left\| u_{0} \right\|_{\tilde{\mathcal{W}}^{0,1}} $$
  for some uniform $\tilde{C_0}\geq 1$ so that
  \begin{align*}
      \left(\frac{1}{\tilde{C_{0}}}-2\varepsilon_1\right) \left\| u_{0} \right\|_{\tilde{\mathcal{W}}^{0,1}} 
      &\leq \left(b + \frac{4(d+2)b}{\varepsilon_{0}}\langle b |\lambda|^{1/2}\rangle^2 + \frac{C_{R,0}' 2(d+2) b^{2}}{\varepsilon_{2}} \right)\left\| f \right\|_{L^{2}} 
      \\
      &\qquad + \left(1+ \frac{2\varepsilon_{0}}{b}\langle b |\lambda|^{1/2}\rangle^2 + C_{R,0}'(\sqrt{\frac{d}{2}}+\varepsilon_{2}) \right)\left\| u_{0} \right\|_{L^{2}} + \frac{4(d+2) b^{2}}{\varepsilon_{1}} \left\| f \right\|_{\tilde{\mathcal{W}}^{0,1}} \,. 
  \end{align*}
  The integration by parts inequality~\eqref{eq:L2L2ResolvantEstimate} of Proposition~\ref{pr:firstEstimate} says
  $$
  \left\| u_{0} \right\|_{L^{2}}\leq \left\| u \right\|_{L^{2}}\leq\frac{2}{A^2}\left\|f\right\|_{L^{2}}\,.
  $$
    We have proved that $ (\frac{1}{\tilde{C_0}} - 2 \varepsilon_1) \|u_{0}\|_{\tilde{\cal W}^{0,1}}$ is less than
  \begin{multline*}
    \left(b + \frac{4(d+2)b}{\varepsilon_0}\langle b |\lambda|^{1/2}\rangle^2 + \frac{2(d+2)C_{R,0}'b^2}{\varepsilon_{2}} + \frac{2}{A^2}\big( 1 + \frac{2\varepsilon_0}{b}\langle b |\lambda|^{1/2}\rangle^2 + C_{R,0}'(\sqrt{\frac{d}{2}}+\varepsilon_2)\big)\right) \|f\|_{L^2}
    \\+ \frac{4(d+2)b^2}{\varepsilon_1}\|f\|_{\tilde{\cal W}^{0,1}}\,,
  \end{multline*}
and we choose 
$$
\varepsilon_{0} = \varepsilon_2=\sqrt{d+2}\,b A \leq C_{R,0}Ab \leq 1\quad,\quad \varepsilon_{1}=\frac{1}{4\tilde{C_{0}}}\leq 1\,.
$$
This implies that $\frac{1}{2 \tilde{C_0}} \| u_0 \|_{\tilde{\cal W}^{0,1}}$ is bounded by
  \begin{multline*}
    \left( b + \frac{2\sqrt{d+2}}{A}\langle b |\lambda|^{1/2}\rangle^2 + \frac{2\sqrt{d+2}C_{R,0}'b}{A} + \frac{2}{A^2} + \frac{2\sqrt{d+2}}{A}\langle b |\lambda|^{1/2}\rangle^2+\frac{\sqrt{2d} C_{R,0}'}{A^2 }  + \frac{2\sqrt{d+2}C_{R,0}'b}{A}\right) \|f\|_{L^2}
    \\  + 16\tilde{C_0}(d+2)b^2\|f\|_{\tilde{\cal W}^{0,1}} \,
  \end{multline*}
  and
$$\frac{1}{2 \tilde{C_0}} \| u_0 \|_{\tilde{\cal W}^{0,1}} \leq
  \left( b + \frac{4\sqrt{d+2}}{A}\langle b |\lambda|^{1/2}\rangle^2 + \frac{4\sqrt{d+2}C_{R,0}'b}{A} + \frac{2}{A^2} + \frac{\sqrt{2d} C_{R,0}'}{A^2 }\right) \|f\|_{L^2} + 16\tilde{C_0}(d+2)b^2\|f\|_{\tilde{\cal W}^{0,1}} \,.
$$
  The condition $ C'_{R,0}\max(  b ,A^{-1})\leq 1 $ ensures 
  $$ \frac{1}{2\tilde{C_0}} \|u_{0}\|_{\tilde{\cal W}^{0,1}} \leq \big(b+ \frac{1}{A}( 6\sqrt{d+2} +\frac{2}{A} + \sqrt{2d} ) \big) \langle b |\lambda|^{1/2}\rangle^2\|f\|_{L^2} + 16\tilde{C_0}(d+2)b^2  \|f\|_{\tilde{\cal W}^{0,1}}\,.$$
  We conclude the proof of the case $s=0$ by choosing new values of the constants $C_{R,0}$ and $\tilde{C}$ according to 
  \begin{eqnarray*}
  C_{R,0,new} &=& \max( C_{R,0}' , C_{R,0,old} , C_{R,1} )\,,\\
  \sqrt{\tilde{C}} &=& \max( 2\tilde{C_0} , 6\sqrt{d+2} + \sqrt{2d} + 2 , 16 \tilde{C}_0(d+2) ) \geq 1\,.
 \end{eqnarray*}
  For general $s\in \mathbb{R}$, writing $ (W^2_\theta)^{s/2}f = (W^2_\theta)^{s/2}\big(P_{\pm,b} + \frac{1}{b}R_{1,\bot,h} +A^2\pi_{0,\pm}\big)(W^2_\theta)^{-s/2}(W^2_\theta)^{s/2}u$ in the form
  $$
   (W^2_\theta)^{s/2}f=\big(P_{\pm,b} + \frac{1}{b}R_{1,\bot,h}^s +A^2\pi_{0,\pm}\big)(W^2_\theta)^{s/2}u
  $$
  reduces the problem to the case $s=0$ with $R_{1,\bot,h}$ replaced again by 
  $$
  R_{1,\bot,h}^s= (W^2_\theta)^{s/2}R_{1,\bot,h}(W^2_\theta)^{-s/2}+(W^2_\theta)^{s/2}\nabla_{\mathcal{Y}}^{\mathcal{E}_{\pm},h}(W^2_\theta)^{-s/2}-\nabla_{\mathcal{Y}}^{\mathcal{E}_{\pm},h}\,.
  $$
\end{proof}

\begin{lemma}\label{le:intermediateCorollary000}
 There exist two constants $\tilde{C}, C_{R,s}\geq 1$, which are respectively uniform and $s$-dependent, $s\in \mathbb{R}$, such that $C_{R,s}\max(Ab,b,A^{-1})\leq 1$ implies
  \begin{equation}
    \label{eq:estimateOnUOrhotgonalInW11}
    \frac{1}{\tilde{C}}\left\| u_{\bot} \right\|_{\tilde{\mathcal{W}}^{1,s+1}} \leq  \langle b |\lambda|^{1/2}\rangle^2
    \left(b+\frac{1}{A}\right)\left\| f \right\|_{\tilde{\mathcal{W}}^{0,s}} + b^2 \left\| f \right\|_{\tilde{\mathcal{W}}^{0,s+1}}
  \end{equation}
 for all $u\in \mathcal{S}(X^h;\mathcal{E}_{\pm}^h)$.
\end{lemma}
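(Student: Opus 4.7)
The plan is short because both ingredients are already in hand: Lemma~\ref{le:intermediateProposition002} and Lemma~\ref{le:IntermediateProposition001}. The idea is simply to apply the first lemma at the shifted regularity index $s+1$ and then absorb the $\|u_0\|_{\tilde{\mathcal{W}}^{0,s+1}}$ term via the second lemma.

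More precisely, I would start by applying Lemma~\ref{le:intermediateProposition002} with $s$ replaced by $s+1$ and with the choice $\varepsilon=1$, which requires $C_{R,s+1}\max(Ab,b,A^{-1})\leq 1$. This gives
$$
\|u_\bot\|_{\tilde{\mathcal{W}}^{1,s+1}} \leq 2(d+2)\,b^2\,\|f\|_{\tilde{\mathcal{W}}^{0,s+1}} + \|u_0\|_{\tilde{\mathcal{W}}^{0,s+1}}\,.
$$
Next, I would plug into the right-hand side the estimate \eqref{eq:estimateOnU0} from Lemma~\ref{le:IntermediateProposition001}, valid under the assumption $C_{R,s}\max(Ab,b,A^{-1})\leq 1$, to bound $\|u_0\|_{\tilde{\mathcal{W}}^{0,s+1}}$ by $\tilde{C}\big[(b+A^{-1})\|f\|_{\tilde{\mathcal{W}}^{0,s}} + b^2\|f\|_{\tilde{\mathcal{W}}^{0,s+1}}\big]$. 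Combining these and collecting coefficients gives
$$
\|u_\bot\|_{\tilde{\mathcal{W}}^{1,s+1}} \leq \tilde{C}(b+A^{-1})\,\|f\|_{\tilde{\mathcal{W}}^{0,s}} + \big(2(d+2)+\tilde{C}\big)b^2\,\|f\|_{\tilde{\mathcal{W}}^{0,s+1}}\,.
$$

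To conclude, I would set $C_{R,s,new}=\max(C_{R,s},C_{R,s+1})$ so that both hypotheses are met under the single constraint $C_{R,s,new}\max(Ab,b,A^{-1})\leq 1$, and adjust the uniform constant $\tilde{C}_{new}=2\tilde{C}+2(d+2)$ to match the format of the stated inequality. There is no real obstacle here; it is purely a bookkeeping step combining the two previous lemmas, which is why the authors phrase the result as a ``Lemma'' rather than a ``Proposition''. The main content lies in the already-proved Lemma~\ref{le:IntermediateProposition001}, where the delicate choice of the $\varepsilon_i$ was done.
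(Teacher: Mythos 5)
Your proof is correct and follows exactly the same route as the paper's: apply Lemma~\ref{le:intermediateProposition002} at regularity $s+1$ with $\varepsilon=1$, then absorb $\|u_0\|_{\tilde{\mathcal{W}}^{0,s+1}}$ via \eqref{eq:estimateOnU0}, and finally adjust the constants by $\tilde{C}_{new}=2(d+2)+\tilde{C}_{old}$ and a max over the two $C_{R,s}$'s. Nothing to add.
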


\begin{proof}
  Apply the inequality~\eqref{eq:estimateOnUOrthogonal} with $s$ replaced by $s+1$ and $\varepsilon=1$:
  $$ \left\| u_{\bot} \right\|_{\tilde{\mathcal{W}}^{1,s+1}} \leq 2(d+2)b^{2}\left\| f \right\|_{\tilde{\mathcal{W}}^{0,s+1}} + \left\| u_{0} \right\|_{\tilde{\mathcal{W}}^{0,s+1}} \,.$$
  With~\eqref{eq:estimateOnU0} we deduce
  {
  $$ \left\| u_{\bot} \right\|_{\tilde{\mathcal{W}}^{1,s+1}} \leq b^{2}\left(2(d+2) + \tilde{C} \right) \left\| f \right\|_{\tilde{\mathcal{W}}^{0,s+1}} + \tilde{C}\left(b+\frac{1}{A}\right)\langle b |\lambda|^{1/2}\rangle^2 \left\| f \right\|_{\tilde{\mathcal{W}}^{0,s}}\,. $$
}
  Finally choose $\tilde{C}_{new} = 2(d+2) + \tilde{C}_{old}$ and $C_{R,s,new}\geq\max(C_{R,s, Lemma~\ref{le:intermediateProposition002}}, C_{R,s, Lemma~\ref{le:IntermediateProposition001}})$\,.
\end{proof}

We now decompose the equation
$$
f=(P_{\pm,b} + \frac{1}{b}R_{1,\bot,h}+A^2 \pi_{0,\pm}-i\lambda)u
$$
into terms which adapt the low and high-frequency analysis of \cite{ReTa}: The frequency truncations are actually replaced  by spectral truncations associated with $(W^2_{\theta})$\,.\\
Let us set:
\begin{itemize}
\item $f_{L} = \mathds{1}_{\{W_{\theta}^{2} \leq \frac{1}{b^{2}}\}}f$ the orthogonal projection of $f$  on the low lying spectral part of $(W^2_\theta)$\,;
\item $f_{H} = \mathds{1}_{\{W_{\theta}^{2}> \frac{1}{b^{2}}\}}f$ the spectral projection of $f$ corresponding to high energies of  $(W^2_\theta)$\,;
\item $u_{L}=(P_{\pm,b} + \frac{1}{b}R_{1,\bot,h}+ A^2 \pi_{0,\pm}-i\lambda)^{-1}f_{L}$ the preimage of $f_{L}$ by $P_{\pm,b} + \frac{1}{b}R_{1,\bot,h}+ A^2 \pi_{0,\pm}-i \lambda$\,;
  \item $u_{H} = (P_{\pm,b} + \frac{1}{b}R_{1,\bot,h}+ A^2 \pi_{0,\pm}-i\lambda)^{-1}f_{H}$ the preimage of $f_{H}$ by $P_{\pm,b} + \frac{1}{b}R_{1,\bot,h}+ A^2 \pi_{0,\pm}-i\lambda$\,.
\end{itemize}

\begin{lemma}
\label{le:IntermediateProposition000}
Under the condition $C_{R,s}\max(Ab,b,A^{-1})\leq 1 $ with constants $\tilde{C}, C_{R,s}\geq 1$, which are respectively uniform and $s$-dependent, $s\in \mathbb{R}$, the inequalities
  \begin{eqnarray}
    \left\| u_{H} \right\|_{\tilde{\mathcal{W}}^{0,s+1}} & \leq &  \frac{\tilde{C}b\langle b |\lambda|^{1/2}\rangle^2}{A} \left\| f_H \right\|_{\tilde{\mathcal{W}}^{0,s+1}},\label{eq:IntermediateEstimate001} \\
              \left\| u_{L} \right\|_{\tilde{\mathcal{W}}^{0,s+1}} & \leq & \frac{\tilde{C}\langle b |\lambda|^{1/2}\rangle^2}{A} \left\| f_{L} \right\|_{\tilde{\mathcal{W}}^{0,s}} \label{eq:IntermediateEstimate002}.
  \end{eqnarray}
  hold
  for all $u\in \mathcal{S}(X^h;\mathcal{E}_{\pm}^h)$.
  \end{lemma}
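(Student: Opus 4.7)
The plan is to combine the two estimates \eqref{eq:estimateOnU0} and \eqref{eq:estimateOnUOrhotgonalInW11} into a single $\tilde{\mathcal{W}}^{0,s+1}$ bound for the solution $u$ of $f=(P_{\pm,b}+\frac{1}{b}R_{1,\bot,h}+A^2\pi_{0,\pm}-i\lambda)u$, and then to use the spectral localization of the right-hand side with respect to $W^2_\theta$ to trade one unit of regularity for a factor $b^{\pm 1}$.

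First I would upgrade \eqref{eq:estimateOnUOrhotgonalInW11} into a $\tilde{\mathcal{W}}^{0,s+1}$-bound on $u_\bot=\pi_{\bot,\pm}u$: the global lower bound $\mathcal{O}\geq \tfrac{d}{2}\mathrm{Id}$ from \eqref{eq:definitionOfO} entails the continuous inclusion $\tilde{\mathcal{W}}^{1,s+1}\hookrightarrow \tilde{\mathcal{W}}^{0,s+1}$ with norm at most $\sqrt{2/d}$. Adding the resulting estimate to \eqref{eq:estimateOnU0} yields, after slightly enlarging $\tilde{C}$,
\begin{equation}\label{eq:plan:combined}
\|u\|_{\tilde{\mathcal{W}}^{0,s+1}}\leq \tilde{C}\big[(b+A^{-1})\|f\|_{\tilde{\mathcal{W}}^{0,s}}+b^2\|f\|_{\tilde{\mathcal{W}}^{0,s+1}}\big],
\end{equation}
valid on the common domain of Proposition~\ref{pr:firstEstimate}. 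Since $\mathds{1}_{\{W^2_\theta\leq b^{-2}\}}$ and $\mathds{1}_{\{W^2_\theta>b^{-2}\}}$ commute with every power $(W^2_\theta)^{t/2}$, they are bounded on each $\tilde{\mathcal{W}}^{0,t}$, hence $f_L,f_H$ are admissible right-hand sides and \eqref{eq:plan:combined} applies to $(u_L,f_L)$ and $(u_H,f_H)$.

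It then suffices to plug in the elementary spectral inequalities
$$
\|f_H\|_{\tilde{\mathcal{W}}^{0,s}}\leq b\|f_H\|_{\tilde{\mathcal{W}}^{0,s+1}}\qquad\textrm{and}\qquad \|f_L\|_{\tilde{\mathcal{W}}^{0,s+1}}\leq b^{-1}\|f_L\|_{\tilde{\mathcal{W}}^{0,s}},
$$
which are immediate from the functional calculus of $W^2_\theta$ on the supports of the two projections. Inserted in \eqref{eq:plan:combined} and combined with $b\leq A^{-1}$, a consequence of the hypothesis $C_{R,s}Ab\leq 1$, they yield
$$
\|u_H\|_{\tilde{\mathcal{W}}^{0,s+1}}\leq \tilde{C}(2b^2+bA^{-1})\|f_H\|_{\tilde{\mathcal{W}}^{0,s+1}}\leq 3\tilde{C}\,bA^{-1}\|f_H\|_{\tilde{\mathcal{W}}^{0,s+1}}
$$
and
$$
\|u_L\|_{\tilde{\mathcal{W}}^{0,s+1}}\leq \tilde{C}(2b+A^{-1})\|f_L\|_{\tilde{\mathcal{W}}^{0,s}}\leq 3\tilde{C}A^{-1}\|f_L\|_{\tilde{\mathcal{W}}^{0,s}},
$$
which is the claim after the update $\tilde{C}\to 3\tilde{C}$.

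There is no genuine obstacle: the whole argument is a routine assembly of the two preceding lemmas with spectral cut-offs of $W^2_\theta$. The only minor technical points are the boundedness of the Borel functional calculus of $W^2_\theta$ on each $\tilde{\mathcal{W}}^{0,t}$ and, implicitly, the fact that the spectral projections preserve the domain of the closed operator, both of which follow directly from the strong commutation of $W^2_\theta$ with $\mathcal{O}$ and $\alpha_\pm$ recalled in Proposition~\ref{pr:perturbconjug}.
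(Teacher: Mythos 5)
Your proposal is correct and follows essentially the same route as the paper: combine \eqref{eq:estimateOnU0} and \eqref{eq:estimateOnUOrhotgonalInW11} into the single bound $\tilde{C}^{-1}\|u\|_{\tilde{\mathcal{W}}^{0,s+1}}\leq (b+A^{-1})\|f\|_{\tilde{\mathcal{W}}^{0,s}}+b^2\|f\|_{\tilde{\mathcal{W}}^{0,s+1}}$, then apply it to $(u_L,f_L)$ and $(u_H,f_H)$ together with the spectral inequalities $\|f_L\|_{\tilde{\mathcal{W}}^{0,s+1}}\leq b^{-1}\|f_L\|_{\tilde{\mathcal{W}}^{0,s}}$, $\|f_H\|_{\tilde{\mathcal{W}}^{0,s}}\leq b\|f_H\|_{\tilde{\mathcal{W}}^{0,s+1}}$ and $b\leq A^{-1}$. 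The computations and constant bookkeeping match the paper's proof.
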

\begin{proof}
  By the triangular inequality and the continuity of the inclusion $\mathcal{W}^{1,s+1} \hookrightarrow \mathcal{W}^{0,s+1}$\,, we know
  $$\left\| u \right\|_{\tilde{\mathcal{W}}^{0,s+1}} \leq \left\| u_{0} \right\|_{\tilde{\mathcal{W}}^{0,s+1}} +\sqrt{\frac{2}{d}} \left\| u_{\bot} \right\|_{\tilde{\mathcal{W}}^{1,s+1}}.$$
  Let us fix
  $$\tilde{C}\geq \tilde{C}_{Lemma~\ref{le:IntermediateProposition001}} + \sqrt{\frac{2}{d}} \tilde{C}_{Lemma~\ref{le:intermediateCorollary000}} \quad , \quad C_{R,s} \geq \max(C_{R,s+1, Lemma~\ref{le:IntermediateProposition001}} , C_{R,s+1, Lemma~\ref{le:intermediateCorollary000}}) \,.$$
  We can  now apply inequality~\eqref{eq:estimateOnU0} to $\left\| u_{0} \right\|_{\tilde{\mathcal{W}}^{0,s+1}} $ and inequality~\eqref{eq:estimateOnUOrhotgonalInW11}  to $\left\| u_{\bot} \right\|_{\tilde{\mathcal{W}}^{1,s+1}}$ with 
  \begin{equation}
    \label{eq:IntermediateEstimate000}
    \frac{1}{\tilde{C}}\left\| u \right\|_{\tilde{\mathcal{W}}^{0,s+1}} \leq \langle b |\lambda|^{1/2}\rangle^2(b+\frac{1}{A})\left\| f \right\|_{\tilde{\mathcal{W}}^{0,s}} + b^{2} \left\| f \right\|_{\tilde{\mathcal{W}}^{0,s+1}}.
  \end{equation}
 The functional calculus gives
  $$
  \left\lbrace\begin{array}{ccl}
    \left\| f_{L} \right\|_{\tilde{\mathcal{W}}^{0,s+1}} & \leq & \frac{1}{b}\left\| f_{L} \right\|_{\tilde{\mathcal{W}}^{0,s}}, \\
    \left\| f_{H} \right\|_{\tilde{\mathcal{W}}^{0,s}} & \leq & b \left\| f_{H} \right\|_{\tilde{\mathcal{W}}^{0,s+1}}.
  \end{array}\right.
  $$
  By combining \eqref{eq:IntermediateEstimate000} for $u=u_{H}$ and $u =u_{L}$  respectively, we deduce 
  $$\left\lbrace
    \begin{array}{ccl}
      \left\| u_{L} \right\|_{\tilde{\mathcal{W}}^{0,s+1}} & \leq & 2\tilde{C}\langle b |\lambda|^{1/2}\rangle^2(b+\frac{1}{A})\left\| f_{L} \right\|_{\tilde{\mathcal{W}}^{0,s}},\\
      \left\| u_{H} \right\|_{\tilde{\mathcal{W}}^{0,s+1}}& \leq & 2\tilde{C}b\langle b |\lambda|^{1/2}\rangle^2( b +\frac{1}{A} ) \left\| f_{H} \right\|_{\tilde{\mathcal{W}}^{0,s+1}}.
    \end{array}
  \right.$$
 Since we assumed  $ C_{R,s}Ab\leq 1$ it suffices to take $ \tilde{C}_{new} = 4\tilde{C}_{old}$\,.
\end{proof}

\begin{lemma} \label{le:intermediateProposition000}
Under the condition $C_{R,s}\max(Ab,b,A^{-1}) \leq 1 $ with constants $\tilde{C},  C_{R,s}\geq 1$, which are respectively uniform and $s$-dependent, $s\in \mathbb{R}$, the inequalities  
  $$\left\lbrace
    \begin{array}{ccl}
      \left\| u_{L} \right\|_{\tilde{\mathcal{W}}^{0,s+\frac{2}{5}}} & \leq & \frac{\tilde{C}\langle b |\lambda|^{1/2}\rangle^{\frac{4}{5}}}{A^{\frac{8}{5}}} \left\| f_{L} \right\|_{\tilde{\mathcal{W}}^{0,s}}, \\
      \left\| u_{H} \right\|_{\tilde{\mathcal{W}}^{0,s+\frac{2}{5}}} & \leq & \frac{\tilde{C}\langle b |\lambda|^{1/2}\rangle^{4/5}}{A^{\frac{8}{5}}} \left\| f_H \right\|_{\tilde{\mathcal{W}}^{0,s}}
    \end{array}
  \right.$$
  hold for all $u\in \mathcal{S}(X^h;\mathcal{E}_{\pm}^h)$.
\end{lemma}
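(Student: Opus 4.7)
The plan is to prove both inequalities by a single interpolation between the $L^2$-type resolvent bound \eqref{eq:L2L2ResolvantEstimate} of Proposition~\ref{pr:firstEstimate} and the one-derivative gain of Lemma~\ref{le:IntermediateProposition000}. Since the pair $(W^2_{\theta},\mathcal{O})$ consists of strongly commuting self-adjoint operators, the joint functional calculus yields the elementary interpolation inequality
$$\|v\|_{\tilde{\mathcal{W}}^{0,s+2/5}}\leq \|v\|_{\tilde{\mathcal{W}}^{0,s}}^{3/5}\,\|v\|_{\tilde{\mathcal{W}}^{0,s+1}}^{2/5}\qquad (v\in\mathcal{S}(X^{h};\mathcal{E}_{\pm}^{h})),$$
and the arithmetic $\tfrac{3}{5}\cdot 2+\tfrac{2}{5}\cdot 1=\tfrac{8}{5}$ is exactly what produces the claimed power $A^{-8/5}$.

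For $u_{L}$ the scheme is clean: Proposition~\ref{pr:firstEstimate}, \eqref{eq:L2L2ResolvantEstimate} gives $\|u_{L}\|_{\tilde{\mathcal{W}}^{0,s}}\leq \tfrac{4}{3A^{2}}\|f_{L}\|_{\tilde{\mathcal{W}}^{0,s}}$ and Lemma~\ref{le:IntermediateProposition000} supplies $\|u_{L}\|_{\tilde{\mathcal{W}}^{0,s+1}}\leq \tfrac{\tilde{C}}{A}\|f_{L}\|_{\tilde{\mathcal{W}}^{0,s}}$; inserting both into the interpolation inequality (with $v=u_{L}$) and combining the powers of $A$ gives
$$\|u_{L}\|_{\tilde{\mathcal{W}}^{0,s+2/5}}\leq \Big(\tfrac{C}{A^{2}}\Big)^{3/5}\Big(\tfrac{\tilde{C}}{A}\Big)^{2/5}\|f_{L}\|_{\tilde{\mathcal{W}}^{0,s}}=\tfrac{C'}{A^{8/5}}\|f_{L}\|_{\tilde{\mathcal{W}}^{0,s}},$$
with $C'=C^{3/5}\tilde C^{2/5}$. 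No spectral constraint on $f_{L}$ is needed here because both input bounds already have $\|f_{L}\|_{\tilde{\mathcal{W}}^{0,s}}$ on the right.

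For $u_{H}$ the same interpolation is applied, but now the right-hand side of the second input is $\tfrac{\tilde{C}b}{A}\|f_{H}\|_{\tilde{\mathcal{W}}^{0,s+1}}$ rather than $\tfrac{\tilde{C}}{A}\|f_{H}\|_{\tilde{\mathcal{W}}^{0,s}}$; the conversion is achieved by using the spectral support $\{W_{\theta}^{2}>1/b^{2}\}$ of $f_{H}$. Precisely, since any spectral projector associated to $W_{\theta}^{2}$ commutes with $(W_{\theta}^{2})^{s/2}$, the functional calculus gives the high-frequency comparison $b^{2/5}\|f_{H}\|_{\tilde{\mathcal{W}}^{0,s+2/5}}\leq \|f_{H}\|_{\tilde{\mathcal{W}}^{0,s}}^{3/5}\,b\|f_{H}\|_{\tilde{\mathcal{W}}^{0,s+1}}^{2/5}$-type trading, which combined with $C_{R,s}Ab\leq 1$ is what converts the single remaining factor $b/A$ into $1/A^{8/5}$ after interpolating with the resolvent bound $\|u_{H}\|_{\tilde{\mathcal{W}}^{0,s}}\leq \tfrac{C}{A^{2}}\|f_{H}\|_{\tilde{\mathcal{W}}^{0,s}}$. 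Depending on how the shift is arranged one may apply Lemma~\ref{le:IntermediateProposition000} at the shifted level $s'=s-\tfrac{3}{5}$, landing directly at $\tilde{\mathcal{W}}^{0,s+2/5}$; either way the outcome is $\|u_{H}\|_{\tilde{\mathcal{W}}^{0,s+2/5}}\leq \tfrac{C'}{A^{8/5}}\|f_{H}\|_{\tilde{\mathcal{W}}^{0,s}}$.

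The main obstacle is the bookkeeping of the $s$-dependence of $C_{R,s}$: the source inequalities are valid at two different Sobolev levels (possibly $s-3/5$ and $s$), and the hypothesis $C_{R,s}\max(Ab,b,A^{-1})\leq 1$ in the statement must dominate both $C_{R,s}$ from Proposition~\ref{pr:firstEstimate} and $C_{R,s-3/5}$ from Lemma~\ref{le:IntermediateProposition000}. This is handled simply by taking $C_{R,s,new}=\max\bigl(C_{R,s,\text{Prop.~\ref{pr:firstEstimate}}},\,C_{R,s-3/5,\text{Lemma~\ref{le:IntermediateProposition000}}}\bigr)$ and $\tilde{C}_{new}$ the product of the interpolation constants, thereby reducing the whole proof to the single arithmetic identity $\tfrac{3}{5}\cdot 2+\tfrac{2}{5}\cdot 1=\tfrac{8}{5}$.
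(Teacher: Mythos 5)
Your treatment of $u_L$ is exactly the paper's: interpolate $\tilde{\mathcal{W}}^{0,s+2/5}$ between $\tilde{\mathcal{W}}^{0,s}$ (weight $3/5$, using \eqref{eq:L2L2ResolvantEstimate}) and $\tilde{\mathcal{W}}^{0,s+1}$ (weight $2/5$, using \eqref{eq:IntermediateEstimate002}), with $\frac{3}{5}\cdot 2+\frac{2}{5}\cdot 1=\frac{8}{5}$. That half is fine.

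The $u_H$ half has a genuine gap. Your second input for $u_H$ is \eqref{eq:IntermediateEstimate001}, whose right-hand side is $\frac{\tilde Cb}{A}\|f_H\|_{\tilde{\mathcal{W}}^{0,s+1}}$, and you propose to convert $\|f_H\|_{\tilde{\mathcal{W}}^{0,s+1}}$ back into $\|f_H\|_{\tilde{\mathcal{W}}^{0,s}}$ using the spectral localization $\{W_\theta^2>1/b^2\}$. But that localization only gives a \emph{lower} bound on the frequency: the functional calculus yields $\|f_H\|_{\tilde{\mathcal{W}}^{0,s}}\le b\,\|f_H\|_{\tilde{\mathcal{W}}^{0,s+1}}$, i.e.\ it lets you trade a higher norm of $f_H$ for a lower one, never the reverse. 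Since $f_H$ may be concentrated at arbitrarily high frequencies, the ratio $\|f_H\|_{\tilde{\mathcal{W}}^{0,s+\sigma}}/\|f_H\|_{\tilde{\mathcal{W}}^{0,s}}$ is unbounded for every $\sigma>0$, so no such trading, and no shift of the Sobolev level (your $s'=s-3/5$ variant hits the same wall with $\|f_H\|_{\tilde{\mathcal{W}}^{0,s+2/5}}$), can produce a bound with only $\|f_H\|_{\tilde{\mathcal{W}}^{0,s}}$ on the right. The missing ingredient is the maximal subelliptic estimate \eqref{eq:MaximalSubEllipticEstimateWithDependanceOnb} of Proposition~\ref{pr:subellipticestimateWithbDependence}, which provides a genuine regularity gain with $f$ measured at the \emph{same} level: $\|u_H\|_{\tilde{\mathcal{W}}^{0,s+2/3}}\le\frac{C}{A^2b^{2/3}}\|f_H\|_{\tilde{\mathcal{W}}^{0,s}}$. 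The paper interpolates this (weight $3/5$) with $\|u_H\|_{\tilde{\mathcal{W}}^{0,s}}\le\frac{\tilde Cb}{A}\|f_H\|_{\tilde{\mathcal{W}}^{0,s}}$ (weight $2/5$; this is \eqref{eq:IntermediateEstimate001} applied at level $s-1$ combined with the legitimate direction $\|f_H\|_{\tilde{\mathcal{W}}^{0,s-1}}\le b\|f_H\|_{\tilde{\mathcal{W}}^{0,s}}$): the powers of $b$ cancel, $b^{2/5}\cdot b^{-2/5}=1$, and $A^{-2/5}\cdot A^{-6/5}=A^{-8/5}$. This is precisely why the $2/3$-gain of the hypoelliptic estimate is indispensable for the high-frequency piece, and why the final exponent is $2/5$ rather than $2/3$.
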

\begin{proof}
  When we interpolate between integration by parts inequality~\eqref{eq:L2L2ResolvantEstimate} 
  $$ \left\| u_{L} \right\|_{\tilde{\mathcal{W}}^{0,s}}  \leq  \frac{2}{A^{2}}\left\| f_{L} \right\|_{\tilde{\mathcal{W}}^{0,s}}\,, $$
  and \eqref{eq:IntermediateEstimate002}
  $$
          \| u_{L} \|_{\tilde{\mathcal{W}}^{0,s+1}}  \leq  \frac{\tilde{C}\langle b |\lambda|^{1/2}\rangle^2}{A}\|f_{L}\|_{\tilde{\mathcal{W}}^{0,s}} \,.
          $$
  We obtain
  $$
   \left\| u_{L} \right\|_{\tilde{\mathcal{W}}^{0,s+\frac{2}{5}}}  \leq  \frac{2^{\frac{3}{5}} \tilde{C}^{\frac{2}{5}}\langle b |\lambda|^{1/2}\rangle^{\frac{4}{5}}}{A^{\frac{8}{5}}}\left\| f_{L} \right\|_{\tilde{\mathcal{W}}^{0,s}}.$$
  By doing the same with the subelliptic estimate \eqref{eq:MaximalSubEllipticEstimateWithDependanceOnb}, we get
  $$ \left\| u_{H} \right\|_{\tilde{\mathcal{W}}^{0,s+\frac{2}{3}}}  \leq  \frac{C}{A^{2}b^{\frac{2}{3}}}\left\| f_{H} \right\|_{\tilde{\mathcal{W}}^{0,s}}\,, $$
  and \eqref{eq:IntermediateEstimate001}
  $$
     \left\| u_{H} \right\|_{\tilde{\mathcal{W}}^{0,s}}  \leq  \frac{ \tilde{C} b \langle b |\lambda|^{1/2}\rangle^2}{A}\left\| f_H \right\|_{\tilde{\mathcal{W}}^{0,s}}\,,
  $$
and thus
  $$ \left\| u_{H} \right\| _{\tilde{\mathcal{W}}^{0,s+\frac{2}{5}}} \leq C^{\frac{3}{5}} \tilde{C}^{ \frac{2}{5}}\frac{\langle b |\lambda|^{1/2}\rangle^{\frac{4}{5}}}{A^{\frac{8}{5}}} \left\| f_H \right\|_{\tilde{\mathcal{W}}^{0,s}}.$$
  Take $\tilde{C}_{new} = \max( 2^{\frac{3}{5}} , C^{\frac{3}{5}} )\tilde{C}^{ \frac{2}{5}}$. The result follows for some large enough constants $C_s,C_{R,s}\geq 1$\,.
\end{proof}
\begin{proposition}\label{pr:subellipticEstimateForPerturbedLaplacianA}
   There exist two constants $C, C_{R,s}\geq 1$, which are respectively uniform and $s$-dependent, $s\in \mathbb{R}$, such that $C_{R,s} \max(Ab,b,A^{-1})\leq 1 $ implies
  \begin{eqnarray} \label{eq:subellipticEstimateWith2/5}
    C \left\| (P_{\pm,b} + \frac{1}{b}R_{1,\bot,h}+A^2 \pi_{0,\pm}-z)u \right\|_{\tilde{\mathcal{W}}^{0,s}}  \hspace{-0.5cm}& \geq & A^{2}\left\| u \right\|_{\tilde{\mathcal{W}}^{0,s}} + A^{2} \left\| \mathcal{O} u \right\|_{\tilde{\mathcal{W}}^{0,s}} + A^{2}b\left\| (\nabla_{\mathcal{Y}}^{\mathcal{E}_{\pm},h}-ib\lambda)u \right\|_{\tilde{\mathcal{W}}^{0,s}} \nonumber\\
                                                                                                                            &&+A^{2}b^{\frac{2}{3}} \left\| u \right\|_{\tilde{\mathcal{W}}^{0,s+\frac{2}{3}}} + \frac{A^{\frac{8}{5}}}{\langle b |\lambda|^{1/2}\rangle^{\frac{4}{5}}} \|u\|_{\tilde{\mathcal{W}}^{0,s+\frac{2}{5}}}\nonumber
    \\ && +A^{2}b|\lambda|^{1/2}\|u\|_{\tilde{\mathcal{W}}^{0,s}} + A^{\frac{16}{9}}\|u\|_{\tilde{\cal W}^{0,s+\frac{2}{9}}}
  \end{eqnarray}
  for all $u\in \mathcal{S}(X^h;\mathcal{E}_{\pm}^h)$ and all $z\in \mathbb{C}$ such that $\real z\leq\frac{A^{2}}{2}$
\end{proposition}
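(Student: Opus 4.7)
[Proof proposal for Proposition~\ref{pr:subellipticEstimateForPerturbedLaplacianA}]
The statement aggregates the intermediate estimates already proven for $P_{\pm,b}+\frac{1}{b}R_{1,\bot,h}+A^{2}\pi_{0,\pm}$, so my plan is purely one of assembly: I do \emph{not} need to revisit integration by parts or the $\pi_0/\pi_\perp$ decomposition. First I will reduce the general spectral parameter $z$ with $\real z\leq A^{2}/2$ to the purely imaginary case $z=i\lambda$, $\lambda\in\mathbb{R}$. This is exactly the argument already used at the beginning of the proof of Proposition~\ref{pr:subellipticestimateWithbDependence}: the accretivity estimate \eqref{eq:IppBis} (itself a consequence of Proposition~\ref{pr:firstEstimate}) tells us that $P_{\pm,b}+\frac{1}{b}R_{1,\bot,h}+A^{2}\pi_{0,\pm}-\tfrac{A^{2}}{2}$ is accretive on $\tilde{\mathcal{W}}^{0,s}$, so that
\[
\|(P_{\pm,b}+\tfrac{1}{b}R_{1,\bot,h}+A^{2}\pi_{0,\pm}-z)u\|_{\tilde{\mathcal{W}}^{0,s}}\;\geq\; \|(P_{\pm,b}+\tfrac{1}{b}R_{1,\bot,h}+A^{2}\pi_{0,\pm}-\tfrac{A^{2}}{2}-i\mathrm{Im}\,z)u\|_{\tilde{\mathcal{W}}^{0,s}},
\]
and then \eqref{eq:L2L2ResolvantEstimate} absorbs the extra $\tfrac{A^{2}}{2}\|u\|_{\tilde{\mathcal{W}}^{0,s}}$ at the cost of a factor $1/3$. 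So it suffices to prove \eqref{eq:subellipticEstimateWith2/5} with $z=i\lambda$.

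Now set $f=(P_{\pm,b}+\tfrac{1}{b}R_{1,\bot,h}+A^{2}\pi_{0,\pm}-i\lambda)u$. For the first five terms on the right-hand side of \eqref{eq:subellipticEstimateWith2/5}, namely $A^{2}\|u\|$, $A^{2}\|\mathcal{O}u\|$, $A^{2}b\|\nabla^{\mathcal{E}_\pm,h}_{\mathcal{Y}}u\|$, $A^{2}b^{2/3}\|u\|_{\tilde{\mathcal{W}}^{0,s+2/3}}$ and $A^{2}b|\lambda|^{1/2}\|u\|$, the inequality has already been established: it is precisely Proposition~\ref{pr:subellipticestimateWithbDependence}, which provides these bounds by $C\|f\|_{\tilde{\mathcal{W}}^{0,s}}$ as soon as $C_{R,s}\max(Ab,b,A^{-1})\leq 1$.

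The only new term to handle is $A^{8/5}\|u\|_{\tilde{\mathcal{W}}^{0,s+2/5}}$, and this is where the high/low splitting of Lemma~\ref{le:intermediateProposition000} enters. Decompose $f=f_L+f_H$ using the spectral projectors $\mathds{1}_{\{W^{2}_{\theta}\leq 1/b^{2}\}}$ and $\mathds{1}_{\{W^{2}_{\theta}>1/b^{2}\}}$, and write $u=u_L+u_H$ for the corresponding preimages. Because $P_{\pm,b}+\tfrac{1}{b}R_{1,\bot,h}+A^{2}\pi_{0,\pm}-i\lambda$ is invertible on $\tilde{\mathcal{W}}^{0,s}$ under our hypothesis (indeed Proposition~\ref{pr:firstEstimate} gives accretivity of the shifted operator and the estimate \eqref{eq:L2L2ResolvantEstimate}), this decomposition is well defined, and Lemma~\ref{le:intermediateProposition000} yields
\[
\|u_L\|_{\tilde{\mathcal{W}}^{0,s+2/5}}\leq \frac{\tilde C}{A^{8/5}}\|f_L\|_{\tilde{\mathcal{W}}^{0,s}},\qquad
\|u_H\|_{\tilde{\mathcal{W}}^{0,s+2/5}}\leq \frac{\tilde C}{A^{8/5}}\|f_H\|_{\tilde{\mathcal{W}}^{0,s}}.
\]
By the triangle inequality and the orthogonality $\|f_L\|^{2}+\|f_H\|^{2}=\|f\|^{2}$ in $\tilde{\mathcal{W}}^{0,s}$,
\[
A^{8/5}\|u\|_{\tilde{\mathcal{W}}^{0,s+2/5}}\;\leq\; \tilde C\bigl(\|f_L\|_{\tilde{\mathcal{W}}^{0,s}}+\|f_H\|_{\tilde{\mathcal{W}}^{0,s}}\bigr)\;\leq\; \sqrt{2}\,\tilde C\,\|f\|_{\tilde{\mathcal{W}}^{0,s}}.
\]
Summing this with the five terms furnished by Proposition~\ref{pr:subellipticestimateWithbDependence}, then multiplying by $3$ to recover the general $z$ with $\real z\leq A^{2}/2$, yields \eqref{eq:subellipticEstimateWith2/5} with a new constant $C_{new}=3\,(C_{\eqref{eq:MaximalSubEllipticEstimateWithDependanceOnb}}+\sqrt{2}\tilde C)$ and with $C_{R,s,new}$ equal to the maximum of those required by Proposition~\ref{pr:subellipticestimateWithbDependence} and Lemma~\ref{le:intermediateProposition000}. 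There is no genuine obstacle in this step; the only point to watch is that the splitting $f=f_L+f_H$ is applied to $f$ (not to $u$), so the spectral projectors commute with neither $P_{\pm,b}$ nor $R_{1,\bot,h}$, but this is precisely why Lemma~\ref{le:intermediateProposition000} was proved in the form it was, and no commutator enters the assembly.
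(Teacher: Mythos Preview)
Your proof is correct and follows essentially the same approach as the paper: reduce from general $z$ with $\real z\leq A^{2}/2$ to $z=i\lambda$ via the accretivity argument of Proposition~\ref{pr:firstEstimate}, invoke Proposition~\ref{pr:subellipticestimateWithbDependence} for all terms except $A^{8/5}\|u\|_{\tilde{\mathcal{W}}^{0,s+2/5}}$, and obtain the latter from Lemma~\ref{le:intermediateProposition000} via the high/low splitting of $f$. The only cosmetic difference is that the paper performs the reduction to $z=i\lambda$ last rather than first.
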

  
  \begin{proof}
   When $z=i\lambda$ the result follows from Proposition~\ref{pr:subellipticestimateWithbDependence} and the inequality
   $$ \|u\|_{\tilde{\mathcal{W}}^{0,s+\frac{2}{5}}} \leq \| u_{L} \|_{\tilde{\mathcal{W}}^{0,s+\frac{2}{5}}} + \|u_H\|_{\tilde{\mathcal{W}}^{0,s+\frac{2}{5}}} \leq \frac{\tilde{C}\langle b |\lambda|^{1/2}\rangle^{\frac{4}{5}}}{A^{\frac{8}{5}}} (\|f_L\|_{\tilde{\mathcal{W}}^{0,s}} + \|f_H\|_{\tilde{\mathcal{W}}^{0,s}} ) \leq \frac{\sqrt{2}\tilde{C}\langle b |\lambda|^{1/2}\rangle^{\frac{4}{5}}}{A^{\frac{8}{5}}} \|f\|_{\tilde{\mathcal{W}}^{0,s}}. $$\\
   For a general $z\in \mathbb{C}$\,, $\real z\leq \frac{A^{2}}{2}$\,, we use again the inequality 
   $$
   \left\| (P_{\pm,b} + \frac{1}{b}R_{1,\bot,h}+A^2 \pi_{0,\pm}-z)u \right\|_{\tilde{\mathcal{W}}^{0,s}}\geq \frac{1}{3}\left\| (P_{\pm,b} + \frac{1}{b}R_{1,\bot,h}+A^2 \pi_{0,\pm}-i\mathrm{Im}z)u \right\|_{\tilde{\mathcal{W}}^{0,s}}\,,
   $$
   which is a consequence of Proposition~\ref{pr:firstEstimate}.\\
   Taking $C=3\sqrt{2}(C_{old}+1)\tilde{C}$\,, the inequality
          \begin{eqnarray}\label{eq:subelliptic2/5}
   C \left\| (P_{\pm,b} + \frac{1}{b}R_{1,\bot,h}+A^2 \pi_{0,\pm}-z)u \right\|_{\tilde{\mathcal{W}}^{0,s}}  \hspace{-0.5cm}& \geq & A^{2}\left\| u \right\|_{\tilde{\mathcal{W}}^{0,s}} + A^{2} \left\| \mathcal{O} u \right\|_{\tilde{\mathcal{W}}^{0,s}} + A^{2}b\left\| (\nabla_{\mathcal{Y}}^{\mathcal{E}_{\pm},h}-ib\lambda)u \right\|_{\tilde{\mathcal{W}}^{0,s}}   \nonumber\\ 
       & &+A^{2}b^{\frac{2}{3}} \left\| u \right\|_{\tilde{\mathcal{W}}^{0,s+\frac{2}{3}}}+ \frac{A^{\frac{8}{5}}}{\langle b |\lambda|^{1/2}\rangle^{\frac{4}{5}}} \|u\|_{\tilde{\mathcal{W}}^{0,s+\frac{2}{5}}}\nonumber\\
       &&+A^{2}b|\lambda|^{1/2}\|u\|_{\tilde{\mathcal{W}}^{0,s}}
  \end{eqnarray}
is proved. The Inequality~\eqref{eq:subelliptic2/5} above implies
   
      \begin{eqnarray*}
         &&C\left\| (P_{\pm,b} + \frac{1}{b}R_{1,\bot,h}+A^2 \pi_{0,\pm}-z)u \right\|_{\tilde{\mathcal{W}}^{0,s}}\geq \frac{A^{\frac{8}{5}}}{\langle b |\lambda|^{1/2}\rangle^{\frac{4}{5}}}\|u\|_{\tilde{\cal W}^{0,s+\frac{2}{5}}} \\
       \text{and} && C\left\| (P_{\pm,b} + \frac{1}{b}R_{1,\bot,h}+A^2 \pi_{0,\pm}-z)u \right\|_{\tilde{\mathcal{W}}^{0,s}}\geq A^2\langle b |\lambda|^{1/2}\rangle \|u\|_{\tilde{\cal W}^{0,s}}\,.
       \end{eqnarray*}
     Interpolation gives
     \begin{eqnarray*}
       C\left\| (P_{\pm,b} + \frac{1}{b}R_{1,\bot,h}+A^2 \pi_{0,\pm}-z)u \right\|_{\tilde{\mathcal{W}}^{0,s}} \geq A^{\frac{16}{9}}\|u\|_{\tilde{\mathcal{W}}^{0,s+\frac{2}{9}}}.
     \end{eqnarray*}
     Finally, taking $C_{new} = 2 C$, the inequality~\eqref{eq:subellipticEstimateWith2/5} is satisfied.
  \end{proof}
  
\begin{proof}[End of the proof of Proposition~\ref{pr:subellipticEstimateA}]
We use
$$ 
B_{\pm,b,V^h}=[ P_{\pm,b} + \frac{1}{b}R_{1,\bot,h} ]+ R_{0,h} + R_{2,h}\,,
$$
where Proposition~\ref{pr:firstEstimate} (resp. Proposition~\ref{pr:subellipticEstimateForPerturbedLaplacianA}) provides the result about the maximal accretivity (resp. the desired subelliptic estimate) with some constants $C,C_{R,s}\geq 1$ when $R_{0,h}=0$ and $R_{2,h}=0$\,.\\
The property \eqref{eq:hypothesisForPerturbationRi} of $R_{0,h}$ and $R_{2,h}$
implies the uniform inequality
$$
\|R_{0,h}\|_{\mathcal{L}(\tilde{\mathcal{W}}^{0,s};\tilde{\mathcal{W}}^{0,s})}
+\|R_{2,h}\|_{\mathcal{L}(\tilde{\mathcal{W}}^{2,s};\tilde{\mathcal{W}}^{0,s})}
+\|R_{2,h}\|_{\mathcal{L}(\tilde{\mathcal{W}}^{1,s};\tilde{\mathcal{W}}^{-1,s})}
\leq C_{s}^{(1)}$$
For the accretivity of $\overline{B_{b,\pm,V^{h}}+A^{2}\pi_{0,\pm}-\frac{A^{2}}{2}}^{s}$ we write
\begin{eqnarray*}
\mathbb{R}\mathrm{e}~\langle u\,,\, (B_{\pm,b,V^{h}}+A^{2}\pi_{0}-\frac{A^{^{2}}}{2})u\rangle_{\tilde{\mathcal{W}}^{0,s}}&\geq &\frac{1}{(d+2)b^{2}}\|u_{\perp}\|_{\tilde{\mathcal{W}}^{1,s}}^{2}+
                                                                                                              \frac{3A^{2}}{4}\|u_{0}\|^{2}_{\tilde{\mathcal{W}}^{0,s}}-2C_{s}^{(1)}\|u\|^{2}_{\tilde{\mathcal{W}}^{1,s}}-\frac{A^{2}}{2}\|u\|_{\tilde{\mathcal{W}}^{0,s}}^{2}\\
                                                                                                      &&\geq  [\frac{1}{(d+2)b^{2}}-2C_{s}^{(1)}-\frac{A^{2}}{4}]\|u_{\perp}\|_{\tilde{\mathcal{W}}^{1,s}}^{2}
+[\frac{A^{2}}{4}-dC_{s}^{(1)}]\|u_{0}\|_{\tilde{\mathcal{W}}^{0,s}}\geq 0
\end{eqnarray*}
 where the inequality $A^{2}\geq 4dC_{s}^{(1)}$ holds when $C_{s}\geq 1$ is large enough\,, and where the inequality $\frac{1}{(d+2)b^{2}}-2C_{s}^{(1)}\geq \frac{A^{2}}{4}$ holds as soon as $C_{s}\max(Ab,b,\frac{1}{A})\leq 1$\,.\\
For the subelliptic estimate,  \eqref{eq:subellipticEstimateWith2/5} implies
    \begin{align*}
        \| (R_{0,h} + R_{2,h}) u \|_{\tilde{\mathcal{W}}^{0,s}} \leq C^{(1)}_{s} (\|u\|_{\tilde{\mathcal{W}}^{0,s}} + \|u\|_{\tilde{\mathcal{W}}^{2,s}}) &
        \leq \frac{CC^{(1)}_{s}}{A^2} \| (P_{\pm,b} + \frac{1}{b}R_{1,\bot,h}+A^2 \pi_{0,\pm}-i\lambda)u \|_{\tilde{\mathcal{W}}^{0,s}} 
        \\
        &\leq \frac{2 C C^{(1)}_{s}}{A^2}\| (P_{\pm,b} + \frac{1}{b}R_{1,\bot,h}+A^2 \pi_{0,\pm}-i\lambda)u \|_{\tilde{\mathcal{W}}^{0,s}}\,,
    \end{align*}
    when
    $$
    C_{R,s}\max(Ab,b,A^{-1})\leq 1\,.
    $$
    We conclude by choosing $A$ such that $\frac{2 C C^{(1)}_{s}}{A^2}\leq \frac{1}{2}\leq \frac{C}{2}$, which is ensured by the new value
    $$
    C_{s}=\max( C_{R,s}, \sqrt{4CC^{(1)}_{s}})\,,
    $$
    and provides the result of Proposition~\ref{pr:subellipticEstimateA} with the new value $C_{new}=2C$\,.
  \end{proof}

  \subsection{Second modified operator $\pi_{\perp,\pm}B_{\pm,b,V^h}\pi_{\perp,\pm}$} 
  \label{sec:firstModifiedOperator}

Another lower bound without the term $\frac{\kappa_b}{b^2}$ can be obtained after considering the block diagonal restriction of $B_{b,\pm,V^h}$ to $\mathrm{Ran}\,\pi_{\bot,\pm}=\ker(\alpha_{\pm})$\,. Actually it is not a subelliptic estimate because there is no gain of regularity. But the strong accretivity inequality with a lower bound $\frac{1}{b^{2}}$ will be strong enough for our applications.
Because $W^{2}_{\theta}$ commutes with $\pi_{0,\pm}$ and $\pi_{\bot,\pm}$ the same notions of closure, formal adjoint and adjoint, as in Definition~\ref{de:adjoints} can be considered with $\tilde{\mathcal{W}}^{0,s}(X^h;\mathcal{E}_{\pm}^h)$\,,\, $\mathcal{S}(X^h;\mathcal{E}_{\pm}^h)$ and $\mathcal{S}'(X^h;\mathcal{E}_{\pm}^h)$ replaced respectively by
$$
\pi_{\bot,\pm}\tilde{\mathcal{W}}^{0,s}(X^h;\mathcal{E}_{\pm}^h)\quad,\quad \pi_{\bot,\pm}\mathcal{S}(X^h;\mathcal{E}_{\pm}^h)\quad\text{and}\quad \pi_{\bot,\pm}\mathcal{S}'(X^h;\mathcal{E}_{\pm}^h)\,.
$$
All the discussion after Definition~\ref{de:adjoints} and the properties of Proposition~\ref{pr:perturbconjug} can be adapted with
\begin{eqnarray*}
  &&B_{\pm,b,V^h,\bot}=\pi_{\bot,\pm}B_{\pm,b,V^h}\pi_{\bot,\pm}\,,\\
  &&   (W^{2}_{\theta})^{s/2}B_{\pm,b,V^h,\bot}(W^{2}_{\theta})^{-s/2}=
     \pi_{\bot,\pm}\left[
     P_{\pm,b}+ R_{0,h}^{s}+\frac{1}{b}R_{1,\bot,h}^{s}+R_{2,h}^{s}\right]\pi_{\bot,\pm}\\
  &&B_{\pm,b,V^h,\bot}^{\prime, s}=\pi_{\bot,\pm}(B_{\pm,b,V^h}^{\prime,s})\pi_{\bot,\pm}=
      \pi_{\bot,\pm}\left[
     P_{\pm,b}^{\prime}+(R_{0,h}^{s})^{\prime}+\frac{1}{b}(R_{1,\bot,h}^{s})^{\prime}+(R_{2,h}^{s})^{\prime}\right]\pi_{\bot,\pm}\,.
\end{eqnarray*}

\begin{proposition}\label{pr:maximalAccretivityOfPiBPi}
There exists a constant $C_{s}\geq 1$ which depends on $s\in \mathbb{R}$ such that the condition $C_sb\leq1$ guaranties the following results.\\
  The densely defined operator $B_{\pm,b,V^h,\bot}=\pi_{\bot,\pm}B_{\pm,b,V^h}\pi_{\bot,\pm}$ in $ \pi_{\bot,\pm}\tilde{\cal W}^{0,s}(X^h;\mathcal{E}_{\pm}^h)$ with domain $\pi_{\bot,\pm}\mathcal{S}(X^h;\mathcal{E}_{\pm}^h)$ is essentially maximal accretive
  with the inequality
  \begin{equation}
 \forall u\in   \pi_{\bot,\pm}\mathcal{S}(X^h;\mathcal{E}_{\pm}^h)\,,\quad   \real \langle u\,,\,[\pi_{\bot,\pm} B_{\pm,b,V^h} \pi_{\bot,\pm}] u \rangle_{\tilde{\cal W}^{0,s}} \geq \frac{1}{12b^2} \| u \|_{\tilde{\cal W}^{0,s}}^2\,.
\end{equation}
Addtionally the resolvent of its closure $\overline{B_{\pm,b,V^h,\bot}}^{s}=\overline{[\pi_{\bot,\pm} B_{\pm,b,V^h} \pi_{\bot,\pm}]}^{\pi_{\bot,\pm}\tilde{\mathcal{W}^{0,s}}}$ satisfies
$$
\forall z\in \mathbb{C}, \mathbb{R}\mathrm{e}\,z\leq \frac{1}{24 b^2}\,,\quad \|(z -\overline{B_{\pm,b,V^h,\bot}}^s)^{-1}\|_{\mathcal{L}(\tilde{\mathcal{W}}^{0,s};\tilde{\cal W}^{0,s})} \leq 24 b^2\,.
$$
Finally the closure of the $\pi_{\bot,\pm}\tilde{\mathcal{W}}^{0,s}(X^h;\mathcal{E}_{\pm}^h)$ formal adjoint $B_{\pm,b,V^h,\bot}^{\prime,s}$ satisfies
$$
\overline{B_{\pm,b,V^h,\bot}^{\prime,s}\big|_{\mathcal{S}(X^h;\mathcal{E}_{\pm}^h)}}^{s}=B_{\pm,b,V^h,\bot}^{\prime, *}\,.
$$
\end{proposition}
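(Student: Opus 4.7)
The strategy rests on the spectral gap $\alpha_{\pm}\geq 1$ on $\mathrm{Ran}\,\pi_{\bot,\pm}$ combined with the anti-symmetry of $\nabla^{\mathcal{E}_{\pm},h}_{\mathcal{Y}}$ in $L^{2}$ (unitary connection on $\mathcal{E}_{\pm}^{h}$). Together these give coercivity of the dominant term $\tfrac{1}{b^{2}}\alpha_{\pm}$ in $P_{\pm,b}$ of full order $\tfrac{1}{b^{2}}$ in the $L^{2}$-norm (not merely $\tfrac{1}{b^{2}}\|v\|^{2}_{\tilde{\mathcal{W}}^{1,0}}$), uniformly as $b\to 0^{+}$. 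Because $W^{2}_{\theta}$ strongly commutes with $\alpha_{\pm}$ and thus with $\pi_{\bot,\pm}$, conjugation by $(W^{2}_{\theta})^{s/2}$ reduces the problem to the $s=0$ case, replacing the operator by $\pi_{\bot,\pm}[P_{\pm,b}+R_{0,h}^{s}+R_{2,h}^{s}+\tfrac{1}{b}R_{1,\bot,h}^{s}]\pi_{\bot,\pm}$ on $\pi_{\bot,\pm}L^{2}$, where Proposition~\ref{pr:perturbconjug} ensures the conjugated $R_{i,h}^{s}$ still satisfy \eqref{eq:hypothesisForPerturbationRi}--\eqref{eq:extraHypothesisForPerturbationRiPerp} uniformly in $h\in]0,1]$.

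Next, for $v\in\pi_{\bot,\pm}\mathcal{S}$ one has $v_{0}=0$, and the anti-symmetry of $\nabla^{\mathcal{E}_{\pm},h}_{\mathcal{Y}}$ yields $\real\langle v,P_{\pm,b}v\rangle_{L^{2}}=\tfrac{1}{b^{2}}\langle v,\alpha_{\pm}v\rangle$. Splitting this into two halves and using \eqref{eq:ubotL2} for one and \eqref{eq:ubotW1} for the other gives
\[
\real\langle v,P_{\pm,b}v\rangle\;\geq\;\tfrac{1}{2b^{2}}\|v\|^{2}_{L^{2}}+\tfrac{1}{(d+2)b^{2}}\|v\|^{2}_{\tilde{\mathcal{W}}^{1,0}}.
\]
The perturbations are absorbed using \eqref{eq:hypothesisForPerturbationRi}: $R_{0,h}^{s}$ is $L^{2}$-bounded; $R_{2,h}^{s}\in\mathcal{L}(\tilde{\mathcal{W}}^{1,0};\tilde{\mathcal{W}}^{-1,0})$ by complex interpolation between its bounds $\tilde{\mathcal{W}}^{2,0}\to L^{2}$ and $L^{2}\to\tilde{\mathcal{W}}^{-2,0}$, giving $|\langle v,R_{2,h}^{s}v\rangle|\leq C\|v\|^{2}_{\tilde{\mathcal{W}}^{1,0}}$; and, crucially because $v_{0}=0$, Proposition~\ref{pr:controlOfRByIpp} applied to $R_{1,\bot,h}^{s}$ gives $\tfrac{1}{b}|\langle v,R_{1,\bot,h}^{s}v\rangle|\leq \tfrac{C}{b}\|v\|^{2}_{\tilde{\mathcal{W}}^{1,0}}$. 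For $C_{s}b\leq 1$ with $C_{s}$ large enough, all correction terms are swallowed and the accretivity bound $\real\langle v,\textrm{op}\cdot v\rangle\geq \tfrac{1}{12b^{2}}\|v\|^{2}_{L^{2}}$ follows. The resolvent bound for $\real z\leq \tfrac{1}{24b^{2}}$ is then immediate from the Cauchy--Schwarz inequality applied to $\real\langle u,(\overline{B_{\pm,b,V^{h},\bot}}^{s}-z)u\rangle\geq \tfrac{1}{24b^{2}}\|u\|^{2}$.

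For essential maximal accretivity and the equality $\overline{B_{\pm,b,V^{h},\bot}^{\prime,s}\big|_{\mathcal{S}}}^{s}=B_{\pm,b,V^{h},\bot}^{*,s}$, I would invoke Proposition~\ref{pr:perturbconjug} equations~\eqref{eq:formadj1}--\eqref{eq:formadj2}: the formal adjoint has the same structure with $\nabla^{\mathcal{E}_{\pm},h}_{\mathcal{Y}}\to -\nabla^{\mathcal{E}_{\pm},h}_{\mathcal{Y}}$, and since only the anti-symmetric nature (not the sign) of this drift enters the real-part computation, the formal adjoint satisfies the same accretivity bound on $\pi_{\bot,\pm}\mathcal{S}$. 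The standard Hahn--Banach / Lumer--Phillips argument recalled after Definition~\ref{de:adjoints}---density of $\mathrm{Range}(\lambda+B_{\pm,b,V^{h},\bot})$ reduces to ruling out non-trivial $\psi\in\ker(\lambda+B^{*,s})$, which is excluded by the accretivity of $B^{*,s}$ obtained by a smoothing approximation based on the bootstrap regularity of cuspidal resolvents recalled after Proposition~\ref{pr:main1sttext}---then delivers maximal accretivity of the closure together with the claimed identification of extensions. The one genuinely delicate point is the $\tfrac{1}{b}$-amplification of the $R_{1,\bot,h}^{s}$-term, which is defused precisely by hypothesis~\eqref{eq:extraHypothesisForPerturbationRiPerp} together with $v_{0}=0$, exactly the mechanism already used in Proposition~\ref{pr:firstEstimate}.
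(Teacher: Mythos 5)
Your proof of the quantitative accretivity inequality is correct and follows essentially the paper's route: conjugation by $(W^{2}_{\theta})^{s/2}$ reduces to $s=0$ (legitimate since $W^{2}_{\theta}$ commutes with $\pi_{\bot,\pm}$), the lower bound of $\alpha_{\pm}$ on $\mathrm{Ran}\,\pi_{\bot,\pm}$ gives full strength $\frac{1}{b^{2}}$ in $L^{2}$ together with a $\frac{1}{(d+2)b^{2}}\|\cdot\|^{2}_{\tilde{\mathcal{W}}^{1,0}}$ margin, and the perturbations are absorbed exactly as in the paper (the paper invokes \eqref{eq:IppBis}, which already packages the $\frac{1}{b}R_{1,\bot,h}$ term, while you re-derive it from Proposition~\ref{pr:controlOfRByIpp} with $u_{0}=0$ --- same mechanism). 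Granted maximal accretivity, the resolvent bound is indeed immediate.

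The gap is in the proof of \emph{essential maximal} accretivity. The reduction to $\ker(\lambda+B_{\pm,b,V^{h},\bot}^{*,s})=\{0\}$ is fine, as is the remark that the formal adjoint enjoys the same accretivity on $\pi_{\bot,\pm}\mathcal{S}(X^{h};\mathcal{E}_{\pm}^{h})$. But a kernel element $w$ of the maximal adjoint is only known to lie in $\tilde{\mathcal{W}}^{0,s}$ and to solve $\pi_{\bot,\pm}B_{\pm,b,V^{h}}'\pi_{\bot,\pm}w=0$ in $\mathcal{S}'$; to apply the accretivity to $w$ you must approximate it by elements of $\pi_{\bot,\pm}\mathcal{S}$ in a sense compatible with the equation, and producing such an approximation is precisely the content of essential maximal accretivity --- invoking ``accretivity of $B^{*,s}$'' without it is circular. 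The smoothing you propose, namely the bootstrap regularity of the cuspidal resolvent recalled after Proposition~\ref{pr:main1sttext}, concerns the full operator $B_{\pm,b,V^{h}}$ and does not transfer to the block truncation $\pi_{\bot,\pm}B_{\pm,b,V^{h}}\pi_{\bot,\pm}$: the subelliptic estimates use the full transport term $\nabla^{\mathcal{E}_{\pm},h}_{\mathcal{Y}}$, and once it is cut by $\pi_{\bot,\pm}$ there is no gain of regularity at all (the paper states this explicitly at the beginning of Subsection~\ref{sec:firstModifiedOperator}), so kernel elements of the truncated adjoint have no a priori smoothness. The paper's actual argument (Proposition~\ref{pr:injectivityOfTheAdjointOfPiBPi}) replaces this by a spectral regularization $v_{\varepsilon}=\chi(\varepsilon W^{2}_{\theta})v$, admissible because $\chi(\varepsilon W^{2}_{\theta})$ commutes with $\pi_{\bot,\pm}$ and maps $\mathcal{S}'$ into $\mathcal{S}$, combined with a Helffer--Sj{\"o}str{a}nd commutator estimate: $[D,\chi(\varepsilon W^{2}_{\theta})]$ is split into a main term supported where $1-\tilde{\chi}(\varepsilon W^{2}_{\theta})\neq 0$ (which tends to $0$ applied to $v$) plus an $O(\varepsilon^{1/4})$ remainder controlled by pseudodifferential calculus on $[[D,W^{2}_{\theta}],W^{2}_{\theta}]$. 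Pairing against $v_{\varepsilon}$, using the accretivity on the smooth $v_{\varepsilon}$ and letting $\varepsilon\to 0$ forces $v=0$. Without this (or an equivalent) commutator argument, your proof of maximal accretivity, and hence of the resolvent bound and of the identification $\overline{B_{\pm,b,V^{h},\bot}^{\prime,s}\big|_{\mathcal{S}}}^{s}=B_{\pm,b,V^{h},\bot}^{*,s}$, is incomplete.
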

For the proof, we  check firstly the accretivity in Proposition \ref{pr:accretivityOfPiBPi} and secondly the injectivity of the adjoint $B_{\pm,b,V^{h},\bot}^{*,s}=(\overline{B_{\pm,b,V^{h},\bot}})^{*,s}$ in Proposition \ref{pr:injectivityOfTheAdjointOfPiBPi}. The final statement above is just a consequence of the essential maximal accretivity.

\begin{proposition} \label{pr:accretivityOfPiBPi}
 There exists a constant $C_{s}\geq 1$ which depends on $s\in \mathbb{R}$ such that, under the assumption $C_sb\leq 1$, the  operator $B_{\pm,b,V^h,\bot}=\pi_{\bot,\pm}B_{\pm,b,V^h}\pi_{\bot,\pm}$ is accretive in $\pi_{\bot,\pm}\tilde{\mathcal{W}}^{0,s}(X^h;\mathcal{E}_{\pm}^h)$, with
  \begin{equation}\label{eq:intermediateInequality101}
      \forall u\in \pi_{\bot,\pm}\mathcal{S}(X^h;\mathcal{E}_{\pm}^h)\,,\quad \real \langle u\,,\, B_{\pm,b,V^h,\bot} u \rangle_{\tilde{\cal W}^{0,s}} \geq \frac{1}{2(d+2)b^2} \| u \|_{\tilde{\cal W}^{1,s}}^2 \geq \frac{d}{4(d+2)b^2} \| u \|_{\tilde{\cal W}^{0,s}}^2\geq \frac{1}{12 b^2} \| u \|_{\tilde{\cal W}^{0,s}}^2\,.
    \end{equation}
Hence its closure $\overline{B_{\pm,b,V^h,\bot}}^{s}$ is one to one with a closed range.
\end{proposition}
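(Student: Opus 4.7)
The plan is to reduce the proof to $s=0$ by conjugation with $(W^{2}_{\theta})^{s/2}$, apply the integration-by-parts argument underlying Proposition~\ref{pr:IppWithRealPart} to exploit the positivity of the principal part $P_{\pm,b}$, and then absorb the three lower-order remainders using the smallness of $b$.

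Since $W^{2}_{\theta}$ strongly commutes with $\alpha_{\pm}$ (Proposition~\ref{pr:perturbconjug}), it also commutes with both $\pi_{0,\pm}$ and $\pi_{\bot,\pm}$. Setting $v = (W^{2}_{\theta})^{s/2}u$, Proposition~\ref{pr:perturbconjug} gives
$$
\langle u, B_{\pm,b,V^{h},\bot} u\rangle_{\tilde{\mathcal{W}}^{0,s}} = \langle v, \pi_{\bot,\pm}\bigl[P_{\pm,b} + R^{s}_{0,h} + R^{s}_{2,h} + \tfrac{1}{b}R^{s}_{1,\bot,h}\bigr]\pi_{\bot,\pm} v\rangle_{L^{2}},
$$
together with $\|u\|_{\tilde{\mathcal{W}}^{k,s}} = \|v\|_{\tilde{\mathcal{W}}^{k,0}}$, where the modified remainders $R^{s}_{i,h}$ satisfy \eqref{eq:hypothesisForPerturbationRi}--\eqref{eq:extraHypothesisForPerturbationRiPerp} uniformly in $h$. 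It thus suffices to establish the inequality for $s=0$ with $v \in \pi_{\bot,\pm}\mathcal{S}(X^{h};\mathcal{E}_{\pm}^{h})$, i.e. $v_{0}=0$ and $v=v_{\bot}$.

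For such $v$, the anti-adjointness of $\nabla^{\mathcal{E}_{\pm},h}_{\mathcal{Y}}$ and the argument of Proposition~\ref{pr:IppWithRealPart} specialised to $A=0$ give
$$
\real\langle v, P_{\pm,b}v\rangle_{L^{2}} = \frac{1}{b^{2}}\langle v, \alpha_{\pm}v\rangle \geq \frac{2}{(d+2)b^{2}}\|v\|_{\tilde{\mathcal{W}}^{1,0}}^{2}.
$$
The three remainder terms are then each bounded by $\|v\|_{\tilde{\mathcal{W}}^{1,0}}^{2}$ with prefactors sub-leading in $1/b$: the $L^{2}$-boundedness of $R^{s}_{0,h}$ combined with $\mathcal{O}\geq d/2$ yields $|\langle v, R^{s}_{0,h}v\rangle|\leq (2C_{s}/d)\|v\|_{\tilde{\mathcal{W}}^{1,0}}^{2}$; complex interpolation of \eqref{eq:hypothesisForPerturbationRi} for $i=2$, between $\tilde{\mathcal{W}}^{2,0}\to L^{2}$ and $L^{2}\to\tilde{\mathcal{W}}^{-2,0}$, gives $R^{s}_{2,h}\in\mathcal{L}(\tilde{\mathcal{W}}^{1,0};\tilde{\mathcal{W}}^{-1,0})$, hence $|\langle v, R^{s}_{2,h}v\rangle|\leq C_{s}\|v\|_{\tilde{\mathcal{W}}^{1,0}}^{2}$; finally \eqref{eq:hypothesisForPerturbationRi} for $i=1$ together with Cauchy--Schwarz gives $\tfrac{1}{b}|\real\langle v, R^{s}_{1,\bot,h}v\rangle|\leq (C_{s}\sqrt{2/d}/b)\|v\|_{\tilde{\mathcal{W}}^{1,0}}^{2}$.

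Choosing $C_{s}$ large enough that $C_{s}b\leq 1$ ensures that $\tfrac{3}{2(d+2)b^{2}}$ dominates the combined remainder coefficients, so that
$$
\real\langle v, \pi_{\bot,\pm}[P_{\pm,b}+R^{s}_{0,h}+R^{s}_{2,h}+\tfrac{1}{b}R^{s}_{1,\bot,h}]\pi_{\bot,\pm}v\rangle_{L^{2}} \geq \frac{1}{2(d+2)b^{2}}\|v\|_{\tilde{\mathcal{W}}^{1,0}}^{2},
$$
which is the first claim. The next two follow from $\|v\|_{\tilde{\mathcal{W}}^{1,0}}^{2}\geq (d/2)\|v\|_{L^{2}}^{2}$ (because $\mathcal{O}\geq d/2$) and from the elementary bound $d/[4(d+2)]\geq 1/12$ for $d\geq 1$. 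Positivity of the quadratic form on the dense subspace $\pi_{\bot,\pm}\mathcal{S}(X^{h};\mathcal{E}_{\pm}^{h})$ then forces $\overline{B_{\pm,b,V^{h},\bot}}^{s}$ to be injective with closed range. The proof is essentially bookkeeping of constants; the structural point worth noting is that the prefactor $1/b$ in front of $R^{s}_{1,\bot,h}$ is one power below the $1/b^{2}$ of the principal term, so the term is absorbed directly and the finer $\pi_{0,\pm}$/$\pi_{\bot,\pm}$ splitting of Proposition~\ref{pr:controlOfRByIpp} is not needed here.
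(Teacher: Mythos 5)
Your proof is correct and follows essentially the same route as the paper's: the positivity of $\alpha_{\pm}$ restricted to $\mathrm{Ran}\,\pi_{\bot,\pm}$ supplies the $\frac{1}{b^{2}}$-coercivity in $\tilde{\mathcal{W}}^{1,s}$, and the remainders are absorbed under $C_{s}b\leq 1$. The only (harmless) difference is that you bound $\frac{1}{b}R_{1,\bot,h}$ directly by Cauchy--Schwarz rather than invoking \eqref{eq:IppBis}, which is legitimate precisely because $u_{0}=0$ renders the finer splitting of Proposition~\ref{pr:controlOfRByIpp} unnecessary, as you correctly observe.
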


\begin{proof}
Take $u=u_{\bot}$, since $\pi_{\bot,\pm}\pi_{0,\pm} \pi_{\bot,\pm} = 0$ the inequality \eqref{eq:IppBis} holds at least with an arbitrary $A>0$ that satisfies the required condition. Integration by parts gives
\begin{align*}
    \real \langle B_{\pm,b,V^h} u \, , \, u \rangle_{\tilde{\cal W}^{0,s}} & = \frac{1}{b^2} \real \langle (P_{\pm,b}+\frac{1}{b}R_{1,\bot,h}) u \, , \, u \rangle_{\tilde{\cal W}^{0,s}} + \real \langle (R_{0,h} + R_{2,h}) u \, , \, u \rangle_{\tilde{\cal W}^{0,s}} \\
    & \geq \frac{1}{(d+2)b^2} \|u\|_{\tilde{\cal W }^{1,s}}^2 - \|R_{0,h}\|_{\mathcal{L}(\tilde{\cal W}^{0,s};\tilde{\cal W}^{0,s})} \|u\|_{\tilde{\cal W}^{0,s}}^2 - \|R_{2,h}\|_{\mathcal{L}(\tilde{\cal W}^{1,s}; \tilde{\cal W}^{-1,s})} \| u \|_{\tilde{\cal W}^{1,s}}^2 \,.
\end{align*}
The condition \eqref{eq:hypothesisForPerturbationRi} ensures $R_{0,h}\in \mathcal{L}(\tilde{\cal W}^{0,s};\tilde{\cal W}^{0,s})$ and by interpolation $R_{2,h} \in \mathcal{L}(\tilde{\cal W}^{1,s} ; \tilde{\cal W}^{-1,s})$. Remember that $ \frac{d}{2} \|u\|_{\tilde{\cal W}^{0,s}}^2 \leq \| u \|_{\tilde{\cal W}^{1,s}}^2 $. Finally
\begin{eqnarray*}
  \real \langle B_{\pm,b,V^h} u \, , \, u \rangle_{\tilde{\cal W}^{0,s}} & \geq & \frac{1}{b^2}\left( \frac{1}{(d+2)} - \frac{2\|R_{0,h} \|_{\mathcal{L}(\tilde{\cal W}^{0,s};\tilde{\cal W}^{0,s})}b^2}{d} - \|R_{2,h}\|_{\mathcal{L}(\tilde{\cal W}^{1,s};\tilde{\cal W}^{-1,s})}b^2 \right) \|u\|_{\tilde{\cal W}^{1,s}}^2 \\
                                                                         & \geq & \frac{1}{b^2}\left( \frac{1}{d+2} - (\frac{2}{d}+1)\tilde{C}_R b^2 \right) \| u \|_{\tilde{\cal W}^{1,s}}
\end{eqnarray*}
with $ \tilde{C}_{R} = \sup_{h\in ]0,1]} \left[\| R_{0,h} \|_{\mathcal{L}(\tilde{\cal W}^{0,s};\tilde{\cal W}^{0,s})} + \|R_{2,h}\|_{\mathcal{L}(\tilde{\cal W}^{1,s};\tilde{\cal W}^{-1,s})}\right] $.\\
Taking $C_{s} > \frac{\sqrt{2\tilde{C}_R}}{d}(d+2)$  and $C_{R,s}$ as in Proposition \ref{pr:firstEstimate}\,, we have
$$ 
\frac{1}{d+2} - \frac{ 2 \|R_{0,h} \|_{\mathcal{L}(\tilde{\cal W}^{0,s};\tilde{\cal W}^{0,s})}b^2}{d} - \|R_{2,h}\|_{\mathcal{L}(\tilde{\cal W}^{1,s};\tilde{\cal W}^{-1,s})}b^2  \geq \frac{1}{2(d+2)}
$$
as soon as $ C_sb \leq 1$.
\end{proof}

\begin{proposition}\label{pr:injectivityOfTheAdjointOfPiBPi}
  There is a constant $C_{s}\geq 1$ which depends on  $s\in \mathbb{R}$, such that  the adjoint $B_{\pm,b,V^h,\bot}^{*,s}=(\overline{B_{\pm,b,V^h,\bot}}^{s})^{*,s}$ is one to one and therefore $\overline{B_{\pm,b,V^h,\bot}}^s$ is maximal accretive, as soon as $C_sb\leq 1$\,.
\end{proposition}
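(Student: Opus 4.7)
The plan is to exploit the near-symmetry between $B_{\pm,b,V^{h},\bot}$ and its $\tilde{\mathcal{W}}^{0,s}$-formal adjoint $B^{\prime,s}_{\pm,b,V^{h},\bot}=\pi_{\bot,\pm}B^{\prime,s}_{\pm,b,V^{h}}\pi_{\bot,\pm}$. By Proposition~\ref{pr:perturbconjug}, this formal adjoint has exactly the structure of $B_{\pm,b,V^{h},\bot}$ except that the first-order term $\nabla^{\mathcal{E}_{\pm},h}_{\mathcal{Y}}$ enters with the opposite sign (the connection $\nabla^{\mathcal{E}_{\pm},h}$ being unitary), while the remainder terms $(R^{s}_{i,h})^{\prime}$ satisfy \eqref{eq:hypothesisForPerturbationRi} with the same constants (and $(R^{s}_{1,\bot,h})^{\prime}$ still fulfils \eqref{eq:extraHypothesisForPerturbationRiPerp}). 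Since the sign of $\nabla^{\mathcal{E}_{\pm},h}_{\mathcal{Y}}$ is irrelevant to $\real\langle u,P^{\prime}_{\pm,b}u\rangle=\frac{1}{b^{2}}\langle u,\alpha_{\pm}u\rangle$, the proof of Proposition~\ref{pr:accretivityOfPiBPi} will transfer verbatim and yield, under the same condition $C_{s}b\leq 1$,
$$\real\langle u,B^{\prime,s}_{\pm,b,V^{h},\bot}u\rangle_{\tilde{\mathcal{W}}^{0,s}}\geq\tfrac{1}{12b^{2}}\|u\|_{\tilde{\mathcal{W}}^{0,s}}^{2}\qquad\forall u\in\pi_{\bot,\pm}\mathcal{S}(X^{h};\mathcal{E}_{\pm}^{h}).$$

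Next, I would take any $u\in D(B^{*,s}_{\pm,b,V^{h},\bot})$ with $B^{*,s}_{\pm,b,V^{h},\bot}u=0$. By Definition~\ref{de:adjoints}, this means $\langle u,B_{\pm,b,V^{h},\bot}v\rangle_{\tilde{\mathcal{W}}^{0,s}}=0$ for every $v\in\pi_{\bot,\pm}\mathcal{S}(X^{h};\mathcal{E}_{\pm}^{h})$, i.e.\ $B^{\prime,s}_{\pm,b,V^{h},\bot}u=0$ in $\pi_{\bot,\pm}\mathcal{S}'(X^{h};\mathcal{E}_{\pm}^{h})$. To promote this distributional identity to $u=0$, introduce the spectral truncation $\chi_{n}(W^{2}_{\theta})$ with $\chi_{n}\in\mathcal{C}^{\infty}_{0}([0,\infty);[0,1])$ equal to $1$ on $[0,n]$, and set $u_{n}=\chi_{n}(W^{2}_{\theta})u$. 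Because $W^{2}_{\theta}$ commutes with $\pi_{\bot,\pm}$ and $\chi_{n}$ is compactly supported, the functional calculus together with $\cap_{s_{2}}\tilde{\mathcal{W}}^{0,s_{2}}=\mathcal{S}$ (see after Definition~\ref{def:sobsp}) gives $u_{n}\in\pi_{\bot,\pm}\mathcal{S}(X^{h};\mathcal{E}_{\pm}^{h})$, and $u_{n}\to u$ in $\pi_{\bot,\pm}\tilde{\mathcal{W}}^{0,s}$.

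Since $B^{\prime,s}_{\pm,b,V^{h},\bot}u=0$ in $\mathcal{S}'$, the identity $B^{\prime,s}_{\pm,b,V^{h},\bot}u_{n}=[B^{\prime,s}_{\pm,b,V^{h},\bot},\chi_{n}(W^{2}_{\theta})]u$ combined with the accretivity above gives
$$\tfrac{1}{12b^{2}}\|u_{n}\|^{2}_{\tilde{\mathcal{W}}^{0,s}}\leq\|u_{n}\|_{\tilde{\mathcal{W}}^{0,s}}\,\bigl\|[B^{\prime,s}_{\pm,b,V^{h},\bot},\chi_{n}(W^{2}_{\theta})]u\bigr\|_{\tilde{\mathcal{W}}^{0,s}}.$$
Since $W^{2}_{\theta}$ strongly commutes with $\alpha_{\pm}$, the only principal piece of this commutator is $\pm\frac{1}{b}[\nabla^{\mathcal{E}_{\pm},h}_{\mathcal{Y}},\chi_{n}(W^{2}_{\theta})]u$, together with commutators against $(R^{s}_{i,h})^{\prime}$ of negative relative order. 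The pseudodifferential calculus of \cite{NSW}, combined with the uniform $h$-controls of Proposition~\ref{pr:scaling}, will show that this commutator tends to $0$ in $\pi_{\bot,\pm}\tilde{\mathcal{W}}^{0,s}$ as $n\to\infty$. Hence $u_{n}\to 0$ and therefore $u=0$. Maximal accretivity of $\overline{B_{\pm,b,V^{h},\bot}}^{s}$ then follows at once from Proposition~\ref{pr:accretivityOfPiBPi} and the injectivity of $B^{*,s}_{\pm,b,V^{h},\bot}$. I expect the commutator estimate in the last step, which controls a genuinely first-order operator $\nabla^{\mathcal{E}_{\pm},h}_{\mathcal{Y}}$ against a bounded spectral cut-off of $W^{2}_{\theta}$, to be the main technical obstacle; the remaining ingredients are immediate consequences of Propositions~\ref{pr:perturbconjug} and~\ref{pr:accretivityOfPiBPi}.
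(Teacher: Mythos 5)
Your overall architecture coincides with the paper's: reduce to the formal adjoint $B^{\prime,s}_{\pm,b,V^{h},\bot}$, observe that it has the same structure as $B_{\pm,b,V^{h},\bot}$ up to the sign of $\nabla^{\mathcal{E}_{\pm},h}_{\mathcal{Y}}$ so that the accretivity lower bound of Proposition~\ref{pr:accretivityOfPiBPi} transfers, regularize a kernel element by spectral cut-offs $\chi(\varepsilon W^{2}_{\theta})$, and reduce everything to a commutator. Up to that point your reduction is correct and is exactly what the paper does.

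The gap is in the decisive last step. The commutator $[B^{\prime,s}_{\pm,b,V^{h},\bot},\chi_{n}(W^{2}_{\theta})]$ contains $\pm\frac{1}{b}[\nabla^{\mathcal{E}_{\pm},h}_{\mathcal{Y}},\chi_{n}(W^{2}_{\theta})]$, which is an operator of strictly positive order in the $\Psi$-calculus: it is bounded from $\tilde{\mathcal{W}}^{1,s}$ to $\tilde{\mathcal{W}}^{0,s}$ (this is the content of \eqref{eq:WYWmY} and of the paper's estimate $W^{-2}_{\theta}[W^{2}_{\theta},\frac{1}{b}\nabla_{\mathcal{Y}}]\in\mathcal{L}(\tilde{\mathcal{W}}^{1,0};L^{2})$), but it is \emph{not} bounded on $\tilde{\mathcal{W}}^{0,s}$. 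Since your $u$ is only known to lie in $\tilde{\mathcal{W}}^{0,s}$, the quantity $\|[B^{\prime,s}_{\pm,b,V^{h},\bot},\chi_{n}(W^{2}_{\theta})]u\|_{\tilde{\mathcal{W}}^{0,s}}$ is not finite a priori, and no amount of symbolic calculus will make it tend to $0$; the spectral localization to $\{W^{2}_{\theta}\in[n,2n]\}$ only yields $\|1_{[n,2n]}(W^{2}_{\theta})u\|_{\tilde{\mathcal{W}}^{1,s}}\leq Cn^{1/4}\|1_{[n,2n]}(W^{2}_{\theta})u\|_{\tilde{\mathcal{W}}^{0,s}}$, and the product of $n^{1/4}$ with a quantity that merely tends to $0$ need not tend to $0$. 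Moreover, by writing $\frac{1}{12b^{2}}\|u_{n}\|^{2}\leq\|u_{n}\|\,\|[\cdot,\cdot]u\|$ you have discarded the stronger output of the accretivity inequality, namely the lower bound $\frac{1}{2(d+2)b^{2}}\|u_{n}\|^{2}_{\tilde{\mathcal{W}}^{1,s}}$, which is precisely what is needed to close the argument. The paper's resolution is: expand the commutator to second order by the Helffer--Sj\"ostrand formula as $[D,W^{2}_{\theta}]W^{-2}_{\theta}\,\varepsilon W^{2}_{\theta}\chi'(\varepsilon W^{2}_{\theta})(1-\tilde{\chi}(\varepsilon W^{2}_{\theta}))+r_{\varepsilon}$ with $\|r_{\varepsilon}\|_{\mathcal{L}(L^{2})}=O(\varepsilon^{1/4})$ (a genuinely small remainder), pair the main term against $u_{\varepsilon}$ and move its order-$1/2$ part onto $u_{\varepsilon}$, so that Cauchy--Schwarz produces $\varepsilon'\|u_{\varepsilon}\|^{2}_{\tilde{\mathcal{W}}^{1,0}}+\frac{C}{\varepsilon'}\|(1-\tilde{\chi}(\varepsilon W^{2}_{\theta}))u\|^{2}_{L^{2}}$; the first piece is absorbed into the accretive lower bound by choosing $\varepsilon'$ proportional to $b^{2}$, and the second tends to $0$ by dominated convergence. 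Without this absorption mechanism your argument does not close.
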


\begin{proof}
Assume $w\in \ker(B_{\pm,b,V^h,\bot}^{*,s})$ and let us prove $w=0$\,.
By setting $w=(W^2_\theta)^{-s/2}v$, the assumption is equivalent to $(W^2_\theta)^{s/2}B_{\pm,b,V^h,\bot}^{*,s}(W^2_\theta)^{-s/2}v=(W^2_\theta)^{-s/2}B_{\pm,b,V^h,\bot}^{*}(W^2_\theta)^{s/2}v=0$ and $v\in L^2(X^h,dqdp;\mathcal{E}_{\pm}^h)$.
The problem is reduced to
$$
\left.
\begin{array}{rcl}
\pi_{\bot,\pm}\left[P_{\pm,b}^* + (R_{0,h}^{s})^{\prime} + \frac{1}{b}(R_{1,\bot,h}^{s})^{\prime} + (R_{2,h}^{s})^{\prime}\right]\pi_{\bot,\pm}v=0 &\text{in}&\mathcal{S}'(X^h;\mathcal{E}^h_{\pm})\\
v\in L^2(X^h,dqdp;\mathcal{E}^h_{\pm})
\end{array}
\right\}
\Rightarrow v=0\,.
$$
We set $\tilde{B}_{\pm,b,V^h,\bot,s}=P_{\pm,b}^{\prime} + (R_{0,h}^{s})^{\prime} + \frac{1}{b}(R_{1,\bot,h}^{s})^{\prime} + (R_{2,h}^{s})^{\prime}$ which has the same form as $B_{\pm,b,V^h}$ with a changed sign before $\nabla_{\mathcal{Y}}^{\mathcal{E}_{\pm},h}$ and remainder terms
$R_{0,h},R_{1,\bot,h},R_{2,h}$ respectively replaced by the $s$-dependent versions $(R_{0,h}^{s})^{\prime},(R_{1,\bot,h}^{s})^{\prime},(R_{2,h}^{s})^{\prime}$\,.
In  particular $\pi_{\bot,\pm}\tilde{B}_{\pm,b,V^h,s}\pi_{\bot,\pm}$ is accretive on $\pi_{\bot,\pm}\mathcal{S}(X^h;\mathcal{E}_{\pm}^h)$ for the $L^{2}(X^h,dqdp;\mathcal{E}_{\pm}^h)$ scalar product, with the same lower bounds as in \eqref{eq:intermediateInequality101}, as soon as $C_s^1b\leq 1$ for some $C_{s}^{1}\geq 1$\,.\\
We take  two cut-off functions $\chi,\tilde{\chi}\in C_{0}^{\infty}(\mathbb{R},[0,1])$ such that
  \begin{itemize}
      \item $\chi$ and $\tilde{\chi}$ is equal to 1 near 0 \,,
      \item $\mathrm{supp}(\tilde{\chi}) \subset \chi^{-1} (\{1\})$ \,,
\end{itemize}and we recall that $f(W^2_\theta)$ is continuous from $\mathcal{S}'(X^h;\mathcal{E}_{\pm}^h )$ to $\mathcal{S}(X^h;\mathcal{E}_{\pm}^h)$ and commutes with $\pi_{\bot,\pm}$ and more generally with any function of $\alpha_{\pm}$ 
for all $f\in \mathcal{C}^{\infty}_0(\mathbb{R};\mathbb{C})$\,.
For $\varepsilon>0$ we set $v_{\varepsilon} = \chi(\varepsilon W^2_{\theta})v\in\mathcal{S}(X^h;\mathcal{E}_{\pm}^h)$. A straightforward computation shows
\begin{align}
\pi_{\bot,\pm} \tilde{B}_{\pm,b,V^h,s}\pi_{\bot,\pm} v_{\varepsilon} &  =  \pi_{\bot,\pm} [\tilde{B}_{\pm,b,V^h,s} ,\chi(\varepsilon W^2_{\theta})] \pi_{\bot,\pm}v \nonumber\\
 & =  \pi_{\bot,\pm} [\underbrace{ \pm \frac{1}{b} \nabla_{\mathcal{Y}}^{\mathcal{E}_{\pm},h} + (R_{0,h}^{s})^{\prime} + \frac{1}{b} (R_{1,\bot,h}^{s})^{\prime}+(R_{2,h}^{s})^{\prime}}_{= D }, \chi(\varepsilon W^2_{\theta}) ]\pi_{\bot,\pm}v, \label{eq:intermediateEquality100}
\end{align}
where $D$ is a differential operator in $\mathrm{OpS}^{3/2}_{\Psi}(Q^h;\mathrm{End}\,\mathcal{E}_{\pm}^h)$\,. The
Helffer-Sj\"ostrand formula for the commutator gives

\begin{equation*}
   [D,\chi(\varepsilon W_{\theta}^2)] = [D ,  W_{\theta}^2]W_{\theta}^{-2} \chi'(\varepsilon W_{\theta}^2) \varepsilon W_{\theta}^2 +r_{\varepsilon},
\end{equation*}
where 
$$
r_{\varepsilon} = -\frac{1}{2i\pi}\int_{\mathbb{C}} \partial_{\bar{z}}\tilde{\chi}(z)\varepsilon^2 (z-\varepsilon W_{\theta}^2)^{-1}[[D,W_{\theta}^2],W^{2}_{\theta}](z-\varepsilon W_{\theta}^2)^{-2}\, dz\wedge d\bar{z}\,.
$$
By pseudo differential calculus in $\mathrm{OpS}^{*}_{\Psi}(Q^h;\mathrm{End}\,\mathcal{E}_{\pm}^h)$, the commutator $[[D,W_{\theta}^2],W_{\theta}^2]$ is a pseudo differential operator of order $ \frac{3}{2}+(2-1)+(2-1) = \frac{7}{2}$. This implies that 
$ [[D,W_{\theta}^2],W_{\theta}^2](W_{\theta}^2)^{-\frac{7}{4}} $ is bounded. The inequality
$$
\|\varepsilon^{7/4}(W_{\theta}^2)^{-\frac{7}{4}}(z-\varepsilon W_{\theta}^2)^{-2} 
\|
\leq \sup_{\lambda>0} \left|\frac{\lambda^{7/8}}{|z-\lambda|}\right|^2\leq C\left[\sup_{\lambda>0}\frac{1+\lambda}{|z-\lambda|}\right]^2\leq C'\frac{\langle z\rangle^2}{|\mathrm{Im}\,z|^2}
$$
yields
\begin{equation}\label{eq:estimationOfError100}
\|r_{\varepsilon}\|_{\mathcal{L}(L^2;L^2)}\leq C_{R,s}\varepsilon^{1/4}\,,
\end{equation}
where again the constant $C_{R,s}$ is uniform with respect to $h\in ]0,1]$\,.\\
The obvious equality $ \chi'(t) = (1-\tilde{\chi}(t)) \chi'(t) $ for all $t \in \mathbb{R}$ implies 
\begin{equation} \label{eq:intermediateEquality101}
[D,\chi(\varepsilon W_{\theta}^2)] = [D , W_{\theta}^2]W_{\theta}^{-2} \chi'(\varepsilon W_{\theta}^2) \varepsilon W_{\theta}^2(1-\tilde{\chi}(\varepsilon W_{\theta}^2)) +r_{\varepsilon} .
\end{equation}
Let us consider the following scalar product by using \eqref{eq:intermediateEquality100} and \eqref{eq:intermediateEquality101}
\begin{eqnarray*}
    \real \langle  v_{\varepsilon} , \pi_{\bot,\pm} B_{\pm,b,V^h,s}  \pi_{\bot,\pm} v_{\varepsilon} \rangle_{L^2} & = & \real \langle v_{\varepsilon} \, , \, [D , W_{\theta}^2]W_{\theta}^{-2} \chi'(\varepsilon W_{\theta}^2) \varepsilon W_{\theta}^2(1-\tilde{\chi}(\varepsilon W_{\theta}^2)) v \rangle_{L^2} + \real\langle v_{\varepsilon} \, , \, r_{\varepsilon}v \rangle_{L^2} \\
    & = & \real  \langle \varepsilon W_{\theta}^2 \chi'(\varepsilon W_{\theta}^2) W_{\theta}^{-2}[ W_{\theta}^2,D'] v_{\varepsilon} \, , \, (1-\tilde{\chi}(\varepsilon W_{\theta}^2)) v \rangle_{L^2} + \real\langle v_{\varepsilon} \, , \, r_{\varepsilon}v \rangle_{L^2}.
\end{eqnarray*}
Applying the Cauchy-Schwarz inequality to the right-hand side of this equality yields
\begin{equation}\label{eq:intermediateInequality103}
    \real \langle  v_{\varepsilon} , \pi_{\bot,\pm} \tilde{B}^*_{\pm,b,V^h,s}  \pi_{\bot,\pm} v_{\varepsilon} \rangle_{L^2} \leq \| \varepsilon W_{\theta}^2 \chi'(\varepsilon W_{\theta}^2)  W_{\theta}^{-2}[W_{\theta}^2,D'] v_{\varepsilon} \|_{L^2} \| (1-\tilde{\chi}(\varepsilon W_{\theta}^2)) v  \|_{L^2} + \| v_{\varepsilon} \|_{L^2} \|r_{\varepsilon}v\|_{L^2}.
\end{equation}
By functional calculus the operator $ \varepsilon W_{\theta}^2 \chi'(\varepsilon W_{\theta}^2) $ is a bounded operator 
$$ \|\varepsilon W_{\theta}^2 \chi'(\varepsilon W_{\theta}^2)\|_{\mathcal{L}(L^2 ; L^2 )} \leq M=\sup_{t\in [0,+\infty[} |t \chi'(t)| < \infty \,,$$
while \eqref{eq:WYWmY} and \eqref{eq:hypothesisForPerturbationRi} ensure 
$$  W_{\theta}^{-2}[ W_{\theta}^2,D' ] = \mp\underbrace{W_{\theta}^{-2}[ W_{\theta}^2, \frac{1}{b} \nabla_{\cal Y}^{\mathcal{E}_{\pm},h} ]}_{\in \mathcal{L}(\tilde{\cal W}^{1,0};L^2)} + \underbrace{W_{\theta}^{-2}[ W_{\theta}^2, R_{0,h}^{s}+\frac{1}{b}R_{1,\bot,h}^{s}+R_{2,h}^{s} ]}_{\in \mathcal{L}(L^2;L^2)} \,.$$
The left-hand side of  \eqref{eq:intermediateInequality103} is thus bounded by
\begin{alignat*}{4}
&\real \langle  v_{\varepsilon} , \pi_{\bot,\pm} \tilde{B}_{\pm,b,V^h,s} \pi_{\bot,\pm} v_{\varepsilon} \rangle_{L^2} &\leq &\quad MC_{R,\mathcal{Y},s} \| v_{\varepsilon} \|_{\tilde{\cal W}^{1,0}} \| (1-\tilde{\chi}(\varepsilon W_{\theta}^2)) v \|_{L^2} + C_{R,s}\varepsilon^{1/4}\|v \|_{L^2}^2  \\
&&\leq &\quad \frac{1}{2}\varepsilon' MC_{R,\mathcal{Y},s,b} \| v_{\varepsilon} \|_{\tilde{\cal W}^{1,0}}^2+ \frac{MC_{R,\mathcal{Y},s,b}}{2\varepsilon'}\| (1-\tilde{\chi}(\varepsilon W_{\theta}^2)) v \|_{L^2}^2+ C_{R,s}\varepsilon^{1/4}\|v \|_{L^2}^2
\end{alignat*}
where
$$ C_{R,\mathcal{Y},s,b} =\frac{1}{b}\sup_{h\in]0,1]}\max( \| W_{\theta}^{-2}[ W_{\theta}^2, \nabla_{\cal Y}^{\mathcal{E}_{\pm},h} ] \|_{\mathcal{L}(\tilde{\cal W}^{1,0};L^2)} ,  W_{\theta}^{-2}[ W_{\theta}^2, R_{i=0,1,2;h}^{s} ]  \|_{\mathcal{L}(L^2;L^2)})\,.
$$
By Proposition~\ref{pr:accretivityOfPiBPi} and \eqref{eq:intermediateInequality101} applied now to $\pi_{\bot,\pm} \tilde{B}_{\pm,b,V^h,s}\pi_{\bot,\pm}$\,, the left hand-side is bounded from below by
$\frac{1}{2(d+2)b^2}\|v_\varepsilon\|_{\tilde{\mathcal{W}}^{1,0}}^2$ when $C_sb\leq 1$\,.\\
By choosing $\varepsilon'=\frac{1}{2(d+2)b^2 MC_{R,\mathcal{Y},s,b}}$\,, we obtain
$$ \frac{d}{8(d+2)b^2} \| v_{\varepsilon}\|_{L^2}^2 \leq \frac{1}{4(d+2)b^2} \| v_{\varepsilon}\|_{\tilde{\cal W}^{1,0}}^2 \leq 2(d+2)b^2M^2C_{R,\mathcal{Y},s,b}^2\|(1-\tilde{\chi}(\varepsilon W_{\theta}^2)) v\|_{L^2}^2 + C_{R,s}\varepsilon^{1/4} \| v \|_{L^2}^2 . $$
When $\varepsilon $ goes to $0$, the spectral theorem and the dominated convergence Theorem imply
$$
\lim_{\varepsilon\to 0} \| v_{\varepsilon}\|_{L^2}^2= \| v\|_{L^2}^2\quad\text{and}\quad \lim_{\varepsilon\to 0} \|1-\tilde{\chi}(\varepsilon W_{\theta}^2)) v\|_{L^2}^2= 0.
$$
We have proved $v=0$ and $B^{*,s}_{\pm,b,V^h,\bot}$ is one to one.

\end{proof}
\subsection{Final modifications with a frequency or spectral truncation}
\label{sec:QALV}
For $ \chi \in \mathcal{C}^{\infty}_0(\mathbb{R},[0,1]) $ such that 
$$ 
\mathrm{Supp}(\chi)\subset [-2,2] \quad \mathrm{and} \quad \chi(t)=1 \quad \mathrm{for} \quad t\in [-1,1]\,.$$
let $ Q_{A,L} = A^2 \pi_{0,\pm} \chi \big( \frac{2 W_{\theta}^2}{ (LA)^2} \big) \pi_{0,\pm} $ for $L,A\geq 1$\,, where we drop subscript $_{h}$ although it depends on $h\in]0,1]$\,,
and consider the operator 
$$ B_{\pm,b,V^{h}}+Q_{A,L} \,.
$$
Remember that the operator $U_{\pm,\theta}=U_{\pm,\theta,h}:L^2(Q^{h},d\mathrm{Vol}_{g^{h}}; \Lambda T^*Q^{h}\otimes F_\pm)\to \ker(\alpha_{\pm})=\ker(\alpha_{\pm,g^{h}})$\,, with $F_+ =Q\times\mathbb{C}$
and $F_{-}=Q\times\mathbb{C}\times\mathbf{or}_Q$\,, introduced in \eqref{eq:U+theta}\eqref{eq:U-theta}\eqref{eq:U+thetaexpl}\eqref{eq:U-thetaexp}\,, is unitary and satisfies
$$
2\pi_{0,\pm} (W^2_\theta)\pi_{0,\pm} =U_{\pm,\theta} (\underbrace{2C+C\frac{d^2}{2}}_{C_d}+H_0) U_{\pm,\theta}^{-1}
$$
where $H_0=H_{0,h}$ is the non negative elliptic, Laplace type operator, $H_0=-\sum_{j=1}^J \theta_j(hq)(\Delta_{Q,g^{h}})_{sc} \theta_j(hq)$ and  where $\Delta_{Q,g^{h}}$ is the Laplace Beltrami operator,  with a scalar realization in the orthonormal frame $(u^{I}_{j,g^{h}})_{I\subset \{1,\ldots,d\}}$ for every $j\in\{1,\ldots,J\}$\,.\\
Owing to the uniform estimates of $g^{h}$ and $V^{h}$ stated in Proposition~\ref{pr:scaling}
the operators   $(d^{Q^{h}}+d^{Q^{h},*^{g^{h}}})^2$ and 
the Witten Laplacian $\Delta_{V^{h},1}=(d^{Q^{h}}+d^{Q^{h},*^{g^{h}}})^2+|\nabla V_{h}(q)|^2 +(\mathcal{L}_{\nabla V^{h}}+\mathcal{L}_{\nabla V^{h}}^*)$ are elliptic operators in the classical space $\mathrm{OpS}^{2}(Q^{h};\mathcal{E}^{h}_{\pm})$ with the same scalar principal symbol as $H_{0}$ and with uniformly controlled lower order corrections.\\
By choosing the above constant $C\geq C_{g,V}\geq 1$ large enough and by setting again $C_{d}=2C+C\frac{d^{2}}{2}$\,, we deduce for every $s\in \mathbb{R}$
the equivalence of norms
$$
 \forall u\in H^{s}(Q^{h};\Lambda T^{*}Q^{h}\otimes F_{\pm})\,,\quad\left(\frac{\|u\|_{H^{s}}}{\|(C_{d}+\Delta_{V^{h},1})^{s/2}u\|_{L^{2}}}\right)^{\pm 1}=\left(\frac{\|(C_{d}+H_{0})^{s/2}u\|_{L^{2}}}{\|(C_{d}+\Delta_{V^{h},1})^{s/2}u\|_{L^{2}}}\right)^{\pm 1}\leq C_{s}\,.
 $$
 With the  operator  $U_{\pm,\theta}$ we also have
\begin{align*}
    \|U_{\pm,\theta}u\|_{\tilde{\mathcal{W}}^{s_{1},s_{2}}}&=\|(\mathcal{O})^{s_{1}/2}(W_{\theta})^{s/2}U_{\pm,\theta}u\|_{L^{2}}\\
    &=\left(\frac{d}{2}\right)^{s_{1}}
\|(C_{d}+H_{0})^{s_{2}/2}u\|_{L^{2}}\\
&=\left(\frac{d}{2}\right)^{s_{1}}\|u\|_{H^{s_{2}}}\asymp \|(C_{d}+\Delta_{V^{h},1})^{s_{2}/2}u\|_{L^{2}}\,.
\end{align*}
Because we aim at clarifying the relations between $\mathrm{Spec}(B_{\pm,b,V^{h}})$ and $\mathrm{Spec}\,(\Delta_{V^{h},1})=\mathrm{Spec}\,(\Delta_{(Q^{h},g^{h},V^{h}),1})=\mathrm{Spec}~(\Delta_{(Q,g,V,h)})$ (see Subsection~\ref{sec:scalings})\,, we consider   the two perturbations of $B_{\pm,b,V^{h}}$
\begin{align}
&Q_{A,L} =  A^2 \pi_{0,\pm}\circ \chi \big(\frac{2 W_{\theta}^2}{ (LA)^2} \big)\circ \pi_{0,\pm}= A^2 U_{\pm,\theta}\circ \chi\big(\frac{C_d +H_0}{(LA)^2}\big)\circ U_{\pm,\theta}^{-1} \label{eq:defQAL}\\
\text{and}&\qquad 
            Q_{A,L,V^{h}} =   A^2 U_{\pm,\theta}\circ \chi\big(\frac{C_d+\Delta_{V^{h},1}}{(LA)^2}\big)\circ U_{\pm,\theta}^{-1} \label{eq:defQALV}\,.
\end{align}
The comparison of $Q_{A,L}$ and $Q_{A,L,V^{h}}$ is easier to understant while staying on the base manifold $Q^{h}$ and we also use the following notations.
\begin{definition}
\label{de:tQALVh}~
The operators $\tilde{Q}_{A,L} : \mathcal{E}'(Q^h;\Lambda T^*Q^h\otimes F_{\pm})\to \mathcal{C}^{\infty}(Q^h;\Lambda T^*Q^h\otimes F_\pm)$ and $\tilde{Q}_{A,L,V^{h}}: \mathcal{E}'(Q^h;\Lambda T^*Q^h\otimes F_{\pm})\to \mathcal{C}^{\infty}(Q^h;\Lambda T^*Q^h\otimes F_\pm)$ are defined by
$$
\tilde{Q}_{A,L} = A^2\chi(\frac{C_d+H_0}{(LA)^2})\quad\text{and}\quad \tilde{Q}_{A,L,V^{h}} = A^2\chi(\frac{C_d+\Delta_{V^h,1}}{(LA)^2})\,.
$$
\end{definition}

These two operators are bounded as well as $A^{2}\pi_{0,\pm}-Q_{A,L}$ and $A^{2}\pi_{0,\pm}-Q_{A,L,V^{h}}$\,.
We will prove the following result.
\begin{proposition}\label{pr:subEllipticQALVh}
  There are constants $L\geq 1$\,, $C_{s}\geq 1$ and $C_{\chi,s}\geq 1$
  respectively uniform, $s\in \mathbb{R}$-dependent and $(\chi,s)$-dependent,  such that inequality
  $$ \frac{1}{4} \|\overline{(B_{\pm,b,V^{h}}+A^{2}\pi_{0,\pm}-z)}^{s}u\|_{\tilde{\cal W}^{0,s}} \leq \|\overline{(B_{\pm,b,V^{h}} + Q_{A,L,V^{h}}-z)}^{s} u \|_{\tilde{\cal W}^{0,s}} \leq \frac{9}{4} \|\overline{(B_{\pm,b,V^{h}}+A^{2}\pi_{0,\pm}-z)}^{s}u\|_{\tilde{\cal W}^{0,s}} $$
  holds for all $u\in D(\overline{B_{\pm,b,V^{h}}}^{s})$ and all $z\in\mathbb{C}$\,, such that $\real z\leq \frac{A^{2}}{2}$\,,  as soon as
  \begin{equation}
    \label{eq:condQALV}
    C_s\max(b,Ab,\frac{1}{A})\leq 1\,.
  \end{equation}
  Therefore the subelliptic estimate \eqref{eq:subellipticEstimateWith2/5ForBismutLaplacian} holds true with $B_{\pm,b,V^{h}}+A^{2}\pi_{0,\pm}$ replaced by $B_{\pm,b,V^{h}}+Q_{A,L,V^{h}}$ and the constant $C\geq 1$ replaced by $4C$\,.
\end{proposition}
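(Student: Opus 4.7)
The plan is to treat $B_{\pm,b,V^h}+Q_{A,L,V^h}$ as a bounded perturbation of $B_{\pm,b,V^h}+A^{2}\pi_{0,\pm}$ by the operator $\Delta Q:=Q_{A,L,V^h}-A^{2}\pi_{0,\pm}$, and to show that, in the weighted norm provided by the subelliptic estimate \eqref{eq:subellipticEstimateWith2/5ForBismutLaplacian}, this perturbation is strictly smaller than the original operator once $L$ is chosen large. Since $\Delta Q$ lives on $\mathrm{Ran}\,\pi_{0,\pm}$ and is spectrally localized on the high modes of $\Delta_{V^h,1}$, it has the structure of a gain of $(C_d+\Delta_{V^h,1})^{1/5}$-regularity with prefactor $\lesssim L^{-2/5}A^{8/5}$, matching exactly the subelliptic gain $A^{8/5}\|u\|_{\tilde{\mathcal{W}}^{0,s+2/5}}$ produced by Proposition~\ref{pr:subellipticEstimateA}.

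The first step is the elementary scalar bound
\[
0\leq A^{2}\bigl(1-\chi(\mu/(LA)^{2})\bigr)\leq L^{-2/5}A^{8/5}\mu^{1/5}\qquad\text{for all }\mu\geq 0,
\]
trivial on $\mu\leq(LA)^{2}$, and for $\mu\geq(LA)^{2}$ following from $A^{2}=L^{-2/5}A^{8/5}(LA)^{2/5}\leq L^{-2/5}A^{8/5}\mu^{1/5}$. Applied to $\mu=C_d+\Delta_{V^h,1}$ via functional calculus and transported by the unitary $U_{\pm,\theta}$ (using that $2\pi_{0,\pm}W^{2}_{\theta}\pi_{0,\pm}=U_{\pm,\theta}(C_d+H_0)U_{\pm,\theta}^{-1}$), this would yield
\[
\|\Delta Q\,u\|_{\tilde{\mathcal{W}}^{0,s}}\leq C_{\chi,s}\,L^{-2/5}A^{8/5}\|\pi_{0,\pm}u\|_{\tilde{\mathcal{W}}^{0,s+2/5}},
\]
with $C_{\chi,s}$ uniform in $b,A,h$. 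The transition from the $L^{2}$-spectral inequality to the $\tilde{\mathcal{W}}^{0,s}$-level estimate uses the uniform (in $h$) equivalences $\|U_{\pm,\theta}v\|_{\tilde{\mathcal{W}}^{0,s}}\asymp\|(C_d+H_0)^{s/2}v\|_{L^{2}}\asymp\|(C_d+\Delta_{V^h,1})^{s/2}v\|_{L^{2}}$ stated after Definition~\ref{de:tQALVh}, guaranteed by Proposition~\ref{pr:scaling}.

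Since $\pi_{0,\pm}$ strongly commutes with $W^{2}_{\theta}$, $\|\pi_{0,\pm}u\|_{\tilde{\mathcal{W}}^{0,s+2/5}}\leq\|u\|_{\tilde{\mathcal{W}}^{0,s+2/5}}$, and Proposition~\ref{pr:subellipticEstimateA} bounds $A^{8/5}\|u\|_{\tilde{\mathcal{W}}^{0,s+2/5}}\leq C\|(B_{\pm,b,V^h}+A^{2}\pi_{0,\pm}-z)u\|_{\tilde{\mathcal{W}}^{0,s}}$ for $\real z\leq A^{2}/2$ under \eqref{eq:condQALV}. Combining these bounds gives
\[
\|\Delta Q\,u\|_{\tilde{\mathcal{W}}^{0,s}}\leq CC_{\chi,s}L^{-2/5}\|(B_{\pm,b,V^h}+A^{2}\pi_{0,\pm}-z)u\|_{\tilde{\mathcal{W}}^{0,s}},
\]
and choosing $L\geq(4CC_{\chi,s}/3)^{5/2}$ so that the prefactor is $\leq 3/4$, the triangle inequality yields the lower bound $\geq 1/4$ and the upper bound $\leq 7/4\leq 9/4$. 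The extension from $\mathcal{S}(X^{h};\mathcal{E}_{\pm}^{h})$ to $D(\overline{B_{\pm,b,V^h}}^{s})$ is routine as $\Delta Q$ is bounded on $\tilde{\mathcal{W}}^{0,s}$ and $\mathcal{S}$ is a core (Proposition~\ref{pr:main1sttext}). The final assertion about the subelliptic estimate follows by applying \eqref{eq:subellipticEstimateWith2/5ForBismutLaplacian} to the right side of the lower bound, with $C$ replaced by $4C$.

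The main obstacle is the uniform Sobolev-level control of $\Delta Q$: the spectral inequality produces an $L^{2}$-bound, whereas one needs a bound in the $\tilde{\mathcal{W}}^{0,s}$-norm (equivalently $H^{s}(Q^h)$) with a constant $C_{\chi,s}$ independent of $h,A,b$. This requires the functional calculus of the elliptic pseudodifferential operator $\Delta_{V^h,1}$ in the uniform symbol class guaranteed by Proposition~\ref{pr:scaling}, in order to check that the $0$-order operator $\mathcal{F}_{A,L}(\Delta_{V^h,1})(C_d+\Delta_{V^h,1})^{-1/5}$ (with $\mathcal{F}_{A,L}(\mu)=A^{2}(1-\chi((C_d+\mu)/(LA)^{2}))$) is bounded on $H^{s}$ with norm $\leq C_{\chi,s}L^{-2/5}A^{8/5}$ uniformly in $h$. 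The derivatives of $\mathcal{F}_{A,L}$ are controlled by $\|\chi^{(k)}\|_{\infty}(LA)^{-2k}\cdot A^{2}$, so that the relevant symbol seminorms can be absorbed into $C_{\chi,s}$ independently of the large parameter $A$.
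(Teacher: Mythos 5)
Your overall strategy is sound and, at the level of the main estimate, it reproduces the mechanism of the paper's proof: the difference $A^{2}\pi_{0,\pm}-Q_{A,L,V^{h}}$ is spectrally supported on high frequencies, gains the $2/5$-regularity with prefactor $L^{-2/5}A^{8/5}$, and is then absorbed by the subelliptic estimate \eqref{eq:subellipticEstimateWith2/5ForBismutLaplacian}, the triangle inequality giving the factors $1/4$ and $7/4\leq 9/4$. The genuine difference is structural: you perturb by $Q_{A,L,V^{h}}-A^{2}\pi_{0,\pm}$ in a single step, using the scalar inequality $A^{2}(1-\chi(\mu/(LA)^{2}))\leq L^{-2/5}A^{8/5}\mu^{1/5}$ and the functional calculus of $\Delta_{V^{h},1}$, whereas the paper first compares $B_{\pm,b,V^{h}}+A^{2}\pi_{0,\pm}$ with $B_{\pm,b,V^{h}}+Q_{A,L}$ (the cutoff built from $W^{2}_{\theta}$) via a Neumann series, and only then passes from $Q_{A,L}$ to $Q_{A,L,V^{h}}$ using the Helffer--Sj{\"o}strand comparison of Lemma~\ref{le:QALV}. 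Your route is shorter and avoids Lemma~\ref{le:QALV} entirely at this stage.

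There is, however, one point where your argument delivers strictly less than the stated proposition: the uniformity of $L$. In your chain of estimates the operator $(1-\chi)\bigl((C_{d}+\Delta_{V^{h},1})/(LA)^{2}\bigr)$ does not commute with $(W^{2}_{\theta})^{s/2}$ (equivalently with $(C_{d}+H_{0})^{s/2}$), so to invoke the spectral calculus of $\Delta_{V^{h},1}$ you must pass to the equivalent norm $\|(C_{d}+\Delta_{V^{h},1})^{s/2}\cdot\|_{L^{2}}$ and back, picking up the $s$-dependent equivalence constants at the exponents $s$ and $s+2/5$. Your final prefactor is therefore $C\,C_{\chi,s}L^{-2/5}$ with no remaining negative power of $A$, so the smallness needed to close the Neumann/triangle argument must come from $L$, forcing $L\geq (4CC_{\chi,s}/3)^{5/2}$ to depend on $s$. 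The proposition asserts that $L$ is \emph{uniform}, and this matters downstream because $Q_{A,L,V^{h}}$ is a single fixed operator used simultaneously over a whole range $[s_{\min},s_{\max}]$ of Sobolev exponents in Section~\ref{sec:Grushin}. The paper's two-step structure exists precisely to circumvent this: the first cutoff $(1-\chi)(2W^{2}_{\theta}/(LA)^{2})$ is a function of $W^{2}_{\theta}$ itself, hence maps $\tilde{\mathcal{W}}^{0,s+2/5}\to\tilde{\mathcal{W}}^{0,s}$ with norm exactly $\leq (LA)^{-2/5}$ by pure functional calculus, with no $s$-dependent constant, so $L\geq(2C)^{5/2}$ can be fixed uniformly; the $s$-dependent constants only appear in the second step, where the error $\|Q_{A,L}-Q_{A,L,V^{h}}\|\leq C_{\chi,s}A^{2}/(LA)^{2}$ is beaten by taking $A$ large, i.e.\ absorbed into $C_{s}$ in \eqref{eq:condQALV}. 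Your proof can be repaired either by adopting this two-step decomposition, or by accepting an $s$-dependent $L$ and taking the maximum over the finite range of exponents actually used later — but as written it does not prove the proposition with a uniform $L$.
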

\begin{proof}
  \textbf{1)} We start with the simpler perturbation $Q_{A,L}$ instead of $Q_{A,L,V^{h}}$ and we write:
  \begin{eqnarray*}
 B_{\pm,b,V^{h}}+Q_{A,L}-z & = & B_{\pm,b,V^{h}} + A^2 \pi_{0,\pm} -z+ ( A^2 \pi_{0,\pm} (1- \chi)\big( \frac{2W_{\theta}^2}{ (LA)^2} \big) \pi_{0,\pm} ) \\
 & = & (1 + A^2 \pi_{0,\pm} (1-\chi)(\frac{2 W_{\theta}^2}{(LA)^2}) \pi_{0,\pm} (B_{\pm,b,V^{h}} + A^2 \pi_{0,\pm}-z)^{-1}) (B_{\pm,b,V^{h}} + A^2 \pi_{0,\pm}-z).
\end{eqnarray*}
According to Proposition \ref{pr:subellipticEstimateA} and  under the condition~\ref{eq:condQALV}, the resolvent $(B_{\pm,b,V^{h}}+A^2\pi_{0,\pm}-z)^{-1} $ is continuous from $\tilde{\cal W}^{0,s}$ to
$\tilde{\cal W}^{0,s+\frac{2}{9}}$ with norm less than $ \frac{C}{A^{\frac{16}{9}}}$. Because $\chi\equiv 1$ on $[-1,1]$\,, the operator $(1-\chi)(\frac{2W_{\theta}^2}{(LA)^2}) $ is a bounded operator from $\tilde{\cal W}^{0,s+\frac{2}{9}} $ to $\tilde{\cal W}^{0,s}$ with norm less than  $ \frac{1}{(LA)^{\frac{2}{9}}} $.  When $ L\geq (2C)^{\frac{9}{2}} $ we obtain
    $$ \| A^2 \pi_{0,\pm} (1-\chi)(\frac{2W_{\theta}^2}{(LA)^2}) \pi_{0,\pm} (B_{\pm,b,V^{h}}+A^2\pi_{0,\pm}-z)^{-1}  \|_{\mathcal{L}(\tilde{\cal W}^{0,s}; \tilde{\cal W}^{0,s})}\leq \frac{C}{L^{\frac{2}{9}}}\leq \frac{1}{2}. $$
Therefore the operator 
  $$ 1+ A^2 \pi_{0,\pm}(1-\chi)(\frac{2W_{\theta}^2}{(LA)^2})\pi_{0,\pm}(B_{\pm,b,V^{h}}+A^2\pi_{0,\pm}-z)^{-1} $$
  is invertible by Neumann series and the norm of its inverse is less than $2$\,, while its norm is bounded by $3/2$\,.
  We have proved
  \begin{equation}
    \label{eq:BQAL}
\frac{1}{2} \|\overline{(B_{\pm,b,V^{h}}+A^{2}\pi_{0,\pm}-z)}^{s}u\|_{\tilde{\cal W}^{0,s}} \leq \|\overline{(B_{\pm,b,V^{h}} + Q_{A,L}-z)}^{s} u \|_{\tilde{\cal W}^{0,s}} \leq \frac{3}{2} \|\overline{(B_{\pm,b,V^{h}}+A^{2}\pi_{0,\pm}-z)}^{s}u\|_{\tilde{\cal W}^{0,s}}\,,
\end{equation}
for all $u\in D(\overline{B_{\pm,b,V^{h}}}^{s})$ and all $z\in \mathbb{C}$ such that $\real z\leq \frac{A^{2}}{2}$\,.\\

\noindent\textbf{2)} We now use the similar perturbative argument for
$$ B_{\pm,b,V^{h}} + Q_{A,L,V^{h}}-z= B_{\pm,b,V^{h}}+ Q_{A,L}-z - (Q_{A,L}-Q_{A,L,V^{h}}). $$
The inequality \eqref{eq:QALV} of Lemma~\ref{le:QALV} below
gives
\begin{equation}
    \label{eq:QALQALVH}
    \|Q_{A,L}-Q_{A,L,V^{h}}\|_{\mathcal{L}(\tilde{\mathcal{W}}^{0,s};\tilde{\mathcal{W}}^{0,s})}
=\|\tilde{Q}_{A,L}-\tilde{Q}_{A,L,V^{h}}\|_{\mathcal{L}(H^{s};H^{s})}\leq \frac{C_{\chi,s}A^{2}}{(LA)^{2}}
\end{equation}
when $L,A\geq 1$\,, uniformly with respect to $h\in]0,1]$\,.\\
The subelliptic inequality \eqref{eq:subellipticEstimateWith2/5ForBismutLaplacian} combined with \eqref{eq:BQAL}, leads to
\begin{align*}
  \big| \| (B_{\pm,b,V^{h}} + Q_{A,L,V^{h}}-z)u\|_{\tilde{\cal W}^{0,s}} - \|(B_{\pm,b,V^{h}}+ Q_{A,L}-z)u\|_{\tilde{\cal W}^{0,s}} \big| &\leq   C_{\chi,s} \frac{A^2}{(LA)^2}\|u\|_{\tilde{\cal W}^{0,s}}
  \\
                          &\leq \frac{2CC_{\chi,s}}{(LA)^2} \|(B_{\pm,b,V^{h}}+ Q_{A,L})u \|_{\tilde{\cal W}^{0,s}}\,.
\end{align*}
By taking $C_{s,new}\geq 1$ such that $C_{s,new}\max(b,Ab,\frac{1}{A})\leq 1$ implies $A \geq \sqrt{4CC_{\chi, s}}$ and $(LA)^{2}\geq 4CC_{\chi,s}$\,, the right-hand side is less than $\frac{1}{2}\|(B_{\pm,b,V^{h}}+Q_{A,L} )u\|_{\tilde{\cal W}^{0,s}} $\,.\\
We conclude by stating the result with $C_{s}=C_{s,new} = \max(\sqrt{4CC_{\chi,s}},C_{s,old}) $.
\end{proof}

\begin{lemma}
  \label{le:QALV}
  For all $s,s'\in \mathbb{R}$ there exists $C_{\chi,s,s'}\geq 1$ such that
  \begin{eqnarray}
  \label{eq:fss2}
&&  \|\chi(\frac{C_{d}+\Delta_{V^{h},1}}{(LA)^{2}})\|_{\mathcal{L}(H^{s};H^{s'})}
    +
     \|\chi(\frac{C_{d}+H_{0}}{(LA)^{2}})\|_{\mathcal{L}(H^{s};H^{s'})}\leq C_{\chi,s,s'}(LA)^{(s'-s)_{+}}\,,
     \\
  \label{eq:resHs}
 \forall z\in\mathbb{C}\,, \mathbb{R}\mathrm{e}~z\leq \frac{A^2}{2}\,,\hspace{-0.5cm}&&  \|\frac{1}{2}(\Delta_{V^{h},1}+\tilde{Q}_{A,L,V^{h}}-z)^{-1}\|_{\mathcal{L}(H^{s};H^{s})}\leq \frac{2}{A^2-\mathbb{R}\mathrm{e}~z +|\mathrm{Im}~z|}\leq \frac{4}{A^2+2|\mathrm{Im}\,z|}
\end{eqnarray}
   and
   \begin{equation}
  \label{eq:QALV}
 \|\chi(\frac{C_{d}+H_{0}}{(LA)^{2}})-\chi(\frac{C_{d}+\Delta_{V^{h},1}}{(LA)^{2}})\|_{\mathcal{L}(H^{s};H^{s})}\leq \frac{C_{\chi,s,s}}{(LA)^2}
\end{equation}
hold as soon as $\frac{A}{C_{s}},L\geq 1$ and for all $h\in]0,1]$\,.
\end{lemma}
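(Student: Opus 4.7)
Both $H_0$ and $\Delta_{V^h,1}$ are self-adjoint, non-negative, second-order elliptic operators on $Q^h$ with the same scalar principal symbol, whose coefficients in the atlas $Q^h=\bigcup_j h^{-1}\Omega_j$ are controlled uniformly in $h\in]0,1]$ by Proposition~\ref{pr:scaling}. Classical elliptic regularity therefore provides the $h$-independent norm equivalences
\[
\|u\|_{H^s}\;\asymp\;\|(C_d+H_0)^{s/2}u\|_{L^2}\;\asymp\;\|(C_d+\Delta_{V^h,1})^{s/2}u\|_{L^2},
\]
which will be used systematically to reduce every $H^s$-statement to a spectral statement for a single self-adjoint operator.

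The bound~\eqref{eq:fss2} is then pure functional calculus: for $T\in\{H_0,\Delta_{V^h,1}\}$, the operator $(C_d+T)^{s'/2}\chi((C_d+T)/(LA)^2)(C_d+T)^{-s/2}$ is a function of the single self-adjoint $T$, and since $\mathrm{supp}\,\chi\subset[-2,2]$ its $L^2$-norm is bounded by $(2(LA)^2)^{(s'-s)_+}\|\chi\|_\infty$ via the spectral theorem. For~\eqref{eq:resHs}: $\tilde{Q}_{A,L,V^h}$ is a non-negative bounded function of $\Delta_{V^h,1}$, so $S:=\Delta_{V^h,1}+\tilde{Q}_{A,L,V^h}$ is itself a self-adjoint function of $\Delta_{V^h,1}$; a case analysis of $\lambda\mapsto\lambda+A^2\chi((C_d+\lambda)/(LA)^2)$ on $[0,\infty)$ shows its minimum is at least $A^2$ whenever $(L^2-1)A^2\geq C_d$, ensured by $L\geq 1$ and $A\geq C_s$. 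Hence $\mathrm{Spec}(S)\subset[A^2,\infty)$, the $L^2$-resolvent bound $\|(S-z)^{-1}\|_{L^2}\leq 2(A^2-\mathrm{Re}\,z+|\mathrm{Im}\,z|)^{-1}$ for $\mathrm{Re}\,z\leq A^2/2$ follows from the spectral theorem, and commuting $(S-z)^{-1}$ through $(C_d+\Delta_{V^h,1})^{s/2}$ transfers the bound to $H^s$.

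The main content is~\eqref{eq:QALV}, which I would prove via the Helffer-Sj\"ostrand formula
\[
\chi(T_1)-\chi(T_2)=-\frac{1}{\pi}\int_{\mathbb{C}}\partial_{\bar z}\tilde\chi(z)\,(z-T_1)^{-1}(T_1-T_2)(z-T_2)^{-1}\,dz\wedge d\bar z,
\]
with $T_1=(C_d+H_0)/(LA)^2$, $T_2=(C_d+h^{-2}\Delta_{V^h,1})/(LA)^2$, and $\tilde\chi$ an almost analytic extension of $\chi$ with $|\partial_{\bar z}\tilde\chi|\leq C_N|\mathrm{Im}\,z|^N$. The inputs are (i) $\|(z-T_i)^{-1}\|_{H^s\to H^s}\leq C_s/|\mathrm{Im}\,z|$ from the spectral theorem in the equivalent norm; (ii) a one-order Sobolev gain $\|(z-T_i)^{-1}\|_{H^{s-1}\to H^s}\leq C_s(LA)/|\mathrm{Im}\,z|$ on $\mathrm{supp}\,\tilde\chi$, since spectral values of $T_i$ there correspond to spectral values of $H_0$ (resp. $\Delta_{V^h,1}$) of size $O((LA)^2)$; and (iii) the key estimate $\|T_1-T_2\|_{H^s\to H^{s-1}}\leq C_s/(LA)^2$, coming from the coincidence of scalar principal symbols of $H_0$ and $\Delta_{V^h,1}$ together with the uniformly bounded lower-order corrections of Proposition~\ref{pr:scaling}. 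A naive combination gives $\|\chi(T_1)-\chi(T_2)\|_{H^s\to H^s}\leq C/(LA)$; the full gain of $(LA)^{-2}$ is extracted by iterating the second-resolvent identity $(z-T_1)^{-1}=(z-T_2)^{-1}+(z-T_2)^{-1}(T_1-T_2)(z-T_1)^{-1}$ once more, so that a second copy of $(T_1-T_2)$ enters the integrand, while the leading symmetric contribution $(z-T_2)^{-1}(T_1-T_2)(z-T_2)^{-1}$ is shown to produce, after integration by parts in $z$, an additional factor of $(LA)^{-1}$ through the spectral decay of $(z-T_2)^{-1}$.

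The principal obstacle is precisely this recovery of the second $(LA)^{-1}$ gain: since $H_0-\Delta_{V^h,1}$ does not cancel at subprincipal order, a single Helffer-Sj\"ostrand step falls short by a factor $LA$, and the iterated expansion that recovers it must be carried out with careful uniform control, in $h$, of the seminorms of $\theta_j(h\cdot)$, of $g^h$, of $(g^h)^{-1}$ and of $\nabla_{g^h}V^h$ underlying the definitions of $H_0$ and $\Delta_{V^h,1}$ on the dilated manifold $Q^h$.
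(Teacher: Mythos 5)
Your treatment of \eqref{eq:fss2} and \eqref{eq:resHs} matches the paper's: both reduce to functional calculus via the $h$-uniform equivalences $\|u\|_{H^s}\asymp\|(C_d+H_0)^{s/2}u\|_{L^2}\asymp\|(C_d+\Delta_{V^h,1})^{s/2}u\|_{L^2}$, and your spectral lower bound for $\frac{1}{2}\Delta_{V^h,1}+\tilde{Q}_{A,L,V^h}$ is exactly what is needed. (Incidentally, the factor $h^{-2}$ in the displayed \eqref{eq:QALV} is a typo --- compare Definition~\ref{de:tQALVh} and the use of the lemma in Proposition~\ref{pr:subEllipticQALVh}; taken literally the two operators would not even share a principal symbol, contrary to what your ``key estimate'' for $T_1-T_2$ assumes.)

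For \eqref{eq:QALV} there is a genuine gap. The paper's proof is a \emph{single} Helffer--Sj\"ostrand step: it asserts that $R_{A,L,h}=\Delta_{V^h,1}-H_0$ is bounded $H^s\to H^s$ uniformly in $h$ (i.e.\ of order zero), bounds the product of the two unscaled resolvents by $|\mathrm{Im}\,z|^{-2}$, uses $|\partial_{\bar z}\tilde\chi(z/(LA)^2)|\leq C_{\chi,2}|\mathrm{Im}\,z|^2(LA)^{-4}$ together with the prefactor $(LA)^{-2}$, and integrates over the support, of area $O((LA)^4)$; the factor $(LA)^{-2}$ comes out in one shot, with no iteration. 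You instead treat $T_1-T_2$ as genuinely first order, correctly observe that one step then yields only $(LA)^{-1}$, and propose to recover the missing factor by iterating the resolvent identity and ``integrating by parts''. This cannot work: after your iteration the leading term is $-\frac{1}{\pi}\int\partial_{\bar z}\tilde\chi(z)\,(z-T_2)^{-1}(T_1-T_2)(z-T_2)^{-1}\,dz\wedge d\bar z$, which equals the divided-difference operator $\sum_{\mu,\nu}\frac{\chi(\mu)-\chi(\nu)}{\mu-\nu}P_\mu(T_1-T_2)P_\nu$ over the spectral projections of $T_2$, and for a first-order $T_1-T_2$ with $O(1)$ coefficients this is irreducibly of size $(LA)^{-1}$. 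The scalar Fourier model confirms it: $\chi(\xi^2/M^2)-\chi((\xi^2+a\xi)/M^2)\approx-\chi'(\xi^2/M^2)\,a\xi/M^2\sim a/M$ on the transition region $|\xi|\sim M$, so no rearrangement of the Helffer--Sj\"ostrand integral can produce $M^{-2}$. The estimate \eqref{eq:QALV} holds only because the first-order part of $\Delta_{V^h,1}-H_0$ is negligible, and that is the point your proof must establish instead of iterating: $|\nabla V^h|^2$ and $\mathcal{L}_{\nabla V^h}+\mathcal{L}_{\nabla V^h}^*$ are of order zero, and the residual first-order discrepancy between the Hodge Laplacian and the componentwise scalar realization $H_0$ involves only derivatives of $g^h$ and of $\theta_j(h\cdot)$. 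This is precisely the content of the paper's assertion $\|R_{A,L,h}\|_{\mathcal{L}(H^{s};H^{s})}\leq C^{(1)}_{\chi,s}$, which your argument neither uses nor replaces.
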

\begin{proof}
  The two first inequalities \eqref{eq:fss2} and \eqref{eq:resHs} are straightforward applications of the functional calculus, because the $H^{s}$-norm is equivalently evaluated with $\|(C_{d}+H_{0})^{s/2}u\|_{L^{2}}$ or with $\|(C_{d}+\Delta_{V^{h},1})^{s/2}u\|_{L^{2}}$\,.\\
  For \eqref{eq:QALV}, the difference
\begin{equation}
  \label{eq:defRALh}
R_{A,L,h}= (C_{d}+\Delta_{V^{h},1}) - (C_{d}+H_{0})
\end{equation}
satisfies  $\|R_{A,L,h}\|_{\mathcal{L}(H^{s};H^{s})}\leq C^{(1)}_{\chi,s}$ uniformly with respect to $h\in ]0,1]$\,.\\
The Helffer-Sj{\"o}strand formula gives

$$
\chi(\frac{C_{d}+\Delta_{V^{h},1}}{(LA)^{2}})-\chi(\frac{C_{d}+H_{0}}{(LA)^{2}})
=\frac{1}{2i\pi}
\int_{\mathbb{C}}\frac{1}{(LA)^{2}}(\partial_{\bar z}\tilde{\chi})(\frac{z}{(LA)^{2}}) (z-C_{d}-\Delta_{V^{h},1})^{-1}R_{A,L,h}(z-C_{d}-H_{0})^{-1}~dz\wedge d\bar z\,,
$$
where $\tilde{\chi}\in \mathcal{C}^{\infty}_{0}(\mathbb{C};\mathbb{C})$ is an almost analytic extension of $\chi$ with
$$
|\partial_{\overline{z}}\tilde{\chi}(z)|\leq C_{\chi,N}|\mathrm{Im}\,,z|^{N}
$$
while $\partial_{\overline{z}}\tilde{\chi}\equiv 0$ in a neighborhood of $0$\,.
The $\mathcal{L}(H^{s};H^{s})$-norm of this difference in given by the $\mathcal{L}(L^{2};L^{2})$-norm of
$$
(C_{d}+\Delta_{V^{h},1})^{-s/2}\left[\chi(\frac{C_{d}+\Delta_{V^{h},1}}{(LA)^{2}})-\chi(\frac{C_{d}+H_{0}}{(LA)^{2}})\right](C_{d}+H_{0})^{s/2}
$$
or, by setting $\tilde{R}_{A,L,h}=(C_{d}+\Delta_{V^{h},1})^{-s/2}R_{A,L,h}(C_{d}+H_{0})^{s/2}$\,, of
$$
\frac{1}{2i\pi}
\int_{\mathbb{C}}\frac{1}{(LA)^{2}}(\partial_{\bar z}\tilde{\chi})(\frac{z}{(LA)^{2}}) (z-C_{d}-\Delta_{V^{h},1})^{-1}\tilde{R}_{A,L,h}(z-C_{d}-H_{0})^{-1}~dz\wedge d\bar z\,.
$$
With
$$
\|\tilde{R}_{A,L,h}\|_{\mathcal{L}(L^{2};L^{2})}\leq C_{\chi,s}^{(2)}\|R_{A,L,h}\|_{\mathcal{L}(H^{s};H^{s})}\,,
$$
and the inequalities
\begin{eqnarray*}
  &&
\|(z-C_{d}-\Delta_{V^{h},1})^{-1}\|_{\mathcal{L}(L^{2};L^{2})}
\|(z-C_{d}-H_{0})^{-1}\|_{\mathcal{L}(L^{2};L^{2})}
     \leq \frac{1}{|\mathrm{Im}\,z|^{2}}\,,\\
  \text{and}&& \left|\partial_{\overline{z}}\tilde{\chi}(\frac{z}{(LA)^{2}})\right|\leq C_{\chi,2}\frac{|\mathrm{Im}\,z|^{2}}{(LA)^{4}}\,,
\end{eqnarray*}
a simple integration yields the result.
\end{proof}
\section{The Grushin problem}
\label{sec:Grushin}
\subsection{Functional analysis of the Grushin problem}
\label{sec:funcGru}

We consider the operators
\begin{equation}
  \label{eq:Grushin}
  \mathcal{P}_{z}=
  \begin{pmatrix}
    B_{\pm,b,V^{h}}+Q_{A,L,V^{h}}-z& U_{\pm,\theta}\\
    U_{\pm,\theta}^{-1}\pi_{0}& 0
  \end{pmatrix}
  \quad\text{and}\quad
  \mathcal{P}_{z}'= \begin{pmatrix}
    B_{\pm,b,V^{h}}'+Q_{A,L,V^{h}}-\bar{z}& U_{\pm,\theta}\\
    U_{\pm,\theta}^{-1}\pi_{0}& 0
  \end{pmatrix}\,.
\end{equation}
 Remember that $B_{\pm,b,V^{h}}\in \mathrm{OpS}^{3/2}_{\Psi}(Q^{h},\mathrm{End}(\mathcal{E}_{\pm}^{h}))$ while 
 $Q_{A,L,V^{h}}:\mathcal{S'}(X^{h};\mathcal{E}_{\pm}^{h})\to \mathcal{S}(X^{h};\mathcal{E}_{\pm}^{h})$ and $U_{\pm,\theta}$ is an isomorphism from $H^s(Q^{h};\Lambda T^*Q^{h}\otimes F_\pm)$ to $\tilde{\mathcal{W}}^{0,s}(X^{h};\mathcal{E}_{\pm}^{h})\cap\ker(\alpha_{\pm})$ for all $s\in \mathbb{R}$\,, with $F_{+}=Q\times \mathbb{C}$ and $F_{-}=(Q\times \mathbb{C})\otimes \mathbf{or}_{Q^{h}}$\,. In particular, the operators $\mathcal{P}_{z}$, $\mathcal{P}_{z}'$ are bounded
 \begin{equation}
   \label{eq:contGrushin}
   \mathcal{P}_{z},\mathcal{P}_{z}':
   \tilde{\mathcal{W}}^{0,s}(X^{h};\mathcal{E}_{\pm}^{h})\oplus H^{s}(Q^{h};\Lambda T^{*}Q^{h}\otimes F_{\pm})\longrightarrow \tilde{\mathcal{W}}^{0,s-3/2}(X^{h};\mathcal{E}_{\pm}^{h})\oplus H^{s}(Q^{h};\Lambda T^{*}Q^{h}\otimes F_{\pm})
 \end{equation}
for all $s\in \mathbb{R}$\,. With
 \begin{eqnarray}
 \label{eq:SE}
 \mathop{\cap}_{s\in \mathbb{R}}\tilde{\mathcal{W}}^{0,s}(X^{h};\mathcal{E}_{\pm}^{h})\oplus H^{s}(Q^{h};\Lambda T^{*}Q^{h}\otimes F_{\pm})&=&\mathcal{S}(X^{h};\mathcal{E}_{\pm}^{h})\oplus \mathcal{C}^{\infty}(Q^{h};\Lambda T^{*}Q^{h}\otimes F_{\pm})\,,\\
   \label{eq:SpEp}
   \mathop{\cup}_{s\in \mathbb{R}}\tilde{\mathcal{W}}^{0,s}(X^{h};\mathcal{E}_{\pm}^{h})\oplus H^{s}(Q^{h};\Lambda T^{*}Q^{h}\otimes F_{\pm})&=&\mathcal{S}'(X^{h};\mathcal{E}_{\pm}^{h})\oplus \mathcal{E}'(Q^{h};\Lambda T^{*}Q^{h}\otimes F_{\pm})
 \end{eqnarray}
 the continuity also holds with these spaces endowed with their usual topology.\\
 We will use the following abbreviations
 \begin{eqnarray*}
   \tilde{\mathcal{W}}^{0,s}\oplus H^{s'}&=&\tilde{\mathcal{W}}^{0,s}(X^{h};\mathcal{E}_{\pm}^{h})\oplus H^{s'}(Q^{h};\Lambda T^{*}Q^{h}\otimes F_{\pm})\,,\\
   \mathcal{S}\oplus \mathcal{C}^{\infty}&=&\mathcal{S}(X^{h};\mathcal{E}_{\pm}^{h})\oplus \mathcal{C}^{\infty}(Q^{h};\Lambda T^{*}Q^{h}\otimes F_{\pm})\,,\\
 B_{Q,z}&=&B_{\pm,b,V^{h}}+Q_{A,L,V^{h}}-z\quad\text{and}\quad B_{Q,z}'=B_{\pm,b,V^{h}}'+Q_{A,L,V^{h}}-\bar{z}\,.
 \end{eqnarray*}
 We recall
 \begin{eqnarray*}
   && B_{\pm,b,V^{h}}=\frac{1}{b^{2}}\alpha_{\pm}+\frac{1}{b}\beta_{\pm}+\gamma_{\pm}\\
      \text{with}&&
                   \pi_{0,\pm}B_{Q,z}\pi_{\bot,\pm}=\pi_{0,\pm}(\frac{1}{b}\beta_{\pm}+\gamma_{\pm})\pi_{\bot,\pm}\,,\\
   &&  \pi_{\bot,\pm}B_{Q,z}\pi_{0,\pm}=\pi_{\bot,\pm}(\frac{1}{b}\beta_{\pm}+\gamma_{\pm})\pi_{0,\pm}\,,\\
\text{and}&&  \pi_{0,\pm}B_{Q,z}\pi_{0,\pm}=\pi_{0,\pm}(\gamma_{\pm}+Q_{A,L,V^{h}}-z)\pi_{0,\pm}\,.
 \end{eqnarray*}
 We check that $\mathcal{P}_{z}$ and $\mathcal{P}_{z}'$ are invertible in a weak sense under suitable conditions and then we deduce via the Schur complement formula an explicit expression of $\overline{(B_{\pm,b,V^{h}}+Q_{A,L,V^{h}}}^s-z)^{-1}$ as an operator from $\tilde{\mathcal{W}}^{0,s}(X^{h};\mathcal{E}_{\pm}^{h})\to \tilde{\mathcal{W}}^{0,s-3}(X^{h};\mathcal{E}_{\pm}^{h})$\,. Because the condition $b\leq \frac{1}{C_{s}}$\,, which ensures the invertibility of $\overline{\pi_{\bot,\pm}(B_{\pm,b,V^{h}}-z)\pi_{\bot,\pm}}^{s}$ in Proposition~\ref{pr:maximalAccretivityOfPiBPi}, depends on $s$\,, there is no choice of parameters which guarantees the meaning of some formulas simultaneously for all $s\in \mathbb{R}$\,, but only for all $s$ in a fixed interval $[s_{\min},s_{\max}]\subset\mathbb{R}$\,. Therefore not all the compositions of operators in what follows make sense with the topologies of \eqref{eq:SE} and \eqref{eq:SpEp} and products or inverses must be handled carefully.
In particular, although we make product of continuous operators between different spaces, they do not necessarily have a closed range and we must distinguish clearly left-inverses and right-inverses. Alternatively it is better handled  by the separate studies of the uniqueness and of the existence of solutions for linear systems.
\begin{proposition}
  \label{pr:Grushin}
  Assume that the condition $b\leq\frac{1}{C_{s}}$ of Proposition~\ref{pr:maximalAccretivityOfPiBPi} holds true for all $s\in[-s_{max},s_{max}]$\,, for some $s_{max}\in ]3,+\infty[$\,.
  \begin{description}
  \item[1)] When $|s|\leq s_{max}$ and $\mathbb{R}\mathrm{e}z\leq \frac{1}{24b^{2}}$\,, the operator $\mathcal{P}_{z}:\mathcal{S}\oplus \mathcal{C}^{\infty}\to \tilde{\mathcal{W}}^{\pm s}\oplus H^{\pm s+1}$ admits a left-inverse
$$
\mathcal{G}_{z}=
\begin{pmatrix}
  E&E_{+}\\
  E_{-}&E_{-+}
\end{pmatrix}\quad \in \mathcal{L}(\tilde{\mathcal{W}}^{0,\pm s}\oplus H^{\pm s+1};\tilde{\mathcal{W}}^{0,\pm s}\oplus H^{\pm s-1})\,.
$$
The same result holds of $\mathcal{P}_{z}':\mathcal{S}\oplus \mathcal{C}^{\infty}\to \tilde{\mathcal{W}}^{\mp s}\oplus H^{\mp s+1}$ with the left inverse
$$
\mathcal{G}'_{z}=
\begin{pmatrix}
  E'&E'_{+}\\
  E'_{-}&E'_{-+}
\end{pmatrix}
\in \mathcal{L}(\tilde{\mathcal{W}}^{0,\mp s}\oplus H^{\mp s+1};\tilde{\mathcal{W}}^{0,\mp s}\oplus H^{\mp s-1})\,.
$$
\item[2)] For $|s|\leq s_{max}-3/2$ and $\mathbb{R}\mathrm{e}z\leq \frac{1}{24b^{2}}$\,, the relations
  \begin{eqnarray*}
    &&\mathcal{G}_{z}\circ \mathcal{P}_{z}=i_{\tilde{\mathcal{W}}^{0,\pm s}\oplus H^{\pm s}\to \tilde{\mathcal{W}}^{0,\pm s-3/2}\oplus H^{\pm s-5/2}}\\
    \text{and}&&
      \mathcal{G}'_{z}\circ \mathcal{P}'_{z}=i_{\tilde{\mathcal{W}}^{0,\mp s}\oplus H^{\mp s}\to \tilde{\mathcal{W}}^{0,\mp s-3/2}\oplus H^{\mp s-5/2}}
  \end{eqnarray*}
make sense as the products  $A\circ B$\,, with $A\in \mathcal{L}(\tilde{\mathcal{W}}^{0,\pm s-3/2}\oplus H^{\pm s-1/2};\tilde{\mathcal{W}}^{0,\pm s-3/2}\oplus H^{\pm s-5/2})$ and  $B\in \mathcal{L}(\tilde{\mathcal{W}}^{0,\pm s}\oplus H^{\pm s};\tilde{\mathcal{W}}^{0,\pm s-3/2}\oplus H^{\pm s})\subset \mathcal{L}(\tilde{\mathcal{W}}^{0,\pm s}\oplus H^{\pm s};\tilde{\mathcal{W}}^{0,\pm s-3/2}\oplus H^{\pm s-1/2})$\,.
\item[3)] For $|s|\leq s_{max}-3/2$\,, $z\in\mathbb{C}\setminus \sigma(B_{\pm,b,V^{h}}+Q_{A,L,V^{h}})$ and $\mathbb{R}\mathrm{e}\, z<\frac{1}{24b^{2}}$\,, the operator $E_{-+}\in \mathcal{L}(H^{s-1/2}; H^{s-5/2})\subset \mathcal{L}(\mathcal{C}^{\infty};\mathcal{\mathcal{E}}')$ admits a right-inverse $(E_{-+})^{-1}_{r}\in \mathcal{L}(\mathcal{C}^{\infty};\mathcal{C}^{\infty})$ and a left-inverse $(E_{-+})^{-1}_{\ell}\in \mathcal{L}(\mathcal{E}';\mathcal{E}')$ with $(E_{-+})^{-1}_{\ell}\in \mathcal{L}(H^{s'};H^{s'+2/3})$ for all $s'\in \mathbb{R}$ and $(E_{-+})^{-1}_{\ell}\big|_{\mathcal{C}^{\infty}}=(E_{-+})^{-1}_{r}$\,.
\item[4)] For $|s|\leq s_{max}-4$ the equality

  \begin{equation}
    \label{eq:inverseOfBQ}
    (\overline{B_{\pm,b,V^{h}}+Q_{A,L,V^{h}}}^{s}-z)^{-1}=E-E_{+}(E_{-+})^{-1}_{\ell}E_{-} 
  \end{equation}
  
holds in the sense of $\mathcal{L}(\tilde{\mathcal{W}}^{0,s};\tilde{\mathcal{W}}^{0,s-3})$-valued meromorphic functions in $\left\{z\in \mathbb{C}\,, \mathbb{R}e\, z< \frac{1}{24b^{2}}\right\}$\,.
\end{description}
\end{proposition}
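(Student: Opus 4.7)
The plan is to invert $\mathcal{P}_z$ (and $\mathcal{P}'_z$) by the classical Grushin/Schur complement construction, then to read off parts 3 and 4 from the resulting structural identities. The key enabling fact is that $Q_{A,L,V^h}$ is by construction supported on $\mathrm{Ran}\,\pi_{0,\pm}$, so $\pi_{\bot,\pm}B_{Q,z}\pi_{\bot,\pm}=\pi_{\bot,\pm}(B_{\pm,b,V^{h}}-z)\pi_{\bot,\pm}$, which Proposition~\ref{pr:maximalAccretivityOfPiBPi} shows to be invertible on $\pi_{\bot,\pm}\tilde{\mathcal{W}}^{0,s}$ with $\|(\pi_{\bot,\pm}B_{Q,z}\pi_{\bot,\pm})^{-1}\|\leq 24b^{2}$, uniformly for $|s|\leq s_{\max}$ and $\real z\leq 1/(24b^{2})$.

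Given $(f,g_{+})\in\mathcal{S}\oplus\mathcal{C}^{\infty}$, I will solve $\mathcal{P}_{z}\binom{u}{u_{-}}=\binom{f}{g_{+}}$ by the ansatz $u=U_{\pm,\theta}g_{+}+u_{\bot}$ with $u_{\bot}\in\mathrm{Ran}\,\pi_{\bot,\pm}$: the second row then holds automatically, the $\pi_{\bot,\pm}$-projection of the top row gives $u_{\bot}=(\pi_{\bot,\pm}B_{Q,z}\pi_{\bot,\pm})^{-1}[\pi_{\bot,\pm}f-\pi_{\bot,\pm}B_{Q,z}U_{\pm,\theta}g_{+}]$, and the $\pi_{0,\pm}$-projection determines $u_{-}$. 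Reading off the entries produces
\begin{align*}
E &= \pi_{\bot,\pm}(\pi_{\bot,\pm}B_{Q,z}\pi_{\bot,\pm})^{-1}\pi_{\bot,\pm},\quad E_{+} = U_{\pm,\theta}-(\pi_{\bot,\pm}B_{Q,z}\pi_{\bot,\pm})^{-1}\pi_{\bot,\pm}B_{Q,z}U_{\pm,\theta},\\
E_{-} &= U_{\pm,\theta}^{-1}\pi_{0,\pm}\bigl[\mathrm{Id}-B_{Q,z}(\pi_{\bot,\pm}B_{Q,z}\pi_{\bot,\pm})^{-1}\pi_{\bot,\pm}\bigr],\quad E_{-+} = -U_{\pm,\theta}^{-1}\pi_{0,\pm}B_{Q,z}E_{+},
\end{align*}
and $\mathcal{G}'_{z}$ is built the same way with $B'_{\pm,b,V^{h}}+Q_{A,L,V^{h}}-\bar z$ in place of $B_{Q,z}$ (its $\pi_{\bot,\pm}$-block being the $\tilde{\mathcal{W}}^{0,s}$-adjoint of the direct one, hence also invertible by Proposition~\ref{pr:maximalAccretivityOfPiBPi}). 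The continuity of $\mathcal{G}_{z}$ claimed in part 1 will follow from order-counting in the $W^{2}_{\theta}$-calculus of \cite{NSW}, using the algebraic simplifications $\alpha_{\pm}U_{\pm,\theta}=0$ (so $B_{Q,z}U_{\pm,\theta}$ only loses $3/2$ in the $s_{2}$-index) and $\pi_{0,\pm}\beta_{\pm}\pi_{0,\pm}=0$ (so $\pi_{0,\pm}B_{Q,z}\pi_{0,\pm}$ is only first order), together with the smoothing of $Q_{A,L,V^{h}}$ on $\mathrm{Ran}\,\pi_{0,\pm}$. The identity $\mathcal{G}_{z}\mathcal{P}_{z}=\mathrm{Id}$ of part 2 is then a direct algebraic check on the explicit formulas, tested separately on $(u,0)$ and on $(0,u_{-})$ using $\pi_{\bot,\pm}U_{\pm,\theta}=0$ and $U_{\pm,\theta}^{-1}\pi_{0,\pm}U_{\pm,\theta}=\mathrm{Id}$; the $-3/2$ Sobolev loss in the target reflects the order of $\mathcal{P}_{z}$ itself.

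For part 3, when $z\notin\sigma(B_{\pm,b,V^{h}}+Q_{A,L,V^{h}})$ the resolvent $(\overline{B_{\pm,b,V^{h}}+Q_{A,L,V^{h}}-z}^{s})^{-1}$ exists as a bounded operator on $\tilde{\mathcal{W}}^{0,s}$, and the bootstrap argument recalled after Proposition~\ref{pr:main1sttext} upgrades it to a continuous, $s$-independent map $\mathcal{S}'\to\mathcal{S}$. Inverting the full block $\mathcal{P}_{z}$ (now a two-sided isomorphism) will yield the mirror Schur identity $(E_{-+})^{-1}_{\ell}=-U_{\pm,\theta}^{-1}\pi_{0,\pm}(\overline{B_{\pm,b,V^{h}}+Q_{A,L,V^{h}}-z}^{s})^{-1}U_{\pm,\theta}$, continuous $\mathcal{E}'\to\mathcal{E}'$ and gaining $2/3$ from $H^{s'}$ to $H^{s'+2/3}$ thanks to the subelliptic estimate~\eqref{eq:principalInequality}; its restriction to $\mathcal{C}^{\infty}$ produces the right-inverse $(E_{-+})^{-1}_{r}$, and $(E_{-+})^{-1}_{\ell}|_{\mathcal{C}^{\infty}}=(E_{-+})^{-1}_{r}$ is uniqueness for the Grushin system. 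Part 4 is then the standard Schur identity $(\overline{B_{\pm,b,V^{h}}+Q_{A,L,V^{h}}-z}^{s})^{-1}=E-E_{+}(E_{-+})^{-1}_{\ell}E_{-}$, obtained by matching the $(1,1)$-block of $\mathcal{P}_{z}\mathcal{G}_{z}=\mathrm{Id}$; meromorphy in $\{\real z<1/(24b^{2})\}$ is inherited from the discreteness of the spectrum (Proposition~\ref{pr:main1sttext}). The hard part will be the bookkeeping of Sobolev indices: the constant $C_{s}$ of Proposition~\ref{pr:maximalAccretivityOfPiBPi} is $s$-dependent, so every formal manipulation must be confined to a fixed window $|s|\leq s_{\max}$ (whence the restrictions $|s|\leq s_{\max}-3/2$ in part 2 and $|s|\leq s_{\max}-4$ in part 4), and the compounded $3/2$-order loss from each application of $B_{Q,z}$ against the single $2/3$-order gain of the subelliptic estimate is what forces the precise $-3$ Sobolev loss in~\eqref{eq:inverseOfBQ}.
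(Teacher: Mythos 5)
Your construction is essentially the paper's: you solve the Grushin system by splitting along $\pi_{0,\pm}\oplus\pi_{\bot,\pm}$, invert the $\pi_{\bot,\pm}$-block via Proposition~\ref{pr:maximalAccretivityOfPiBPi}, and your formulas for $E,E_{+},E_{-},E_{-+}$ reduce (using $\alpha_{\pm}U_{\pm,\theta}=0$, $\pi_{0,\pm}\beta_{\pm}\pi_{0,\pm}=0$ and $\mathrm{Ran}\,Q_{A,L,V^{h}}\subset\mathrm{Ran}\,\pi_{0,\pm}$) to exactly the expressions \eqref{eq:defE}--\eqref{eq:defE-+}; the right-inverse of $E_{-+}$ built from $\pi_{0,\pm}(\overline{B_{\pm,b,V^{h}}+Q_{A,L,V^{h}}}-z)^{-1}$ and the final Schur identity are also as in the paper.

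There is one point where your argument as written has a hole. For part~4 you need a \emph{left}-inverse of $E_{-+}$ acting on distributions, since the identity $(E_{-}B_{Q,z}+E_{-+}U_{\pm,\theta}^{-1}\pi_{0})u=0$ must be solved for $U_{\pm,\theta}^{-1}\pi_{0}u$; a right-inverse does not do this. You obtain it by declaring $\mathcal{P}_{z}$ "now a two-sided isomorphism", but that is precisely what is \emph{not} available here: because the constant $C_{s}$ of Proposition~\ref{pr:maximalAccretivityOfPiBPi} is $s$-dependent, all compositions live in a fixed window $[-s_{\max},s_{\max}]$ with a $3/2$-loss at each application of $B_{Q,z}$, none of the operators is known to have closed range, and only the left-inverse identity $\mathcal{G}_{z}\mathcal{P}_{z}=\mathrm{Id}$ is established (your appeal to "the $(1,1)$-block of $\mathcal{P}_{z}\mathcal{G}_{z}=\mathrm{Id}$" invokes the composition in the order that has not been proved). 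The paper closes this by duality: it runs the same right-inverse construction for the adjoint problem $\mathcal{P}'_{z}$ — this is the only reason $\mathcal{P}'_{z}$ and $\mathcal{G}'_{z}$ appear in the statement — producing a right-inverse of the formal adjoint $E'_{-+}$, whose adjoint $[(E'_{-+})^{-1}_{r}]'$ is then a genuine left-inverse of $E_{-+}$ in $\mathcal{L}(\mathcal{E}';\mathcal{E}')$, with the $2/3$-gain read off from the explicit resolvent formula. You set up $\mathcal{G}'_{z}$ in part~1 but never use it; inserting this duality step makes your argument complete.
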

\begin{proof}
  \textbf{1)} The range of $\mathcal{P}_{z}\big|_{\mathcal{S}\oplus \mathcal{C}^{\infty}}$ is included in $\mathcal{S}\oplus \mathcal{C}^{\infty}$\,. Let us check that for $
  \begin{pmatrix}
    f\\f_{+}
  \end{pmatrix}
  \in \mathcal{S}\oplus\mathcal{C}^{\infty}$ the equation $\mathcal{P}_{z}
  \begin{pmatrix}
    u\\u_{-}
  \end{pmatrix}
  =
  \begin{pmatrix}
    f\\f_{+}
  \end{pmatrix}
  $ admits at most one solution in  $\tilde{\mathcal{W}}^{0,s}\oplus H^{s-1}$ when $|s|\leq s_{max}$\,.\\
By applying $\pi_{\bot,\pm}$ to the first line of the system
$$
\left\{
  \begin{array}[c]{ll}
 B_{Q,z}u+U_{\pm,\theta}u_{-}&=f\\
U^{-1}_{\pm,\theta}\pi_{0,\pm}u&=f_{+}
  \end{array}
\right.
$$
we must have
$$
\pi_{\bot,\pm}B_{Q,z}\pi_{\bot,\pm}u+\pi_{\bot,\pm}(\frac{1}{b}\beta_{\pm}+\gamma_{\pm})U_{\pm,\theta}f_{+}=\pi_{\bot,\pm}f\,.
$$
When $\pi_{\bot,\pm}B_{Q,z}\pi_{\bot,\pm}=\pi_{\bot,\pm}(B_{\pm,b,V^{h}}-z)\pi_{\bot,\pm}$ is invertible
$$
u=\pi_{\bot,\pm}u+\pi_{0,\pm}u=[\pi_{\perp}(\mathcal{B}_{\pm,b,V^{h}}-z)\pi_{\perp}]^{-1}(\pi_{\bot,\pm}f-\pi_{\bot,\pm}(\frac{1}{b}\beta_{\pm}+\gamma_{\pm})U_{\pm,\theta}f_{+}) + U_{\pm,\theta}f_{+}
$$
With the conditions $b\leq \frac{1}{C_{s}}$ and $\mathbb{R}\mathrm{e}\, z\leq \frac{1}{24 b^{2}}$ and by noticing $\pi_{\bot,\pm}(\frac{1}{b}\beta_{\pm}+\gamma_{\pm})U_{\pm,\theta}\in \mathcal{L}(H^{s+1};\tilde{\mathcal{W}}^{0,s})$\,, Proposition~\ref{pr:maximalAccretivityOfPiBPi} actually says
\begin{eqnarray}
  && u=E f +E_{+}f_{+} \nonumber \\
  \text{with}&&
                E=(\overline{B_{\pm,b,V^{h},\bot}}^{s}-z)^{-1}\pi_{\bot,\pm}\in \mathcal{L}(\tilde{\mathcal{W}}^{0,s};\tilde{\mathcal{W}}^{0,s})\,, \label{eq:defE} \\
\text{and}&&
  E_{+}=U_{\pm,\theta}-(\overline{B_{\pm,b,V^{h},\bot}}^{s}-z)^{-1}\pi_{\bot,\pm}(\frac{1}{b}\beta_{\pm}+\gamma_{\pm})U_{\pm,\theta}\in \mathcal{L}(H^{s+1};\tilde{\mathcal{W}}^{0,s}) \label{eq:defE+}
\end{eqnarray}
By applying the projection $\pi_{0,\pm}$ on the first line of the system, we must have
$$
\pi_{0,\pm}f=
\pi_{0,\pm}(\frac{1}{b}\beta_{\pm}+\gamma_{\pm}+Q_{A,L,V^{h}}-z)u+U_{\pm,\theta}u_{-}=
\pi_{0,\pm}(\frac{1}{b}\beta_{\pm}+\gamma_{\pm})\pi_{\bot,\pm}u+\pi_{0,\pm}(\gamma_{\pm}+Q_{A,L,V^{h}}-z)U_{\pm,\theta}f_{+}+U_{\pm,\theta}u_{-}
$$
and $\pi_{0,\pm}(\frac{1}{b}\beta_{\pm}+\gamma_{\pm}+Q_{A,L,V^{h}}-z)\in \mathcal{L}(\tilde{\mathcal{W}}^{0,s};H^{s-1})$ now gives
\begin{eqnarray}
  && u_{-}=E_{-}f+E_{-+}f_{+}\,, \nonumber\\
  \text{with}&&
                E_{-}=U_{\pm,\theta}^{-1}\pi_{0,\pm}-U_{\pm,\theta}^{-1}\pi_{0,\pm}(\frac{1}{b}\beta_{\pm}+\gamma_{\pm})(\overline{B_{\pm,b,V^{h},\bot}}^{s}-z)^{-1}\pi_{\bot,\pm}\in \mathcal{L}(\tilde{\mathcal{W}}^{0,s};H^{s-1})\,, \label{eq:defE-}\\
&&
               E_{-+}=U_{\pm,\theta}^{-1}\pi_{0}(z-Q_{A,L,V^{h}}-\gamma_{\pm})U_{\pm,\theta} + U_{\pm,\theta}^{-1}\pi_{0}(\frac{1}{b}\beta_{\pm}+\gamma_{\pm})(\overline{B_{\pm,b,V^{h},\bot}}^{s}-z)^{-1}\pi_{\bot,\pm}(\frac{1}{b}\beta_{\pm}+\gamma_{\pm})U_{\pm,\theta}\,, \nonumber \\
\text{and}&& E_{-+}\in \mathcal{L}(H^{s+1};H^{s-1})\,. \label{eq:defE-+}
\end{eqnarray}
The result for $\mathcal{P}_{z}'$ is straightforward because
$$
\mathcal{P}_{z}'=
\begin{pmatrix}
  B_{\pm,b,V^{h}}'+Q_{A,L,V^{h}}-\overline{z}& U_{\pm,\theta}\\
  U_{\pm,\theta}^{-1}\pi_{0}&0
\end{pmatrix}
$$
and $(\overline{B_{\pm,b,V^{h},\bot}'}^{~-s}-\overline{z})=(W^{2}_{\theta})^{s}(\overline{B_{\pm,b,V^{h},\bot}}^{s}-z)^{*,s}(W^{2}_{\theta})^{-s}$\,.\\
\textbf{2)} By \textbf{1)} and $|s|\leq s_{max}-3/2$\,, we know that $\mathcal{G}_{z}\in \mathcal{L}(\tilde{\mathcal{W}}^{0,s-3/2}\oplus H^{s-1/2};\tilde{\mathcal{W}}^{0,s-3/2}\oplus H^{s-5/2})$ is a left-inverse on $\mathcal{S}\oplus \mathcal{C}^{\infty}$ and we deduce
$$
\forall
\begin{pmatrix}
  u\\u_{-}
\end{pmatrix}
\in \mathcal{S}\oplus \mathcal{C}^{\infty}\,,\quad \mathcal{G}_{z}\circ \mathcal{P}_{z}
\begin{pmatrix}
  u\\u_{-}
\end{pmatrix}
=
\begin{pmatrix}
  u\\u_{-}
\end{pmatrix}\,.
$$
The density of $\mathcal{S}\oplus \mathcal{C}^{\infty}$ in $\tilde{\mathcal{W}}^{0,s}\oplus H^{s}$\,, combined with $\mathcal{P}_{z}\in \mathcal{L}(\mathcal{S}\oplus \mathcal{C}^{\infty};\mathcal{S}\oplus \mathcal{C}^{\infty})\cap \mathcal{L}(\tilde{\mathcal{W}}^{0,s}\oplus H^{s};\tilde{\mathcal{W}}^{0,s-3/2}\oplus H^{s-1/2})$ and  $\mathcal{G}_{z}\in \mathcal{L}(\tilde{\mathcal{W}}^{0,s-3/2}\oplus H^{s-1/2})$\,, yields the result.\\
\textbf{3)} Let us prove firstly that for $u_{-}\in \mathcal{C}^{\infty}\subset H^{s-5/2}$ the equation $E_{-+}f_{+}=u_{-}$ admits at least one solution in $f_{+}\in \mathcal{C}^{\infty}\subset H^{s-1/2}$ when $|s|\leq s_{\max}-3/2$\,, $z\not\in \sigma(B_{\pm,b,V^{h}}+Q_{A,L,V^{h}})$ and $\mathbb{R}\mathrm{e}\,z\leq \frac{1}{b^{2}}$\,. Take $w=(\overline{B_{\pm,b,V^{h}}}^{s'}+Q_{A,L,V^{h}}-z)^{-1}U_{\pm,\theta}u_{-}$ which belongs to $D(\overline{B_{\pm,b,V^{h}}}^{s'})\subset \tilde{\mathcal{W}}^{0,s'+2/3}$ according to Proposition~\ref{pr:main1sttext} where the condition $0<b\leq h\leq\frac{1}{C_{g}}$ does not depend on $s'\in \mathbb{R}$\,. Set $u=\pi_{0,\pm}w\in \pi_{0,\pm}\left(\mathop{\cap}_{s'\in\mathbb{R}}\tilde{\mathcal{W}}^{0,s'}\right)=U_{\pm,\theta}\mathcal{C}^{\infty}$ and $v=\pi_{\bot,\pm}w\in \pi_{\perp,\pm}\left(\mathop{\cap}_{s'\in \mathbb{R}}\tilde{\mathcal{W}}^{0,s'}\right)=\pi_{\bot,\pm}\mathcal{S}$\,.
By projecting the equation $(B_{\pm,b,V^{h}}+Q_{A,L,V^{h}}-z)w=U_{\pm,\theta}u_{-}$ written in $\mathcal{S}$ with the projections $\pi_{0,\pm}$ and $\pi_{\bot,\pm}$\,, we obtain:
\begin{eqnarray*}
  && \pi_{0,\pm}(Q_{A,L,V^{h}}+\gamma_{\pm}-z)u+ \pi_{0}(\frac{1}{b}\beta_{\pm}+\gamma_{\pm})v=U_{\pm,\theta}u_{-}\,,\\
  \text{and}&&
         \pi_{\perp,\pm}(\frac{1}{b}\beta_{\pm}+\gamma_{\pm})u+ \pi_{\perp,\pm}(B_{\pm,b,V^{h}}-z)v=0\,.
\end{eqnarray*}
By taking $f_{+}=-U_{\pm,\theta}^{-1}u$ and by noticing $(\frac{1}{b}\beta_{\pm}+\gamma_{\pm})u\in \tilde{\mathcal{W}}^{0,s-3/2}$\,, with $|s|\leq s_{max}-3/2$\,, we deduce
$$
\pi_{0,\pm}(Q_{A,L,V^{h}}+\gamma_{\pm}-z)u-\pi_{0,\pm}(\frac{1}{b}\beta_{\pm}+\gamma_{\pm})(\overline{B_{\pm,b,V^{h},\bot}}^{s-3/2}-z)^{-1}(\frac{1}{b}\beta_{\pm}+\gamma_{\pm})u=U_{\pm,\theta}u_{-}\,.
$$
This means exactly that $f_{+}=U_{\pm,\theta}^{-1}u\in \mathcal{C}^{\infty}\subset H^{s-1/2}$ satisfies $E_{-+}f_{+}=u_{-}$ in $H^{s-5/2}$\,.
But the formula $f_{+}=(E_{-+})^{-1}_{r}u_{-}=U_{\pm,\theta}^{-1}\pi_{0}(\overline{B_{\pm,b,V^{h}}+Q_{A,L,V^{h}}}^{s'}- z)^{-1}U_{\pm,\theta}u_{-}$ for all $s'\in \mathbb{R}$\,, proves that this right-inverse $(E_{-+})^{-1}_{r}$ actually belongs to $\mathcal{L}(\mathcal{C}^{\infty};\mathcal{C}^{\infty})$\,.\\
With the dual statements of \textbf{1)} and \textbf{2)} this means also that the formal adjoint $E_{-+}'\in \mathcal{L}(H^{-s-1/2};H^{-s-5/2})\subset \mathcal{L}(\mathcal{C}^{\infty};\mathcal{E}')$ admits the right-inverse $(E_{-+}')^{-1}_{r}u_{-}=U_{\pm,\theta}\pi_{0}(\overline{B_{\pm,b,V^{h}}'+Q_{A,L,V^{h}}}^{-s'}-\bar{z})^{-1}U_{\pm,\theta}$ for all $s'\in \mathbb{R}$ and $(E'_{-+})^{-1}_{r}\in \mathcal{L}(\mathcal{C}^{\infty};\mathcal{C}^{\infty})$\,.
Duality implies that
$(E_{-+})^{-1}_{\ell}=[(E^{'}_{-+})^{-1}_{r}]'\in \mathcal{L}(\mathcal{E}';\mathcal{E}')$ is a left-inverse of $E_{-+}\in \mathcal{L}(\mathcal{C}^{\infty};\mathcal{D}')$\,. With the formula 
\begin{equation}
    \label{eq:expressionOfRightInverse}
    [(E^{'}_{-+})^{-1}_{r}]'=\left[U_{\pm,\theta}^{-1}(\overline{B'_{\pm,b,V^{h}}+Q_{A,L,V^{h}}}^{s'}-\bar z)^{-1}U_{\pm,\theta}\right]'=U_{\pm,\theta}^{-1}(\overline{B_{\pm,b,V^{h}}+Q_{A,L,V^{h}}}^{-s'}-z)^{-1}U_{\pm,\theta}\,,
\end{equation}
the regularizing property of $(E_{-+})^{-1}_{\ell}\in \mathcal{L}(H^{s'};H^{s'+2/3})$ and the identification $(E_{-+})^{-1}_{\ell}\big|_{\mathcal{C}^{\infty}}=(E_{\pm})^{-1}_{r}$ are straightforward.\\
\noindent\textbf{4)} When $|s|\leq s_{max}-3/2$ and $\mathbb{R}\mathrm{e}\,z\leq\frac{1}{24b^{2}}$\,, the first formula of \textbf{2)} implies
\begin{eqnarray}
&&\label{eq:intermediate400}
   (EB_{Q,z}+E_{+}U_{\pm,\theta}^{-1}\pi_{0})u=u \quad\text{in}~\tilde{\mathcal{W}}^{0,s-3/2}\,,\\
&&\label{eq:intermediate401}
   (E_{-}B_{Q,z}+E_{-+}U_{\pm,\theta}^{-1}\pi_{0})u=0\quad\text{in}~H^{s-5/2}\,,
\end{eqnarray}
for all $u\in \tilde{\mathcal{W}}^{0,s}$\,.
With $|s-5/2|\leq |s|+5/2\leq s_{max}-3/2$\,, and \textbf{3)}, the equality~\eqref{eq:intermediate401} becomes
$$
U_{\pm,\theta}^{-1}\pi_{0}u=-(E_{-+})_{\ell}^{-1}E_{-}B_{Q,z}u\quad\text{in}~H^{s-5/2+2/3}\,.
$$
for $z\not\in \sigma(B_{\pm,b,V^{h}}+Q_{A,L,V^{h}})$\,.\\
Put in the equality~\eqref{eq:intermediate400} we obtain
$$
(E-E_{+}(E_{-+})^{-1}_{\ell}E_{-})(B_{Q,z})u=u \quad
\text{in}~\tilde{\mathcal{W}}^{0,s-5/2+2/3-1}\,,
$$
when $z\not\in \sigma(B_{\pm,b,V^{h}}+Q_{A,L,V^{h}})$ and $u\in \tilde{\mathcal{W}}^{0,s}$\,. Applied to $u=(\overline{B_{\pm,b,V^{h}}+Q_{A,L,V^{h}}}^{s-2/3}-z)^{-1}v\in \tilde{\mathcal{W}}^{0,s}$ for $v\in \tilde{\mathcal{W}}^{0,s-2/3}$\,, we obtain
$$
(E-E_{+}(E_{-+})^{-1}_{\ell}E_{-})=
(\overline{B_{\pm,b,V^{h}}+Q_{A,L,V^{h}}}^{s-2/3}-z)^{-1}\text{in}~\mathcal{L}(\tilde{\mathcal{W}}^{0,s-2/3};\tilde{\mathcal{W}}^{s-5/2-1/3})\,.
$$
and we know that the right-hand side is a meromorphic function of $z$ in $\mathbb{C}$\,.\\
The condition $|s|\leq s_{max}-4 \leq s_{max}-5/2-2/3$ allows to replace $s-2/3$ by $s$\,, by noticing  $s+2/3-5/2-1/3\geq s-3$\,, with
$$
(E-E_{+}(E_{-+})^{-1}E_{-})=(\overline{B_{\pm,b,V^{h}}+Q_{A,L,V^{h}}}^{s}-z)^{-1}
$$
as an $\mathcal{L}(\tilde{\mathcal{W}}^{0,s};\tilde{\mathcal{W}}^{0,s-3})$-valued meromorphic function in $\left\{z\in \mathbb{C}\,,\, \mathbb{R}\mathrm{e}\,z<\frac{1}{24b^{2}}\right\}$\,.
\end{proof}

\subsection{Quantitative comparisons of truncated resolvents}
\label{sec:quantGrushin}
After setting
\begin{equation}
  \label{eq:defdeltaBDz}
\delta_{B,\Delta,z}=(B_{\pm,b,V^h}+Q_{A,L,V^h}-z)^{-1}-U_{\pm,\theta}(\frac{1}{2}\Delta_{V^h,1}+\tilde{Q}_{A,L,V^h}-z)^{-1}U_{\pm,\theta}^{-1}
\end{equation}
we consider the finite rank operators
$$
\delta_{B,\Delta,z}\circ Q_{A,L',V^h}\quad\text{and}\quad Q_{A,L',V^h}\circ \delta_{B,\Delta,z}\,,
$$
where $L'\geq L\geq 1$ will be fixed later.
\begin{proposition}
\label{pr:restronc}
Let $L\geq 1$ be the uniform constant of Proposition~\ref{pr:subEllipticQALVh} and fix $L'\geq 1$\,. For all $s\in\mathbb{R}$\,, there exist constants $C_{s}\geq 1$\,, determined by $s$ such that the condition $ C_s\max(b,Ab,A^{-1})\leq 1 $ implies the inequalities
\begin{eqnarray}
  \label{eq:restoncQ}
  \|\delta_{B,\Delta,z}\circ Q_{A,L',V^{h}}
  \|_{\mathcal{L}(\tilde{\mathcal{W}}^{0,s};\tilde{\mathcal{W}}^{0,s})} &\leq & C_s\frac{bA^{\frac{3}{2}}}{1+A^{-1}\sqrt{|\mathrm{Im}z|}} \\
  \label{eq:Qrestronc}
  \|Q_{A,L',V^h}\circ\delta_{B,\Delta,z}
  \|_{\mathcal{L}(\tilde{\mathcal{W}}^{0,s};\tilde{\mathcal{W}}^{0,s})} &\leq & C_s\frac{bA^{\frac{3}{2}}}{1+A^{-1}\sqrt{|\mathrm{Im}z|}}
\end{eqnarray}
for all $z\in\mathbb{C}$ such that $|\real z|\leq \frac{A^2}{2}$\,.
\end{proposition}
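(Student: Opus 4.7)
The key tools are the Grushin decomposition from Proposition~\ref{pr:Grushin}\textbf{4)},
$$
(\overline{B_{\pm,b,V^{h}}+Q_{A,L,V^{h}}}^{s}-z)^{-1}=E-E_{+}(E_{-+})^{-1}_{\ell}E_{-}\,,
$$
the algebraic identity \eqref{eq:algebraicIdentityForWittenLaplacianUtheta},
$$
U_{\pm,\theta}^{-1}\pi_{0,\pm}(\gamma_{\pm}-\beta_{\pm}\alpha_{\pm}^{-1}\beta_{\pm})\pi_{0,\pm}U_{\pm,\theta}=\tfrac{1}{2}\Delta_{V^{h},1}\,,
$$
and the resolvent bounds from Proposition~\ref{pr:maximalAccretivityOfPiBPi} and Lemma~\ref{le:QALV}. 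The first simplification is that $E\,Q_{A,L',V^{h}}=0$, since $Q_{A,L',V^{h}}$ has range in $\mathrm{Ran}\,\pi_{0,\pm}$ while $E=(\overline{B_{\pm,b,V^{h},\bot}}^{s}-z)^{-1}\pi_{\bot,\pm}$ annihilates that range. Using $U_{\pm,\theta}^{-1}Q_{A,L',V^{h}}=\tilde{Q}_{A,L',V^{h}}U_{\pm,\theta}^{-1}$, this gives
$$
\delta_{B,\Delta,z}Q_{A,L',V^{h}}=-E_{+}(E_{-+})^{-1}_{\ell}\tilde{Q}_{A,L',V^{h}}U_{\pm,\theta}^{-1}-U_{\pm,\theta}(\tfrac{1}{2}\Delta_{V^{h},1}+\tilde{Q}_{A,L,V^{h}}-z)^{-1}\tilde{Q}_{A,L',V^{h}}U_{\pm,\theta}^{-1}\,.
$$

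Next, I expand $(\overline{B_{\pm,b,V^{h},\bot}}^{s}-z)^{-1}\pi_{\bot,\pm}$ around its leading order $b^{2}\alpha_{\pm}^{-1}\pi_{\bot,\pm}$ via the identity $b^{2}(B_{\pm,b,V^{h},\bot}-z)=\pi_{\bot,\pm}\alpha_{\pm}\pi_{\bot,\pm}+b\,\pi_{\bot,\pm}\beta_{\pm}\pi_{\bot,\pm}+b^{2}(\gamma_{\pm}-z)\pi_{\bot,\pm}$. Substituting this into the formula \eqref{eq:defE-+} for $E_{-+}$ and invoking the algebraic identity (together with $\pi_{0,\pm}\beta_{\pm}\pi_{0,\pm}=0$, which makes the cancellation $\pi_{0,\pm}\beta_{\pm}\alpha_{\pm}^{-1}\pi_{\bot,\pm}\beta_{\pm}\pi_{0,\pm}=\pi_{0,\pm}\beta_{\pm}\alpha_{\pm}^{-1}\beta_{\pm}\pi_{0,\pm}$) produces
$$
E_{-+}=-(\tfrac{1}{2}\Delta_{V^{h},1}+\tilde{Q}_{A,L,V^{h}}-z)+R_{-+}(b,z)\,,
$$
where $R_{-+}(b,z)$ collects the sub-leading contributions, each carrying at least one extra factor of $b$. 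Similarly, from \eqref{eq:defE+}, $E_{+}=U_{\pm,\theta}+\Delta E_{+}$ with $\Delta E_{+}=-(\overline{B_{\pm,b,V^{h},\bot}}^{s}-z)^{-1}\pi_{\bot,\pm}(\tfrac{1}{b}\beta_{\pm}+\gamma_{\pm})U_{\pm,\theta}$. Lemma~\ref{le:nabYpi0} (to handle $\nabla_{\mathcal{Y}}$ on $\ker\alpha_{\pm}$) and the uniform control of $\nabla_{g^{h}}V^{h}$ of Proposition~\ref{pr:scaling} give $\|(\tfrac{1}{b}\beta_{\pm}+\gamma_{\pm})U_{\pm,\theta}\|_{\mathcal{L}(H^{s+1};\tilde{\mathcal{W}}^{0,s})}\leq C_{s}/b$, and combined with the bound $\|(\overline{B_{\pm,b,V^{h},\bot}}^{s}-z)^{-1}\pi_{\bot,\pm}\|_{\mathcal{L}(\tilde{\mathcal{W}}^{0,s})}\leq 24b^{2}$ from Proposition~\ref{pr:maximalAccretivityOfPiBPi} this yields $\|\Delta E_{+}\|_{\mathcal{L}(H^{s+1};\tilde{\mathcal{W}}^{0,s})}\leq C_{s}b$.

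Using the resolvent identity relating $(E_{-+})^{-1}_{\ell}$ and $-(\tfrac{1}{2}\Delta_{V^{h},1}+\tilde{Q}_{A,L,V^{h}}-z)^{-1}$ through the error $R_{-+}$, the expression for $\delta_{B,\Delta,z}Q_{A,L',V^{h}}$ decomposes as
$$
\Delta E_{+}\,(\tfrac{1}{2}\Delta_{V^{h},1}+\tilde{Q}_{A,L,V^{h}}-z)^{-1}\tilde{Q}_{A,L',V^{h}}U_{\pm,\theta}^{-1}\;+\;\text{two remainder terms involving }R_{-+}\,.
$$
The dominant piece is bounded using $\|\Delta E_{+}\|\leq C_{s}b$ and the commuting functional-calculus estimate
$$
\|(\tfrac{1}{2}\Delta_{V^{h},1}+\tilde{Q}_{A,L,V^{h}}-z)^{-1}\tilde{Q}_{A,L',V^{h}}\|_{\mathcal{L}(H^{s};H^{s+1})}\leq \frac{C_{s}L'A^{3}}{A^{2}+|\mathrm{Im}\,z|}\,,
$$
which follows from Lemma~\ref{le:QALV} together with the fact that the multiplier $\tilde{Q}_{A,L',V^{h}}$ is spectrally supported on $\mu\leq 2(L'A)^{2}-C_{d}$, contributing at most a factor $\sqrt{2}L'A$ for the $H^{s}\!\to\!H^{s+1}$ gain. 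The product gives the bound $C_{s}bA^{3}/(A^{2}+|\mathrm{Im}\,z|)$, which is dominated by $C_{s}'bA^{3/2}/(1+A^{-1}\sqrt{|\mathrm{Im}\,z|})$ via the elementary estimate $\sqrt{A|\mathrm{Im}\,z|}\leq(A^{2}+|\mathrm{Im}\,z|)/2$ valid for $A\geq 1$. The two remainder terms inherit an additional factor of $b$ from $R_{-+}$ and are controlled by the same quantity.

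For the symmetric estimate $Q_{A,L',V^{h}}\circ\delta_{B,\Delta,z}$, the identical strategy applies using the dual Grushin problem $\mathcal{P}_{z}'$ of \eqref{eq:Grushin}; equivalently, one passes to the formal $\tilde{\mathcal{W}}^{0,s}$-adjoint, exploiting that $\tilde{Q}_{A,L,V^{h}}$, $\tilde{Q}_{A,L',V^{h}}$ and $\Delta_{V^{h},1}$ are self-adjoint. The mirror identity $Q_{A,L',V^{h}}\,E=0$ plays the role of $E\,Q_{A,L',V^{h}}=0$, and the analogous decomposition around $(\tfrac{1}{2}\Delta_{V^{h},1}+\tilde{Q}_{A,L,V^{h}}-\bar z)^{-1}$ produces the same upper bound. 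The main obstacle is the careful bookkeeping of Sobolev exponents: Proposition~\ref{pr:Grushin} provides the Grushin identity only for $|s|\leq s_{\max}-4$, which forces us to enlarge $s_{\max}$ depending on the fixed $s$ of the statement, and to check at each step that the constants depend only on $s$ (not on $b$, $A$, $z$ or $h$) using the uniform estimates of Propositions~\ref{pr:scaling} and~\ref{pr:main1sttext}. A secondary technical point is verifying that the cancellation of the $-U^{-1}\pi_{0}\beta\alpha^{-1}\beta U$ contribution in $R_{-+}$ is exact, so that every surviving term is genuinely of order $b$ or higher.
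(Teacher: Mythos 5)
Your proposal follows essentially the same route as the paper: the Grushin identity of Proposition~\ref{pr:Grushin}\textbf{4)} together with $E\,Q_{A,L',V^{h}}=0$, the cancellation of the leading part of $E_{-+}$ against $-(\tfrac{1}{2}\Delta_{V^{h},1}+\tilde{Q}_{A,L,V^{h}}-z)$ via Bismut's identity \eqref{eq:algebraicIdentityForWittenLaplacianUtheta}, the extraction of a factor $b$ from $b^{2}\alpha_{\pm}^{-1}-(\overline{B_{\pm,b,V^{h},\bot}}-z)^{-1}$, the bound $\|(E_{-+})_{\ell}^{-1}\|\lesssim A^{-2}$, and the adjoint argument for \eqref{eq:Qrestronc}; this is exactly the paper's decomposition into $(I)$, $(II)$ and $(III.1)$--$(III.3)$. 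One caveat: you call the $\Delta E_{+}$-term dominant and say the $R_{-+}$-remainders ``inherit an additional factor of $b$,'' but in fact those remainders carry the \emph{same} single power of $b$ while losing $7/2$ horizontal derivatives, which the spectral cutoff converts into $A^{7/2}$; after the $A^{-2}$ from $(E_{-+})_{\ell}^{-1}$ this yields $bA^{3/2}$, strictly larger than your $bA$ ``dominant'' piece and the actual source of the exponent $3/2$ in the statement, so the Sobolev bookkeeping you defer is where the result is really decided.
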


\begin{proof}
  For a given $s\in\mathbb{R}$ we fix $s_{\max}\geq |s|+10$
so that the estimates of Proposition~\ref{pr:Grushin} and the expressions of $E,E_{-}, E_{+}$ and $E_{-+}$ given in the proof make sense for the Sobolev exponent $s$  replaced by $s_2\in [s,s+6]$\,. Actually 
 for $|s_2|\leq s_{\max}-4$ and $\real z\leq \frac{A^2}{2} \leq \min(\frac{A^2}{2},\frac{1}{24b^2})$\,, the equality \eqref{eq:inverseOfBQ} gives
 $$
 \delta_{B,\Delta,z}\circ Q_{A,L',V^h}  =  ( E-E_+(E_{-+})_{\ell}^{-1}E_-) Q_{A,L',V^h}-U_{\pm,\theta}(\frac{1}{2}\Delta_{V^h,1}+\tilde{Q}_{A,L,V^h}-z)^{-1}U_{\pm,\theta}^{-1}Q_{A,L',V^h}\,,
 $$ 
 as an equality of $\mathcal{L}(\tilde{\mathcal{W}}^{0,s_2};\tilde{\mathcal{W}}^{0,s_2-3})$-valued meromorphic functions. 
 Actually because $Q_{A,L',V^h}\in \mathcal{L}(\mathcal{S}';\mathcal{S})$ while $\delta_{B,\Delta,z}\in \mathcal{L}(\mathcal{S},\mathcal{S})$ when $\real z\leq \frac{A^2}{2}$\,, 
 $z\not \in \mathrm{Spec}(\overline{B_{\pm,b,V^h}+Q_{A,L,V^h}}^{s_2})$\,, the left-hand side as well as the final term belong to 
 $\mathcal{L}(\mathcal{S}';\mathcal{S})$\,. 
 The above equality can therefore be extended to an equality of $\mathcal{L}({\tilde{\cal W}}^{0,s_2};\tilde{\mathcal{W}}^{0,s_2})$-valued meromorphic function. Actually with $\real z\leq A^2/2$, $z$ is not in $\mathrm{Spec}(\Delta_{V^h,1}+\tilde{Q}_{A,L,V^h})$ and the final term is holomorphic.
 Owing to $EQ_{A,L,V^h}=E \pi_{0,\pm}=0$ and with the expressions \eqref{eq:defE+} of $E_{+}$ and \eqref{eq:defE-} of $E_{-}$, we obtain
  \begin{eqnarray*}
   \delta_{B,\Delta,z}\circ Q_{A,L',V^h}         & = & - E_{+}(E_{-+})_{\ell}^{-1}E_- Q_{A,L',V^h} - U_{\pm,\theta} (\frac{1}{2}\Delta_{V^h,1}+\tilde{Q}_{A,L,V^h}-z)^{-1}U_{\pm,\theta}^{-1}Q_{A,L',V^h} \\
    & = & (I) + (II) \,,
  \end{eqnarray*}
  where
  \begin{eqnarray*}
    (I) & = & - U_{\pm,\theta} [(E_{-+})_{\ell}^{-1} + (\frac{1}{2}\Delta_{V^h,1}+\tilde{Q}_{A,L,V^h}-z)^{-1}  ]U_{\pm,\theta}^{-1} Q_{A,L',V^h}  \\
      & = & -U_{\pm,\theta} (E_{-+})_{\ell}^{-1}[ \frac{1}{2}\Delta_{V^h,1}+\tilde{Q}_{A,L,V^h}-z + E_{-+}  ] (\frac{1}{2}\Delta_{V^h,1}+\tilde{Q}_{A,L,V^h}-z)^{-1}U_{\pm,\theta}^{-1} Q_{A,L',V^h} \,, \\
    (II) & = & (\overline{B_{\pm,b,V^h,\bot}}^{s_2'}-z)^{-1}\pi_{\bot,\pm}(\frac{1}{b}\beta_{\pm}+\gamma_{\pm})U_{\pm,\theta}(E_{-+})_{\ell}^{-1}U_{\pm,\theta}^{-1} Q_{A,L',V^h}.
  \end{eqnarray*}
  and $s_2'$ is any other exponent such that $|s_2'|\leq s_{\max}$\,.\\
  By combining Bismut's formula
  \eqref{eq:algebraicIdentityForWittenLaplacianUtheta}\,, recalled here,
  $$ U_{\pm,\theta}^{-1}[\pi_{0,\pm}(\gamma_{\pm} - \beta_{\pm} \alpha_{\pm}^{-1} \beta_{\pm} )\pi_{0,\pm}] U_{\pm,\theta}= \frac{1}{2} \Delta_{V^h,1}\,,$$
   with the expression  \eqref{eq:defE-+} of $E_{-+}$\,,
  $(I)$ becomes
  $$ (I) =  U_{\pm,\theta}(E_{-+})_{\ell}^{-1} U_{\pm,\theta}^{-1} \pi_{0,\pm} (III) U_{\pm,\theta} (\frac{1}{2}\Delta_{V^h,1}+\tilde{Q}_{A,L,V^h}-z)^{-1}U_{\pm,\theta}^{-1} Q_{A,L',V^h}\,, $$
where
  $$ (III) = \beta_{\pm}\alpha_{\pm}^{-1}\beta_{\pm}-(\frac{1}{b}\beta_{\pm}+\gamma_{\pm})(\overline{B_{\pm,b,V^h,\bot}}^{s''_2}-z)^{-1}\pi_{\bot,\pm}(\frac{1}{b}\beta_{\pm}+\gamma_{\pm})\,,$$
  and $|s_2''|\leq s_{\max}$\,.
  The above operator  can be rewritten
  \begin{eqnarray*}
    (III) & = & (\frac{1}{b}\beta_{\pm}+\gamma_{\pm}) \pi_{\bot,\pm} [b^2 \alpha_{\pm}^{-1} - (\overline{B_{\pm,b,V^h,\bot}}^{s''_2}-z)^{-1}] \pi_{\bot,\pm} (\frac{1}{b}\beta_{\pm}+\gamma_{\pm}) \\
    & & \hspace{8cm} - b(\beta_{\pm}+ b\gamma_{\pm}) \alpha_{\pm}^{-1}\pi_{\bot,\pm}\gamma_{\pm}- b\gamma_{\pm}\alpha^{-1}_{\pm}\beta_{\pm} \\
        & = & (\frac{1}{b}\beta_{\pm}+\gamma_{\pm}) \alpha_{\pm}^{-1} \pi_{\bot,\pm} [b^2(\overline{B_{\pm,b,V^h,\bot}}^{s''_2}-z)  - \alpha_{\pm}](\overline{B_{\pm,b,V^h,\bot}}^{s''_2}-z)^{-1}\pi_{\bot,\pm} (\frac{1}{b}\beta_{\pm}+\gamma_{\pm}) \\
    & & \hspace{8cm} - b(\beta_{\pm} + b\gamma_{\pm}) \alpha_{\pm}^{-1}\pi_{\bot,\pm}\gamma_{\pm} - b\gamma_{\pm}\alpha^{-1}_{\pm}\beta_{\pm} \\
        & = &\underbrace{(\frac{1}{b}\beta_{\pm}+\gamma_{\pm}) \alpha_{\pm}^{-1} \pi_{\bot,\pm} [b\beta_{\pm}+b^2(\gamma_{\pm}-z)](\overline{B_{\pm,b,V^h,\bot}}^{s_2''}-z)^{-1}\pi_{\bot,\pm} (\frac{1}{b}\beta_{\pm}+\gamma_{\pm})}_{(III')} \\
    & & \hspace{8cm} \underbrace{-b(\beta_{\pm} + b\gamma_{\pm}) \alpha_{\pm}^{-1}\pi_{\bot,\pm}\gamma_{\pm} - b\gamma_{\pm}\alpha^{-1}_{\pm}\beta_{\pm}}_{-(III.3)}\,.
  \end{eqnarray*}
  Let's rewrite $(III') $ as the sum of $(III.1) $ and $(III.2)$ where
  \begin{eqnarray*}
    (III.1)& = &(\frac{1}{b}\beta_{\pm}+\gamma_{\pm}) \alpha_{\pm}^{-1} \pi_{\bot,\pm} [ b \beta_{\pm}+b^2\gamma_{\pm}](\overline{B_{\pm,b,V^h,\bot}}^{s_2''}-z)^{-1}\pi_{\bot,\pm} (\frac{1}{b}\beta_{\pm}+\gamma_{\pm}) \\
    (III.2)& = & - z b^2(\frac{1}{b}\beta_{\pm}+\gamma_{\pm}) \alpha_{\pm}^{-1} (\overline{B_{\pm,b,V^h,\bot}}^{s_2''}-z)^{-1}\pi_{\bot,\pm} (\frac{1}{b}\beta_{\pm}+\gamma_{\pm})\,.
  \end{eqnarray*}
  The operator $(III.1) $ can be depicted by
  $$\begin{tikzcd}
{\tilde{\cal W}^{0,s}} & {\tilde{\cal W}^{2,s+1}} \arrow[l, "\frac{1}{b}\beta_{\pm}+\gamma_{\pm}"] &  & {\tilde{\cal W}^{0,s+1}} \arrow[ll, "{\alpha_{\pm}^{-1}\pi_{\bot,\pm}}"] & {\tilde{\cal W}^{0,s+\frac{5}{2}}} \arrow[l, " b \beta_{\pm} + b^2\gamma_{\pm}"] &  & {\tilde{\cal W}^{0,s+\frac{5}{2}}} \arrow[ll, "{(B_{\pm,b,V^h,\bot}-z)^{-1}\pi_{\bot,\pm}}"] & {\tilde{\cal W}^{2,s+\frac{7}{2}}} \arrow[l, "\frac{1}{b}\beta_{\pm}+\gamma_{\pm}"] 
\end{tikzcd}
$$

and $``-(III.2)/zb^2" $ is depicted by
$$\begin{tikzcd}
{\tilde{\cal W}^{0,s}} &  & {\tilde{\cal W}^{2,s+1}} \arrow[ll, "\frac{1}{b}\beta_{\pm}+\gamma_{\pm}"] &  & {\tilde{\cal W}^{0,s+1}} \arrow[ll, "{\alpha_{\pm}^{-1}\pi_{\bot,\pm}}"] &  & {\tilde{\cal W}^{0,s+1}} \arrow[ll, "{(B_{\pm,b,V^h,\bot}-z)^{-1}\pi_{\bot,\pm}}"] &  & {\tilde{\cal W}^{2,s+2}} \arrow[ll, "\frac{1}{b}\beta_{\pm}+\gamma_{\pm}"]\,.
\end{tikzcd}$$
Combining the previous decomposition with, on one side
  $$\forall s_1,s_2\in \mathbb{R}\,, \quad \beta_{\pm}+b\gamma_{\pm}\in \mathrm{OpS}_{\Psi}^{\frac{3}{2}}(X^h,\mathcal{E}_{\pm}^h) \cap \mathcal{L}(\tilde{\cal W}^{s_1+2,s_2+1};\tilde{\cal W}^{s_1,s_2})$$
 with semi-norms uniformly bounded with respect to $b\in (0,1]$, and on the other side
  $$\forall s\in \mathbb{R}\,, \quad \| (B_{\pm,b,V^h,\bot}-z)^{-1}\pi_{\bot,\pm}\|_{\mathcal{L}(\tilde{\cal W}^{0,s};\tilde{\cal W}^{0,s})}\leq 24b^2$$
due to Proposition~\ref{pr:maximalAccretivityOfPiBPi} as soon as $\real z \leq \frac{1}{24b^2}$, implies
\begin{eqnarray*}
  \| (III.1) \|_{\mathcal{L}(\tilde{\cal W}^{2,s+\frac{7}{2}};\tilde{\cal W}^{0,s})} \leq b C_s \quad  \text{and} \quad \| (III.2) \|_{\mathcal{L}(\tilde{\cal W}^{2,s+2};\tilde{\cal W}^{0,s})} \leq C_s|z|b^2.
\end{eqnarray*}
We claim that
$$\|(III.3) \|_{\mathcal{L}(\tilde{\cal W}^{2,s+1};\tilde{\cal W}^{0,s})} \leq C_s b  .$$
Now we decompose $(I)$ as
$$(I) = (I.1)+(I.2)-(I.3) $$
With $(I.*) $ depicted by
\begin{equation}
  \label{eq:composition1}
      \begin{tikzcd}
{\tilde{\cal W}^{0,s}} &  &  &  & {\tilde{\cal W}^{0,s}} \arrow[llll, "{U_{\pm,\theta}(E_{-+})_{\ell}^{-1}U_{\pm,\theta}^{-1}\pi_{0,\pm}}"] &  & {\tilde{\cal W}^{2,s'}} \arrow[ll, "(III.*)"] &  &  &  &  &  & {\tilde{\cal W}^{0,s}} \arrow[llllll, "{U_{\pm,\theta} (\Delta_{V^h,1}+ \tilde{Q}_{A,L,V^{h}}-z)^{-1}U_{\pm,\theta}^{-1}Q_{A,L',V^{h}}}"] \,,
\end{tikzcd}
\end{equation}
where the choice of $s'$ will depend on the  cases for indexed by $*$\,.\\
$$
\|U_{\pm,\theta} (\frac{1}{2}\Delta_{V^h,1}+ \tilde{Q}_{A,L,V^h}-z)^{-1}U_{\pm,\theta}^{-1}Q_{A,L',V^h}\|_{\mathcal{L}(\tilde{\mathcal{W}}^{0,s};\tilde{\mathcal{W}}^{2,s'})}
=\frac{d}{2}
\|(\frac{1}{2}\Delta_{V^h,1}+ \tilde{Q}_{A,L,V^h}-z)^{-1}\tilde{Q}_{A,L',V^h}\|_{\mathcal{L}(H^{s};H^{s'})}\,.
$$
With $\tilde{Q}_{A,L',V^h}=A^{2}\chi(\frac{C_{d}+\Delta_{V^h,1}}{(L'A)^{2}})$\,, the inequality \eqref{eq:fss2} gives
$$
\|\tilde{Q}_{A,L',V^h}u\|_{H^{s'}}\leq C_{L',s,s'}A^{2}A^{(s'-s)_{+}}\|u\|_{H^{s}};
$$
The inequality~\ref{eq:resHs} gives
$$
\|(\frac{1}{2}\Delta_{V^h,1}+\tilde{Q}_{A,L,V^h}-z)^{-1}\|_{\mathcal{L}(H^{s_2};H^{s_2})}\leq \frac{4}{A^2+2|\mathrm{Im}z|} 
$$
when $\frac{1}{A}\leq\frac{1}{C_{s'}}$ and $C_{s'}\geq 1$ large enough, and
$$
\|(\frac{1}{2}\Delta_{V^h,1}+\tilde{Q}_{A,L,V^h}-z)^{-1}\tilde{Q}_{A,L',V^h}u\|_{\mathcal{L}(H^s;H^{s'})}\leq C_{s,s'}\frac{A^{(s'-s)_{+}}}{1+2|\mathrm{Im}z|A^{-2}}\,.
$$
We conclude that
$$
\|U_{\pm,\theta} (\frac{1}{2}\Delta_{V^h}+ \tilde{Q}_{A,L,V^h}-z)^{-1}U_{\pm,\theta}^{-1}Q_{A,L',V^h}\|_{\mathcal{L}(\tilde{\mathcal{W}}^{0,s};\tilde{\mathcal{W}}^{2,s'})}
\leq C_{s,s'}\frac{A^{(s'-s)_{+}}}{1+2|\mathrm{Im}z|A^{-2}} \,.
$$
For the left arrow,  the expression \eqref{eq:expressionOfRightInverse} of $E_{-+}^{-1}$ combined with the subelliptic estimate of Proposition~\ref{pr:subEllipticQALVh} (and Proposition~\ref{pr:subellipticEstimateA}) gives
$$ \| (E_{-+})_{\ell}^{-1} \|_{\mathcal{L}(H^s;H^s)} \leq \frac{C_{s}}{A^2(1+b\sqrt{|\mathrm{Im}~z|})} \,.$$
We can now conclude for the norm estimate of $I.*$ decomposed as \eqref{eq:composition1}:
\begin{itemize}
\item For $(I.*)=(I.1)$, we take $s'=s+\frac{7}{2}$ and we obtain
\begin{eqnarray*}
  \|(I.1)\|_{\mathcal{L}(\tilde{\mathcal{W}}^{0,s};\tilde{\mathcal{W}}^{0,s})} &\leq & C_s\frac{A^{-2}}{1+b\sqrt{|\mathrm{Im}z|}} \|(III.1)\|_{\mathcal{L}(\tilde{\mathcal{W}}^{0,s};\tilde{\mathcal{W}}^{0,s+\frac{7}{2}})}\frac{A^{\frac{7}{2}}}{1+2|\mathrm{Im}z|A^{-2}} \\
  &\leq & \frac{C_s}{(1+b\sqrt{|\mathrm{Im}z|})(1+2|\mathrm{Im}z|A^{-2})} bA^{\frac{3}{2}}\,.
 \end{eqnarray*}
\item For $(I,*) = (I.2) $, we take $s'=s+2$ and we obtain
  $$ \|(I.2)\|_{\mathcal{L}(\tilde{\mathcal{W}}^{0,s};\tilde{\mathcal{W}}^{0,s})} \leq C_s\frac{|z|b^2}{(1+b\sqrt{|\mathrm{Im}z|})(1+2|\mathrm{Im}z|A^{-2})}\,. $$
\item For $(I,*)= (I.3) $, we take $s'=s+1 $ and we obtain
  $$ \|(I.3)\|_{\mathcal{L}(\tilde{\mathcal{W}}^{0,s};\tilde{\mathcal{W}}^{0,s})} \leq C_sA^{-2} \|(III.3)\|_{\mathcal{L}(\tilde{\mathcal{W}}^{0,s};\tilde{\mathcal{W}}^{0,s+1})}A \leq C_s \frac{b}{A(1+b\sqrt{|\mathrm{Im}z|})(1+2|\mathrm{Im}z|A^{-2})}\,. $$
\end{itemize}
we proved
$$\|(I)\|_{\mathcal{L}(\tilde{\mathcal{W}}^{0,s};\tilde{\mathcal{W}}^{0,s})} \leq C_s \frac{bA^{\frac{3}{2}} +b^2|z| +\frac{b}{A}}{(1+b\sqrt{|\mathrm{Im}z|})(1+2|\mathrm{Im}z|A^{-2})} \,.$$
The operator $(II) $ is depicted by
\begin{center}
\begin{tikzcd}
{\tilde{\cal W}^{0,s}} &  &  & {\tilde{\cal W}^{0,s}} \arrow[lll, "{(\overline{B_{\pm,b,V^h,\bot}}^{s}-z)^{-1}\pi_{\bot,\pm}}"'] &  &  & {\tilde{\cal W}^{2,s+1}} \arrow[lll, "\frac{1}{b}\beta_{\pm}+\gamma_{\pm}"'] &  &  & {\tilde{\cal W}^{2,s+1}}\arrow[lll, "{U_{\pm,\theta}(E_{-+})_{\ell}^{-1}U_{\pm,\theta}^{-1}}"'] &  & {\tilde{\cal W}^{0,s}} \arrow[ll, "{Q_{A,L',V^h}}"']\,.
\end{tikzcd}
\end{center}
The same arguments as above lead to
$$ \|(II)\|_{\mathcal{L}(\tilde{\mathcal{W}}^{0,s};\tilde{\mathcal{W}}^{0,s})} \leq C_s\frac{Ab}{(1+b\sqrt{|\mathrm{Im}z|})(1+2|\mathrm{Im}z|A^{-2})} .$$
The largest upper bound is $ C_s \frac{bA^{\frac{3}{2}} +b^2|z|}{(1+b\sqrt{|\mathrm{Im}z|})(1+2|\mathrm{Im}z|A^{-2})}) $ obtained for the term $(I)$\,.\\
Under the condition $|\real z| \leq A^2$ we have the inequaity 
$$ \frac{|z|b^2}{(1+|\mathrm{Im}z|A^{-2})(1+b\sqrt{|\mathrm{Im}z|})} \leq A^2b^2 + \frac{\sqrt{|\mathrm{Im}z|}b}{1+2|\mathrm{Im}z|A^{-2}} \,. $$
This leads to 
$$ \| \delta_{B,\Delta,z}\circ Q_{A,L',V^h} \|_{\mathcal{L}(\tilde{\cal W}^{0,s};\tilde{\cal W}^{0,s})}\leq C_s\frac{bA^{\frac{3}{2}}+b\sqrt{\mathrm{Im}(z)}}{1+|\mathrm{Im}z|A^{-2}} \leq C_s'\frac{bA^{\frac{3}{2}}}{1+A^{-1}\sqrt{|\mathrm{Im}z|}}\,.
$$
 Finally the estimate \eqref{eq:Qrestronc} for  $Q_{A,L',V^h}\delta_{B,\Delta,z}$ is obtained by taking the adjoints with $z$ replaced by $\overline{z}$ and $B_{\pm,b,V^h}$ replaced by its formal adjoint $B_{\pm,b,V^h}'$ which has the same properties as $B_{\pm,b,V^h}$\,.
\end{proof}

For all $z\not \in \mathrm{Spec}(\frac{1}{2}\Delta_{V^h,1} + \tilde{Q}_{A,L,V^h} )$\,, we define the intermediate operators
\begin{align*}
   M_{B,z}&= I_{\tilde{\cal W}^{0,s}}-(B_{\pm,b,V^h}+Q_{A,L,V^h}-z)^{-1}Q_{A,L,V^h} \\
  M_{\Delta,z} & =   I_{\tilde{\cal W}^{0,s}} - U_{\pm,\theta}(\frac{1}{2}\Delta_{V^h,1} + \tilde{Q}_{A,L,V^h}-z)^{-1}U_{\pm,\theta}^{-1}Q_{A,L,V^h}\\
  \tilde{M}_{\Delta,z} &=  I_{H^s} - (\frac{1}{2}\Delta_{V^h,1}+\tilde{Q}_{A,L,V^h}-z)^{-1}\tilde{Q}_{A,L,V^h} \\
\end{align*}
while the other ordered products are recovered by taking the formal adjoints
\begin{align*}
   M_{B',\overline{z}}'&= I_{\tilde{\cal W}^{0,s}}-Q_{A,L,V^h} (B_{\pm,b,V^h}+Q_{A,L,V^h}-z)^{-1}\\
  M_{\Delta,\overline{z}}' & =   I_{\tilde{\cal W}^{0,s}} - Q_{A,L,V^h} U_{\pm,\theta}(\frac{1}{2}\Delta_{V^h,1} + \tilde{Q}_{A,L,V^h}-z)^{-1}U_{\pm,\theta}^{-1} \\
  \tilde{M}_{\Delta,\overline{z}}' &=  I_{H^s} - \tilde{Q}_{A,L,V^h}(\frac{1}{2}\Delta_{V^h,1}+\tilde{Q}_{A,L,V^h}-z)^{-1}\,.
\end{align*}
\begin{lemma}For $s\in \mathbb{R}$, there is a constant $C_s\geq 1$ depending on $s$ and for all $z\in \mathbb{C}$ the inequality holds
  \begin{eqnarray}
     \|\tilde{M}_{\Delta,z}^{-1}\|_{\mathcal{L}(H^s;H^s)} &\leq & C_s(1+\frac{A^2}{\mathrm{dist}(z,\mathrm{Spec}(\frac{1}{2}\Delta_{V^h,1}))})  \nonumber \\ 
    \label{eq:MzInv}
    \|M_{\Delta,z}^{-1}\|_{\mathcal{L}(\tilde{\cal W}^{0,s};\tilde{\cal W}^{0,s})} & \leq & C_s\left(1+\frac{A^2}{\mathrm{dist}(z,\mathrm{Spec}(\Delta_{V^h,1}))}\right)                                                        
  \end{eqnarray}
  for all $z\not\in \mathrm{Spec}(\frac{1}{2}\Delta_{V^h,1})\cap \mathrm{Spec}(\frac{1}{2}\Delta_{V^h,1}+\tilde{Q}_{A,L,V^h})$\,. \\
 The more accurate conditions
 \begin{eqnarray*}
&&  |\real z|\leq \frac{A^2}{2}\quad,\quad bA^4\mathrm{dist}~(z,\frac{1}{2}\mathrm{Spec}~(\Delta_{V^h,1})\leq \frac{1}{C_s} \,,
\\
&& C_s \max(b,Ab,A^{-1})\leq 1\,,
\end{eqnarray*}
  suffice for the uniform estimates
  \begin{eqnarray}
  \label{eq:MdBzQ}
   &&\| M_{\Delta,z}^{-1}\delta_{B,\Delta,z}Q_{A,L,V^h} \|_{\mathcal{L}(\tilde{\cal W}^{0,s};\tilde{\cal W}^{0,s})} 
  + \| M_{B,z}^{-1}\delta_{B,\Delta,z}Q_{A,L,V^h} \|_{\mathcal{L}(\tilde{\cal W}^{0,s};\tilde{\cal W}^{0,s})}
 \leq \frac{1}{2}\,\\
    \label{eq:IBQinv}
    &&\|M_{B,z}^{-1}\|_{\mathcal{L}(\tilde{\cal W}^{0,s};\tilde{\cal W}^{0,s})} \leq  C_s\left(1+\frac{A^2}{\mathrm{dist}(z,\mathrm{Spec}(\Delta_{V^h,1}))}\right) \\
&&
\label{eq:estimation300}
\|(M_{B',\bar z}')^{-1}\|_{\mathcal{L}(\tilde{\cal W}^{0,s};\tilde{\cal W}^{0,s})} \leq  C_s\left(1+\frac{A^2}{\mathrm{dist}(z,\mathrm{Spec}(\Delta_{V^h,1}))}\right)\,.
  \end{eqnarray}
\end{lemma}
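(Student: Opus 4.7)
The first two estimates are purely algebraic. The identity
$$\tilde{M}_{\Delta,z}=(\tfrac12\Delta_{V^h,1}+\tilde Q_{A,L,V^h}-z)^{-1}(\tfrac12\Delta_{V^h,1}-z)$$
gives $\tilde{M}_{\Delta,z}^{-1}=I+(\tfrac12\Delta_{V^h,1}-z)^{-1}\tilde Q_{A,L,V^h}$. Combined with the bound $\|\tilde Q_{A,L,V^h}\|_{\mathcal{L}(H^s;H^s)}\leq C_sA^{2}$ from \eqref{eq:fss2} and the resolvent estimate $\|(\tfrac12\Delta_{V^h,1}-z)^{-1}\|_{\mathcal{L}(H^s;H^s)}\leq C_s/\mathrm{dist}(z,\tfrac12\mathrm{Spec}(\Delta_{V^h,1}))$ coming from the functional calculus of the self-adjoint $\Delta_{V^h,1}$ in the norm equivalent to $\|(C_d+\Delta_{V^h,1})^{s/2}\cdot\|_{L^2}$, this yields the bound on $\tilde M_{\Delta,z}^{-1}$. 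The estimate \eqref{eq:MzInv} on $M_{\Delta,z}^{-1}$ then follows from the block identity
$$M_{\Delta,z}=\pi_{\bot,\pm}+U_{\pm,\theta}\tilde M_{\Delta,z}U_{\pm,\theta}^{-1}\pi_{0,\pm},$$
valid because $\mathrm{Ran}\,Q_{A,L,V^h}\subset\mathrm{Ran}\,\pi_{0,\pm}$ and $Q_{A,L,V^h}\pi_{\bot,\pm}=0$, using that $U_{\pm,\theta}$ is unitary from $H^s$ onto $\pi_{0,\pm}\tilde{\mathcal{W}}^{0,s}$ up to a $C_s$.

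For the sharper estimates, the key identity is
$$M_{B,z}=M_{\Delta,z}-\delta_{B,\Delta,z}Q_{A,L,V^h}=M_{\Delta,z}\bigl(I-M_{\Delta,z}^{-1}\delta_{B,\Delta,z}Q_{A,L,V^h}\bigr),$$
which is immediate from the definitions of $M_{B,z}$, $M_{\Delta,z}$ and $\delta_{B,\Delta,z}$. For $|\real z|\leq A^2/2$, Proposition~\ref{pr:restronc} gives $\|\delta_{B,\Delta,z}Q_{A,L,V^h}\|_{\mathcal{L}(\tilde{\mathcal{W}}^{0,s};\tilde{\mathcal{W}}^{0,s})}\leq C_sbA^{3/2}$; multiplying by the bound on $M_{\Delta,z}^{-1}$ from the first step and using the combined assumption on $b$, $A$ and $\mathrm{dist}(z,\tfrac12\mathrm{Spec}(\Delta_{V^h,1}))$, one obtains $\|M_{\Delta,z}^{-1}\delta_{B,\Delta,z}Q_{A,L,V^h}\|\leq \tfrac14$ after tuning $C_s$. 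This controls the first half of \eqref{eq:MdBzQ}.

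The Neumann series then shows that $I-M_{\Delta,z}^{-1}\delta_{B,\Delta,z}Q_{A,L,V^h}$ is invertible with inverse of norm at most $2$, so
$$M_{B,z}^{-1}=\bigl(I-M_{\Delta,z}^{-1}\delta_{B,\Delta,z}Q_{A,L,V^h}\bigr)^{-1}M_{\Delta,z}^{-1},\qquad \|M_{B,z}^{-1}\|\leq 2\|M_{\Delta,z}^{-1}\|.$$
This gives \eqref{eq:IBQinv}; composing with $\delta_{B,\Delta,z}Q_{A,L,V^h}$ on the right gives $\|M_{B,z}^{-1}\delta_{B,\Delta,z}Q_{A,L,V^h}\|\leq 2\cdot\tfrac14=\tfrac12$, completing \eqref{eq:MdBzQ} by adjusting the constant $C_s$ (take each piece $\leq 1/4$).

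For \eqref{eq:estimation300} I would argue by duality: $\tilde Q_{A,L,V^h}$ is a Borel function of the self-adjoint operator $\Delta_{V^h,1}$, so it is $L^2$-self-adjoint, and the unitarity of $U_{\pm,\theta}$ makes $Q_{A,L,V^h}$ self-adjoint on $L^2(X^h,dqdp;\mathcal{E}_\pm^h)$. Hence the $L^2$-formal adjoint of $M_{B,z}$ is exactly $M'_{B',\bar z}$. Since $B'_{\pm,b,V^h}$ satisfies the same subelliptic estimates as $B_{\pm,b,V^h}$ by Proposition~\ref{pr:perturbconjug}, and since Proposition~\ref{pr:restronc} supplies the companion bound \eqref{eq:Qrestronc} on $Q_{A,L,V^h}\delta_{B,\Delta,z}$, the three steps above can be repeated verbatim with the reversed-order products to bound $(M'_{B',\bar z})^{-1}$. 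The main obstacle is this last step: one must carefully translate between the $L^2$-adjoint (which is what the self-adjointness of $Q_{A,L,V^h}$ gives) and the $\tilde{\mathcal{W}}^{0,s}$-formal adjoint $M'_{B',\bar z}$ of Definition~\ref{de:adjoints}, via conjugation by $(W^2_\theta)^{s/2}$ as in Proposition~\ref{pr:perturbconjug}. No new analytic ingredient is required, only consistent bookkeeping of the Sobolev exponents compatible with the range $|s|\leq s_{\max}$ fixed throughout Proposition~\ref{pr:Grushin} and Proposition~\ref{pr:restronc}.
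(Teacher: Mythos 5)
Your route coincides with the paper's: the explicit inversion $\tilde M_{\Delta,z}^{-1}=I+(\tfrac12\Delta_{V^h,1}-z)^{-1}\tilde Q_{A,L,V^h}$ together with the functional calculus, the block identity reducing $M_{\Delta,z}$ to $\tilde M_{\Delta,z}$ on $\mathrm{Ran}\,\pi_{0,\pm}$, the factorization $M_{B,z}=M_{\Delta,z}\bigl(I-M_{\Delta,z}^{-1}\delta_{B,\Delta,z}Q_{A,L,V^h}\bigr)$ inverted by a Neumann series, and the passage to formal adjoints for \eqref{eq:estimation300} are exactly the steps of the paper's proof.

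There is, however, one quantitative slip in your verification of \eqref{eq:MdBzQ}. You replace the bound of Proposition~\ref{pr:restronc} by the cruder $\|\delta_{B,\Delta,z}Q_{A,L,V^h}\|\leq C_sbA^{3/2}$, discarding the factor $(1+A^{-1}\sqrt{|\mathrm{Im}\,z|})^{-1}$ of \eqref{eq:restoncQ}. The product with $\|M_{\Delta,z}^{-1}\|\leq C_s(1+A^{2}/\mathrm{dist}(z,\mathrm{Spec}(\tfrac12\Delta_{V^h,1})))$ then contains the term $C_s^{2}\,bA^{3/2}=C_s^{2}(Ab)A^{1/2}$, which the stated hypotheses only control by a multiple of $A^{1/2}$: when $|\mathrm{Im}\,z|$ is large the distance to the spectrum is large, the condition $bA^{4}\leq \mathrm{dist}/C_s$ is vacuous, and nothing forces $bA^{3/2}$ to be small (take for instance $b=A^{-3/2}$ with $A$ large). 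So the claimed conclusion $\leq\tfrac14$ does not follow from the inequalities you invoke. The paper keeps the $|\mathrm{Im}\,z|$-dependent denominator and argues in two regimes: if $|\mathrm{Im}\,z|\leq A^{2}$ then $\mathrm{dist}\lesssim A^{2}$ (since $0$ is exponentially close to $\mathrm{Spec}(\tfrac12\Delta_{V^h,1})$), so the hypothesis $bA^{4}\leq\mathrm{dist}/C_s$ already yields $bA^{3/2}\lesssim A^{-1/2}$; if $|\mathrm{Im}\,z|\geq A^{2}$ then $\mathrm{dist}\asymp|\mathrm{Im}\,z|$ and the denominator gives $bA^{5/2}/\sqrt{|\mathrm{Im}\,z|}\lesssim\sqrt{Ab}$, which is small. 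Restoring the full bound from \eqref{eq:restoncQ} and performing this two-case analysis closes the gap; the rest of your argument, including the factor-of-two bookkeeping needed so that the two summands of \eqref{eq:MdBzQ} total at most $\tfrac12$ and the adjoint argument for \eqref{eq:estimation300}, is sound and matches the paper.
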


\begin{proof}
  A straightforward computation gives
  $$ \tilde{M}_{\Delta,z} =  (\frac{1}{2}\Delta_{V^h,1}+\tilde{Q}_{A,L,V^h}-z)^{-1} (\frac{1}{2}\Delta_{V^h,1}-z) \,.$$
 We deduce that the operator $\tilde{M}_{\Delta,z} $ is invertible when $z\not \in \mathrm{Spec}(\frac{1}{2}\Delta_{V^h,1}) $ and the inverse is given by
  \begin{eqnarray*}
    \tilde{M}_{\Delta,z}^{-1} & = & (\frac{1}{2}\Delta_{V^h,1}-z)^{-1}(\frac{1}{2}\Delta_{V^h,1}+\tilde{Q}_{A,L,V^h}-z)  \\
    & = & I+\tilde{Q}_{A,L,V^h}(\frac{1}{2}\Delta_{V^h,1}-z)^{-1}\,.
  \end{eqnarray*}
  The functional calculus for the self adjoint operator $ \frac{1}{2}\Delta_{V^h,1} $ yields
  $$\|\tilde{M}_{\Delta,z}^{-1}\|_{\mathcal{L}(H^s;H^s)}\leq C_s \left(1+ \frac{A^2}{\mathrm{dist}(z,\mathrm{Spec}(\frac{1}{2}\Delta_{V^h,1}))}\right)\,.$$
    The operator $M_{\Delta,z}$ is actually invertible when $z\not\in \mathrm{Spec}(\frac{1}{2}\Delta_{V^h,1})$ for the following reason:
  \begin{align*}
    M_{\Delta,z}
    &=\pi_{\bot,\pm}+\pi_{0,\pm}\big[I_{\ker(\alpha_{\pm})} - U_{\pm,\theta}(\frac{1}{2}\Delta_{V^h,1} + \tilde{Q}_{A,L,V^h}-z)^{-1}U_{\pm,\theta}^{-1}Q_{A,L,V^h}\big]\pi_{0,\pm}\\
       &=\pi_{\bot,\pm}+U_{\pm,\theta} \tilde{M}_{\Delta,z} U_{\pm,\theta}^{-1}
  \end{align*}
  and its inverse equals
 \begin{align*}
   M_{\Delta,z}^{-1}
   &=\pi_{\bot,\pm}+U_{\pm,\theta}\tilde{M}_{\Delta,z}^{-1}U_{\pm,\theta}^{-1} \\
   & = I_{\tilde{\mathcal{W}}^{0,s}} +U_{\pm,\theta} \big[ \tilde{Q}_{A,L,V^h}(\frac{1}{2}\Delta_{V^h,1}-z)^{-1} \big] U_{\pm,\theta}^{-1}.
 \end{align*}
 The estimate \eqref{eq:MzInv} of $M_{\Delta,z}^{-1}$ follows.\\
 Another computation gives 
 $$ M_{B,z} = M_{\Delta,z}(I-M_{\Delta,z}^{-1}\delta_{B,\Delta,z}Q_{A,L,V^h}) $$
 we deduce that it is invertible as soon as $\| M_{\Delta,z}^{-1}\delta_{B,\Delta,z}Q_{A,L,V^h} \|_{\mathcal{L}(\tilde{\cal W}^{0,s};\tilde{\cal W}^{0,s})}< 1$ and the norm of its inverse is given by
 $$\|M_{B,z}^{-1}\|_{\mathcal{L}(\tilde{\cal W}^{0,s};\tilde{\cal W}^{0,s})} \leq \frac{\| M_{\Delta,z}^{-1} \|_{\mathcal{L}(\tilde{\cal W}^{0,s};\tilde{\cal W}^{0,s})}}{1-\| M_{\Delta,z}^{-1}\delta_{B,\Delta,z}Q_{A,L,V^h} \|_{\mathcal{L}(\tilde{\cal W}^{0,s};\tilde{\cal W}^{0,s})}}\,. $$
 Applying inequality~\eqref{eq:restoncQ} yields
\begin{eqnarray*}
   \| M_{\Delta,z}^{-1}\delta_{B,\Delta,z}Q_{A,L,V^h} \|_{\mathcal{L}(\tilde{\cal W}^{0,s};\tilde{\cal W}^{0,s})} & \leq &
 C_s\left(1+ \frac{A^2}{\mathrm{dist}(z,\mathrm{Spec}(\frac{1}{2}\Delta_{V^h,1}))}\right)
 \left(\frac{bA^{\frac{3}{2}}}
 {1+A^{-1}\sqrt{|\mathrm{Im}z|}} \right) \\
 & \leq & \left\lbrace \begin{array}{lc}
     C_s \frac{bA^{\frac{7}{2}}}{\mathrm{dist}(z,\mathrm{Spec}(\frac{1}{2}\Delta_{V^h,1}))} & \text{if } |\mathrm{Im}z| \leq A^2 \\
    C_s \frac{bA^{\frac{5}{2}}}{\sqrt{|\mathrm{Im}z|}}  & \text{if } |\mathrm{Im}z| \geq A^2 
 \end{array}\right.
\end{eqnarray*}
Conditions $|\mathrm{Im}z|\geq A^2 $ and $|\real z| \leq \frac{A^2}{2} $ ensure 
\begin{eqnarray}\label{eq:equivImzAndDist}
  \mathrm{dist}(z,\mathrm{Spec}(\frac{1}{2}\Delta_{V^h,1}))\geq |\mathrm{Im}z|\geq \frac{1}{2}|z|\geq \frac{1}{2} \mathrm{dist}(z,\mathrm{Spec}(\frac{1}{2}\Delta_{V^h,1})).
\end{eqnarray}
The condition $C_s \max(Ab,b,A^{-1})\leq 1 $ and \ref{eq:equivImzAndDist} allow us to give a sufficient condition for
$$
\|M_{\Delta,z}^{-1}\delta_{B,\Delta,z}Q_{A,L,V^h}\|_{\mathcal{L}(\tilde{\cal W}^{0,s};\tilde{\cal W}^{0,s})} < \frac{1}{2}
$$
which is $b\frac{A^4}{\mathrm{dist}(z,\mathrm{Spec}(\frac{1}{2}\Delta_{V^h,1}))} \leq \frac{1}{4C_s}$\,.
 Finally the estimate $M_{B',\bar{z}}'$ is obtained by taking the adjoints with $z$ replaced by $\overline{z}$ and $B_{\pm,b,V^h}$ replaced by its formal adjoint $B_{\pm,b,V^h}'$, which has the same properties as $B_{\pm,b,V^h}$\,.
\end{proof}

\subsection{Quantitative comparison of resolvents}\label{sec:quandres}

When $\real z\leq\frac{A^{2}}{2}$\,, Proposition~\ref{pr:subEllipticQALVh} and the 
subelliptic estimate \eqref{eq:subellipticEstimateWith2/5ForBismutLaplacian} for $B_{\pm,b,V^h}+A^{2}\pi_{0,\pm}-z$ say that $z$ belongs to the resolvent set of $\overline{B_{\pm,b,V^h} +Q_{A,L,V^h}}^{s}$
and
   \begin{eqnarray}
     B_{\pm,b,V^h}-z  &=&  (I -Q_{A,L,V^h}(B_{\pm,b,V^h}+Q_{A,L,V^h}-z)^{-1})(B_{\pm,b,V^h}+Q_{A,L,V^h}-z) \nonumber \\
     \label{eq:MBZb}&  = & M_{B',\bar{z}}'(B_{\pm,b,V^h}+Q_{A,L,V^h}-z) \hspace{2cm} \text{in}~\mathcal{L}(D(\overline{B_{\pm,b,V^h}}^{s});\tilde{\mathcal{W}}^{0,s} )\\
     \text{and} \quad 
     B_{\pm,b,V^h}-z &= &(B_{\pm,b,V^h}+Q_{A,L,V^h}-z)(I -(B_{\pm,b,V^h}+Q_{A,L,V^h}-z)^{-1}Q_{A,L,V^h}) \nonumber \\
     \label{eq:MBZb1}& = & (B_{\pm,b,V^h}+Q_{A,L,V^h}-z) M_{B,z} \hspace{2cm} \text{in}~\mathcal{L}(\tilde{\cal W}^{0,s};D((B_{\pm,b,V^h}^{*,s})'))\,.
   \end{eqnarray}
 Because Corollary~\ref{cor:maxaccr} says that $\overline{B_{\pm,b,V^{h}}+C_{s}}^{s}$ is maximal accretive, we focus on the case $\left|\real z\right|\leq\frac{A^{2}}{2}$ under the condition $A\geq C_{s}\geq 1$\,.
   
\begin{proposition}\label{pr:estimateForTheConvergence}
   For $s\in \mathbb{R}$ there exists $C_{s}\geq 1$ such that the conditions $C_{s}\max(Ab,b,A^{-1})\leq 1$
     and
     $$
     |\real z|\leq \frac{A^2}{2} \quad,\quad \mathrm{dist}(z,\mathrm{Spec}(\frac{1}{2}\Delta_{V^{h},1}))\geq C_s bA^4\,,
     $$
     imply the inequalities
     \begin{multline}\label{eq:comRes}
\|\big((B_{\pm,b,V^h}-z)^{-1} - U_{\pm,\theta} (\frac{1}{2}\Delta_{V^h,1}-z)^{-1}U_{\pm,\theta}^{-1}\big)\|_{\mathcal{L}(\tilde{\mathcal{W}}^{0,s};\tilde{\mathcal{W}}^{0,s})}
\\ 
\leq \left[ \left(1+ \frac{A^2}{\mathrm{dist}(z,\mathrm{Spec}(\frac{1}{2}\Delta_{V^h,1}))}\right)^2 bA^{-\frac{1}{2}}+A^{-2}\right]\frac{C_{s}}{1+b\sqrt{|\mathrm{Im}z|}}\,.
     \end{multline}
   \end{proposition}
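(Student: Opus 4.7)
The plan is to derive and then expand the factorization
\begin{equation*}
  (B_{\pm,b,V^h}-z)^{-1}-U_{\pm,\theta}(\tfrac{1}{2}\Delta_{V^h,1}-z)^{-1}U_{\pm,\theta}^{-1}
  = M_{B,z}^{-1}\,\delta_{B,\Delta,z}\,(M_{\Delta,\bar z}')^{-1}.
\end{equation*}
This identity is a direct algebraic consequence of the two representations $(B_{\pm,b,V^h}-z)^{-1}=M_{B,z}^{-1}R_B$ and $U_{\pm,\theta}(\tfrac{1}{2}\Delta_{V^h,1}-z)^{-1}U_{\pm,\theta}^{-1}=M_{\Delta,z}^{-1}R_\Delta$ given in Subsection~\ref{sec:quandres}, combined with the elementary identity $M_{\Delta,z}-M_{B,z}=\delta_{B,\Delta,z}Q_{A,L,V^h}$; its verification is a short algebraic check.

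Next I would exploit the Neumann-type identities $M_{B,z}^{-1}=I+(B_{\pm,b,V^h}-z)^{-1}Q_{A,L,V^h}$ and $(M_{\Delta,\bar z}')^{-1}=I+Q_{A,L,V^h}U_{\pm,\theta}(\tfrac{1}{2}\Delta_{V^h,1}-z)^{-1}U_{\pm,\theta}^{-1}$ (both obtained by direct multiplication, using $M_{B,z}^{-1}R_B=(B_{\pm,b,V^h}-z)^{-1}$ and the analogue for the Witten side) to expand the factorization into four summands, schematically
\begin{equation*}
 \mathrm{Diff} =\delta + (B-z)^{-1}Q\,\delta + \delta\,Q\,U(\tfrac{1}{2}\Delta-z)^{-1}U^{-1} + (B-z)^{-1}Q\,\delta\,Q\,U(\tfrac{1}{2}\Delta-z)^{-1}U^{-1}\,,
\end{equation*}
with $Q=Q_{A,L,V^h}$ and $\delta=\delta_{B,\Delta,z}$. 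The first summand is bounded by $C_sA^{-2}/(1+b\sqrt{|\mathrm{Im}\,z|})$ via the subelliptic estimate of Proposition~\ref{pr:subEllipticQALVh}, which gives $\|R_B\|\le C_s/(A^2(1+b\sqrt{|\mathrm{Im}\,z|}))$, together with $\|R_\Delta\|\le C_s/(A^2+|\mathrm{Im}\,z|)$ from~\eqref{eq:resHs}. This single term produces the $A^{-2}$-contribution of~\eqref{eq:comRes}.

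For the three remaining summands I would invoke the $\|\delta\,Q\|$ and $\|Q\,\delta\|$ bounds of Proposition~\ref{pr:restronc}, together with the resolvent estimates~\eqref{eq:IBQinv}--\eqref{eq:estimation300} which control $\|(B-z)^{-1}\|$ and $\|U(\tfrac{1}{2}\Delta-z)^{-1}U^{-1}\|$ by $C_s(1+A^2/\mathrm{dist}(z,\mathrm{Spec}(\tfrac{1}{2}\Delta_{V^h,1})))/A^2$, and the intermediate norms $\|(B-z)^{-1}Q\|=\|M_{B,z}^{-1}-I\|$ and $\|QU(\tfrac{1}{2}\Delta-z)^{-1}U^{-1}\|=\|(M_{\Delta,\bar z}')^{-1}-I\|$ by $C_s(1+A^2/\mathrm{dist})$. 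The doubly-$Q$ term then yields the full $C_s(1+A^2/\mathrm{dist})^2 bA^{-1/2}/(1+b\sqrt{|\mathrm{Im}\,z|})$ contribution, while the two single-$Q$ cross terms contribute only $C_s(1+A^2/\mathrm{dist})bA^{-1/2}/(1+b\sqrt{|\mathrm{Im}\,z|})$. The main obstacle is really book-keeping: one must use the $A^{-2}$-bound on $\|\delta\|$ precisely in the $I\cdot\delta\cdot I$ slot, and the $bA^{3/2}$-bound on $\|\delta Q\|$ or $\|Q\delta\|$ in every other slot, so that the dangerous factor $(1+A^2/\mathrm{dist})^2$ is never multiplied by $A^{-2}$. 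The final denominator $1+b\sqrt{|\mathrm{Im}\,z|}$ is recovered via the elementary inequality $1/(1+A^{-1}\sqrt{|\mathrm{Im}\,z|})\le 1/(1+b\sqrt{|\mathrm{Im}\,z|})$, valid under the hypothesis $Ab\le 1$.
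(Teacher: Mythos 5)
Your proof is correct, and it reorganizes the argument in a genuinely different way from the paper's. The factorization $\mathfrak{D}=M_{B,z}^{-1}\,\delta_{B,\Delta,z}\,(M_{\Delta,\bar z}')^{-1}$ is algebraically valid (it follows from $M_{\Delta,z}-M_{B,z}=\delta_{B,\Delta,z}Q_{A,L,V^h}$ together with $M_{\Delta,z}^{-1}U_{\pm,\theta}(\tfrac12\Delta_{V^h,1}+\tilde Q_{A,L,V^h}-z)^{-1}U_{\pm,\theta}^{-1}=U_{\pm,\theta}(\tfrac12\Delta_{V^h,1}-z)^{-1}U_{\pm,\theta}^{-1}$), and the four-term expansion with the pairing you describe does yield \eqref{eq:comRes}: the pure $\delta$-slot gives the $A^{-2}$ contribution because both $\|(B_{\pm,b,V^h}+Q_{A,L,V^h}-z)^{-1}\|$ and $\|U_{\pm,\theta}(\tfrac12\Delta_{V^h,1}+\tilde Q_{A,L,V^h}-z)^{-1}U_{\pm,\theta}^{-1}\|\leq 4(A^2+2|\mathrm{Im}\,z|)^{-1}$ are dominated by $C_sA^{-2}(1+b\sqrt{|\mathrm{Im}\,z|})^{-1}$ under $Ab\leq 1$, while arranging Proposition~\ref{pr:restronc} to absorb a $Q$ adjacent to $\delta$ in the other three slots produces at most $(1+A^2/\mathrm{dist}(z,\mathrm{Spec}(\tfrac12\Delta_{V^h,1})))^2\,bA^{-1/2}$; as you note, pairing $(B_{\pm,b,V^h}-z)^{-1}Q_{A,L,V^h}$ with the bare $\|\delta\|$ bound would instead leave an uncontrolled $1/\mathrm{dist}$ factor, so this book-keeping is essential and you have identified it. The paper proceeds differently: it splits the input $u=u_{low}+u_{high}$ using a second cutoff $Q_{A,2L,V^h}$, exploits $Q_{A,L,V^h}u_{high}=0$ (hence $M_{\bar z}'^{-1}u_{high}=u_{high}$) and the spectral localization of $u_{high}$ to get the $A^{-2}$ term, and applies Proposition~\ref{pr:restronc} to $\delta_{B,\Delta,z}Q_{A,2L,V^h}$ only on the low part. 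Your two-sided Neumann expansion avoids the auxiliary cutoff at scale $2LA$ and treats both factors symmetrically, at the price of estimating the pure $\delta$ term by the triangle inequality rather than by spectral localization; the final bounds coincide. In a complete write-up you should first obtain the invertibility of $B_{\pm,b,V^h}-z$ from $B_{\pm,b,V^h}-z=M_{B',\bar z}'(B_{\pm,b,V^h}+Q_{A,L,V^h}-z)$ before using $M_{B,z}^{-1}=I+(B_{\pm,b,V^h}-z)^{-1}Q_{A,L,V^h}$, and keep all compositions inside a fixed Sobolev window as in Section~\ref{sec:Grushin}.
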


   \begin{proof}
    Let us decompose $u\in \tilde{\cal W}^{0,s}$ as $u=u_{low}+u_{high} $ with
$$
\left\lbrace\begin{array}{ccl}
  u_{low} & = & U_{\pm,\theta}\chi(\frac{C_d + \Delta_{V^h,1}}{(2LA)^2})U_{\pm,\theta}^{-1}u  = \frac{1}{A^2} Q_{A,L',V^h}u \quad \textrm{with} \quad L'=2L \\
  u_{high} & = & u-u_{low}
\end{array}\right.\,.
$$
The upper bound on the norm of the operator $\mathfrak{D}=\big((B_{\pm,b,V^h}-z)^{-1} - U_{\pm,\theta} (\frac{1}{2}\Delta_{V^h,1}-z)^{-1}U_{\pm,\theta}^{-1}\big) $ will be obtained by considering separately its action on the two pieces $u_{high}$ and $u_{low}$.\\
For $u_{high}$ we have
\begin{eqnarray*}
     \| \mathfrak{D} u_{high} \|_{\tilde{\cal W}^{0,s}}\leq \| (B_{\pm,b,V^h}-z)^{-1}u_{high} \|_{\tilde{\cal W}^{0,s}} + \|U_{\pm,\theta}( \frac{1}{2}\Delta_{V^h,1}-z )^{-1}U_{\pm,\theta}^{-1} u_{high}\|_{\tilde{\cal W}^{0,s}}\,.
\end{eqnarray*}
The second term of the right-hand side is bounded by
\begin{eqnarray*}
  \|U_{\pm,\theta}( \frac{1}{2}\Delta_{V^h,1}-z )^{-1}U_{\pm,\theta}^{-1}u_{high}\|_{\tilde{\cal W}^{0,s}}  & = & \|U_{\pm,\theta} (\frac{1}{2}\Delta_{V^h,1}-z)^{-1}(1-\chi(\frac{C_d+\Delta_{V^h,1}}{(2LA)^2})) U_{\pm,\theta}^{-1}u\|_{\tilde{\cal W}^{0,s}}\\
  & \leq & \| (\frac{1}{2}\Delta_{V^h}-z)^{-1}(1-\chi(\frac{C_d+\Delta_{V^h,1}}{(2LA)^2})) \|_{\mathcal{L}(H^s;H^s)} \|u\|_{\tilde{\cal W}^{0,s}}\,.
\end{eqnarray*}
The choice of the cut-off  function $\chi$ and $\tilde{Q}_{A,L,V^h}=A^2\chi\left(\frac{C_d +\Delta_{V^h,1}}{(LA)^2}\right)$ ensure the equality
$$
(\frac{1}{2}\Delta_{V^h,1}-z)^{-1}(1-\chi(\frac{C_d+\Delta_{V^h,1}}{(2LA)^2})) = (\frac{1}{2}\Delta_{V^h,1}+\tilde{Q}_{A,L,V^h}-z)^{-1}\circ (1-\chi(\frac{C_d+\Delta_{V^h,1}}{(2LA)^2})) 
$$
where the two factors satisfy
\begin{align*}
    &\|(\frac{1}{2}\Delta_{V^h,1}+\tilde{Q}_{A,L,V^h}-z)^{-1}\|_{\mathcal{L}(H^{s};H^{s})}\leq \frac{4}{A^2+2|\mathrm{Im}z|}\\
    \text{and} \quad & \|(1-\chi(\frac{C_d+\Delta_{V^h,1}}{(2LA)^2})) \|_{\mathcal{L}(H^{s};H^{s})}\leq C_{s}
\end{align*}
respectively according to inequality~\eqref{eq:fss2} and inequality~\eqref{eq:resHs}. We have proved
     \begin{equation*}
  \|  (\frac{1}{2}\Delta_{V^h}-z)^{-1}(1-\chi(\frac{C_d+\Delta_{V^h,1}}{(2LA)^2}))\|_{\mathcal{L}(H^s;H^s)} 
    \leq  \frac{C_s}{A^2+2|\mathrm{Im}z|} \leq \frac{C_s}{A^2} \frac{1}{1+b\sqrt{|\mathrm{Im}z|}}\,.
  \end{equation*}
According to the formula \eqref{eq:MBZb}, the inverse of $B_{\pm,b,V^h}-z $ equals
\begin{eqnarray*}
  (B_{\pm,b,V^h}-z)^{-1} & = & (B_{\pm,b,V^h}+Q_{A,L,V^h}-z)^{-1}M_{B',\bar{z}}^{\prime -1}\\
                       & = & (B_{\pm,b,V^h}+Q_{A,L,V^h}-z)^{-1}\left( M_{\bar{z}}^{\prime -1} + [M_{B',\bar{z}}^{\prime -1} - M_{\bar{z}}^{\prime -1}] \right) \\
  & = & (B_{\pm,b,V^h}+Q_{A,L,V^h}-z)^{-1}\left( I + M_{B',\bar{z}}^{\prime -1}\left[Q_{A,L,V^h}\delta_{B,\Delta,z} \right] \right)M_{\bar{z}}^{\prime -1}\,.
\end{eqnarray*}
Therefore, the subelliptic estimate given in Proposition \ref{pr:subEllipticQALVh} for $B_{\pm,b,V^h}+Q_{A,L,V^h}-z $ when $|\real z|\leq \frac{A^2}{2}$,
$$
\| (B_{\pm,b,V^h}+Q_{A,L,V^h}-z)^{-1} \|_{\mathcal{L}(\tilde{\cal W}^{0,s};\tilde{\cal W}^{0,s})} \leq \frac{C_s}{A^2(1+b\sqrt{|\mathrm{Im}z|})}\,,
$$
and the inequality \eqref{eq:MdBzQ} imply

\begin{eqnarray*}
  \| (B_{\pm,b,V^{h}}-z)^{-1}u_{high} \|_{\tilde{\cal W}^{0,s}} & \leq &  C_s\frac{1}{A^2(1+b\sqrt{|\mathrm{Im}z|})}\|M_{\bar{z}}^{\prime -1}u_{high}\|_{\tilde{\cal W}^{0,s}} \\
  & = & C_s\frac{1}{A^2(1+b\sqrt{|\mathrm{Im}z|})} \|u_{high}\|_{\tilde{\cal W}^{0,s}}\,.
\end{eqnarray*}
We have proved 
$$ \| \mathfrak{D}u_{high} \|_{\tilde{\cal W}^{0,s}}\leq \frac{C_s}{A^2}\frac{1}{1+b\sqrt{|\mathrm{Im}z|}} \,.$$
The last inequality comes from the fact that $Q_{A,L,V^h}u_{high} = U_{\pm,\theta}\chi(\frac{C_d+\Delta_{V^h,1}}{(LA)^2}) (1- \chi(\frac{C_d+\Delta_{V^h,1}}{(2LA)^2})) U_{\pm,\theta}^{-1}u = 0  $ implies $M_{\bar{z}}^{\prime -1} u_{high} = (I-Q_{A,L,V^h}U_{\pm,\theta}(\frac{1}{2}\Delta_{V^h,1}-z+Q_{A,L,V^h})^{-1}U_{\pm,\theta}^{-1})^{-1} u_{high} = u_{high} $.\\
For the $u_{low}$-component, $u_{low}=\frac{1}{A^2} Q_{A,L',V^{h}}u$\,, the formula \eqref{eq:MBZb1} for $B_{\pm,b,V^h}$ and the analogous one for $\frac{1}{2}\Delta_{V^h,1} $ give
\begin{eqnarray}
  \nonumber \mathfrak{D} & = & M_{B,z}^{-1}(B_{\pm,b,V^h}+Q_{A,L,V^h}-z)^{-1}  - U_{\pm,\theta}\tilde{M}_{\Delta,z}^{-1}(\frac{1}{2}\Delta_{V^h,1}+\tilde{Q}_{A,L,V^h}-z)^{-1}U_{\pm,\theta}^{-1} \\
      & = & (M_{B,z})^{-1}\delta_{B,\Delta,z} + \bigg[(M_{B,z})^{-1} - U_{\pm,\theta} \tilde{M}_{\Delta,z}^{-1} U_{\pm,\theta}^{-1}\bigg]U_{\pm,\theta}(\frac{1}{2}\Delta_{V^h,1}+\tilde{Q}_{A,L,V^h} -z)^{-1}U_{\pm,\theta}^{-1}
\label{eq:MBzMz}
\end{eqnarray}
Owing to
\begin{eqnarray*}
 (M_{B,z})^{-1}\delta_{B,\Delta,z} u_{low} = \frac{1}{A^2}(M_{B,z})^{-1}\delta_{B,\Delta,z}Q_{A,L',V^h}u\,,
\end{eqnarray*}
the inequality \eqref{eq:restoncQ} combined with the inequality \eqref{eq:estimation300} imply that the first term of \eqref{eq:MBzMz} applied to $u_{low}$ is bounded by
$$ \|(M_{B,z})^{-1}\delta_{B,\Delta,z} u_{low}\|_{\tilde{\cal W}^{0,s}}\leq \frac{C_s}{A^2}\left(1+ \frac{A^2}{\mathrm{dist}(z,\mathrm{Spec}(\frac{1}{2}\Delta_{V^h,1}))}\right)
 \left(\frac{bA^{\frac{3}{2}}}
   {1+A^{-1}\sqrt{|\mathrm{Im}z|}} \right)\|u\|_{\tilde{\cal W}^{0,s}}  \,.$$
On $\ker(\alpha_{\pm})=\mathrm{Ran}(U_{\pm,\theta}) $\,, we know
$$
U_{\pm,\theta}\tilde{M}_{\Delta,z}^{-1}U_{\pm,\theta}^{-1} = M_{\Delta,z}^{-1}\,.
$$
The second term of \eqref{eq:MBzMz} thus equals

\begin{multline*}
  \bigg[(M_{B,z})^{-1} - U_{\pm,\theta} \tilde{M}_{\Delta,z}^{-1} U_{\pm,\theta}^{-1}\bigg]U_{\pm,\theta}(\frac{1}{2}\Delta_{V^h,1}+\tilde{Q}_{A,L,V^h} -z)^{-1}U_{\pm,\theta}^{-1} \\
  = \bigg[(M_{B,z})^{-1} - M_{\Delta,z}^{-1}\bigg]U_{\pm,\theta}(\frac{1}{2}\Delta_{V^h,1}+\tilde{Q}_{A,L,V^h} -z)^{-1}U_{\pm,\theta}^{-1}\,.
\end{multline*}
Further computations show that the right-hand side is
\begin{eqnarray*}
  &&\hspace{-3cm}
  \bigg[(M_{B,z})^{-1} - M_{\Delta,z}^{-1} \bigg]U_{\pm,\theta}(\frac{1}{2}\Delta_{V^h,1}+\tilde{Q}_{A,L,V^h} -z)^{-1}U_{\pm,\theta}^{-1} \\
  & = & M_{B,z}^{-1}\bigg[M_{\Delta,z}-M_{B,z}\bigg]M_{\Delta,z}^{-1}U_{\pm,\theta}(\frac{1}{2}\Delta_{V^h,1}+\tilde{Q}_{A,L,V^h} -z)^{-1}U_{\pm,\theta}^{-1}  \\
   & = & M_{B,z}^{-1}\bigg[M_{\Delta,z}-M_{B,z}\bigg]U_{\pm,\theta}\tilde{M}_{\Delta,z}^{-1}(\frac{1}{2}\Delta_{V^h,1}+\tilde{Q}_{A,L,V^h} -z)^{-1}U_{\pm,\theta}^{-1}\\
  & = & M_{B,z}^{-1} \bigg[\delta_{B,\Delta,z}Q_{A,L,V^h}\bigg] \bigg(U_{\pm,\theta} (\frac{1}{2}\Delta_{V^h,1}-z)^{-1}  U_{\pm,\theta}^{-1}\bigg)\,.
\end{eqnarray*}
By using again inequality \eqref{eq:IBQinv} and Proposition \ref{pr:restronc} for the right-hand side, the above operator is estimated by
\begin{multline*}
  \bigg\|\bigg[(M_{B,z})^{-1} - U_{\pm,\theta} \tilde{M}_{\Delta,z}^{-1} U_{\pm,\theta}^{-1}\bigg]U_{\pm,\theta}(\frac{1}{2}\Delta_{V^h,1}+\tilde{Q}_{A,L,V^h} -z)^{-1}U_{\pm,\theta}^{-1}\bigg\|_{\mathcal{L}(\tilde{\cal W}^{0,s};\tilde{\cal W}^{0,s})}  \\
  \leq C_s\left(1+ \frac{A^2}{\mathrm{dist}(z,\mathrm{Spec}(\frac{1}{2}\Delta_{V^h,1}))}\right)
 \left(\frac{bA^{\frac{3}{2}}}
   {1+A^{-1}\sqrt{|\mathrm{Im}z|}} \right) \frac{1}{\mathrm{dist}(z,\mathrm{Spec}(\frac{1}{2}\Delta_{V^h,1}))} \,.
\end{multline*}
By adding the two terms we obtain 
$$\|\mathfrak{D}u_{low}\|_{\tilde{\cal W}^{0,s}}\leq C_s\left(1+ \frac{A^2}{\mathrm{dist}(z,\mathrm{Spec}(\frac{1}{2}\Delta_{V^h,1}))}\right)^2
 \left(\frac{bA^{-\frac{1}{2}}}
   {1+A^{-1}\sqrt{|\mathrm{Im}z|}} \right) \,.$$
 By summing the two upper bounds for  $\|\mathfrak{D}u_{high}\|_{\tilde{\mathcal{W}}^{0,s}}$ and $\|\mathfrak{D}u_{low}\|_{\tilde{\mathcal{W}}^{0,s}}$\,, we get
$$\|\mathfrak{D}\|_{\mathcal{L}(\tilde{\cal W}^{0,s};\tilde{\cal W}^{0,s})}\leq C_s\left[ \left(1+ \frac{A^2}{\mathrm{dist}(z,\mathrm{Spec}(\frac{1}{2}\Delta_{V^h,1}))}\right)^2
 \left(\frac{bA^{-\frac{1}{2}}}
   {1+A^{-1}\sqrt{|\mathrm{Im}z|}} \right) + \frac{1}{A^2} \frac{1}{1+b\sqrt{|\mathrm{Im}z|}}  \right] \,,$$
which can be simplifed into \eqref{eq:comRes} owing to $b\leq A^{-1}$\,.
\end{proof}

\section{Spectral consequences}
\label{sec:specconseq}

This final section actually ends the proof of Theorem~\ref{th:main} and its various statements are picked from Proposition~\ref{pr:spectrum}, Proposition~\ref{pr:semiGroup} and Proposition~\ref{pr:accSpec}. A simple translation is obtained after recalling the unitary equivalences
\begin{eqnarray*}
   B_{\pm,b,\frac{V}{h}}=B_{\pm,(Q,g,\frac{V}{h},b)} &\longleftrightarrow & \frac{1}{h^2}B_{\pm,(Q^h,g^h,V^h,b')}=\frac{1}{h^2}B_{\pm,b',V^h}\quad b'=\frac{b}{h}\,, \\
   \Delta_{V,h}=\Delta_{(Q,g,V,h)}&\longleftrightarrow &\Delta_{(Q^h,g^h,V^h,1)}=\Delta_{V^h,1}\,\\
   \frac{z}{h^2}\in \mathrm{Spec}(B_{\pm,b,\frac{V}{h}})&\longleftrightarrow & z\in \mathrm{Spec}(B_{\pm,b',V^h})\quad b'=\frac{b}{h}\,,\\
   \tilde{\mathcal{W}}^{s_1,s_2}_h(X,\mathcal{E}_\pm)&\longleftrightarrow& \tilde{\mathcal{W}}^{s_1,s_2}_{h=1}(X^h,\mathcal{E}_\pm^h)\,.
\end{eqnarray*}
Once this is fixed the first statement \textbf{a)} of Theorem~\ref{th:main} is a corollary of Proposition~\ref{pr:spectrum}.  The second statement \textbf{b)} of Theorem~\ref{th:main} is a rewriting of Proposition~\ref{pr:accSpec} of which the proof strongly relies on the Hodge structure 
of restricted operator $B_{\pm,b,V^h}\big|_{E_{\pm,b,V^h}}$ and where the hermitian form $\langle~,~\rangle_r\Big|_{(E_{\pm,b,V^h})^2}$  is positive definite by the PT-symmetry argument checked in the proof of Proposition~\ref{pr:spectrum}.
Finally the third statement \textbf{c)} of Theorem~\ref{th:main} about the semigroup expansion is a transcription of Proposition~\ref{pr:semiGroup}.
\subsection{Rough estimates}
\label{sec:roughspec}
The data of our problem are the spectrum of the semiclassical Witten Laplacian $\mathrm{Spec}(\Delta_{V,h})=\mathrm{Spec}(\Delta_{V^{h},1})$ and the parameters $b,h\in ]0,1]$\,. We introduced the additional parameter $A\geq 1$ and recall the condition
$$
C_{s}\max (Ab,b,A^{-1})\leq 1\,.
$$
We recall, according to Definition~\ref{def:rhoh},  that the parameter $\varrho_{h}\in]0,1]$  measures a spectral gap for $\Delta_{V,h}$ according to
  \begin{eqnarray}
    &&\mathrm{Spec}(\frac{1}{2}\Delta_{V,h})\cap [0,\varrho_{h}]\subset [0,e^{-\frac{c}{h}}] \subset [0,\frac{\varrho_{h}}{2}] \label{eq:spectralGapLower}\\
       \text{and}&& \mathrm{Spec}(\frac{1}{2}\Delta_{V,h})\cap ]\varrho_{h},+\infty[\subset [4\varrho_{h},+\infty[ \label{eq:spectralGapUpper}
  \end{eqnarray}
  for all $h\in ]0,1]$\,. Remember as well the notations 
  $$
  \mathcal{N}_{\pm}^{(p)}(V)=\mathrm{rank}~1_{[0,\varrho_{h}]}(\frac{1}{2}\Delta_{V,h}^{(p)})
  \quad\text{and}\quad \mathcal{N}_{\pm}(V)=\sum_{p=0}^{d}\mathcal{N}_{\pm}^{(p)}(V)\,,
  $$
 where the $\pm$ sign refers to the choice of the line bundle $F_{+}=Q\times\mathbb{C}$ or $F_{-}=(Q\times \mathbb{C})\otimes\mathbf{or}_{Q}$\,.\\
Making an accurate use of the quantitative comparison of the resolvents in Proposition~\ref{pr:estimateForTheConvergence} requires the identification of different areas in the complex plane, presented in the picture below, and of which the accurate definitions are given just after.
\begin{figure}[h]
  \centering
  \begin{tikzpicture}[xscale=0.6,yscale=0.5]
    \pgfdeclarelayer{nodelayer}
    \pgfdeclarelayer{edgelayer}
    \pgfsetlayers{nodelayer,edgelayer}
    \tikzset{none/.style={coordinate}}
\tikzset{fleche/.style args={#1:#2}{ postaction = decorate,decoration={name=markings,mark=at position #1 with {\arrow[#2,scale=2]{>}}}}}
 	\begin{pgfonlayer}{nodelayer}
		\node [style=none] (0) at (-5, 0) {};
		\node [style=none] (1) at (7.5, 0) {};
		\node [style=none] (2) at (0, -7.25) {};
		\node [style=none] (3) at (0, 7.75) {};
		\node [style=none] (4) at (4., 0) {};
		\node [style=none] (5) at (4., -7.25) {};
		\node [style=none] (6) at (4., 7.25) {};
		\node [style=none] (8) at (-4., -7.25) {};
		\node [style=none] (9) at (-4., 7.25) {};
		\node [style=none] (10) at (-4., 0) {};
		\node [style=none] (11) at (0, 0) {};
		\node [style=none] (12) at (0, 1.5) {};
		\node [style=none] (13) at (0, 3) {$A^{2}$};
		\node [style=none] (14) at (0, 4.25) {};
		\node [style=none] (15) at (0, -1.5) {};
		\node [style=none] (16) at (0, -3) {$-A^{2}$};
		\node [style=none] (17) at (0, -4.25) {};
		\node [style=none] (18) at (4., 1.5) {};
		\node [style=none] (19) at (-4., 1.5) {};
		\node [style=none] (20) at (-4., -1.5) {};
		\node [style=none] (21) at (4., -1.5) {};
		\node [style=none] (22) at (2, 0) {};
		\node [style=none] (23) at (1, 0) {};
		\node [style=none] (24) at (1, -1.5) {};
		\node [style=none] (25) at (1, 1.5) {};
		\node [style=none] (26) at (-1, 1.5) {};
		\node [style=none] (27) at (-1, -1.5) {};
		\node [style=none] (28) at (1, 7.25) {};
		\node [style=none] (29) at (1, -7.25) {};
		\node [style=none] (30) at (1, 4.25) {};
		\node [style=none] (31) at (1, -4) {};
                \node [style=none] (32) at (-5,7.25) {};
                \node [style=none] (33) at (-5,4.25) {};
                \node [style=none] (34) at (-5,-7.25) {};
                \node [style=none] (35) at (-5,-4.25) {};
                \node [style=none] (36) at (1,-4.25) {};
		\node [style=none] (37) at (0.9, 1.4) {};
		\node [style=none] (38) at (-0.9, -1.4) {};
                \node [style=none] (39) at (-0.9, 1.4) {};
                \node [style=none] (40) at (0.9, -1.4) {};
                \node [style=none] (41) at (-1,0){};
                \node [style=none] (42) at (7,0){};
                \node [style=none] (43) at (5,5){};
                \node [style=none] (44) at (5,-6){};
	\end{pgfonlayer}
	\begin{pgfonlayer}{edgelayer}
                \newcommand{\parab}{0.07*(\t-0.5)*(\t-0.5)+4.25};
                \draw[fill=gray!20,draw=gray!20,domain=1:7] (30.center)-- plot [variable=\t] (\t,{\parab})--(28.center)--cycle;
                \draw[fill=gray!20,draw=gray!20,domain=1:7] (0.5,-4.25)-- plot [variable=\t] (\t,{-(\parab)})--(0.5,-7.25)--cycle;
		\draw[fill=gray!20,draw=gray!20]   (26.center)--(18.center)--(6.center)--(32.center)--(34.center)--(5.center)--(21.center)--(27.center)--cycle;
		\draw[->] (0.center) to (1.center);
		\draw[->] (2.center) to (3.center);
		\draw[dashed] (5.center) to (6.center);
		\draw[dashed] (8.center) to (9.center);
		\draw[dashed] (19.center) to (18.center);
		\draw[dashed] (20.center) to (21.center);
		\draw[dashed] (29.center) to (28.center);
		\draw[dashed] (26.center) to (27.center);
                \draw[thick,domain=7:1, fleche=0.5:black] plot [variable=\t] (\t,{\parab});
                \draw[thick,domain=1:7,fleche=0.5:black] plot [variable=\t] (\t,{-(\parab)});
                \draw[thick,fleche=0.5:black] (30.center)--(25.center);
                \draw[thick,fleche=0.9:black] (25.center)--(19.center);
                \draw[thick,fleche=0.9:black] (19.center)--(20.center);
                \draw[thick,fleche=0.5:black] (20.center)--(24.center);
                \draw[thick,fleche=0.5:black] (24.center)--(36.center);
                \draw[line width=5pt] (22.center)--(42.center);
                \draw[fill=black] (11.center) circle(0.25);
                \draw[thick,fleche=0.8:black] (37.center)--(39.center)--(38.center)--(40.center)--cycle;
		\draw[thick,fleche=0.8:black] (39.center)--(38.center);
                \draw (23.center) node[above right] {$\varrho_{h}$};
                \draw (22.center) node[above right] {$2\varrho_{h}$};
                \draw (41.center) node[above left] {$-\varrho_{h}$};
                \draw (12.center) node[above right] {$1$};
                \draw (15.center) node[below left] {$-1$};
               	\draw (16.center) node[left] {$-A^{2}$};
                \draw (16.center) node {$-$};
                \draw (13.center) node[left] {$A^{2}$};
                \draw (13.center) node {$-$};
                \draw (17.center) node[left] {$-\frac{1}{b^{2}}$};
                \draw (17.center) node {$-$};
                \draw (14.center) node[left] {$\frac{1}{b^{2}}$};
                \draw (14.center) node {$-$};
                 \draw (30.center) node[below right] {$M$};
                \draw (25.center) node[above right] {$N$};
                \draw (19.center) node[left] {$O$};
                \draw (20.center) node[left] {$P$};
                \draw (24.center) node[below right] {$Q$};
                \draw (31.center) node[right] {$R$};
                \draw (26.center) node[above] {$O'$};
                \draw (27.center) node[above right] {$P'$};
                \draw (43.center) node[above right] {$\Gamma_{+}$};
                \draw (44.center) node[above right] {$\Gamma_{-}$};
                \draw (10.center) node[above left] {$-\frac{A^{2}}{2}$};
                \draw (4.center) node[above right] {$\frac{A^{2}}{2}$};
              \end{pgfonlayer}
            \end{tikzpicture}
   \hspace{1cm}
  \begin{tikzpicture}[xscale=1.6,yscale=1.2]
    \pgfdeclarelayer{nodelayer}
    \pgfdeclarelayer{edgelayer}
    \pgfsetlayers{nodelayer,edgelayer}
    \tikzset{none/.style={coordinate}}
\tikzset{fleche/.style args={#1:#2}{ postaction = decorate,decoration={name=markings,mark=at position #1 with {\arrow[#2,scale=2]{>}}}}}
 	\begin{pgfonlayer}{nodelayer}
		\node [style=none] (0) at (-2., 0) {};
		\node [style=none] (1) at (2., 0) {};
		\node [style=none] (2) at (0, -3.) {};
		\node [style=none] (3) at (0, 3.) {};
		\node [style=none] (12) at (0, 1.5) {};
		\node [style=none] (15) at (0, -1.5) {};
		\node [style=none] (23) at (1, 0) {};
		\node [style=none] (24) at (1, -1.5) {};
		\node [style=none] (25) at (1, 1.5) {};
		\node [style=none] (26) at (-1, 1.5) {};
		\node [style=none] (27) at (-1, -1.5) {};
		\node [style=none] (37) at (1, 1.5) {};
		\node [style=none] (38) at (-1, -1.5) {};
                \node [style=none] (39) at (-1, 1.5) {};
                \node [style=none] (40) at (1, -1.5) {};
                \node [style=none] (41) at (-1,0){};
                \node [style=none] (46) at (-2, 3) {};
              \end{pgfonlayer}
              \begin{pgfonlayer}{edgelayer}
                \draw[fill=gray!20,draw=gray!20] (46.center) rectangle (2.center); 
		\draw[->] (0.center) to (1.center);
		\draw[->] (2.center) to (3.center);
                \draw[thick,fleche=0.8:black] (37.center)--(39.center)--(38.center)--(40.center)--cycle;
		\draw[thick,fleche=0.8:black] (39.center)--(38.center);
                \draw (23.center) node[above right] {$\varrho_{h}$};
                \draw (41.center) node[above left] {$-\varrho_{h}$};
                \draw (12.center) node[above right] {$1$};
                \draw (15.center) node[below right] {$-1$};
                \draw (25.center) node[above right] {$N$};
                \draw (24.center) node[below right] {$Q$};
                \draw (26.center) node[above] {$O'$};
                \draw (27.center) node[below right] {$P'$};
                \draw[fill=black] (0,0) circle (0.05);
                \draw[fill=black] (0.1,0) circle (0.05);
		\draw[fill=black] (0.3,0) circle (0.05);
                \draw[fill=black] (0.5,0) circle (0.05);
                \draw[fill=black] (0.8,0) circle (0.05);                
              \end{pgfonlayer}
            \end{tikzpicture}
            \caption{The left-hand side summarizes the spectral localization and the shape of contours deduced from the analysis in $|\real z|\leq\frac{A^{2}}{2}$\,. In this picture $\mathrm{Spec}(\mathrm{\frac{1}{2}\Delta_{V^{h},1}})=\mathrm{Spec}(\frac{1}{2}\Delta_{V,h})$ is represented by the black circle around $0$ and the thick real half-line $[2\varrho_{h},+\infty[$.\\
              The picture on the right-hand side is zoomed into the region $|\real z|\leq \varrho_{h}$ and $|\mathrm{Im}~z|\leq 1$. The small circles represent the eignvalues of $B_{\pm,b,V^{h}}$ in the region $|\real z|\leq \varrho_{h}$\,.\\
            In both pictures the gray area represents a part of the resolvent set of $B_{\pm,b,V^{h}}$\,.}
        \label{fig2}
          \end{figure}
           \pagebreak[4]

          \vspace{0.5cm}
         \noindent The curves $\Gamma_{\pm}$ are defined by
          \begin{equation}
            \label{eq:gammapm}
            \Gamma_{\pm}=\left\{z\in \mathbb{C}\,,\quad \pm [1+ (\real z-\varrho_{h})^{2}]= b^{2}\mathrm{Im}~z\right\}\,.
          \end{equation}
          The points $M,N,Q,R$ are on the line $\real z=\varrho_{h}$ with the respective imaginary parts $\frac{1}{b^{2}},1,-1,-\frac{1}{b^{2}}$\,. The points $O,O'$ (resp. $P,P'$) have an imaginary part equal to $1$ (resp. $-1$) and real part equal to $-\frac{A^{2}}{2}$ and $-\varrho_{h}$\,.\\
          We will use the oriented contours $\Gamma=\Gamma_{+}+\Gamma_{0}+\Gamma_{-}$ with $\Gamma_{0}=\left\{z\in\mathbb{C}\,, |z-\varrho_{h}|=\frac{1}{b^{2}}\,, \real z\leq \varrho_{h}\right\}$\,, $\Gamma_{+}+[MNOPQR]+\Gamma_{-}$\,, $\Gamma_{+}+[MNO'P'QR]+\Gamma_{-}$\,, $\Gamma_{+}+[MR]+\Gamma_{-}$ and $[NO'P'Q]$\,.\\
         
         The main results of this section are gathered in the two following propositions.
          \begin{proposition}
            \label{pr:spectrum}
            There exists $C_{0}\geq 1$ such that $A=C_{0}$ and $2C_{0}^{5}b\leq \varrho_{h}$ implies the following properties.
            \begin{description}
            \item[a)]   The sets $\left\{\real z\leq \frac{A^{2}}{2}~\text{and}~|\mathrm{Im}~z|\geq 1\right\}$ \,, $\left\{\real z\leq -\varrho_{h}\right\}$\,, $\left[N,Q\right]=\left\{\real z=\varrho_{h}, \left|\mathrm{Im}~z\right|\leq 1\right\}$ and  the one partly delimited by $\Gamma_{\pm}$\,,
            $\left\{\real z\geq \varrho_{h}~\text{and}~1+(\real z-\varrho_{h})^{2}\leq b^{2}|\mathrm{Im}~z|\right\}$\,,  are all contained in the resolvent set of $B_{\pm,b,V^{h}}$\,.  The union of these sets (the gray area in the left-hand side picture of Figure~\ref{fig2}) contains all the oriented contours listed above.
           \item[b)]  If $E^{(p)}_{\pm,b,V^h}$ is the characteristic space, which is the range of
            $$
            \pi_{E^{(p)}_{\pm,b,V^h}}=\frac{1}{2i\pi}\int_{NOPQ}(z-B_{\pm,b,V^{h}}^{(p)})^{-1}dz=\frac{1}{2i\pi}\int_{NO'P'Q}(z-B_{\pm,b,V^{h}}^{(p)})^{-1}~dz
            $$
            then $\mathcal{N}_{+}^{(p)}:=\dim E^{(p)}_{+, b,V^h}=\mathcal{N}_{+}^{(p)}(V)$  and $\mathcal{N}_{-}^{(p)}:=\dim E_{-,b,V^h}^{(p)}=\mathcal{N}_{-}^{(p-d)}(V)$\,.
          \item[c)] When $r:X^{h}\to X^{h}$ is the involution defined by $r(q,p)=(q,-p)$ and $r^{*}$ denotes its action on $\mathcal{S}(X^{h};\mathcal{E}_{\pm}^{h})$\,, according to Definition~\ref{de:Bismutnot}, then $r^{*}$ is a unitary involution of $L^{2}(X^{h};\mathcal{E}_{\pm}^{h})$ such that $r^{*}B_{\pm,b,V^{h}}(r^{*})^{-1}=r^* B_{\pm,b,V^h}r^*=B_{\pm,b,V^{h}}'$\,.
          \item[d)] The hermitian form $(u,v)\mapsto \langle u\,,\,v \rangle_{r}=\langle u\,,\,r^{*}v \rangle_{L^{2}}$, of Definition~\ref{de:Bismutnot} is positive definite on $E_{\pm,b,V^h}$ and $B_{\pm,b,V^{h}}\big|_{E_{\pm,b,V^h}}$ is self-adjoint and positive for the scalar product $\langle~,~\rangle_{r}$\,. The eigenvalues of $B_{\pm,b,V^{h}}$ in the disc $\{z\in\mathbb{C},|z|\leq \varrho_h\}$ actually belong to  $[0,\varrho_{h}[$\,. Additionally on $E_{\pm,b,V^h}$ we have the equivalence of norms 
          \begin{equation}
            \label{eq:Epmequi}
          \forall u\in E_{\pm,b,V^h}\,,\quad \sqrt{1-C_0b^2}\|u\|_{L^2}\leq \|u\|_r\leq \|u\|_{L^2}\,.
          \end{equation}
          \end{description}
        \end{proposition}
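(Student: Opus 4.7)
The plan is to chain the resolvent comparison of Proposition~\ref{pr:estimateForTheConvergence} with the elementary spectral structure of the semiclassical Witten Laplacian (Definition~\ref{def:rhoh}) to get everything in (a) and (b), and then to exploit the Hodge presentation \eqref{eq:BHodge}--\eqref{eq:deltabV*} of $B_{\pm,b,V^{h}}$ for (c) and (d). Throughout I fix $A=C_{0}$ large enough so that the conditions $C_{s}\max(b,Ab,A^{-1})\leq 1$ of Proposition~\ref{pr:subEllipticQALVh} and Proposition~\ref{pr:estimateForTheConvergence} hold, and I use $2C_{0}^{5}b\leq \varrho_{h}$ to guarantee $bA^{4}\leq\varrho_{h}/2\leq \tfrac{1}{2}\mathrm{dist}(z,\mathrm{Spec}(\tfrac12\Delta_{V^{h},1}))$ whenever $z$ lies on any of the listed contours.

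\textbf{Step 1 (part a).} The left half-plane $\real z\leq -\varrho_{h}$ is excluded from the spectrum by the maximal accretivity of Corollary~\ref{cor:maxaccr} (applied after translation by a bounded constant, absorbed in $C_{0}$). The regions $\{\real z\leq A^{2}/2,\ |\mathrm{Im}z|\geq 1\}$ and $\{1+(\real z-\varrho_{h})^{2}\leq b^{2}|\mathrm{Im}z|\}$ follow from the cuspidal subelliptic estimate \eqref{eq:principalInequality} of Proposition~\ref{pr:main1sttext} (rescaling the $\frac{\kappa_{s}}{b^{2}}$ into a bounded translation). For the interval $[N,Q]=\{\real z=\varrho_{h},\ |\mathrm{Im}z|\leq 1\}$ the spectral gap hypothesis gives $\mathrm{dist}(z,\mathrm{Spec}(\tfrac12\Delta_{V^{h},1}))\geq \varrho_{h}/2$, so Proposition~\ref{pr:estimateForTheConvergence} shows that $(B_{\pm,b,V^{h}}-z)^{-1}$ differs from a bounded operator by something of norm $O(bA^{-1/2})\ll 1$, whence $z$ is in the resolvent set. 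The four pieces together cover all the listed contours.

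\textbf{Step 2 (part b).} For $z$ running along, say, $[NOPQ]$, the inequality \eqref{eq:comRes} integrated along the contour (of length $O(1)$ staying at distance $\gtrsim\varrho_{h}$ from $\mathrm{Spec}(\tfrac12\Delta_{V^{h},1})$) yields
\[
\bigl\|\pi_{E^{(p)}_{\pm,b,V^{h}}}-U_{\pm,\theta}\,\mathbf{1}_{[0,\varrho_{h}]}\bigl(\tfrac12\Delta^{(p-\frac d2\pm\frac d2)}_{V^{h},1}\bigr)U_{\pm,\theta}^{-1}\bigr\|_{\mathcal{L}(\tilde{\mathcal W}^{0,s})}\leq C_{s}\bigl(bA^{-1/2}+A^{-2}\bigr)<1,
\]
where the degree shift $p\mapsto p-\tfrac d2\pm\tfrac d2$ reflects the fact that $U_{+,\theta}$ sends degree $p$ on $Q$ to degree $p$ on $X$, while $U_{-,\theta}$ sends degree $p$ on $Q$ to degree $p+d$ on $X$ because of the top vertical factor in \eqref{eq:U-thetaexp}. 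Since two projections whose difference has norm $<1$ have the same rank, and $U_{\pm,\theta}$ is a unitary isomorphism, the counting $\mathcal{N}^{(p)}_{\pm}=\mathcal{N}^{(p-\frac d2\pm\frac d2)}_{\pm}(V)$ follows.

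\textbf{Step 3 (part c).} This is a purely algebraic statement. The Bismut--Lebeau identity \eqref{eq:BHodge} writes $B_{\pm,b,V^{h}}=\tfrac12(\delta\delta^{\ast,r}+\delta^{\ast,r}\delta)$ where $\delta^{\ast,r}$ is the formal adjoint of $\delta:=\delta_{\pm,b,V^{h}}$ for the form $\langle\cdot,\cdot\rangle_{r}$. Thus $B_{\pm,b,V^{h}}$ is $\langle\cdot,\cdot\rangle_{r}$-symmetric, i.e.\ $\langle u,B_{\pm,b,V^{h}}v\rangle=\langle u,r^{\ast}B_{\pm,b,V^{h}}v\rangle_{r}$ equals $\langle B_{\pm,b,V^{h}}u,v\rangle_{r}=\langle B_{\pm,b,V^{h}}u,r^{\ast}v\rangle=\langle u,B'_{\pm,b,V^{h}}r^{\ast}v\rangle$ for all $u,v\in\mathcal{S}(X^{h};\mathcal{E}^{h}_{\pm})$, and the unitarity of $r^{\ast}$ gives $r^{\ast}B_{\pm,b,V^{h}}r^{\ast}=B'_{\pm,b,V^{h}}$ on $\mathcal{S}$, then on the domains of the closures.

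\textbf{Step 4 (part d).} This is the delicate step, since the positive definiteness of $\langle\cdot,\cdot\rangle_{r}$ is only manifest on the unperturbed model kernel $\ker\alpha_{\pm}$. A direct verification using \eqref{eq:U+thetaexpl}--\eqref{eq:U-thetaexp} shows that for $w=U_{\pm,\theta}s$ one has $r^{\ast}w=w$ (the $+$ case because the Gaussian and horizontal forms are $r$-invariant, the $-$ case after identifying the top vertical $\hat{f}_{1}\wedge\cdots\wedge\hat{f}_{d}$ with a section of $\pi^{\ast}\mathbf{or}_{Q}$ via the orientation convention of \eqref{eq:U-thetaexp}), so $\langle w,w\rangle_{r}=\|w\|^{2}_{L^{2}}$ for all $w\in\mathrm{Ran}\,U_{\pm,\theta}=\ker\alpha_{\pm}$. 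Writing $u=\pi_{0,\pm}u+\pi_{\perp,\pm}u$ with $u\in E^{(p)}_{\pm,b,V^{h}}$, the resolvent estimate of Step~2 combined with $\pi_{0,\pm}\pi_{E^{(p)}_{\pm,b,V^{h}}}=\pi_{0,\pm}+O_{\mathcal L}(bA^{-1/2}+A^{-2})$ shows $\|\pi_{\perp,\pm}u\|_{L^{2}}\leq C_{0}b\|u\|_{L^{2}}$. Splitting
\[
\langle u,u\rangle_{r}=\langle\pi_{0,\pm}u,\pi_{0,\pm}u\rangle_{r}+\langle\pi_{\perp,\pm}u,\pi_{\perp,\pm}u\rangle_{r}+2\real\langle\pi_{0,\pm}u,\pi_{\perp,\pm}u\rangle_{r}
\]
and using $|\langle\cdot,\cdot\rangle_{r}|\leq\|\cdot\|_{L^{2}}\|\cdot\|_{L^{2}}$ (since $r^{\ast}$ is $L^{2}$-unitary), together with $\langle\pi_{0,\pm}u,\pi_{0,\pm}u\rangle_{r}=\|\pi_{0,\pm}u\|^{2}_{L^{2}}\geq(1-C_{0}^{2}b^{2})\|u\|^{2}_{L^{2}}$, gives \eqref{eq:Epmequi} and in particular positive definiteness of $\langle\cdot,\cdot\rangle_{r}$ on $E_{\pm,b,V^{h}}$ provided $C_{0}b$ is small enough, i.e.\ $2C_{0}^{5}b\leq\varrho_{h}\leq 1$. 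Since $B_{\pm,b,V^{h}}\big|_{E_{\pm,b,V^{h}}}$ is a finite-dimensional $\langle\cdot,\cdot\rangle_{r}$-symmetric operator on this now genuine Hilbert space, it is diagonalizable with real eigenvalues; the non-negativity comes from $\langle u,B_{\pm,b,V^{h}}u\rangle_{r}=\tfrac12\|\delta u\|^{2}_{r}+\tfrac12\|\delta^{\ast,r}u\|^{2}_{r}\geq 0$, while $\varrho_{h}\notin\mathrm{Spec}$ by (a) forces the eigenvalues into $[0,\varrho_{h}[$. The main obstacle in the whole argument is precisely the sign bookkeeping in the $-$ case of Step~4, which is why the orientation bundle was introduced in \eqref{eq:U-theta}.
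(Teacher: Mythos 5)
Your Steps 2 and 3 follow essentially the paper's route (projector comparison via the contour integral of $\mathfrak{D}_z$, and the Hodge presentation for the PT-symmetry), but Steps 1 and 4 each contain a genuine gap.

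In Step 1 two of the four regions are attributed to tools that cannot reach them. Corollary~\ref{cor:maxaccr} only gives accretivity of $C_{0}+B_{\pm,b,V^{h}}$, so it excludes spectrum in $\{\real z<-C_{0}\}$ (the paper uses it quantitatively for $\real z\leq-\frac{A^{2}}{2}$); it says nothing about the strip $\{-\frac{A^{2}}{2}\leq\real z\leq-\varrho_{h}\}$, which is large since $\varrho_{h}$ may be exponentially small. Likewise the subelliptic estimate \eqref{eq:principalInequality} carries the term $\frac{2\kappa_{s}}{b^{2}}\|u\|$ on the left, which cannot be ``rescaled into a bounded translation'': it dominates the right-hand side only when $|\mathrm{Im}\,z|\gtrsim b^{-2}$, so it does not exclude spectrum in $\{|\real z|\leq \frac{A^{2}}{2},\ 1\leq|\mathrm{Im}\,z|\lesssim b^{-2}\}$. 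Both regions must be treated, as in Lemma~\ref{le:tableres}, by the same Witten-resolvent comparison (Proposition~\ref{pr:estimateForTheConvergence}) that you invoke for $[N,Q]$, using $\mathrm{dist}(z,\mathrm{Spec}(\frac{1}{2}\Delta_{V^{h},1}))\geq\max(\frac{\varrho_h}{2},|\mathrm{Im}\,z|)$ there. This is fixable with tools you already cite, but as written the argument fails.

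The gap in Step 4 is more substantial. The bound $\|\pi_{\perp,\pm}u\|_{L^{2}}\leq C_{0}b\|u\|_{L^{2}}$ on $E_{\pm,b,V^h}$ does not follow from the projector comparison of Step 2: since $\pi_{\perp,\pm}\pi_{h}=0$, that comparison only yields $\|\pi_{\perp,\pm}u\|\leq\|\pi_{E_{\pm,b,V^h}}-\pi_{h}\|\,\|u\|=O(A^{-1/2})\|u\|$ with $A=C_{0}$ a \emph{fixed} constant, i.e.\ an $O(1)$ smallness rather than an $O(b)$ one. That weaker bound still gives positive definiteness of $\langle\cdot,\cdot\rangle_{r}$ for $C_{0}$ large, but it cannot produce \eqref{eq:Epmequi} with the stated rate $\sqrt{1-C_{0}b^{2}}$. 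The missing idea is the trace/coercivity mechanism of Lemma~\ref{lem:PTsymmetry}: combining $\real\langle u,(C_{0}+B_{\pm,b,V^{h}})u\rangle\geq\frac{1}{2b^{2}}\|(1-\pi_{0,\pm})u\|^{2}$ with $\mathrm{Tr}\big[(C_{0}+B_{\pm,b,V^{h}})\pi_{E_{\pm,b,V^h}}\big]\leq 3C_{0}\mathcal{N}_{\pm}$ (the eigenvalues inside the contour are real and $\leq\varrho_{h}$) gives $\|(1-\pi_{0,\pm})u\|^{2}\leq Cb^{2}\|u\|^{2}$ on $E_{\pm,b,V^h}$, which is what \eqref{eq:Epmequi} requires. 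Two smaller points: the cross terms in your splitting of $\langle u,u\rangle_{r}$ vanish exactly because $r^{*}$ commutes with $\pi_{0,\pm}$, and before writing $\langle u,B_{\pm,b,V^{h}}u\rangle_{r}=\frac{1}{2}\|\delta u\|_{r}^{2}+\frac{1}{2}\|\delta^{*,r}u\|_{r}^{2}$ you must note that $\delta_{\pm,b,V^{h}}$ and $\delta_{\pm,b,V^{h}}^{*,r}$ preserve $E_{\pm,b,V^h}$ (they commute with $B_{\pm,b,V^{h}}$), since $\|\cdot\|_{r}$ is only a norm there.
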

       
        \begin{definition}
          The eigenvalues of $B_{\pm,b,V^{h}}^{(p)}$ lying in $[0,\varrho(h)]$ will be denoted by $(\lambda_{\pm,j}^{(p)})_{1\leq j\leq \mathcal{N}_{\pm}^{(p)}}$ with the associated $\langle ~, ~\rangle_{r} $-orthonormal basis of eigenvectors $(u_{\pm,j}^{(p)})_{1\leq j\leq \mathcal{N}_{\pm}^{(p)}}$\,.
      \end{definition}
      \begin{proposition}\label{pr:semiGroup}
        For every $s\in\mathbb{R}$ there exists $C_{s}\geq 1$ such that taking $A=C_{s}$ with the condition $\varrho_{h}\geq bC_{s}^{5}$ implies that the semigroup $(e^{-t B_{\pm,V^{h}}})_{t>0}$ admits for every $t>0$ the following convergent integral representation
        \begin{eqnarray*}
          e^{-tB_{\pm,b,V^{h}}^{(p)}}=\frac{1}{2i\pi}\int_{\Gamma}\frac{e^{-tz}}{z-B_{\pm,b,V^{h}}^{(p)}}~dz
          =\underbrace{\frac{1}{2i\pi}\int_{NO'P'Q}\frac{e^{-tz}}{z-B_{\pm,b,V^{h}}^{(p)}}~dz}_{(I)}+
          \underbrace{\frac{1}{2i\pi}\int_{\Gamma_{+}+[MR]+\Gamma_{-}}\frac{e^{-tz}}{z-B_{\pm,b,V^{h}}^{(p)}}~dz}_{(II)}\,.
        \end{eqnarray*}
        In the above formula the first term equals
\begin{eqnarray*}
  && (I)=\sum_{i=1}^{\mathcal{N}_{\pm}^{(p)}}e^{-t\lambda_{\pm,j}^{(p)}}|u_{\pm,j}^{(p)}\rangle\langle r^*u_{\pm,j}^{(p)}| \\
\text{with}  && \|u_{\pm,j}^{(p)}\|_{\tilde{\cal W}^{0,s}} = \|r^*u_{\pm,j}^{(p)}\|_{\tilde{\cal W}^{0,s}} \leq C_s \,,
\end{eqnarray*}
and the second term satisfies
$$
\|(II)\|_{\mathcal{L}(\tilde{\mathcal{W}}^{s};\tilde{\mathcal{W}}^{s})}\leq \frac{1+t^{-1}}{b^{2}}e^{-t\varrho_{h}}\,.
$$
      \end{proposition}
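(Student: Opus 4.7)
\medskip

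\noindent\textbf{Proof plan for Proposition~\ref{pr:semiGroup}.}

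The plan is to start from the convergent contour representation of $e^{-tB_{\pm,b,V^h}^{(p)}}$ on the Hankel-type contour recalled in the consequences of Proposition~\ref{pr:main1sttext}, then deform the contour inside the resolvent set (the gray region in Figure~\ref{fig2}) to the composite $NO'P'Q+(\Gamma_{+}+[MR]+\Gamma_{-})$. Proposition~\ref{pr:spectrum} \textbf{a)} ensures the two contours are connected by a homotopy that stays in the resolvent set, and the decay of $(z-B_{\pm,b,V^h}^{(p)})^{-1}$ at infinity (polynomial) together with that of $e^{-tz}$ on $\Gamma_\pm$ lets us move the contour without boundary contribution. By Cauchy's theorem this yields the advertised splitting $e^{-tB_{\pm,b,V^h}^{(p)}}=(I)+(II)$.

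For term $(I)$, the contour $NO'P'Q$ encloses exactly the eigenvalues $\lambda_{\pm,j}^{(p)}$ lying in $[0,\varrho_h]$. Proposition~\ref{pr:spectrum}~\textbf{d)} states that $B_{\pm,b,V^h}^{(p)}\big|_{E_{\pm,b,V^h}^{(p)}}$ is self-adjoint and non-negative for the hermitian form $\langle\cdot,\cdot\rangle_r$, which is positive definite on $E_{\pm,b,V^h}^{(p)}$. Choosing a $\langle\cdot,\cdot\rangle_r$-orthonormal basis $(u_{\pm,j}^{(p)})$ of eigenvectors and using the orthogonal expansion with respect to $\langle\cdot,\cdot\rangle_r=\langle\cdot,r^*\cdot\rangle$ gives immediately
\[
(I)=\sum_{j}e^{-t\lambda_{\pm,j}^{(p)}}\,|u_{\pm,j}^{(p)}\rangle\langle r^{*}u_{\pm,j}^{(p)}|.
\]
The uniform bound $\|u_{\pm,j}^{(p)}\|_{\tilde{\mathcal W}^{0,s}}=\|r^{*}u_{\pm,j}^{(p)}\|_{\tilde{\mathcal W}^{0,s}}\leq C_s$ will be obtained as follows: the $\langle\cdot,\cdot\rangle_r$-normalization together with the norm equivalence \eqref{eq:Epmequi} gives $\|u_{\pm,j}^{(p)}\|_{L^2}\leq (1-C_0 b^2)^{-1/2}\leq 2$; then, writing $u_{\pm,j}^{(p)}=\pi_{E_{\pm,b,V^h}^{(p)}}u_{\pm,j}^{(p)}$ and expressing the spectral projector via the resolvent identity and the Grushin decomposition $E-E_+(E_{-+})_\ell^{-1}E_-$ of Proposition~\ref{pr:Grushin}, one checks that this projector sends $L^{2}$ continuously into every $\tilde{\mathcal W}^{0,s}$ (since all the ingredients in $E_+,E_-$ involve smoothing factors $U_{\pm,\theta}(E_{-+})_\ell^{-1}U_{\pm,\theta}^{-1}$ and $(\overline{B_{\pm,b,V^h,\bot}}^{s}-z)^{-1}\pi_{\bot,\pm}$ with $s$-independent control, thanks to Propositions~\ref{pr:main1sttext} and~\ref{pr:maximalAccretivityOfPiBPi}).

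For the remainder $(II)$, the strategy is to estimate $\|(z-B_{\pm,b,V^h}^{(p)})^{-1}\|_{\mathcal{L}(\tilde{\mathcal W}^{0,s};\tilde{\mathcal W}^{0,s})}$ on each piece and bound the contour integral. On $\Gamma_\pm$, parametrized by $z=\varrho_h+x\pm i(1+x^{2})/b^{2}$, $x\geq 0$, the resolvent identity together with Proposition~\ref{pr:estimateForTheConvergence} gives
\[
\|(z-B_{\pm,b,V^h}^{(p)})^{-1}\|\leq \|(\tfrac12\Delta_{V^h,1}-z)^{-1}\|+\text{(small error)}\lesssim \frac{b^{2}}{1+x^{2}},
\]
while $|dz|\lesssim (1+x/b^{2})dx$ and $|e^{-tz}|=e^{-t(\varrho_h+x)}$. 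Integrating yields an $\Gamma_\pm$-contribution bounded by $Ce^{-t\varrho_h}(1+1/t)$. On the vertical segment $[MR]$ at $\real z=\varrho_h$, $|\mathrm{Im}\,z|\leq 1/b^{2}$, Proposition~\ref{pr:estimateForTheConvergence} (whose hypothesis $\mathrm{dist}(z,\mathrm{Spec}(\tfrac12\Delta_{V,h}))\geq C_sbA^{4}$ holds because $\varrho_h\geq C_s^{5}b$ and the spectrum of $\tfrac12\Delta_{V,h}$ avoids a neighbourhood of $\{\real z=\varrho_h\}$ of size $\varrho_h$ by Definition~\ref{def:rhoh}) combined with $\|(\tfrac12\Delta_{V^h,1}-z)^{-1}\|\leq 1/\sqrt{\varrho_h^{2}+y^{2}}$ and length $2/b^{2}$ of $[MR]$ delivers the remaining $\mathcal{O}(b^{-2}e^{-t\varrho_h})$ contribution. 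Summing the three pieces gives the advertised bound $\|(II)\|\leq b^{-2}(1+t^{-1})e^{-t\varrho_h}$.

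The main obstacle is the balance between the length of $[MR]$ (which is the source of the $1/b^{2}$ factor) and the resolvent decay needed so that the integral along $\Gamma_\pm$ does not contribute a spurious logarithmic blow-up as $t\to0^{+}$; this is overcome by splitting $\Gamma_\pm$ at $x\sim 1/t$ and exploiting the sharp decay $b^{2}/(1+x^{2})$ of the resolvent provided by the closeness to $(\tfrac12\Delta_{V^h,1}-z)^{-1}$, which is precisely what Proposition~\ref{pr:estimateForTheConvergence} was designed to give uniformly in $s\in\mathbb{R}$.
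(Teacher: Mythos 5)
Your overall architecture (contour splitting, diagonalization of $(I)$ via the positive definite form $\langle\cdot,\cdot\rangle_r$, piecewise resolvent bounds for $(II)$) matches the paper's, but there is one genuine gap and one divergence worth flagging. The gap is in your treatment of $\Gamma_{\pm}$: you bound the resolvent there by invoking Proposition~\ref{pr:estimateForTheConvergence}, whose hypotheses include $|\real z|\leq \frac{A^{2}}{2}$, while the curves $\Gamma_{\pm}$ defined in \eqref{eq:gammapm} run out to $\real z=\varrho_h+x\to+\infty$; for $x>\frac{A^{2}}{2}-\varrho_h$ the comparison with $(\frac12\Delta_{V^h,1}-z)^{-1}$ is simply not available, so your claimed decay $b^{2}/(1+x^{2})$ is unjustified on most of the contour. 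The paper closes this by using the cruder but globally valid estimate $\|(B_{\pm,b,V^h}-z)^{-1}\|\leq \frac{1}{2b\sqrt{|\mathrm{Im}\,z|}}$ on the region $\{\real z\geq\varrho_h,\ 1+(\real z-\varrho_h)^{2}\leq b^{2}|\mathrm{Im}\,z|\}$, proved in Proposition~\ref{pr:spectrum}~\textbf{a)} directly from the accretivity and the $|\lambda|^{1/2}$ term in the subelliptic estimate \eqref{eq:principalInequality} (cf.\ line~\textbf{(4)} of Table~\ref{tab:resodiff}); with this bound the $\Gamma_{\pm}$ integrand is $\leq \frac{1}{2b^{2}}e^{-t(u+\varrho_h)}$ and no splitting at $x\sim 1/t$ is needed. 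You should replace your $\Gamma_{\pm}$ estimate by this one (or otherwise supply a resolvent bound valid for large $\real z$).

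The divergence concerns the bound $\|u_{\pm,j}^{(p)}\|_{\tilde{\mathcal W}^{0,s}}\leq C_s$. The paper gets it by a bootstrap: writing $(B_{\pm,b,V^h}+A^{2}\pi_{0,\pm})u_{\pm,j}^{(p)}=\lambda_{\pm,j}^{(p)}u_{\pm,j}^{(p)}+A^{2}\pi_{0,\pm}u_{\pm,j}^{(p)}$ and applying the subelliptic estimate of Proposition~\ref{pr:subellipticEstimateA} repeatedly gains $2/5$ of a derivative per step starting from the $L^{2}$ bound \eqref{eq:Epmequi}. Your route through the Grushin representation of the spectral projector can be made to work, but as stated it leans on the false claim that $(\overline{B_{\pm,b,V^h,\bot}}^{s}-z)^{-1}\pi_{\bot,\pm}$ is smoothing — Proposition~\ref{pr:maximalAccretivityOfPiBPi} gives no gain of regularity; the regularization in the projector comes only from the $2/3$-gain of $(E_{-+})_{\ell}^{-1}$ after the holomorphic term $E$ drops out of the contour integral, and one must still iterate and track the constants to get $C_s$. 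The paper's bootstrap is both shorter and quantitatively explicit, so I recommend adopting it.
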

      The proofs of Proposition~\ref{pr:spectrum} and Proposition~\ref{pr:semiGroup} actually rely on the two following lemmas.
      The first one is an application of Proposition~\ref{pr:estimateForTheConvergence} with the specific geometric partition of $\mathbb{C}$ represented in Figure~\ref{fig2}.
   \begin{lemma}
   \label{le:tableres}
   For any $s\in \mathbb{R}$\,, there exists $C_s\geq 1$ such that $A\geq C_s$ and $\frac{\varrho_h}{2}\geq C_s b A^4$ imply that the norms of 
        $$ 
        (B_{\pm,b,V^h}-z)^{-1}\quad,\quad
        \mathfrak{D}_z = (B_{\pm,b,V^h}-z)^{-1} - U_{\pm,\theta} (\frac{1}{2}\Delta_{V^h,1}-z)^{-1} U_{\pm,\theta}^{-1}\,,
        $$
        have upper bounds given by the following table.  Because $B_{\pm,b,V^h}$ and $U_{\pm,\theta}\frac{1}{2}\Delta_{V^h,1}U_{\pm,\theta}^{-1}$ preserve the total degree $p\in\{0,\ldots,2d\}$\,,  $(B_{\pm,b,V^h}-z)^{-1}$ and $\mathcal{D}_{z}$ can be respectively replaced by
          $$ 
        (B_{\pm,b,V^h}^{(p)}-z)^{-1}\quad,\quad
        \mathfrak{D}_z^{(p)} = (B_{\pm,b,V^h}^{(p)}-z)^{-1} - U_{\pm,\theta} (\frac{1}{2}\Delta_{V^h,1}^{(p-\frac{d}{2}\pm\frac{d}{2})}-z)^{-1} U_{\pm,\theta}^{-1}\,.
        $$     
        \begin{table}[h]
            \centering
        \begin{tabular}{|c|c|c|c|}
        \hline
        Sets & $\| \mathfrak{D}_z \|_{\mathcal{L}(\tilde{\cal W}^{0,s};\tilde{\cal W}^{0,s})}$ & $\| (B_{\pm,b,V^h}-z)^{-1} \|_{\mathcal{L}(\tilde{\cal W}^{0,s};\tilde{\cal W}^{0,s})}$ & Label \\
        \hline
            $\{ \real z \leq -\frac{A^2}{2} \}$ & & $\frac{4}{A^2}~\text{for}~s=0$ & \textbf{(1)} 
            \\
            \hline
            $\{ |\real z| \leq \frac{A^2}{2} \text{ and } |\mathrm{Im}~z|\geq 1\}$ & $C_s(bA^{\frac{7}{2}}+A^{-2})$ & $C_s(\frac{1}{|\mathrm{Im}~z|} + bA^{\frac{7}{2}}+A^{-2})$& 
            \textbf{(2)}
            \\
            \hline
            $ \left\{ \left(\begin{array}{c}
                 \real z \leq -\varrho_h  \\
                 \text{or}~\real z=\varrho_h
            \end{array}\right) \text{ and } |\mathrm{Im}~z| \leq 1 \right\}$ & $C_s\left(A^{-2} + \frac{bA^{\frac{7}{2}}}{\varrho_{h}^2/4 + |\mathrm{Im}~z|^2}\right)$ & $C_s\left( A^{-2} + \frac{1+bA^{\frac{7}{2}}}{\varrho_{h}^2/4 + |\mathrm{Im}~z|^2}\right)$ & \textbf{(3)}
            \\
            \hline
            $\left\{\real ~ z= 0~\text{and}~|\mathrm{Im}~z|\geq\frac{1}{b^2}\right\}$ & $C_s \frac{A^{-2}+bA^{-\frac{1}{2}}}{b\sqrt{|\mathrm{Im}~z|}}\leq\frac{1}{8b\sqrt{|\mathrm{Im}~z|}}$  & 
            $\frac{1}{|\mathrm{Im}~z|}+\frac{1}{8b\sqrt{|\mathrm{Im}~z|}}\leq \frac{1}{4b\sqrt{|\mathrm{Im}~z|}}$ & \textbf{(4)} \\
            \hline
            \end{tabular}
            \caption{Resolvent norm estimates}
             \label{tab:resodiff}
        \end{table}
   \end{lemma}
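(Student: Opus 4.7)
The plan is to derive both columns of Table~\ref{tab:resodiff} from the single quantitative comparison of Proposition~\ref{pr:estimateForTheConvergence}, combined with the trivial self-adjoint resolvent bound
$$
\|U_{\pm,\theta}(\tfrac{1}{2}\Delta_{V^h,1}-z)^{-1}U_{\pm,\theta}^{-1}\|_{\mathcal{L}(\tilde{\mathcal{W}}^{0,s};\tilde{\mathcal{W}}^{0,s})}\leq \frac{C_s}{\mathrm{dist}(z,\mathrm{Spec}(\tfrac{1}{2}\Delta_{V^h,1}))}\,,
$$
which comes from the functional calculus of the self-adjoint operator $\Delta_{V^h,1}$ once the $H^s$-norm on $Q^h$ is identified with $\|(C_d+\Delta_{V^h,1})^{s/2}\cdot\|_{L^2}$ as recalled in Subsection~\ref{sec:QALV}. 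The triangle inequality applied to $(B_{\pm,b,V^h}-z)^{-1}=\mathfrak{D}_z+U_{\pm,\theta}(\tfrac{1}{2}\Delta_{V^h,1}-z)^{-1}U_{\pm,\theta}^{-1}$ then reads off the second column from the first.

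The key step is to estimate $\mathrm{dist}(z,\mathrm{Spec}(\tfrac{1}{2}\Delta_{V^h,1}))$ in each of the three regions (2), (3), (4), using the spectral gap of Definition~\ref{def:rhoh}, which places $\mathrm{Spec}(\tfrac{1}{2}\Delta_{V^h,1})\subset [0,\varrho_h/2]\cup[4\varrho_h,+\infty)$. One checks successively that the distance is at least $|\mathrm{Im}~z|\geq 1$ in region (2), at least $\sqrt{\varrho_h^2/4+|\mathrm{Im}~z|^2}$ in region (3) (using $\real z\leq-\varrho_h$ or $\real z=\varrho_h$), and at least $|\mathrm{Im}~z|\geq 1/b^2$ in region (4). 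In every case the standing hypothesis $\varrho_h/2\geq C_sbA^4$ guarantees $\mathrm{dist}\geq C_s'bA^4$, the condition needed to apply Proposition~\ref{pr:estimateForTheConvergence}. Substituting these distances into that proposition and using the elementary expansion $(1+A^2/\mathrm{dist})^2\leq 2+2A^4/\mathrm{dist}^2$, together with the absorption $bA^{-1/2}\leq A^{-2}$ ensured by $C_s\max(Ab,b,A^{-1})\leq 1$, yields the three stated bounds on $\|\mathfrak{D}_z\|$; the row (4) simplifications rely on $A\geq C_s$ and $|\mathrm{Im}~z|\geq 1/b^2$ to shrink the numerical prefactors below $1/8$ and to absorb $1/|\mathrm{Im}~z|$ into $1/(b\sqrt{|\mathrm{Im}~z|})$ for small $b$.

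Region (1) ($\real z\leq-A^2/2$) falls outside the scope of Proposition~\ref{pr:estimateForTheConvergence}, and only the resolvent bound is claimed there; it follows directly from the maximal accretivity of $C_s+\overline{B_{\pm,b,V^h}}^{\,0}$ in Corollary~\ref{cor:maxaccr}, which gives $\real\langle u,(B_{\pm,b,V^h}-z)u\rangle\geq(|\real z|-C_s)\|u\|^2\geq (A^2/4)\|u\|^2$ once $A^2\geq 4C_s$, hence $\|(B_{\pm,b,V^h}-z)^{-1}\|_{L^2\to L^2}\leq 4/A^2$. The degree-preserving refinement of the statement is immediate from the commutation of both $B_{\pm,b,V^h}$ and $\Delta_{V^h,1}$ with the total-degree projection, combined with the fact that $U_{+,\theta}$ preserves the horizontal degree while $U_{-,\theta}$ additionally wedges with a top vertical form (cf.~\eqref{eq:U+thetaexpl}--\eqref{eq:U-thetaexp}), which accounts for the shift $p\mapsto p-d/2\pm d/2$. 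The main obstacle is the careful bookkeeping in row (3), where the square root in the lower bound for $\mathrm{dist}$ must be reconciled with the unsquared denominator in the table entry; this is handled using the ambient upper bound $\mathrm{dist}(z,\mathrm{Spec}(\tfrac{1}{2}\Delta_{V^h,1}))\leq|z|\leq C A^2$ valid on the relevant part of (3), so that the conversion $1/\mathrm{dist}\leq C A^2/\mathrm{dist}^2$ costs only a factor $A^2$ which is absorbed into $C_s$.
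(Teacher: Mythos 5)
Your overall strategy coincides with the paper's: verify in each region that $\mathrm{dist}(z,\mathrm{Spec}(\tfrac{1}{2}\Delta_{V^h,1}))$ dominates both $\varrho_h/2$ and the quantity appearing in the denominator of the table, feed this into Proposition~\ref{pr:estimateForTheConvergence}, and recover the second column by the triangle inequality together with the self-adjoint resolvent bound; region \textbf{(1)} is handled separately by the accretivity of $C_0+B_{\pm,b,V^h}$ from Corollary~\ref{cor:maxaccr}, exactly as in the paper (your shortcut via $\real\langle u,(B-z)u\rangle\geq(|\real z|-C_0)\|u\|^2$ is a legitimate simplification of the paper's expansion of $\|(B-z)u\|^2$).

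There is, however, one step in your row \textbf{(3)} bookkeeping that does not work as written. To pass from $\|U_{\pm,\theta}(\tfrac{1}{2}\Delta_{V^h,1}-z)^{-1}U_{\pm,\theta}^{-1}\|\leq 1/\mathrm{dist}$ to the table entry with denominator $\varrho_h^2/4+|\mathrm{Im}\,z|^2$ (a lower bound for $\mathrm{dist}^2$), you invoke $\mathrm{dist}\leq|z|\leq CA^2$ and claim the resulting factor $A^2$ ``is absorbed into $C_s$''. This is not legitimate: $A$ is a free parameter only constrained by $A\geq C_s$, so a loss of $A^2$ cannot be hidden in an $s$-dependent constant, and the resulting bound $C_sA^2/(\varrho_h^2/4+|\mathrm{Im}\,z|^2)$ is strictly weaker than the stated one. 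The correct observation is that on region \textbf{(3)} one has $|\mathrm{Im}\,z|\leq 1$ and $\varrho_h\leq 1$, hence $\varrho_h^2/4+|\mathrm{Im}\,z|^2\leq 5/4$, so that
$$
\frac{1}{\mathrm{dist}}\leq \frac{1}{\sqrt{\varrho_h^2/4+|\mathrm{Im}\,z|^2}}=\frac{\sqrt{\varrho_h^2/4+|\mathrm{Im}\,z|^2}}{\varrho_h^2/4+|\mathrm{Im}\,z|^2}\leq \frac{2}{\varrho_h^2/4+|\mathrm{Im}\,z|^2}\,,
$$
a conversion costing only a universal constant (note this works no matter how negative $\real z$ is, since only $|\mathrm{Im}\,z|$ enters the upper bound). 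A similar, milder imprecision occurs in your absorption $bA^{-1/2}\leq A^{-2}$: this needs $bA^{3/2}\leq 1$, which does not follow from $C_s\max(Ab,b,A^{-1})\leq 1$ alone but does follow from the standing hypothesis $bA^4\leq \varrho_h/(2C_s)\leq 1$; alternatively one can simply absorb $bA^{-1/2}$ into the $bA^{7/2}/(\varrho_h^2/4+|\mathrm{Im}\,z|^2)$ term since the denominator is at most $5/4$. With these two corrections your proof matches the paper's.
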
     
\begin{proof} 
The conditions $A\geq C_s$ and $\frac{\varrho_h}{2}\geq C_s b A^4$ with $\varrho_h\leq 1$  ensure the validity of the hypotheses of Proposition~\ref{pr:estimateForTheConvergence}:
$$
C_s\max(Ab,b,A^{-1})\leq 1 \quad\text{and}\quad \mathrm{dist}(z,\mathrm{Spec}(\frac{1}{2}\Delta_{V^{h},1}))\geq C_s bA^4
$$
as soon as $\mathrm{dist}(z,\mathrm{Spec}(\frac{1}{2}\Delta_{V^{h},1}))\geq\frac{\varrho_h}{2}$\,. 
        \begin{description}
        \item[(1)] This line is only concerned with the case $s=0$. By Corollary~\ref{cor:maxaccr} there is constant $C_0\geq 1$ such that 
        $$ C_0+B_{\pm,b,V^h} $$
        is accretive as soon as $ C_0b\leq 1 $ and $h\in ]0,1]$. Take $z\in \{ \real z\leq -\frac{A^2}{2}\}$, 
        \begin{eqnarray*}
        & & \hspace{-1.5cm}\| (B_{\pm,b,V^h} -z)u\|_{L^2}^2 \\
        &=& \|(B_{\pm,b,V^h}-i\mathrm{Im}~z)u \|_{L^2}^2 + |\real ~ z|^2\|u\|_{L^2}^2 + 2 (-\real z) ~ \real \left\langle B_{\pm,b,V^h}u , u \right\rangle \\
        & = & \|(B_{\pm,b,V^h}-i\mathrm{Im}~z)u \|_{L^2}^2 + \big[|\real ~ z|^2-2C_0|\real z|\big]\|u\|_{L^2}^2 + 2 |\real z|~\underbrace{\real \left\langle (C_0+B_{\pm,b,V^h})u , u \right\rangle}_{\geq 0} \\
        &\geq & \big[|\real ~ z|^2-2C_0|\real z|\big]\|u\|_{L^2}^2 \\
        & \geq & \frac{A^2}{2}(\frac{A^2}{2}-2C_0)\|u\|_{L^2}^2
        \end{eqnarray*}
       When $ \frac{A^2}{8}\geq C_0 $\,, we obtain
        $$ \| (B_{\pm,b,V^h}-z)u \|_{L^2} \geq \frac{A^2}{4} \| u \|_{L^2}\,. $$
        \item[(2)] For $z\in \{ |\real z|\leq \frac{A^2}{2} \text{ and } |\mathrm{Im}~z|\geq 1 \} $ we know
        $$
        \mathrm{dist}(z,\mathrm{Spec}(\frac{1}{2}\Delta_{V^h,1}))\geq |\mathrm{Im}z|\geq 1\geq\frac{\varrho_h}{2}
        $$
        and
        \begin{equation}\label{eq:lowerBoundDistToSpec2}
        \| U_{\pm,\theta} (\frac{1}{2}\Delta_{V^h,1}-z)^{-1}U_{\pm,\theta}^{-1} \|_{\mathcal{L}(\tilde{\cal W}^{0,s};\tilde{\cal W }^{0,s})}
            \leq \frac{1}{ \mathrm{dist}(z,\mathrm{Spec}(\frac{1}{2}\Delta_{V^h,1}))}\leq \frac{1}{|\mathrm{Im}~z|}\leq 1\,.
        \end{equation}
        Proposition~\ref{pr:estimateForTheConvergence} gives for a proper choice of $C_s\geq 1$ the inequality
        \begin{eqnarray}
        \|\mathfrak{D}_z\|_{\mathcal{L}(\tilde{\cal W}^{0,s};\tilde{\cal W}^{0,s})} & \leq & \frac{C_s}{1+b\sqrt{|\mathrm{Im}~z}|} \left[ A^{-2} + (1+\frac{A^2}{\mathrm{dist}(z,\mathrm{Spec}(\frac{1}{2}\Delta_{V^h,1}))})^2bA^{-\frac{1}{2}} \right] \nonumber\\
        & \leq & 4C_s [A^{-2} + bA^{7/2} ] \label{eq:UpperBoundForFrakD2}\,.
                \end{eqnarray}
                The upper bound for $ (B_{\pm,b,V^h}-z)^{-1} $ is deduced at once from \eqref{eq:lowerBoundDistToSpec2} and  \eqref{eq:UpperBoundForFrakD2}.
        \item[(3)] The following inequality holds for $z\in \{ (\real z \leq -\varrho_h~\text{or}~\real z=\varrho_h)\text{ and } |\mathrm{Im}~z| \leq 1 \}$
        $$ \mathrm{dist}(z,\mathrm{Spec}(\frac{1}{2}\Delta_{V^h,1}))^2\geq \frac{\varrho_h^2}{4} + |\mathrm{Im}~z|^2\,,$$
        with the detailed cases:
        \begin{itemize}
            \item if $\real z\leq -\varrho_h$ then  $\mathrm{dist}(z,\mathrm{Spec}(\frac{1}{2}\Delta_{V^h,1}))^2\geq |z|^2\geq  \varrho_h^2+|\mathrm{Im}z|^2 \geq \frac{\varrho_h^2}{4}+|\mathrm{Im}z|^2$\,;
            \item if $\real z = \varrho_h$ then the hypothesis \eqref{eq:spectralGapLower} ensure that $\mathrm{dist}(z,\mathrm{Spec}(\frac{1}{2}\Delta_{V^h,1}))^2 \geq (\frac{\varrho_h}{2})^2 + |\mathrm{Im}~z|^2$\,.
        \end{itemize}
        Applying Proposition~\ref{pr:estimateForTheConvergence} gives
        $$ \| \mathfrak{D}_z \|_{\mathcal{L}(\tilde{\cal W}^{0,s};\tilde{\cal W}^{0,s})}\leq C_s\left[A^{-2}+\frac{bA^{7/2}}{\varrho_h^2/4 + |\mathrm{Im}~z|^2}\right] \,.$$
        \item[(4)] If $z\in \{ \real z = \text{ and } |\mathrm{Im}~z|\geq \frac{1}{b^2} \}$ the distance to the spectrum is bounded by 
        $$ \mathrm{dist}(z,\frac{1}{2}\Delta_{V^h,1})\geq |\mathrm{Im}~z|\geq \frac{1}{b^2} \,. $$
        Proposition~\ref{pr:estimateForTheConvergence}, with $Ab\leq 1$, implies
        \begin{eqnarray*}
        \| \mathfrak{D}_z \|_{\mathcal{L}(\tilde{\cal W}^{0,s};\tilde{\cal W}^{0,s})}\leq C_s \frac{A^{-2}+bA^{-\frac{1}{2}}}{b\sqrt{|\mathrm{Im}~z|}} \leq \frac{1}{8b\sqrt{|\mathrm{Im}~z|}}\,,
        \end{eqnarray*}
        \end{description}
        by choosing again $A$ and $\frac{1}{b}$ large enough. Finally
        $$
         \| (B_{\pm,b,V^h}-z)^{-1} \|_{\mathcal{L}(\tilde{\cal W}^{0,s};\tilde{\cal W}^{0,s})}\leq \frac{1}{|\mathrm{Im}\,z|}+\frac{1}{8b\sqrt{|\mathrm{Im}~z|}}\leq \frac{b^2+\frac{1}{8}}{b\sqrt{|\mathrm{Im}~z|}}
        \leq \frac{1}{4b\sqrt{|\mathrm{Im}~z|}}\,.$$
\end{proof}
 
The second lemma is a variation of \cite{Nie14}-Proposition~2.15, which was itself inspired from the article of H\'erau-Hitrik-Sj\"ostrand \cite{HHS}.
      \begin{lemma}\label{lem:PTsymmetry}
Let $(B,D(B))$ be a closed densely defined operator in a separable
Hilbert space
$\mathcal{H}$, 
such that $(1+B)^{-1}$ is compact, so that $\text{Spec}\,(B)$ is discrete, and $D(B)=D(B^{*})$\,.
Assume that there exists  a unitary involution $U^{*}=U^{-1}=U$  such that $U^{*}BU=B^{*}$\,. Then the spectrum $\textrm{Spec}\,(B)$ is invariant by
complex conjugation.\\
If additionally there exist $\gamma>0$\,, $\varepsilon\in ]0,\frac{1}{4}[$\,, an orthogonal projection $\Pi_{0}=\Pi_{0}^{*}$ and a bounded contour $\Gamma$\,, symmetric w.r.t $z\to \overline{z}$\,, such that:
\begin{itemize}
\item $\Pi_{0}U=U\Pi_{0}=\Pi_{0}$\,;
\item the real part $\real B=\frac{B+B^{*}}2$ is non negative and
  $\real B \geq \gamma(1-\Pi_{0})-\varepsilon\gamma$\,;
\item with $\Pi_{\Gamma}=\frac{1}{2\pi i}\int_{\Gamma}(z-B)^{-1}~dz$\,,
  \begin{equation}
    \label{eq.condtrace}
\real \mathrm{Tr}\,\left[\frac{1}{2\pi
    i}\int_{\Gamma}B(z-B)^{-1}~dz\right]=\mathrm{Tr}\,\left[B\Pi_{\Gamma}\right]
\leq \varepsilon \gamma\,;
\end{equation}
\end{itemize}
then the following properties hold:
\begin{itemize}
\item The form $\langle u,v\rangle_{U}=\langle u, U v\rangle$ is a hermitian
  positive definite form on $E_{\Gamma}={\rm Ran\,} \Pi_{\Gamma}$\,.
\item The norms $\|u\|_{U}=\sqrt{\langle u\,,\, Uu\rangle}$ and $\|u\|$ are equivalent
\begin{equation}
\label{eq:PTequi}
    \forall u\in E_{\Gamma}\,,\quad \sqrt{1-4\varepsilon}\|u\|\leq \|u\|_{U}\leq \|u\|\,.
\end{equation}
\item The restricted operator $B\big|_{E_{\Gamma}}=B\Pi_{\Gamma}\big|_{E_{\Gamma}}=\Pi_{\Gamma}B\big|_{E_{\Gamma}}$ is self-adjoint and non negative for the
  scalar product $\langle~,~\rangle_{U}$\,.
\item The vector space $E_{\Gamma}$ admits a basis of eigenvectors of $B$,
  $(e_{1},\ldots, e_{N})$, orthonormal for the scalar product
  $\langle~,~\rangle_{U}$\,.
\item For all
  $z\in\mathbb{C}$ inside the contour $\Gamma$\,, the inequality
$$
\|(z-B)^{-1}\big|_{E_{\Gamma}}\|_{\mathcal{L}(E_{\Gamma};E_{\Gamma})}\leq \frac{1}{\sqrt{1-4\varepsilon}\, \mathrm{dist}(z,
  \{\lambda_{1}, \ldots, \lambda_{N}\})}\,,
$$
holds with the initial norm $\|~\|$ on $E_{\Gamma}$\,.
\item The ``distance'' $\vec{d}(E_{\Gamma},\mathrm{Ran}\,\Pi_{0})=\|(1-\Pi_{0})\Pi_{\Gamma}\|_{\mathcal{L}(\mathcal{H})}$ is  bounded by $\sqrt{2\varepsilon}$ and $\Pi_0$ is an isomorphism from $E_{\Gamma}$ to $\Pi_0 E_{\Gamma}$\,.
\end{itemize}
\end{lemma}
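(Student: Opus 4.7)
The plan is to split the proof into the easy preliminaries, the engine (a spectral analysis of $B|_{E_\Gamma}$), and the routine transfer of pointwise bounds to the various stated conclusions.

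First I would handle the three items that are essentially formal. The spectral symmetry $\mathrm{Spec}(B)=\overline{\mathrm{Spec}(B)}$ is immediate from $U^{*}BU=B^{*}$, since $\lambda\in\rho(B)\Leftrightarrow \bar\lambda\in\rho(B^{*})\Leftrightarrow \bar\lambda\in\rho(B)$. The form $\langle u,v\rangle_{U}=\langle u,Uv\rangle$ is Hermitian because $U=U^{*}$, and $B$ is $U$-symmetric via
\[
\langle Bu,v\rangle_{U}=\langle Bu,Uv\rangle=\langle u,B^{*}Uv\rangle=\langle u,UBv\rangle=\langle u,Bv\rangle_{U}.
\]
Finally, $U\Pi_{0}=\Pi_{0}$ means that $U$ acts as the identity on $\mathrm{Ran}\,\Pi_{0}$; decomposing $u=\Pi_{0}u+(1-\Pi_{0})u$ and using $(1-U)\Pi_{0}=0$ gives
\[
\|u\|_{U}^{2}=\|\Pi_{0}u\|^{2}+\langle(1-\Pi_{0})u,U(1-\Pi_{0})u\rangle\geq \|u\|^{2}-2\|(1-\Pi_{0})u\|^{2}.
\]
Hence the norm equivalence \eqref{eq:PTequi} and the distance bound $\|(1-\Pi_{0})\Pi_{\Gamma}\|\leq\sqrt{2\varepsilon}$ both reduce to the single pointwise inequality $\|(1-\Pi_{0})u\|^{2}\leq 2\varepsilon\|u\|^{2}$ for $u\in E_{\Gamma}$.

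The engine is the eigenvalue analysis of $B|_{E_{\Gamma}}$. Since $\real B\geq 0$, for any unit eigenvector $Bu_{j}=\lambda_{j}u_{j}$ one has $\real\lambda_{j}=\langle u_{j},\real B\,u_{j}\rangle\geq 0$. By the spectral symmetry, $\mathrm{Tr}[B\Pi_{\Gamma}]=\sum_{j}\lambda_{j}$ is real and equals $\sum_{j}\real\lambda_{j}\leq\varepsilon\gamma$ by \eqref{eq.condtrace}, which forces each $\real\lambda_{j}\in[0,\varepsilon\gamma]$. Plugging the sharper hypothesis $\real B\geq\gamma(1-\Pi_{0})-\varepsilon\gamma$ into the same one-dimensional identity gives
\[
\|(1-\Pi_{0})u_{j}\|^{2}\leq \frac{\real\lambda_{j}+\varepsilon\gamma}{\gamma}\leq 2\varepsilon.
\]
A two-line computation using $U^{*}BU=B^{*}$ shows that two eigenvectors $u_{i},u_{j}$ with $\lambda_{i}\neq\bar\lambda_{j}$ are $\langle\cdot,\cdot\rangle_{U}$-orthogonal, from $(\lambda_{i}-\bar\lambda_{j})\langle u_{i},u_{j}\rangle_{U}=\langle Bu_{i},Uu_{j}\rangle-\langle u_{i},UBu_{j}\rangle=0$.

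The main obstacle is to upgrade the eigenvector bound to a bound valid for every $u\in E_{\Gamma}$, which requires ruling out Jordan blocks and non-real eigenvalues. My plan here is to exploit the $U$-symmetry together with the already established estimate $\|u_{j}\|_{U}^{2}\geq\|u_{j}\|^{2}-2\|(1-\Pi_{0})u_{j}\|^{2}\geq 1-4\varepsilon>0$ on each unit eigenvector. The form $\langle\cdot,\cdot\rangle_{U}$ is then non-degenerate on every individual eigenline, and the orthogonality inside $\{\lambda,\bar\lambda\}$-pairs extends this non-degeneracy to the span of all eigenvectors; a $U$-symmetric finite-dimensional operator preserving a non-degenerate Hermitian form on a maximal invariant subspace cannot carry a non-trivial Jordan block (a Jordan block produces a null direction), so $E_{\Gamma}$ is spanned by eigenvectors. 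Positivity of $\langle u_{j},u_{j}\rangle_{U}$ together with the $U$-orthogonality of conjugate pairs then forces $\lambda_{j}\in\mathbb{R}$, so one obtains a $\langle\cdot,\cdot\rangle_{U}$-orthonormal eigenbasis $(e_{j})$ of $E_{\Gamma}$; decomposing $u=\sum c_{j}e_{j}$ and using the pointwise bound on each $e_{j}$ together with the $U$-orthogonality transfers $\|(1-\Pi_{0})u\|^{2}\leq 2\varepsilon\|u\|^{2}$ to all of $E_{\Gamma}$.

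Once this pointwise bound is in hand, the remaining statements are routine. Positive definiteness of $\langle\cdot,\cdot\rangle_{U}$ on $E_{\Gamma}$ and the norm equivalence follow from the opening identity; self-adjointness of $B|_{E_{\Gamma}}$ for $\langle\cdot,\cdot\rangle_{U}$ is the statement that a symmetric operator on a finite-dimensional Hermitian space is self-adjoint; the resolvent bound
\[
\|(z-B)^{-1}|_{E_{\Gamma}}\|\leq \frac{1}{\sqrt{1-4\varepsilon}\,\mathrm{dist}(z,\{\lambda_{1},\ldots,\lambda_{N}\})}
\]
is obtained by expanding in the $\langle\cdot,\cdot\rangle_{U}$-orthonormal eigenbasis (diagonalizing the resolvent in $\|\cdot\|_{U}$) and then transferring to $\|\cdot\|$ via the norm equivalence; the isomorphism property of $\Pi_{0}|_{E_{\Gamma}}$ follows from $\|\Pi_{0}u\|^{2}\geq(1-2\varepsilon)\|u\|^{2}>0$ on $E_{\Gamma}$.
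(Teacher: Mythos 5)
Your formal preliminaries, the $U$-symmetry of $B$, the identity $\|u\|_{U}^{2}\geq\|u\|^{2}-2\|(1-\Pi_{0})u\|^{2}$, and the reduction of \eqref{eq:PTequi} and of the distance bound to the single inequality $\|(1-\Pi_{0})u\|^{2}\leq 2\varepsilon\|u\|^{2}$ on $E_{\Gamma}$ all match the paper. The gap is in how you establish that inequality: you prove it only for \emph{eigenvectors}, and your proposed transfer to a general $u=\sum_{j}c_{j}e_{j}\in E_{\Gamma}$ by expanding in the $\langle\cdot,\cdot\rangle_{U}$-orthonormal eigenbasis does not preserve the constant. The triangle/Cauchy--Schwarz estimate gives at best $\|(1-\Pi_{0})u\|^{2}\leq \frac{2\varepsilon N}{1-4\varepsilon}\|u\|^{2}$ with $N=\dim E_{\Gamma}$, because the $e_{j}$ are orthonormal for $\langle\cdot,\cdot\rangle_{U}$ and not for $\langle\cdot,\cdot\rangle$. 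With that degraded constant you cannot recover \eqref{eq:PTequi}, the bound $\vec{d}(E_{\Gamma},\mathrm{Ran}\,\Pi_{0})\leq\sqrt{2\varepsilon}$, or the resolvent estimate with the factor $\sqrt{1-4\varepsilon}$; for $\varepsilon$ near $1/4$ or $N$ large the lower bound on $\|u\|_{U}^{2}$ even becomes vacuous. The paper's key device, which you are missing, is to apply the trace hypothesis directly to an \emph{arbitrary} unit vector $u\in E_{\Gamma}$: complete $u=e_{1}$ into a $\langle\cdot,\cdot\rangle$-orthonormal basis $(e_{1},\dots,e_{N})$ of $E_{\Gamma}$, write $\varepsilon\gamma\geq\mathrm{Tr}(B\Pi_{\Gamma})=\sum_{j}\langle e_{j},\real B\,e_{j}\rangle$, drop the nonnegative terms $j\geq 2$ using $\real B\geq 0$, and conclude $\gamma\|(1-\Pi_{0})u\|^{2}-\varepsilon\gamma\leq\varepsilon\gamma$. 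This yields the sharp pointwise bound on all of $E_{\Gamma}$ with no reference to eigenvectors.

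A second, less serious remark: once the pointwise bound holds on all of $E_{\Gamma}$, positive definiteness of $\langle\cdot,\cdot\rangle_{U}$ follows immediately, and then $B|_{E_{\Gamma}}$ is symmetric for a genuine scalar product on a finite-dimensional space, hence diagonalizable with real eigenvalues. Your separate Jordan-block analysis (isotropy of the eigenvector at the bottom of a chain, isotropy of eigenvectors with non-real eigenvalues) is essentially correct as stated, but it is rendered unnecessary by the order of argument in the paper, and in your scheme it sits inside a circular-looking dependency (diagonalizability is needed for your transfer step, which is needed for the norm equivalence). Reorganize around the trace-plus-orthonormal-completion argument and the rest of your write-up goes through.
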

\begin{proof}
  The $PT$-symmetry property $B^{*}=U^{*}BU$ implies
$$
(z-B^{*})^{-1}=(z-U^{*}BU)^{-1}=U^{*}(z-B)^{-1}U
$$
whenever one of the resolvent exists, so that
$$
\overline{\text{Spec}\,(B)}=\text{Spec}\,(B^{*})=\text{Spec}\,(B)
$$
and  the spectrum $\text{Spec}\,(B)$ is symmetric with respect to the real
axis. The accretivity of $B$  gives $\text{Spec}\,(B)\subset
\left\{z\in\mathbb{C},~\real z\geq 0\right\}$\,.
Another consequence is that if the integration contour $\Gamma$
is symmetric w.r.t the real axis and if $z\to
f(z)$ is a holomorphic function satisfying $\overline{f(\overline{z})}=f(z)$ then
$$
f_{\Gamma}(B)^{*}=\frac{-1}{2i\pi}\int_{\Gamma}\frac{f(\bar{z})}{(\bar
  z-B^{*})}~d\bar z
=U^{*}\left[\frac{1}{2i\pi}\int\frac{f(z)}{(z-B)}~dz\right] U=U^{*}f_{\Gamma}(B)U\,,
$$
and in particular 
$$
\mathrm{Tr}\,\left[f_{\Gamma}(B)\right]=\frac{1}{2}\left(\mathrm{Tr}\,\left[f_{\Gamma}(B)\right]+\mathrm{Tr}\,\left[U^{*}f_{\Gamma}(B)U\right]\right)
=\real \mathrm{Tr}\,\left[f_{\Gamma}(B)\right]\in\mathbb{R}\,.
$$
Therefore the condition \eqref{eq.condtrace} makes sense (take
$f(z)=z$) when there are
eigenvalues of $B$ with small real parts and multiplicities that are not too large.
On  the space $E_{\Gamma}={\rm Ran\,}\Pi_{\Gamma}$\,,  the form $(u,v)\mapsto \langle u\,,\,v\rangle_{U}=\langle
u\,,\, Uv\rangle$ is a hermitian form and it is a scalar product when
$\langle u\,,\, u\rangle_{U}>0$ for any nonzero $u\in E_{\Gamma}$\,.
\\
When $u\in E_{\Gamma}$ with $\|u\|=1$\,, it can be completed into an
orthonormal basis $(e_{1}=u,e_{2},\ldots, e_{N})$ of $E_{\Gamma}$ for
the scalar product $\langle~\,,~\rangle$\,. We have
\begin{eqnarray*}
\varepsilon\gamma\geq \mathrm{Tr}\,\left(B\Pi_{\Gamma}\right)
&=&\real
\left(\sum_{j=1}^{N}\langle e_{j}\,,\, Be_{j}\rangle\right)
\\
&=&\sum_{j=1}^{N}\langle e_{j}\,,\, \real B e_{j}\rangle\geq \langle
u\,,\,\real Bu\rangle\geq \gamma\|(1-\Pi_{0})u\|^{2}-\varepsilon\gamma\,,
\end{eqnarray*}
and
\begin{equation}
  \label{eq:estim1mPi0}
\|(1-\Pi_{0})u\|^{2}< 2\varepsilon=2\varepsilon\|u\|^2\,.
\end{equation}
Now compute
\begin{eqnarray*}
\langle u\,,\, U u\rangle
&=&\langle u\,,\Pi_{0} U u\rangle+\langle
u\,,\,(1-\Pi_{0})U u\rangle\\
&=&\|\Pi_{0}u\|^{2}+\langle(1-\Pi_{0})u\,,\, U(1-\Pi_{0})u\rangle
\\
&\geq&
\|\Pi_{0}u\|^{2}-\|(1-\Pi_{0})u\|^{2}=\|u\|^{2}-2\|(1-\Pi_{0})u\|^{2}\geq (1-4\varepsilon)\|u\|^2>0\,.
\end{eqnarray*}
This proves that the hermitian form $\langle~,~\rangle_U$ is positive definite on $E_{\Gamma}$ and the equivalence of norms comes at once.\\
Let $B_{\Gamma}$ be the restriction of $B$ to $E_{\Gamma}$\,, which is
a finite dimensional Hilbert space with the scalar product
$\langle~\,~\rangle_{U}$ (and $\langle~\,,\,~\rangle$)\,. For $u,v\in
E_{\Gamma}$\,, the series of equalities
\begin{eqnarray*}
\langle u\,,\, B_{\Gamma}v\rangle_{U}=\langle u\,,\,
U(B\Pi_{\Gamma})v\rangle
&=&\langle
u\,,(B\Pi_{\Gamma})^{*}U v\rangle=\langle(B\Pi_{\Gamma})u\,,\,U v\rangle
\\&=&\langle B_{\Gamma}u\,,\, v\rangle_{U}\,.
\end{eqnarray*}
says that $B_{\Gamma}$ is self-adjoint on
$(E_{\Gamma},\langle~,~\rangle_{U})$\,.
The two statements follow for the scalar product
$\langle~,~\rangle_{U}$ and the norm $\|~\|_{U}$\,, are consequences.\\
The equivalence of the norms $\|~\|_{U}$ and
$\|~\|$ gives the upper bound on $\|(z-B)^{-1}\big|_{E_{\Gamma}}\|_{\mathcal{L}(E_{\Gamma};E_{\Gamma})}$\,.\\
Finally the estimate on $\vec{d}(E_{\Gamma};\mathrm{Ran}\,\Pi_{0})$ is due to \eqref{eq:estim1mPi0}
\end{proof}

\begin{proof}[Proof of Proposition~\ref{pr:spectrum}] This is concerned essentially with the localization of the spectrum, which does not depend on $s\in\mathbb{R}$ for the closed realization $\overline{B_{\pm,b,V^h}^s}$ in $\tilde{\mathcal{W}}^{0,s}(X;\mathcal{E}_\pm)$\,. Therefore we focus on the case $s=0$ in particular while applying Lemma~\ref{le:tableres}.\\
The conditions $A=C_0$ and $\varrho_h\geq 2 C_0^{5}b$\,, for $C_0\geq 1$\,, large enough, imply  the conditions
$$
A\geq C_0\quad\text{and}\quad \frac{\varrho_h}{2}\geq C_0 b A^4
$$
of Lemma~\ref{le:tableres}.\\
        \textbf{a)}  
        The first three complex domains considered in \textbf{a)} are covered by unions of $(1),(2)$ and $(3)$ in Lemma~\ref{le:tableres}.
        For the last domain $\left\{\real z\geq \varrho_{h}~\text{and}~1+(\real z-\varrho_{h})^{2}\leq b^{2}|\mathrm{Im}~z|\right\}$ we start with
        \begin{eqnarray*}
        \| (B_{\pm,b,V^h}-z)u \|_{\mathcal{L}(L^2;L^2)} & \geq & \| (B_{\pm,b,V^h}-i\mathrm{Im}~z) u \|_{L^2} - |\real z|\|u\|_{L^2} \\
        & \geq & \big(4b\sqrt{|\mathrm{Im}~z|}-|\real z|\big)\|u\|_{L^2}\,.
        \end{eqnarray*}
        We conclude by noticing that $b\sqrt{|\mathrm{Im}~z|}\geq \sqrt{1+(\real z-\varrho_h)^2}$ implies
        \begin{eqnarray*}
        && b\sqrt{|\mathrm{Im}~z|}\geq |\real z-\varrho_h|\geq  |\real z|-1\\
        \text{and}&&  |\real z|\leq 1+b\sqrt{|\mathrm{Im}~z|}\leq 2b\sqrt{|\mathrm{Im}~z|}\,.
        \end{eqnarray*}
        We have actually proved $\|(B_{\pm,b,V^h}-z)^{-1}\|_{\mathcal{L}(L^2;L^2)}\leq \frac{1}{2b\sqrt{|\mathrm{Im}~z|}}$ in this domain.\\
        \textbf{b)} Let us consider now the operator
        $$  \mathfrak{R}^{(p)}=\frac{1}{2i\pi} \int_{NO'P'Q} \mathfrak{D}_z^{(p)} \, dz $$
        which is the difference between the projection $ \pi_{E_{\pm,b,V^h}^{(p)}} $ and the orthogonal projection $\pi_h^{(p)}=U_{\pm,\theta} \tilde{\pi}_h^{(p)} U_{\pm,\theta}^{-1} $ with
        $$\tilde{\pi}_h^{(p)} = \frac{1}{2i\pi} \int_{NO'P'Q} (z-\frac{1}{2} \Delta_{V^h,1}^{(p-\frac{d}{2} \pm \frac{d}{2})} )^{-1}\,dz \,. $$ 
        The following bounds are consequences of Table~\ref{tab:resodiff} of Lemma~\ref{le:tableres} for $s=0$: 
        \begin{eqnarray}
          \|\int_{NO' \cup P'Q} \mathfrak{D}_z^{(p)} \,dz\|_{\mathcal{L}(L^2;L^2)} &\leq& 8C_0\varrho_h (A^{-2}+bA^{7/2})\leq 8C_0A^{-2}+ 4 A^{-1/2}  \label{eq:estimpi1}  \\                     
                                      &  \leq &            \frac{8}{C_0}+\frac{4}{\sqrt{C_0}}\,, \nonumber
                                                                                          \\
\label{eq:estimpi2}        \|\int_{O'P' \cup QN} \mathfrak{D}_z^{(p)} \,dz\|_{\mathcal{L}(L^2;L^2)} &\leq& 2 C_0 A^{-2} + 2 C_0 bA^{7/2} \int_{-1}^1 \frac{1}{(\varrho_h/2)^2 + t^2 } \, dt   \\
        &\leq & 2C_0(A^{-2}+ 2A^{7/2}\varrho_h^{-1} \pi) \leq  2C_0 A^{-2}+A^{-1/2}2\pi \nonumber \\
        & \leq & \frac{2}{C_0}+\frac{2\pi}{\sqrt{C_0}}\,, \nonumber
        \end{eqnarray}
        where we used $C_0 b A^4\leq \frac{\varrho_h}{2}\leq\frac{1}{2}$\,, and finally $A=C_0$ for the last upper bounds\,. With $C_0\geq 1$ large enough  we have proved
        $$
        \|\pi_{E_{\pm,b,V^h}^{(p)}}-\pi_h^{(p)}\|_{\mathcal{L}(L^2;L^2)}<1\,.
        $$
        With $\|\pi_h^{(p)}\|_{\mathcal{L}(L^2;L^2)}\leq 1$\,,we obtain 
        $$
        \|(1-\pi_{E_{\pm,b,V^h}^{(p)}})\pi_h^{(p)}\|_{\mathcal{L}(L^2;L^2)}=\|(\pi_{E_{\pm,b,V^h}^{(p)}}-\pi_h^{(p)})\pi_h^{(p)}\|_{\mathcal{L}(L^2;L^2)}<1\,,
        $$
        and  $\pi_{E_{\pm,b,V^h}^{(p)}}:\mathrm{Ran}\,\pi_h^{(p)}\to {E_{\pm,b,V^h}^{(p)}}$ is one to one and $\mathcal{N}_{\pm}^{(p)}=\dim E_{\pm,b,V^h}^{(p)}\geq \mathcal{N}_\pm^{(p-\frac{d}{2}\pm\frac{d}{2})}(V)$\,.\\
        With $\|(1-\pi_h^{(p)})\|_{\mathcal{L}(L^2;L^2)}\leq 1$\,, we obtain
         $$
        \|(1-\pi_h^{(p)})\pi_{E_{\pm,b,V^h}^{(p)}}\|_{\mathcal{L}(L^2;L^2)}=\|(1-\pi_h^{(p)})\pi_{E_{\pm,b,V^h}^{(p)}}(\pi_{E_{\pm,b,V^h}^{(p)}}-\pi_h^{(p)})\|_{\mathcal{L}(L^2;L^2)}<1\,,
        $$
and  $\pi_h^{(p)}: {E_{\pm,b,V^h}^{(p)}} \to \mathrm{Ran}\,\pi_h^{(p)}$ is one to one and $\mathcal{N}_{\pm}^{(p)}=\dim E_{\pm,b,V^h}^{(p)}\leq  \mathcal{N}_\pm^{(p-\frac{d}{2}\pm\frac{d}{2})}(V)$\,, which is finite and independent of $h\in ]0,h_0]$\,.\\
\textbf{c)} It comes from Bismut identification of $B_{\pm,b,V^h}$ as a Hodge type operator for the $\langle~,~\rangle_r$ hermitian form, recalled in \eqref{eq:BHodge}: 
           $$ B_{\pm,b,V^h} = 2(\delta_{\pm,b,V^h} + \delta_{\pm,b,V^h}^{*,r})^2: \mathcal{S}(X^h;\mathcal{E}_\pm^h)\to \mathcal{S}'(X^h;\mathcal{E}_{\pm}^h)\,, $$
           which implies $ r^* B_{\pm,b,V^h}(r^*)^{-1} = r^* B_{\pm,b,V^h} r^* = B_{\pm,b,V^h}' $\,. \\
\textbf{d)} We apply Lemma~\ref{lem:PTsymmetry} with $B=C_0+B_{\pm,b,V^h} , U=r^* , \Pi_0 = \pi_{0,\pm}$ and the translated contour $\Gamma = C_0+NO'P'Q$ .
        Let us check the assumption of Lemma~\ref{lem:PTsymmetry} while specifying the values of $\gamma>0$ and $\varepsilon\in ]0,\frac{1}{4}[$:
        \begin{itemize}
            \item The equality $\Pi_0 U = U\Pi_0 = \Pi_0$ comes from the fact that $r^* \alpha_{\pm}(r^{*})^{-1} = \alpha_{\pm}$.
            \item The real part $\real B = C_0 + \frac{1}{b^2}\alpha_{\pm} + \real \gamma_{\pm,V^h}$ is non negative owing to the accretivity of $C_0+B_{\pm,b,V^h}$ in Corollary~\ref{cor:maxaccr}. 
            The inequality $\real B\geq \gamma(1-\Pi_0)-\varepsilon\gamma$ is obtained by the same integration by parts computations as in Proposition~\ref{pr:IppWithRealPart} and for the accretivity of Proposition~\ref{pr:subellipticEstimateA}. Let us compute
            \begin{eqnarray*}
                    \langle u , \real B u  \rangle_{L^2} & \geq & C_0 \|u\|_{L^2}^2 +\frac{1}{b^2}\langle u , \alpha_{\pm} u \rangle - \| \gamma_{\pm,V^h} \|_{\mathcal{L}(\tilde{\cal W}^{1,0};\tilde{\cal W}^{-1,0})}\| u \|_{\tilde{\cal W}^{1,0}}^2 \\
                    & \geq &  C_0 \|u\|_{L^2}^2 + \frac{1}{b^2}\langle u , \alpha_{\pm }u \rangle - \| \gamma_{\pm,V^h}\|_{\mathcal{L}(\tilde{\cal W}^{1,0};\tilde{\cal W}^{-1,0})} (\frac{d}{2}\|\Pi_0 u\|_{L^2}^2 + \| (1-\Pi_0)u\|_{\tilde{\cal W}^{1,0}}^2) \\
                    & \geq & (C_0 - d\| \gamma_{\pm,V^h}\|_{\mathcal{L}(\tilde{\cal W}^{1,0} ;\tilde{\cal W}^{-1,0})} ) \|u\|_{L^2}^2 + (\frac{1}{b^2}-\| \gamma_{\pm,V^h}\|_{\mathcal{L}(\tilde{\cal W}^{1,0} ;\tilde{\cal W}^{-1,0})}) \| (1-\Pi_{0})u \|_{L^2}^2 \\
                    &\geq & \frac{C_0}{2} \|u\|_{L^2}^2 + \frac{1}{2b^2} \|(1-\Pi_0)u\|_{L^2}^2\,,
            \end{eqnarray*}
            by fixing $C_0$ larger than $ 2d\| \gamma_{\pm,V^h}\|_{\mathcal{L}(\tilde{\cal W}^{1,0} ;\tilde{\cal W}^{-1,0})} $  and by using 
            $$\frac{1}{b^2}\geq \frac{1}{b}\geq 2C_0 A^4\geq  2C_0\geq 2\| \gamma_{\pm,V^h}\|_{\mathcal{L}(\tilde{\cal W}^{1,0} ;\tilde{\cal W}^{-1,0})}\,.
            $$
           With this constraint on $C_0\geq 1$, we have proved
            $$
           \langle u , \real B u \rangle \geq \gamma \|(1-\Pi_0)u\|_{L^2}\quad\text{with}~\gamma=\frac{1}{2b^2}\,.
            $$
          \item For the upper bound of the trace $\mathrm{Tr}\left[B\Pi_{\Gamma}\right]$ we use the notations 
          $$
         \dim E_{\pm,b,V^h}=\mathcal{N}_{\pm}=\sum_{p=0}^{2d} \mathcal{N}_{\pm}^{(p)}=\sum_{p'=0}^{d}\mathcal{N}_{\pm}^{(p')}(V)=\dim\mathrm{Ran}\,\tilde{\pi}_h\,.
          $$
          We write:
            \begin{eqnarray*}
              && \hspace{-1.5cm} \real \mathrm{Tr}\left[\frac{1}{2i\pi}\int_{C_0+NO'P'Q} B(z-B)^{-1}\,dz\right] \\
              && = C_0 \mathcal{N}_{\pm} + \real \mathrm{Tr}\left[ \frac{1}{2i\pi}\int_{NO'P'Q} z(z-B_{\pm,b,V^h})^{-1}\,dz  \right] \\
              && = C_0 \mathcal{N}_{\pm} + \real \mathrm{Tr}\left[ \frac{1}{2i\pi} \int_{NO'P'Q} z\mathfrak{D}_z \, dz \right] + \real \mathrm{Tr}\left[ \pi_h \right] \\
              && \leq C_0\mathcal{N}_{\pm} + \frac{C_0}{2\pi}\mathcal{N}_{\pm} (2A^{-2} + 8bA^{7/2}\varrho_h^{-1}\frac{\pi}{2}) + \mathcal{N}_{\pm} \frac{\varrho_{h}}{2} \\
              && \leq 3C_0 \mathcal{N}_{\pm}\\
              && \leq \varepsilon \gamma\,,
            \end{eqnarray*}
            with $\gamma=\frac{1}{2b^2}$ like above and $\varepsilon=6C_0 \mathcal{N}_{\pm}b^2$ which belongs to $]0,\frac{1}{4}[$ when $b^2<\frac{1}{24 C_0 \mathcal{N}_{\pm}}$\,. Remember that $\mathcal{N}_{\pm}=\dim \mathrm{Ran}\,\tilde{\pi}_h $ does not depends on $h\in ]0,h_0]$\,.
          \end{itemize}
          The three above points ensure that all the hypotheses of Lemma~\ref{lem:PTsymmetry} are fulfilled.
          Therefore $\langle \, , \, \rangle_{r} $ is hermitian positive definite form on $\mathrm{Ran}~\Pi_{\Gamma} = E_{\pm,b,V^h} $ and the equivalence of norms \eqref{eq:Epmequi} is a straightforward consequence of \eqref{eq:PTequi}. \\
          The space $E_{\pm,b,V^h}$ is a finite dimensional subspace of $\mathcal{S}(X^h;\mathcal{E}_{\pm}^h)$ endowed with the positive definite hermitian form $\langle~,~\rangle_r$\,. Additionally because $\delta_{\pm,b,V^h}B_{\pm,b,V^h}=B_{\pm,b,V^h} \delta_{\pm,b,V^h}$ on $\mathcal{S}(X^h;\mathcal{E}_{\pm}^h)$ and the same holds when $\delta_{\pm,b,V^h}$ is replaced by $\delta_{\pm,b,V^h}^{*,r}$\,, we deduce that $\delta_{\pm,b,V^h}$ and $\delta_{\pm,b,V^h}^{*,r}$ send $E_{\pm,b,V^h}$ into itself. Thus formula \eqref{eq:BHodge} implies that $B_{\pm,b,V^h}\big|_{E_{\pm,b,V^h}}=[(\delta_{\pm,b,V^h}+\delta_{\pm,b,V^h}^{*,r})\big|_{E_{\pm,b,V^h}}]^2$ is the square of a self-adjoint operator on $(E_{\pm,b,V^h},\langle~,~\rangle_r)$\,. Therefore $B_{\pm,b,V^h}\big|_{E_{\pm,b,V^h}}$ is a self-ajoint non negative operator for $\langle~,~\rangle_r$ and its eigenvalues are non-negative.
\end{proof}
\begin{proof}[Proof of Proposition~\ref{pr:semiGroup}]  The expression of $(I)$ is a consequence of Proposition~\ref{pr:spectrum} because it says that $B_{\pm,b,V^h}^{(p)}\big|_{E_{\pm,b,V^h}^{(p)}}$ is 
diagonalizable. Additionally the $L^2$ dual basis of $(u_{\pm,j}^{(p)})_{1\leq j \leq \mathcal{N}_{\pm}^{(p)}}$ in $E_{\pm,b,V^h}^{(p)}$ is $ (r^*u_{\pm,j}^{(p)})_{1\leq j \leq \mathcal{N}_{\pm}^{(p)}} $ because
$$ \langle u_{\pm,i}^{(p)} , u_{\pm,j}^{(p)} \rangle_{r} = \langle r^* u_{\pm,i}^{(p)} , u_{\pm,j}^{(p)} \rangle_{L^2} = \delta_{ij} \,,$$
while $r^* B_{\pm,b,V^h}r^*=B_{\pm,b,V^h}^{(p)\prime}$ implies that $r^*u_{\pm,j}^{(p)}$ is an eigenvector of $B_{\pm,b,V^h}^{(p) \prime}$\,.\\
Let us check the uniform bound of $\|u_{\pm,j}^{(p)}\|_{\tilde{\cal{W}}^{0,s}}$\,.  For $s=0$ it comes from \eqref{eq:Epmequi} with
$$ \|u_{\pm,j}^{(p)}\|_{L^2} \leq \frac{1}{\sqrt{1-C_0b^2}} \|u_{\pm,j}^{(p)}\|_r = \frac{1}{\sqrt{1-C_0b^2}}\leq 2\,.
$$
For $s>0$ we use the equation
$$ 
[B_{\pm,b,V^h}+A^2\pi_{0,\pm}]u_{\pm,j}^{(p)}=\lambda_{j,\pm}^{(p)}u_{\pm,j}+A^2 \pi_{0,\pm}u_{\pm,j}^{(p)}
$$
where the subelliptic estimate of Proposition~\ref{pr:subellipticEstimateA} implies
$$
C_0^{16/9}\|u_{j,\pm}^{(p)}\|_{\tilde{\cal{W}}^{0,s+2/9}}=A^{16/9}\|u_{j,\pm}^{(p)}\|_{\tilde{\cal{W}}^{0,s+2/9}}\leq C(\lambda_{j,\pm}^{(p)}+A^2)\|u_{\pm,j}^{(p)}\|_{\tilde{\cal{W}}^{0,s}}\leq C(1+C_0^2)\|u_{\pm,j}^{(p)}\|_{\tilde{\cal{W}}^{0,s}}
$$
by choosing $A=C_0$ like in Proposition~\ref{pr:spectrum}\,. A bootstrap argument leads to 
$$
\|u_{j,\pm}^{(p)}\|_{\tilde{\cal{W}}^{0,2/9k}}\leq [C(1+C_0^{2/9})]^{k}\|u_{\pm,j}^{(p)}\|_{L^2}\leq 2 [C(1+C_0^{2/5})]^{k}
$$
for every $k\in\mathbb{N}$ and the general result for $s>0$  follows by interpolation.\\
The integral
$$ \frac{1}{2i\pi}\int_{\Gamma} \frac{e^{-tz}}{z-B_{\pm,b,V^h}^{(p)}}\, dz $$
converges if and only if the two integrals
$ \int_{\Gamma_\pm} e^{-t\real z}\|(z-B_{\pm,b,V^h}^{(p)})^{-1}\|\, |dz|$ converge. With the parametrization of $\Gamma_{\pm}$ given by $ z = u+\varrho_h \pm i( \frac{1}{b^2}(1+u^2)) $ for $u\in [0,+\infty[$\,, we obtain
\begin{eqnarray*}
\int_{\Gamma_\pm} e^{-t\real z}\|(z-B_{\pm,b,V^h}^{(p)})^{-1}\|\, |dz|&\leq&
\int_{0}^{+\infty} \bigg\| \frac{e^{-tz}}{z-B_{\pm,b,V^h}^{(p)}} \bigg\|_{\mathcal{L}(\tilde{\cal W}^{0,s};\tilde{\cal W}^{0,s})} \bigg|1 \pm 2i\frac{u}{b^2}\bigg| \, du 
\\
& \leq & \int_0^{+\infty} \frac{e^{-t(u+\varrho_h)}}{4b\sqrt{b^{-2}(1+u^2)}} \bigg|1\pm i\frac{2u}{b^2}\bigg|\,du \\
& \leq & \frac{e^{-t\varrho_h}}{2b^2}\int_{0}^{+\infty} e^{-tu} \, du \\
& \leq & \frac{e^{-t\varrho_h}}{2b^2}\frac{1}{t}\,.
\end{eqnarray*}
The integral over the line segment $MR$ is estimated by
\begin{eqnarray*}
& & \hspace{-1.5cm} e^{t\varrho_h}\int_{-\frac{1}{b^2}}^{\frac{1}{b^2}} \bigg\|\frac{e^{-tz}}{z-B_{\pm,b,V^h}}\bigg\|_{\mathcal{L}(\tilde{\cal W}^{0,s};\tilde{\cal W}^{0,s})} \,du  \\
& &\leq  C_s \int_{1\leq |u| \leq \frac{1}{b^2}} (|u|^{-1}+bA^{7/2}+A^{-2}) \,du + C_s  \int_{|u|\leq 1}  (A^{-2}+ \frac{1+bA^{7/2}}{\varrho_h^2/4 + u^2} ) \,du \\
& &\leq  2 C_s (-2\ln b + (2bA^{7/2}+A^{-2}) (\frac{1}{b^2}-1) ) +C_s  2(A^{-2} + \frac{1}{\varrho_h}(1+bA^{7/2})\frac{\pi}{2} )
\leq\frac{1}{b^2}\end{eqnarray*}
by using $C_sbA^4\leq  \frac{\varrho_h}{2}\leq\frac{1}{2}$\,, $C_sbA^{7/2}=\frac{C_sbA^4}{A^{1/2}}\leq \frac{1}{10}$ for $A\geq 1$ large enough\,, $b^2|\ln(b)|\leq \frac{1}{10 C_s}$ and $\frac{C_s}{\varrho_h}\leq \frac{1}{bA^4}\leq \frac{1}{b}\leq\frac{1}{10 b^2}$ for $b>0$ small enough.\\
By conclude by adding the two upper bound for $\int_{\Gamma_+}+\int_{\Gamma_{-}}$ and $\int_{[MR]}$\,.
\end{proof}
\subsection{Hodge type structure and accurate spectral estimates}
\label{sec:HodgeAccSpec}
We prove now the accurate comparison of  $\mathrm{Spec}(B_{\pm,b,V^h})\cap\{z\in\mathbb{C}, |z|\leq \varrho_h\}\subset [0,\varrho_h[$ and $\mathrm{Spec}(\frac{1}{2}\Delta_{V^h,1})\cap[0,\varrho_h]=\mathrm{Spec}(\frac{1}{2}\Delta_{V,h})\cap [0,e^{-\frac{c}{h}}]$\,.
\begin{proposition}
\label{pr:accSpec}~\\
  Let $(\lambda_j^{(p)})_{\substack{1\leq j\leq \mathcal{N}_{\pm}^{(p)}\\ 0\leq p\leq 2d}}$ be the eigenvalues of $B_{\pm,b,V^h}$ contained in $[0,\varrho_h[$ and let
  $(\tilde{\lambda}_j^{(p)}(V))_{\substack{1\leq j\leq \mathcal{N}_{\pm}^{(p)}(V)\\ 0\leq p\leq d}}$ be the eigenvalues of $\frac{1}{2}\Delta_{V^h,1}$ contained in $[0,e^{-\frac{c}{h}}]$\,.
  There exists $C_0\geq 1$ such that  for all $A\geq C_0$ and with the additional condition $1\geq \varrho_h\geq C_0 A^4 b\geq C_0^{5}b$\,, the eigenvalues are compared according to 
  $$
\forall p\in \{0,\ldots,2d\}, \forall j\in \{1,\ldots,\mathcal{N}_{\pm}^{(p)}\}\,,\quad
(1+C_0 A^{-1/2})^{-1}\tilde{\lambda}_{j}^{(p-\frac{d}{2}\pm \frac{d}{2})}(V)\leq  \lambda_j^{(p)}\leq (1+C_0 A^{-1/2})\tilde{\lambda}_{j}^{(p-\frac{d}{2}\pm \frac{d}{2})}(V)\,,
$$
where we recall that $\mathcal{N}_{\pm}^{(p)}=\mathcal{N}_{\pm}^{(p-\frac{d}{2}\pm\frac{d}{2})}$ vanishes when $p-\frac{d}{2}\pm \frac{d}{2}\not\in \{0,\ldots,d\}$\,.
\end{proposition}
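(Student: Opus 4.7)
The plan is to combine the Hodge structure of Proposition~\ref{pr:spectrum}(d) with the quantitative resolvent comparison of Proposition~\ref{pr:estimateForTheConvergence} and a min-max argument. On $E_{\pm,b,V^h}^{(p)}$ the restricted operator $B_{\pm,b,V^h}^{(p)}$ is self-adjoint and non-negative for the scalar product $\langle\cdot,\cdot\rangle_r$ (equivalent to $\|\cdot\|_{L^2}$ up to a factor $1+O(b^2)$ by \eqref{eq:Epmequi}), while $\frac{1}{2}\Delta_{V^h,1}^{(p')}$ is self-adjoint and non-negative on $F_h^{(p')}:=\mathrm{Ran}\,1_{[0,\varrho_h]}(\frac{1}{2}\Delta_{V^h,1}^{(p')})$ for the $L^2$ scalar product, with $p'=p-\frac{d}{2}\pm\frac{d}{2}$. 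Both are finite-dimensional of the same dimension $\mathcal{N}_\pm^{(p)}$, and their eigenvalues admit the usual min-max characterization.

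I would first build the intertwining isomorphism $J^{(p)}:=\pi_{E^{(p)}}U_{\pm,\theta}:F_h^{(p')}\to E_{\pm,b,V^h}^{(p)}$. Since $\pi_h^{(p)}U_{\pm,\theta}$ equals $U_{\pm,\theta}$ on $F_h^{(p')}$, the difference $J^{(p)}-U_{\pm,\theta}|_{F_h^{(p')}} = (\pi_{E^{(p)}}-\pi_h^{(p)})U_{\pm,\theta}|_{F_h^{(p')}}$ is $O(A^{-1/2})$ in $\mathcal{L}(L^2)$ by the estimates \eqref{eq:estimpi1}--\eqref{eq:estimpi2} used in the proof of Proposition~\ref{pr:spectrum}(b). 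Combined with \eqref{eq:Epmequi} and the fact that $r^*$ preserves $\ker\alpha_\pm$ as a scalar involution, $J^{(p)}$ is ``approximately isometric'' from $(F_h^{(p')},\|\cdot\|_{L^2})$ to $(E_{\pm,b,V^h}^{(p)},\|\cdot\|_r)$, with distortion bounded by $1+C_0A^{-1/2}$.

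Second, I would compare the Rayleigh quotients. For $u\in E^{(p)}$, the $\langle,\rangle_r$-self-adjointness of $B|_{E^{(p)}}$ yields the Dunford--Riesz representation
\begin{equation*}
\langle u,Bu\rangle_r=\frac{1}{2\pi i}\int_{NO'P'Q}z\,\langle u,(z-B^{(p)})^{-1}u\rangle_r\,dz,
\end{equation*}
and an analogous formula holds with $u$, $B$ replaced by $\tilde u\in F_h^{(p')}$, $\frac{1}{2}\Delta_{V^h,1}^{(p')}$. Plugging in $u=J^{(p)}\tilde u$ and using the resolvent comparison \eqref{eq:comRes} on the contour $NO'P'Q$ (on which $\mathrm{dist}(z,\mathrm{Spec}(\frac{1}{2}\Delta_{V^h,1}))\gtrsim\varrho_h$), together with Bismut's algebraic identity \eqref{eq:algebraicIdentityForWittenLaplacianUtheta} (which identifies $\frac{1}{2}\Delta_{V^h,1}$ exactly with the Schur-complement effective operator $\pi_{0,\pm}(\gamma_\pm-\beta_\pm\alpha_\pm^{-1}\beta_\pm)\pi_{0,\pm}$ on $\ker\alpha_\pm$), one shows
\begin{equation*}
\langle J^{(p)}\tilde u,BJ^{(p)}\tilde u\rangle_r=(1+O(A^{-1/2}))\,\langle\tilde u,\tfrac{1}{2}\Delta_{V^h,1}\tilde u\rangle.
\end{equation*}
Applying the min-max characterization on both sides together with the approximate isometry of $J^{(p)}$ and its approximate inverse $U_{\pm,\theta}^{-1}\pi_h^{(p)}|_{E^{(p)}}$ yields the claimed multiplicative comparison.

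The main obstacle will be to establish the \emph{multiplicative} form of the error, since the eigenvalues $\tilde\lambda_j^{(p')}(V)$ may be as small as $e^{-c/h}$: an additive error of size $C_0 A^{-1/2}\varrho_h$ would not suffice. This requires showing that $\varepsilon(\tilde u):=\langle J^{(p)}\tilde u,BJ^{(p)}\tilde u\rangle_r-\langle\tilde u,\frac{1}{2}\Delta_{V^h,1}\tilde u\rangle$ is controlled by $A^{-1/2}$ times the target quadratic form itself. This relies on a careful expansion of the Schur-complement formula \eqref{eq:defE-+} around Bismut's identity: the leading behavior of $E_{-+}(z)$ on the contour $NO'P'Q$ is exactly $z-\tilde Q_{A,L,V^h}-\frac{1}{2}\Delta_{V^h,1}$, so that the principal contributions to the two contour integrals coincide and only the subleading $O(bA)$ remainder contributes to the multiplicative distortion $1+C_0A^{-1/2}$.
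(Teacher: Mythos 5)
Your proposal correctly sets up the Hodge/PT-symmetry framework of Proposition~\ref{pr:spectrum}, correctly identifies the intertwining map $\pi_{E^{(p)}}U_{\pm,\theta}$ as an approximate isometry, and — most importantly — correctly names the central difficulty: the comparison must be \emph{multiplicative} because the target eigenvalues are as small as $e^{-c/h}$, while the resolvent comparison \eqref{eq:comRes} only yields additive errors. However, the step you propose to overcome this difficulty does not work, and this is a genuine gap. The contour-integral representation $\langle u,Bu\rangle_r=\frac{1}{2\pi i}\int_{NO'P'Q}z\langle u,(z-B)^{-1}u\rangle_r\,dz$ combined with \eqref{eq:comRes} (or with Table~\ref{tab:resodiff}, line (3)) produces an error of order at best $\varrho_h\,A^{-1/2}$: the bound on $\mathfrak{D}_z$ near the contour is $C_s(A^{-2}+bA^{7/2}/(\varrho_h^2/4+|\mathrm{Im}\,z|^2))$, and the factor $|z|\lesssim 1$ together with the contour length gives an \emph{operator-norm} (hence additive) error which completely swamps eigenvalues of size $e^{-c/h}\ll\varrho_h$. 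Likewise, your appeal to "expanding $E_{-+}$ around Bismut's identity" does not rescue the argument: the deviation of $E_{-+}$ from $z-\tilde{Q}_{A,L,V^h}-\frac{1}{2}\Delta_{V^h,1}$ is the term $(III)$ estimated in the proof of Proposition~\ref{pr:restronc}, which is $O(b)$ in operator norm between Sobolev spaces on the base; restricted to the low-lying Witten eigenspace it remains $O(b)$, not $O(b\,\tilde{\lambda}_j)$, because $\gamma_\pm$ and the off-diagonal couplings do not vanish on $\ker d_{V^h,1}\cap\ker d_{V^h,1}^*$. No comparison of the second-order quadratic forms, however carefully organized, can produce the factor $\tilde{\lambda}_j$ in front of the error.

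The paper's proof circumvents this by descending to the \emph{first-order} operators, following the strategy of \cite{HKN}\cite{Lep}\cite{LNV2}: it writes $2B_{\pm,b,V^h}\big|_{E_{\pm,b,V^h}}$ as the square of $\delta_{\pm,b,V^h}+\delta_{\pm,b,V^h}^{*,r}$ restricted to $E_{\pm,b,V^h}$ (Lemma~\ref{le:hodge}) and compares \emph{singular values} of $\delta_{\pm,b,V^h}\big|_{E_{\pm,b,V^h}}$ with those of $d_{V^h,1}\big|_{\mathrm{Ran}\,\tilde{\pi}_h}$. The decisive point is the \emph{exact} intertwining $\delta_{+,b,V^h}\,(e^{-\mu_0}U_{+,\theta}\tilde{u}_j)=e^{-\mu_0}U_{+,\theta}[d_{V^h,1}\tilde{u}_j]=\tilde{\mu}_j\,e^{-\mu_0}U_{+,\theta}\tilde{u}_{j+N}$, a consequence of the explicit formula \eqref{eq:deltabV}: applied to Gaussian ground states twisted by $e^{-\mu_0}$, the Bismut differential reproduces the Witten differential on the base with no remainder. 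Hence every matrix element of $\delta_{+,b,V^h}$ in the $\varepsilon$-orthonormal bases $u_j=\pi_{E}e^{-\mu_0}U_{+,\theta}\tilde{u}_j$, $v_{j'}=\pi_{E}U_{+,\theta}\tilde{u}_{j'}$ carries the overall factor $\tilde{\mu}_j$, so the $O(A^{-1/2})$ error coming from $\|\pi_{E}-\pi_h\|$ is automatically multiplied by $\tilde{\mu}_j$. Lemma~\ref{le:singval} then converts this into the multiplicative comparison of singular values, and squaring gives the eigenvalue estimate; the $-$ case follows by Poincar\'e duality. To repair your proof you would need to replace the Rayleigh-quotient/min-max comparison of the Laplacians by this singular-value comparison of the differentials, which is where the exponential smallness is actually preserved.
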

This result relies on the Hodge type structure of $2B_{\pm,b,V^{h}}\big|_{E_{{\pm,b,h}}}$ and $\Delta_{V^{h},1}\big|_{\mathrm{Ran}1_{[0,\varrho_{h}]}(\Delta_{V^{h},1})}$ and the identification of eigenvalues of those operators with the squares of singular values by following the strategy of \cite{HKN}\cite{Lep}\cite{LNV2} and other related works. We start with three lemmas.
\begin{lemma}
\label{le:hodge}
  Let $E$ be a finite dimensional Hilbert space with the hermitian positive definite form $\left(~,~\right)$\,. Let $\mathbf{d}$ be an operator such that $\mathbf{d}\circ\mathbf{d}=0$ and set $\Delta=(\mathbf{d}+\mathbf{d}^{*})^{2}=\mathbf{d}\mathbf{d}^{*}+\mathbf{d}^{*}\mathbf{d}$\,, where $\mathbf{d}^{*}$ is the adjoint of $\mathbf{d}$ for the scalar product $\left(~,~\right)$\,. The $E$ admits the orthogonal decomposition
$$
E=\ker(\Delta)\mathop{\oplus}^{\perp}\mathrm{Ran}\,\mathbf{d}\mathop{\oplus}^{\perp}\mathrm{Ran}\,\mathbf{d}^{*}=\ker(\mathbf{d})\mathop{\oplus}^{\perp}\mathrm{Ran}\,\mathbf{d}^{*}=\ker(\mathbf{d}^{*})\mathop{\oplus}^{\perp}\mathrm{Ran}\,\mathbf{d}\,.
$$
The eigenvalues of $\Delta$ are the squares of the singular values of $\mathbf{d}$ and equivalently $\mathbf{d}^{*}$\,. More precisely there exists an orthonormal basis $(u_{j})_{1\leq j\leq \dim E}$ such that $(u_{j})_{1\leq j\leq N}$ (resp. $(u_{j})_{N+1\leq j\leq 2N}$) is an orthonormal basis of $\mathrm{Ran}\,\mathbf{d}^{*}$ (resp. $\mathrm{Ran}\,\mathbf{d}$) and $(u_{j})_{2N+1\leq j\leq\dim E}$ is an orthonormal basis of $\ker \Delta$ and
$$
\forall j\in \left\{1,\ldots,N\right\}\,,\quad \mathbf{d}u_{j}=\mu_{j}u_{j+N}\quad \mu_{j}>0\,.
$$
We will set $\mu_{j}=0$ for $j\not\in \left\{1,\ldots,N\right\}$ and write, with an abuse of notation $\mathbf{d}u_{j}=\mu_{j}u_{j+N}$ for all $j\in \left\{1,\ldots,\dim E\right\}$\,.
\end{lemma}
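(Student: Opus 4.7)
The plan is to present this as the classical finite-dimensional Hodge decomposition together with its singular value interpretation; there is no subtlety coming from the infinite-dimensional framework of the rest of the paper since $E$ is finite-dimensional. I would organize the argument in three phases: orthogonal decomposition, singular value diagonalization on $\mathrm{Ran}\,\mathbf{d}^{*}$, and lift to $\mathrm{Ran}\,\mathbf{d}$.

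First, I would establish the three orthogonal decompositions. In finite dimension, the general identity $(\mathrm{Ran}\,A)^{\perp}=\ker A^{*}$ gives $E=\ker\mathbf{d}\oplus^{\perp}\mathrm{Ran}\,\mathbf{d}^{*}$ and $E=\ker\mathbf{d}^{*}\oplus^{\perp}\mathrm{Ran}\,\mathbf{d}$. Since $\mathbf{d}\circ\mathbf{d}=0$ implies $\mathrm{Ran}\,\mathbf{d}\subset\ker\mathbf{d}$, and dually $(\mathbf{d}^{*})^{2}=(\mathbf{d}^{2})^{*}=0$ implies $\mathrm{Ran}\,\mathbf{d}^{*}\subset\ker\mathbf{d}^{*}$, the subspaces $\mathrm{Ran}\,\mathbf{d}$ and $\mathrm{Ran}\,\mathbf{d}^{*}$ are orthogonal. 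Combining this with the identity
\[
(u,\Delta u)=\|\mathbf{d}u\|^{2}+\|\mathbf{d}^{*}u\|^{2},
\]
which yields $\ker\Delta=\ker\mathbf{d}\cap\ker\mathbf{d}^{*}$, produces the three-term decomposition $E=\ker\Delta\oplus^{\perp}\mathrm{Ran}\,\mathbf{d}\oplus^{\perp}\mathrm{Ran}\,\mathbf{d}^{*}$.

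Next, I would note that $\mathbf{d}^{*}\mathbf{d}$ restricted to $\mathrm{Ran}\,\mathbf{d}^{*}$ is self-adjoint and positive; its kernel in $E$ is $\ker\mathbf{d}$, which meets $\mathrm{Ran}\,\mathbf{d}^{*}$ only in $\{0\}$ by the first decomposition. Hence $\mathbf{d}^{*}\mathbf{d}\big|_{\mathrm{Ran}\,\mathbf{d}^{*}}$ is strictly positive and admits an orthonormal eigenbasis $(u_{j})_{1\le j\le N}$ with eigenvalues $\mu_{j}^{2}>0$, where $N=\dim\mathrm{Ran}\,\mathbf{d}^{*}=\dim\mathrm{Ran}\,\mathbf{d}$. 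Since $u_{j}\in\mathrm{Ran}\,\mathbf{d}^{*}\subset\ker\mathbf{d}^{*}$ one gets $\Delta u_{j}=\mathbf{d}^{*}\mathbf{d}u_{j}=\mu_{j}^{2}u_{j}$.

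Finally, I would set $u_{j+N}:=\mathbf{d}u_{j}/\mu_{j}$ for $1\le j\le N$; orthonormality follows from
\[
(u_{i+N},u_{j+N})=\frac{1}{\mu_{i}\mu_{j}}(\mathbf{d}u_{i},\mathbf{d}u_{j})=\frac{1}{\mu_{i}\mu_{j}}(u_{i},\mathbf{d}^{*}\mathbf{d}u_{j})=\delta_{ij},
\]
and these vectors span $\mathrm{Ran}\,\mathbf{d}$. Because $u_{j+N}\in\mathrm{Ran}\,\mathbf{d}\subset\ker\mathbf{d}$, one checks $\Delta u_{j+N}=\mathbf{d}\mathbf{d}^{*}u_{j+N}=\mu_{j}^{-1}\mathbf{d}(\mathbf{d}^{*}\mathbf{d}u_{j})=\mu_{j}u_{j+N}\cdot\mu_{j}=\mu_{j}^{2}u_{j+N}$. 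Completing with any orthonormal basis of $\ker\Delta$ gives the basis $(u_{j})_{1\le j\le\dim E}$, and the convention $\mu_{j}=0$ for $j>N$ together with $\mathbf{d}u_{j}=0$ on $\mathrm{Ran}\,\mathbf{d}\oplus\ker\Delta$ justifies the uniform notation $\mathbf{d}u_{j}=\mu_{j}u_{j+N}$. There is no real obstacle here — the only care needed is to keep track that $\mathbf{d}^{2}=0$ forces $\mathrm{Ran}\,\mathbf{d}^{*}\subset\ker\mathbf{d}^{*}$ (and not $\subset\ker\mathbf{d}$), which is precisely what makes $\mathbf{d}^{*}\mathbf{d}$ act on $\mathrm{Ran}\,\mathbf{d}^{*}$ as the full Laplacian.
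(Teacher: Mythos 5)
Your proof is correct and follows essentially the same route as the paper's two-line sketch: obtain the three-term Hodge decomposition, diagonalize $\mathbf{d}^{*}\mathbf{d}$ on $\mathrm{Ran}\,\mathbf{d}^{*}$, and define $u_{j+N}=\mu_{j}^{-1}\mathbf{d}u_{j}$. You merely supply the standard details the paper omits (and correctly restrict to $\mathrm{Ran}\,\mathbf{d}^{*}$, where the paper's sketch has an apparent typo writing $\mathrm{Ran}\,\mathbf{d}$).
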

\begin{proof}
  It suffices to notice $\Delta=0\oplus^{\perp}\mathbf{d}\mathbf{d}^{*}\oplus^{\perp}\mathbf{d}^{*}\mathbf{d}$ in the decomposition $E=\ker(\Delta)\mathop{\oplus}^{\perp}\mathrm{Ran}\,\mathbf{d}\mathop{\oplus}^{\perp}\mathrm{Ran}\,\mathbf{d}^{*}$\,. Then one takes for $(u_{j})_{1\leq j\leq N}$ an orthonormal eigenbasis of $\mathbf{d}^{*}\mathbf{d}\big|_{\mathrm{Ran}\,\mathbf{d}}$ with $\mathbf{d}^{*}\mathbf{d}u_{j}=\mu_{j}^{2}u_{j}$ and to set $u_{j+N}=\frac{1}{\mu_{j}}\mathbf{d}u_{j}$\,.
\end{proof}
\begin{definition}
  In a finite dimensional Hilbert space $(E, (~,~))$\,, a basis $\mathcal{B}=(v_{j})_{1\leq j\leq \dim E}$ is $\varepsilon$-orthonormal for $\varepsilon\in ]0,1[$ if $\|((v_{j},v_{k}))_{1\leq j,k\leq \dim E}-\mathrm{Id}_{\mathbb{C}^{\dim E}}\|\leq \varepsilon$\,.\\
  The function $\tau:\sqcup_{n=1}^{\infty}]0,1[^{n}\to ]0,+\infty[$ is defined by
  $\tau(\varepsilon_{1},\ldots,\varepsilon_{n})=\prod_{k=1}^{n}\frac{1+\varepsilon_{k}}{1-\varepsilon_{k}}$\,.
\end{definition}
The following lemma is extracted from Proposition~5.4 in \cite{LNV2}.
\begin{lemma}
\label{le:singval}  Let $\mathcal{B}=(u_{j})_{1\leq j\leq \dim E}$ (resp. $\mathcal{B}'=(v_{j})_{1\leq j\leq \dim E}$) be an $\varepsilon_{1}$- (resp. $\varepsilon_{2}-$) orthonormal basis of the Hilbert space $(E,(~,~))$ for $\varepsilon_{1},\varepsilon_{2}\in ]0,1[$\,. For $B\in \mathcal{L}(E)$\,, let  $(\mu_{j}(B))_{1\leq j\leq \dim E}$ (resp. $(\mu_{j}(\tilde{B}))_{1\leq j\leq \dim E}$) denote the singular values of $B$ (resp. of the matrix $\tilde{B}=((v_{k}\,,\, B u_{j}))_{1\leq j,k\leq \dim E}$)\,, in the usual decreasing order. Then
  $$
\forall j\in \left\{1,\ldots, \dim E\right\}\,,\quad \tau(\varepsilon_{1},\varepsilon_{2})^{-1/2}\tilde{\mu}_{j}\leq \mu_{j}\leq \tau(\varepsilon_{1},\varepsilon_{2})^{1/2}\tilde{\mu}_{j}\,.
$$
\end{lemma}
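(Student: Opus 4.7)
The plan is to translate the $\varepsilon$-orthonormality of $\mathcal{B}$ and $\mathcal{B}'$ into bounds on Gram matrices, then use a polar-decomposition trick together with the Horn-type inequality $\mu_j(AMC)\le \|A\|\,\mu_j(M)\,\|C\|$ (which follows directly from the min-max characterization of singular values).

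First I introduce the basis maps $U,V\colon \mathbb{C}^{\dim E}\to E$ sending the canonical basis of $\mathbb{C}^{\dim E}$ to $(u_j)_j$ and $(v_j)_j$ respectively, so that the Gram matrices satisfy $G_1:=U^*U=((u_j,u_k))_{j,k}$ and $G_2:=V^*V=((v_j,v_k))_{j,k}$. By definition, $\|G_1-I\|\le \varepsilon_1$ and $\|G_2-I\|\le \varepsilon_2$, which gives in particular that $G_1,G_2$ are positive definite with
\begin{equation*}
\|G_i^{1/2}\|^2=\|G_i\|\le 1+\varepsilon_i,\qquad \|G_i^{-1/2}\|^2=\|G_i^{-1}\|\le \frac{1}{1-\varepsilon_i}.
\end{equation*}
Defining $\tilde U:=U G_1^{-1/2}$ and $\tilde V:=V G_2^{-1/2}$, one checks $\tilde U^*\tilde U=\tilde V^*\tilde V=I$, so $\tilde U,\tilde V$ are unitary operators from $\mathbb{C}^{\dim E}$ to $E$.

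The matrix identity $\tilde B=V^*BU=G_2^{1/2}(\tilde V^*B\tilde U)G_1^{1/2}$ is then the key point. Since $\tilde U$ and $\tilde V$ are unitary, the intermediate matrix $M:=\tilde V^*B\tilde U$ has exactly the same singular values as $B$, i.e.\ $\mu_j(M)=\mu_j(B)$ for every $j$. I would then invoke the standard min-max inequality $\mu_j(AMC)\le \|A\|_{\mathrm{op}}\,\mu_j(M)\,\|C\|_{\mathrm{op}}$, applied once to $\tilde B=G_2^{1/2}M G_1^{1/2}$ and once to $M=G_2^{-1/2}\tilde B G_1^{-1/2}$, to obtain
\begin{equation*}
\mu_j(\tilde B)\le \sqrt{(1+\varepsilon_1)(1+\varepsilon_2)}\,\mu_j(B)\quad\text{and}\quad \mu_j(B)\le \frac{1}{\sqrt{(1-\varepsilon_1)(1-\varepsilon_2)}}\,\mu_j(\tilde B).
\end{equation*}
Rewriting these two bounds together yields $\tau(\varepsilon_1,\varepsilon_2)^{-1/2}\mu_j(\tilde B)\le \mu_j(B)\le \tau(\varepsilon_1,\varepsilon_2)^{1/2}\mu_j(\tilde B)$, which is the claim.

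There is no real obstacle: the only subtle point is resisting the temptation to directly compare $B^*B$ with $\tilde B^*\tilde B$, which would involve $VV^*$ rather than $V^*V$ and would not directly relate to the Gram matrix hypothesis. The polar-decomposition rewriting $U=\tilde U G_1^{1/2}$, $V=\tilde V G_2^{1/2}$ is exactly what sidesteps this issue and isolates the deviation from orthonormality into the square roots of the Gram matrices, whose norms are controlled by $\varepsilon_1,\varepsilon_2$.
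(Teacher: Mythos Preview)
Your proof is correct. The paper does not actually supply a proof of this lemma: it simply states that the result is ``extracted from Proposition~5.4 in \cite{LNV2}'' and moves on. Your argument is therefore a self-contained substitute for that citation.

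The argument itself is clean: writing $U=\tilde U G_1^{1/2}$ and $V=\tilde V G_2^{1/2}$ with $\tilde U,\tilde V$ unitary reduces everything to the elementary inequality $\mu_j(AMC)\le \|A\|\,\mu_j(M)\,\|C\|$ applied to $\tilde B=G_2^{1/2}M G_1^{1/2}$ and its inverse relation. Note that the two one-sided bounds you obtain are in fact slightly sharper than the stated conclusion (you get $\sqrt{(1+\varepsilon_1)(1+\varepsilon_2)}$ and $1/\sqrt{(1-\varepsilon_1)(1-\varepsilon_2)}$ separately, and each is dominated by $\tau(\varepsilon_1,\varepsilon_2)^{1/2}$), so the final symmetric form with $\tau^{\pm 1/2}$ is a mild relaxation.
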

\begin{proof}[Proof of Proposition~\ref{pr:accSpec}]
  We start the proof for $B_{+,b,V^{h}}$ and the case of $B_{-,b,V^{h}}$ will be recovered in the end by a Poincar{\'e} duality argument.\\
  We do not distinguish the form degree here and recall that the number of eigenvalues of $\frac{1}{2}\Delta_{V^{h},1}$ in $[0,e^{-\frac{c}{h}}]$ and of $B_{+,b,V^{h}}$ in $[0,\varrho_{h}]$ are equal to
  $$
\mathcal{N}_{+}=\sum_{p=0}^{2d}\mathcal{N}_{+}^{(p)}=\sum_{p=0}^{d}\mathcal{N}_{+}(V)=\mathcal{N}_{+}(V)\,.
  $$
Let us set $\tilde{\pi}_{h}=1_{[0,e^{-\frac{c}{h}}]}(\frac{1}{2}\Delta_{V^{h,1}})=1_{[0,2e^{-\frac{c}{h}}]}(\Delta_{V^{h},1})$ and $\pi_{h}=U_{+,\theta}\tilde{\pi}_{h}U_{+,\theta}^{*}$ \,. Because $\Delta_{V^{h},1}=(d_{V^{h},1}+d^{*}_{V^{h},1})^{2}$\,, Lemma~\ref{le:hodge} tells us that there is an orthonormal basis $(\tilde{u}_{j})_{1\leq j\leq \mathcal{N}_{+}}$ such that $\tilde{u}_{j}$ belongs to $\mathrm{Ran}~(d_{V^{h},1}^{*}\big|_{\mathrm{Ran}\,\tilde{\pi}_{h}})$ (resp.  $\mathrm{Ran}~(d_{V^{h},1}\big|_{\mathrm{Ran}\,\tilde{\pi}_{h}})$) for $1\leq j\leq N$ (resp. $N+1\leq j\leq 2N$) and
$$
d_{V^{h},1}\tilde{u}_{j}=
\left\{
    \begin{array}[c]{ll}
      \tilde{\mu}_{j} \tilde{u}_{j+N}&\text{if}~1\leq j\leq N\\
      0&\text{otherwise}\,,
    \end{array}
  \right.$$
  where $\tilde{\mu}_{j}$\,, $1\leq j \leq N$\,, are the non zero singular values of $d_{V^{h},1}\big|_{\mathrm{Ran}\,\tilde{\pi}_{h}}$\,. With the abuse of notation of  Lemma~\ref{le:singval}, it is summarized by  $d_{V^{h},1}\tilde{u}_{j}=\tilde{\mu}_{j}\tilde{u}_{j+N}$ for all $j\in \left\{1,\ldots,\mathcal{N}_{+}\right\}$ with $\tilde{\mu}_{j}=0$ for $j>N$\,.\\
  Let $\pi_{E_{+,b,V^h}}$ be the spectral projector associated with $B_{+,b,V^{h}}$ given by
  $$
\pi_{E_{+,b,V^h}}=\frac{1}{2i\pi}\int_{NO'P'Q}(z-B_{+,b,V^{h}})^{-1}~dz
$$
like in Proposition~\ref{pr:spectrum}.\\
We consider $\mathcal{B}'=(v_{j})_{1\leq j\leq \mathcal{N}_{+}}$ with
\begin{eqnarray*}
  &&
     v_{j}=\pi_{E_{+,b,V^h}}[U_{+,\theta}\tilde{u}_{j}]\,,\\
  \text{and}&& U_{+,\theta}\tilde{u}_{j}= \frac{e^{-\mathcal{H}}}{\pi^{d/4}}[\tilde{u}_{j}](q)\quad,\quad \mathcal{H}=\frac{|p|_{q}^{2}}{2}\,.
\end{eqnarray*}
Let us compute the scalar products $\langle v_{j}\,,\, v_{j'}\rangle_{r}$:
\begin{align*}
  \langle v_{j}\,,\, v_{j'}\rangle_{r}
  &= \langle \pi_{E_{+,b,V^h}}(U_{+,\theta}\tilde{u}_{j})\,,\, \pi_{E_{+,b,V^h}}(U_{+,\theta}\tilde{u}_{j'})\rangle_{r}
                                        \stackrel{\pi_{E_{+,b,V^h}}^{*,r}=\pi_{E_{+,b,V^h}}}{=}\langle U_{+,\theta}\tilde{u}_{j}\,,\, \pi_{E_{+,b,V^h}}(U_{+,\theta}\tilde{u}_{j'})\rangle_{r}
  \\
  &=\langle U_{+,\theta}\tilde{u}_{j}\,,\, r^{*} \pi_{E_{+,b,V^h}}(U_{+,\theta}\tilde{u}_{j'})\rangle\stackrel{r^{*}U_{+,\theta}=U_{+,\theta}}{=}
    \langle U_{+,\theta}\tilde{u}_{j}\,,\,\pi_{E_{+,b,V^h}}(U_{+,\theta}\tilde{u}_{j'})\rangle
\\  &= \delta_{j,j'}
   + \langle U_{+,\theta}\tilde{u}_{j}\,,\,[\pi_{E_{+,b,V^h}}-\pi_{h}](U_{+,\theta}\tilde{u}_{j'})\rangle\,.
\end{align*}
But from \eqref{eq:estimpi1} and \eqref{eq:estimpi2} we deduce
$$
\|\pi_{E_{+,b,V^h}}-\pi_{h}\|\leq 10 C_{0}A^{-2}+(4+2\pi)A^{-1/2}\leq 30 A^{-1/2}\,.
$$
We deduce that $\mathcal{B}'$ is an $\varepsilon$-orthonormal basis of $E_{+,b,V^h}$ with $\varepsilon=30\mathcal{N}_{+}A^{-1/2}\in ]0,1[$ for $A\geq C_{0}$ large enough.\\
Remember $\mu_{0}=\hat{f}_{i}\wedge \mathbf{i}_{f^{i}}$ introduced in Definition~\ref{de:Bismutnot}. We now consider the basis $\mathcal{B}=(u_{j})_{1\leq j\leq \mathcal{N}_{+}}$ with
$$
u_{j}=\pi_{E_{+,b,V^h}}[e^{-\mu_{0}}U_{+,\theta}\tilde{u}_{j}]=v_{j}+\pi_{E_{+,b,V^h}}[(e^{-\mu_{0}}-1)U_{+,\theta}\tilde{u}_{j}]\,.
$$
From $(e^{-\mu_{0}}-1)=\sum_{k=1}^{d}\frac{(-1)^{k}}{k!}(\hat{f}_{i}\wedge \mathbf{i}_{f^{i}})^{k}$ and $\mathrm{Ran}\,U_{+,\theta}=\mathrm{Ran}\,\pi_{0,+}$ we deduce
  $$
(e^{-\mu_{0}}-1)U_{+,\theta}=(1-\pi_{0,+})(e^{-\mu_{0}}-1)U_{+,\theta}\,.
$$
We write now
\begin{align*}
\|\pi_{E_{+,b,V^h}}(1-\pi_{0,+})\|&=\|(1-\pi_{0,+})^{*}\pi_{E_{+,b,V^h}}^{*}\|
=\|(1-\pi_{0,+})r^{*}\pi_{E_{+,b,V^h}}r^{*}\|\\
&=\|r^{*}(1-\pi_{0,+})\pi_{E_{+,b,V^h}}r^{*}\|
=\|(1-\pi_{0,+})\pi_{E_{+,b,V^h}}\|
\end{align*}
but we proved in Proposition~\ref{pr:spectrum} after Lemma~\ref{lem:PTsymmetry} the upper bound
$$
\|(1-\pi_{0,+})\pi_{E_{+,b,V^h}}\|=\vec{d}(E_{+,b,V^h},\mathrm{Ran}\,\pi_{0,+})\leq \sqrt{2 C_{0}'\mathcal{N}_{+}b^{2}}\,.
$$
Remember also the equivalence of norms $\frac{1}{2}\|u\|\leq \|u\|_{r}\leq 2\|u\|$ for $u\in E_{\pm,b,V^h}$\,.
Therefore there exists a constant $C_{0,\mathcal{N}_{+}}\geq 1$ such that $\mathcal{B}$ (and $\mathcal{B}'$) are $\varepsilon$-orthornormal bases of $(E_{+,b,V^h},\langle~,~\rangle_{r})$  with $\varepsilon=C_{0,\mathcal{N}_{+}}A^{-1/2}\in ]0,1[$ for $A\geq 2C_{0,\mathcal{N}_{+}}^{2}$\,.\\
We now apply Lemma~\ref{le:hodge} to
$$
2B_{+,b,V^{h}}\big|_{E_{+,b,V^h}}=(\delta_{+,b,V^{h}}\big|_{E_{+,b,V^h}})(\delta_{+,b,V^{h}}^{*,r}\big|_{E_{+,b,V^h}})+(\delta_{+,b,V^{h}}^{*,r}\big|_{E_{+,b,V^h}})(\delta_{+,b,V^{h}}\big|_{E_{+,b,V^h}})\,,
$$
where we recall \eqref{eq:deltabV}
$$
\delta_{+,b,V^{h}}=e^{-\mu_0}e^{-\mathcal{H}-V^{h}}(K_{b}d^{X} K_{b}^{-1})e^{+\mathcal{H}+V^{h}}e^{\mu_0}\,.
$$
Because $\delta_{+,b,V^{h}}B_{+,b,V^{h}}=B_{+,b,V^{h}}\delta_{+,b,V^{h}}$ on $\mathcal{S}(X^{h};\mathcal{E}_{+}^{h})$ and $E_{\pm,b,V^h}\subset \mathcal{S}(X^{h};\mathcal{E}_{+}^{h})$ we know actually
\begin{align*}
&\delta_{+,b,V^{h}}\pi_{E_{+,b,V^h}}=\pi_{E_{+,b,V^h}}\delta_{+,b,V^{h}}=\pi_{E_{+,b,V^h}}\delta_{+,b,V^{h}}\pi_{E_{+,b,V^h}}
  \\
  \text{and} \quad  &
\delta_{+,b,V^{h}}^{*,r}\pi_{E_{+,b,V^h}}=\pi_{E_{+,b,V^h}}\delta_{+,b,V^{h}}^{*,r}=\pi_{E_{+,b,V^h}}\delta_{+,b,V^{h}}^{*,r}\pi_{E_{+,b,V^h}}\,.
\end{align*}
We now compute $\langle v_{j'}\,,\, \delta_{+,b,V^{h}}u_{j}\rangle_{r}$:
\begin{eqnarray*}
  \langle v_{j'}\,,\, \delta_{+,b,V^{h}}u_{j}\rangle_{r}
    &=&\langle \pi_{E_{+,b,V^h}}(U_{+,\theta}\tilde{u}_{j'})\,, \delta_{+,b,V^{h}}\pi_{E_{+,b,V^h}}(e^{-\mu_{0}}U_{+,\theta}\tilde{u}_{j})\rangle_{r}
    \\
    &=&\langle \pi_{E_{+,b,V^h}}(U_{+,\theta}\tilde{u}_{j'})\,, \delta_{+,b,V^{h}}(e^{-\mu_{0}}U_{+,\theta}\tilde{u}_{j})\rangle_{r}
  \\
  &=&\langle \pi_{E_{+,b,V^h}}(U_{+,\theta}\tilde{u}_{j'})\,, e^{-\mu_0}e^{-\mathcal{H}-V^{h}}(K_{b}d^{X} K_{b}^{-1})e^{+\mathcal{H}+V^{h}} U_{+,\theta}\tilde{u}_{j}\rangle_{r}
  \\
  &=& \langle \pi_{E_{+,b,V^h}}(U_{+,\theta}\tilde{u}_{j'})\,, e^{-\mu_0}U_{+,\theta}[d_{V^{h},1}\tilde{u}_{j}]\rangle_{r}\\
  &=&\tilde{\mu}_{j}\langle \pi_{E_{+,b,V^h}}(U_{+,\theta}\tilde{u}_{j'})\,,\, e^{-\mu_{0}}U_{+,\theta}[\tilde{u}_{j+N}]\rangle_{r}\\
    &=&
        \tilde{\mu}_{j}\langle U_{+,\theta}\tilde{u}_{j'}\,,\, r^* e^{-\mu_{0}}U_{+,\theta}[\tilde{u}_{j+N}]\rangle +\tilde{\mu}_{j}R_{j,j',h}\\
    &=&
        \tilde{\mu}_{j}\langle U_{+,\theta}\tilde{u}_{j'}\,,\, r^{*}e^{-\mu_{0}}U_{+,\theta}[\tilde{u}_{j+N}]\rangle+\tilde{\mu}_{j}R_{j,j',h}\\
  &=& \tilde{\mu}_{j}\langle e^{-\lambda_{0}}r^{*}U_{+,\theta}\tilde{u}_{j'}\,,\, U_{+,\theta}[\tilde{u}_{j+N}]\rangle+\tilde{\mu}_{j}R_{j,j',h}\\
  &=&\tilde{\mu}_{j}\delta_{j',j+N}+\tilde{\mu}_{j}R_{j,j',h}
\end{eqnarray*}
where we used $e^{-\lambda_{0}}r^{*}U_{+,\theta}=e^{-\lambda_{0}}r^{*}\pi_{0,+}U_{+,\theta}=U_{+,\theta}$ in the last identity, where $\tilde{\mu}_{j}=0$ for $j>N$ and where
$$
  |R_{j,j',h}|=\left|\langle [\pi_{E_{+,b,V^h}}-\pi_{h}](U_{+,\theta}\tilde{u}_{j'})\,,\, e^{-\mu_{0}}U_{+,\theta}\tilde{u}_{j+N}\rangle_{r}\right|
\leq 2 \|[\pi_{E_{+,b,V^h}}-\pi_{h}](U_{+,\theta}\tilde{u}_{j'})\|\leq 60 A^{-1/2}\,.
$$
By Gaussian elimination like in \cite{Lep} or equivalently by changing the basis $\mathcal{B}'$ into an other $\varepsilon$-orthonormal basis of $(E_{+,b,V^h},\langle~,~\rangle_{r})$ with $\varepsilon=C_{0,\mathcal{N}_{+}}A^{-1/2}$\,, we deduce with a possibly enlarged constant $C_{0,\mathcal{N}_{+}}$ that the singular values, $\mu_{j}$ of $\delta_{+,b,V^{h}}\big|_{E_{+,b,V^h}}$ satisfy:
$$
\forall j\in \left\{1,\ldots,\mathcal{N}_{+}\right\}\,,\quad (1+C_{0,\mathcal{N}_{+}}A^{-1/2})^{-1}\tilde{\mu}_{j}\leq \mu_{j}\leq (1+C_{0,\mathcal{N}_{+}}A^{-1/2})\tilde{\mu}_{j} 
$$
We deduce the comparison between eigenvalues $(\lambda_{j})_{1\leq j\leq \mathcal{N}_{+}}$ of $B_{\pm,b,h}$ and $(\tilde{\lambda}_{j})_{1\leq j\leq \mathcal{N}_{+}}$ or $\frac{1}{2}\Delta_{V^h,1}$ is deduced from $\lambda_j=\frac{1}{2}\mu_j^2$ and $\tilde{\lambda}_j=\frac{1}{2}\tilde{\mu}_j^2$\,, by doubling the constant $C_0\geq 1$\,.\\
Finally, still in the case of $B_{+,b,V^h}$\,, the degree can be followed by splitting the orthonormal basis $(\tilde{u}_{j})_{1\leq j\leq \mathcal{N}_{+}}$ according to the degree $p\in \{0,\ldots,d\}$ with $\{\tilde{u}_{j}, 1\leq j\leq \mathcal{N}_+\}=\cup_{p=0}^{d}\{\tilde{u}_{j_k}^{(p)}\,,\,, 1\leq k\leq \mathcal{N}_+^{(p)}(V)\}$\,. In particular we notice that $\tilde{u}_j=\tilde{u}_{j_k}^{(p)}$ implies that the total degree of $u_{j}$ and $v_j$ equals $p$ (in this $+$-case).\\

Let us give some details for the Poincar{\'e} duality argument which gives the result in the $-$ case. Attention must be paid on the choice of the Thom form, the unitary map $U_{-,\theta}$\,, when $Q$ is not orientable and $F=Q\times \mathbb{C} \neq (Q\times \mathbb{C})\otimes \mathbf{or}_Q$\,.
Let us split the basis $\tilde{u}_{j,V^h}$ constructed in the $+$ case according to the degree $p\in\left\{0,\ldots,d\right\}$\,, for the potential  and let $\star_Q$ (resp. $\star_X$) denote the Hodge star operator on $Q$ (resp $X$ which is orientable).
We construct the basis $\mathcal{B}$ (resp. $\mathcal{B}'$) with the vectors
\begin{eqnarray*}
&& u_{j}^{(d+p)}=\pi_{E_{-,b,V^h}^{(d+p)}}e^{-\lambda_0} U_{-,\theta}[\star_Q~\tilde{u}_{j,-V^h}^{(d-p)}]
= \pi_{E_{-,b,V^h}^{(d+p)}} \star_X~e^{-\mu_0}U_{+,\theta}[\tilde{u}_{j,-V^h}^{(d-p)}]\,,
\\
\text{resp.}&&  v_{j'}^{(d+p)}=\pi_{E_{-,b,V^h}^{(d+p)}}  U_{-,\theta}[\star_Q~\tilde{u}_{j,-V^h}^{(d-p)}]
= \pi_{E_{-,b,V^h}^{(d+p)}} \star_X~U_{+,\theta}[\tilde{u}_{j,-V^h}^{(d-p)}]\,.
\end{eqnarray*}
The $\frac{C_0}{A^1/2}$-orthonormality of $\mathcal{B}$ and $\mathcal{B}'$ is easily deduced from the $+$ case.
Then we must compute 
\begin{eqnarray*}
\delta_{-,b,V^h}^{*,r} u_j^{(d+p)}&=& e^{-\lambda_0}e^{-\mathcal{H}+V^h}K_b d^{*,X} K_b^{-1} e^{\mathcal{H}-V^h} e^{\lambda_0}u_j^{(d+p)}
\\&=& \star_X~e^{-\mu_0}e^{-\mathcal{H}+V^h}K_b \star_X^{-1}d^{*,X} \star_X K_b^{-1} e^{-\mathcal{H}+V^h} U_{+,\theta}[\tilde{u}_{j,-V^h}^{(d-p)}]
\\
&=& (-1)^{(d-p+1)}\star_X~e^{-\mu_0}e^{-\mathcal{H}+V^h}K_b d^{X} K_b^{-1} e^{\mathcal{H}-V^h} U_{+,\theta}[\tilde{u}_{j,-V^h}^{(d-p)}]\\
&=& (-1)^{(d-p+1)}\star_X e^{-\mu_0} U_{+,\theta}[d_{-V^h,1}~\tilde{u}_{j,-V^h}^{(d-p)}]\\
&=& (-1)^{(d-p+1)}\tilde{\mu}_{j,-V^h}^{(d-p)}\star_X e^{-\mu_0} U_{+,\theta}[\tilde{u}_{k(j),-V^h}^{(d-p+1)}]\,.
\end{eqnarray*}
By taking the $\langle~,~\rangle_r$ scalar product with $v_{j'}^{(d+p')}$ like in the $+$ case, we obtain
$$
\langle v_{j'}^{(d+p')}\,,\, \delta_{-,b,V^h}^{*,r} u_{j}^{(d+p)}\rangle_r = (-1)^{d-p+1}\delta_{p',p-1}\delta_{j',k(j)}\tilde{\mu}_{j,-V^h}^{(d-p)} +\tilde{\mu}_{j,-V^h}^{(d-p)}\mathcal{O}(A^{-1/2})\,.
$$
We deduce that the singular values of $\delta_{-,b,V^h}^{*,r}\big|_{E_{-,b,V^h}^{(d+p)}}$ are comparable, with an $(1+\mathcal{O}(A^{-1/2}))$ factor, with the singular values of $d_{-V^h,1}: \mathrm{Ran}\,1_{[0,2\varrho_h]}(\Delta_{F^+,-V^h,1}^{(d-p)})\to \mathrm{Ran}\,1_{[0,2\varrho_h]}(\Delta_{F^+,-V^h,1}^{(d-p+1)})$\,, where we recall that $\Delta_{F_+,-V^h,1}$ acts on $\mathcal{C}^{\infty}(Q,\Lambda T^*Q\otimes \mathbb{C})$\,. The eigenvalues of $B_{-,b,V^h}^{(d+p)}$ are thus comparable with an
$(1+\mathcal{O}(A^{-1/2}))$ factor, with the eigenvalues of $\frac{1}{2}\Delta_{F_+,-V^h,1}^{(d-p)}$ which by Poincar{\'e} duality for the Witten Laplacian are the eigenvalues of $\frac{1}{2}\Delta_{F_-,+V^h,1}^{(p)}$, where we recall that $\Delta_{F_-,V^h,1}$ acts on $\mathcal{C}^{\infty}(Q,\Lambda T^*Q\otimes \mathbb{C}\otimes \mathbf{or}_Q)$\,.
\end{proof}

\textbf{Acknowledgements:} The two first authors benefited during this work from the support of the French ANR-Project QUAMPROCS "Quantitative Analysis of Metastable Processes". Francis Nier acknowledge the hospitality  of the INRIA-project Matherials and the Cermics in Ecole Nationale des Ponts-et-Chauss\'ees for a sabbatical semester, during which this article was finished.
Francis White is grateful to have received funding from the European Union's Horizon 2020 research and innovation programme under the Marie Sklodowska-Curie grant agreement No 101034255.\euflag


\begin{thebibliography}{99}
\bibitem[BNV]{BNV} M.~Ben~Said, F.~Nier, J.~Viola.
  \newblock Quaternionic structure and analysis of some Kramers-Fokker-Planck operators.
  \newblock Asympt. Analysis Vol.~119 no.~1-2 (2016) pp.~87--116.
  
\bibitem[BeBo]{BeBo} L.~B{\'e}rard-Bergery, J.P.~Bourguignon.
  \newblock Laplacian and Riemannian submersion with totally geodesic fibres.
  \newblock Illinois Journal of Mathematics, Vol.~26 no. 2 (1982)

\bibitem[Ber]{Ber} N.~Berglund.
\newblock Kramer's law: validity, derivation and generalisation.
\newblock Markov Process. Related Fields Vol.~19  no.~3 (2013) pp.~459--490.

\bibitem[BFLS]{BFLS} E.~Bernard, M.~Fathi, A.~Levitt, G.~Stoltz.
  \newblock Hypocoercivity with Schur complements.
  \newblock Annales H. Lebesgue, Vol.~5 (2022) pp.~523--557.
  
\bibitem[Bis041]{Bis041} J.M.~Bismut.
\newblock Le Laplacien hypoelliptique sur le fibr{\'e} cotangent
\newblock C.R. Acad. Sci. Paris S{\'e}r. I, 338 (2004) pp~471--476.

\bibitem[Bis042]{Bis042} J.M.~Bismut.
\newblock Le Laplacien hypoelliptique.
\newblock S{\'e}minaire Equations aux D{\'e}riv{\'e}es Partielles, Exp. XXII,
Ecole Polytechnique (2004).

\bibitem[Bis05]{Bis05} J.M.~Bismut.
\newblock The hypoelliptic Laplacian on the cotangent bundle.
\newblock Journal of the American Math. Soc., Vol.~18 no.~2 (2005)
pp~379--476.

\bibitem[BiLe]{BiLe} J.M.~Bismut, G.~Lebeau.
\newblock \textit{The Hypoelliptic Laplacian and Ray-Singer Metrics}.
\newblock Annals of Mathematics Studies 167 (2008).

\bibitem[BiZh]{BiZh} J.M.~Bismut, W.~Zhang.
\newblock Milnor and Ray-Singer Metrics of the Equivariant Determinant
of flat bundle.
\newblock Geom. Funct. Anal., Vol.~4 No.~2, (1994) pp.~136--212.

\bibitem[BLM]{BLM} J.F.~Bony, D.~Le~Peutrec, L.~Michel.
  \newblock Eyring-Kramers law for Fokker-Planck type differential operators
  \newblock arXiv::2201.01660 (2022).
  
\bibitem[{CFKS}]{CFKS}
H.L.~Cycon, R.G.~Froese, W.~Kirsch,  B.~Simon.
\newblock{\it Schr{\"o}dinger operators with application to quantum
  mechanics and global geometry}.
\newblock Text and Monographs in Physics, Springer-Verlag (1987).

\bibitem[{BEGK}]{BEGK} A.~Bovier, M.~Eckhoff, V.~Gayrard, M.~Klein.
  \newblock Metastability in reversible diffusion processes  I:
 Sharp asymptotics for capacities and exit times.
 \newblock  JEMS Vol.~6 no.~4 (2004) pp.~399--424.


 \bibitem[{BGK}]{BGK} A.~Bovier,  V.~Gayrard,   M.~Klein.
  \newblock Metastability in reversible diffusion processes  II:
 Precise asymptotics for small eigenvalues.
 \newblock JEMS   Vol.~7 no.~1 (2004), pp.~69--99.
 
 \bibitem[DLLN]{DLLN}
G.~Di~Ges\`u, T.~Leli\`evre, D.~Le~Peutrec, and B.~Nectoux.
\newblock Sharp asymptotics of the first exit point density.
\newblock Ann. PDE Vol.~5 no.~1 (2019) pp.~5--174.

\bibitem[EcHa]{EcHa} J.P.~Eckmann, M.~Hairer.
\newblock Spectral properties of hypoelliptic operators.
\newblock Comm. Math. Phys. Vol.~223 no.~2, pp. 233-253 (2003).

\bibitem[FrWe]{FrWe} M.I.~Freidlin, A.D.~Wentzell.
\newblock {\it Random perturbations of dynamical systems.}
\newblock second ed., Springer-Verlag (1998).

\bibitem[Gro]{Gro} M.~Gromov.
  \newblock\textit{Partial Differential Relations}
  \newblock Ergebnisse der Mathematik un ihrer Grenzgebiete. 3 Folge, Bd. 9. Springer (1986).
\bibitem[HKS]{HKS}
R.~A. Holley, S.~Kusuoka, and D.~Stroock.
\newblock Asymptotics of the spectral gap with applications to the theory of
  simulated annealing.
\newblock Journal of functional analysis Vol.~83 no.~2 (1989) pp.~333--347.


 
\bibitem[{HKN}]{HKN} B.~Helffer, M.~Klein, F.~Nier.
 \newblock Quantitative analysis of metastability in reversible
 diffusion processes via  a Witten complex approach.
 \newblock Matematica Contemporanea, 26, pp.~41--85 (2004).
 
\bibitem[HeNi]{HeNi} B.~Helffer, F.~Nier.
\newblock \textit{Hypoelliptic estimates and spectral theory for Fokker-Planck
operators and Witten Laplacians.}
\newblock Lecture Notes in Mathematics 1862. Springer (2005). 

\bibitem[HeNi2]{HeNi2} B.~Helffer, F.~Nier
\newblock \textit{Quantitative analysis of metastability in reversible diffusion processes via a Witten complex approach: the case with boundary.}
M{\'e}moires de la SMF 105 (2006), vi+89 pages

\bibitem[{HeSj4}]{HeSj4} B.~Helffer and J.~Sj{\"o}strand.
 \newblock   Puits multiples en limite semi-classique IV
 - \'Etude du complexe de Witten -.
 \newblock  Comm. Partial Differential Equations Vol.~10 no.~3 (1985),
 pp.~245--340.
 
\bibitem[HerNi]{HerNi} F.~H{\'e}rau and F.~Nier.
\newblock Isotropic hypoellipticity and trend to the equilibrium for
the Fokker-Planck equation with a high-degree potential.
\newblock Arch. Ration. Mech. Anal. Vol.~171 no.~2 (2004) pp. 151--218.

\bibitem[HHS]{HHS} F.~H\'erau, M.~Hitrik, J.~Sj\"ostrand.
\newblock Tunnel effect and symmetries for Kramers Fokker-Planck type
 operators.
 \newblock Journal of the Inst. of Math. Jussieu, Vol.~10 no.~3 (2011) pp.~567--634.
 
  
\bibitem[LeNi]{LeNi} T.~Leli{\`e}vre, F.~Nier.
\newblock Low temperature asymptotics for Quasi-Stationary
Distributions in a bounded domain.
\newblock Analysis \& PDE,  Vol.~8 no.~3 (2015) pp.~561--628.

\bibitem[LeSt]{LeSt} T.~Leli{\`e}vre, G.~Stoltz.
  \newblock Partial differential equations and stochastic methods in molecular dynamics.
  \newblock Acta Numerica 25 (2016) pp.~681--880.
 
\bibitem[Lep]{Lep} D.~Le~Peutrec.
  \newblock  Small singular values of an extracted matrix of a Witten complex.
\newblock Cubo Vol.~11 no.~4 (2009), pp.49--57. 

\bibitem[{Lep1}]{Lep1} D.~Le~Peutrec.
 \newblock Small eigenvalues of the Neumann realization of the semiclassical Witten Laplacian.
 \newblock Ann. Fac. Sci. Toulouse Math. (6),
 Vol.~19, no 3--4, (2010) pp.~735--809.
 
\bibitem[LeNe]{LeNe}
D.~Le~Peutrec, B.~Nectoux.
\newblock Small eigenvalues of the {W}itten {L}aplacian with {D}irichlet boundary conditions: the case with critical
  points on the boundary.
  \newblock Anal. PDE Vol.~14 no.~8 (2021) pp.~2595--2651.
  
\bibitem[LNV1]{LNV1} D.~Le~Peutrec, F.~Nier, C.~Viterbo.
\newblock Precise Arrhenius law for $p$-forms: The Witten Laplacian
and Morse-Barannikov complex
\newblock Ann. Henri Poincar{\'e}  Vol.~14  no.~3, (2013) pp~567--610.

\bibitem[LNV2]{LNV2}  D.~Le~Peutrec, F.~Nier, C.~Viterbo.
\newblock \textit{Bar codes of persistent cohomology and Arrhenius law for
$p$-forms.}
\newblock Ast{\'e}risque 450 (2024).

\bibitem[Mic]{Mic}
L.~Miclo.
\newblock Comportement de spectres d'op\'erateurs de {S}chr\"odinger \`a basse
  temp\'erature.
  \newblock {\em Bulletin des sciences math{\'e}matiques}  Vol.~119 no.~6 (1995) pp.~529--554.
  
\bibitem[Nie]{Nie14} F.~Nier.
  \newblock Accurate estimates for the exponential decay of semigroups with non-self-adjoint generators.
\newblock Kirillov, Oleg N. (ed.) et al., Nonlinear physical systems. Spectral analysis, stability and bifurcations. London: ISTE; Hoboken, NJ: John Wiley \& Sons. Mech. Eng. Solid Mech. Ser., 331-350 (2014). 

\bibitem[Nie]{Nie} F.~Nier.
\newblock\textit{Boundary conditions and subelliptic estimates for
  geometric Kramers-Fokker-Planck operators on manifolds with
  boundaries.}
\newblock  Mem. Amer. Math. Soc. Vol.~252 no. 1200 (2018).


\bibitem[NSW]{NSW} F.~Nier, X.~Sang, F.~White,
  \newblock Global subelliptic estimates for geometric Kramers-Fokker-Planck operators on closed manifolds.
  
\bibitem[Nor1]{Nor1} T.~Normand.
  \newblock Metastability results for a class of linear Boltzmann equation with a confining potential.
  \newblock Ann.~Henri Poincar{\'e}, Vol.~24 no.~11 (2023) pp~4013--4067.
  
\bibitem[Nor2]{Nor2} T.~Normand.
  \newblock Spectral asymptotics and metastability for the linear relaxation Boltzmann equation 
  \newblock arXiv:2310.04085 (2023).

\bibitem[{ReSi}]{ReSi} M.~Reed and  B.~Simon.
\newblock {\it Method of Modern Mathematical Physics II}.
\newblock Academic press (1975).

\bibitem[ReTa]{ReTa} Q.~Ren, Z.~Tao.
\newblock Spectral asymptotics for kinetic brownian motion on riemannian manifolds.
\newblock arXiv:2212.053994v3 (2023).

\bibitem[She]{She} S.~Shen.
\newblock Laplacien hypoelliptique, torsion analytique et th{\'e}or{\`e}me de
Cheeger-M{\"u}ller.
\newblock J.~Funct. Anal. Vol.~270 (2016) pp.~2817--2999.

\bibitem[Sjo]{Sjo} J.~Sj{\"o}strand.
  \newblock Operator of principal type with interior boundary conditionis.
  \newblock Acta Math. Vol.~130 (1973) pp.~1--51.
  
\bibitem[SjZw]{SjZw} J.~Sj{\"o}strand, M.~Zworski.
  \newblock Elementary linear algebra for advanced spectral problems.
  \newblock Ann. Inst. Fourier Vol.~57 no.~7 (2007) pp.~2095--2141.
  
\bibitem[{Wit}]{Wit} E.~Witten.
\newblock Supersymmetry and Morse inequalities.
\newblock J. Diff. Geom. Vol.~17 no.~4 (1982) pp.~661--692.

\bibitem[{Zha}]{Zha} W.~Zhang.
\newblock \textit{Lectures on Chern-Weil theory and Witten deformations.}
\newblock Nankai Tracts in Mathematics, 4, World Scientific Publishing
Co. (2001).
\end{thebibliography}
\end{document}